\documentclass[a4paper,11pt]{article}
\textwidth380pt
\hoffset-40pt
\voffset+0pt
\headsep-20pt
\textheight510pt

\usepackage{amsmath, amsfonts, amscd, amssymb, amsthm, enumerate, float}

\def\bfB{\mathbf{B}}

\def\id{\mathrm{id}}
\def\ad{\mathrm{ad}}

\newcommand{\Mat}{\operatorname{M}}

\newcommand{\GL}{\operatorname{GL}}
\newcommand{\res}{\operatorname{res}}
\newcommand{\Ker}{\operatorname{Ker}}
\newcommand{\End}{\operatorname{End}}
\newcommand{\Gal}{\operatorname{Gal}}
\newcommand{\Vect}{\operatorname{span}}
\newcommand{\im}{\operatorname{Im}}
\newcommand{\Tr}{\operatorname{Tr}}
\newcommand{\car}{\operatorname{char}}
\newcommand{\tr}{\operatorname{tr}}

\newcommand{\rk}{\operatorname{rk}}

\newcommand{\Root}{\operatorname{Root}}
\renewcommand{\setminus}{\smallsetminus}


\def\F{\mathbb{F}}
\def\K{\mathbb{K}}
\def\R{\mathbb{R}}
\def\C{\mathbb{C}}

\def\N{\mathbb{N}}
\def\Z{\mathbb{Z}}

\renewcommand{\L}{\mathbb{L}}


\def\calA{\mathcal{A}}

\def\calM{\mathcal{M}}

\def\calW{\mathcal{W}}


\def\lcro{\mathopen{[\![}}
\def\rcro{\mathclose{]\!]}}

\theoremstyle{definition}
\newtheorem{Def}{Definition}
\newtheorem{Not}[Def]{Notation}

\theoremstyle{plain}
\newtheorem{theo}{Theorem}[section]
\newtheorem{prop}[theo]{Proposition}
\newtheorem{cor}[theo]{Corollary}
\newtheorem{lemma}[theo]{Lemma}

\theoremstyle{plain}

\theoremstyle{remark}
\newtheorem{Rems}{Remarks}
\newtheorem{Rem}[Rems]{Remark}

\title{The sum and the product of two quadratic matrices}
\author{Cl\'ement de Seguins Pazzis\footnote{Universit\'e de Versailles Saint-Quentin-en-Yvelines, Laboratoire de Math\'ematiques
de Versailles, 45 avenue des \'Etats-Unis, 78035 Versailles cedex, France}
\footnote{e-mail address: dsp.prof@gmail.com}}

\begin{document}

\thispagestyle{plain}

\maketitle

\begin{abstract}
Let $p$ and $q$ be polynomials with degree $2$ over an arbitrary field $\F$.

In the first part of this article, we characterize the matrices that
can be decomposed as $A+B$ for some pair $(A,B)$ of square matrices such that $p(A)=0$ and $q(B)=0$.
The case when both polynomials $p$ and $q$ are split was already known \cite{Bothasquarezero,dSPidem2,dSPsumoftwotriang}.
In the first half of this article, we complete the study by tackling the case when at least one of the polynomials $p$ and $q$ is irreducible over $\F$.

In the second half of the article, we use a similar method to characterize, under the assumption that
$p(0)q(0) \neq 0$, the matrices that
can be decomposed as $AB$ for some pair $(A,B)$ of square matrices such that $p(A)=0$ and $q(B)=0$.
\end{abstract}

\vskip 2mm
\noindent
\emph{AMS Classification :} 15A24; 15B33.

\vskip 2mm
\noindent
\emph{Keywords :} quadratic matrices, rational canonical form, fields with characteristic $2$, companion matrices,
quaternion algebras, Galois theory.

\tableofcontents

\section{Introduction}

\subsection{Basic notation and concepts}

Let $\F$ be an arbitrary field and $\overline{\F}$ be an algebraic closure of it. We denote by $\car(\F)$ the characteristic of $\F$.
We denote by $\Mat_n(\F)$ the algebra of all square matrices with $n$ rows and entries in $\F$, and by $I_n$
its unity. The similarity of two square matrices $A$ and $B$ is denoted by $A \simeq B$.
We denote by $\N$ the set of all non-negative integers, and by $\N^*$ the set of all positive ones.
Given a polynomial $p \in \F[t]$, we denote by $\Root(p)$ the set of all roots of $p$ in $\overline{\F}$, and, if $p$ is non-constant and monic with degree $n$, we denote by $\tr(p)$ the opposite of the coefficient of $p$ on $t^{n-1}$, which we call the trace of $p$.

An element of an $\F$-algebra $\calA$ is called \textbf{quadratic} when it is annihilated by a polynomial of degree $2$ of $\F[t]$.
Basic special cases of such elements are the idempotents ($a^2=a$), the involutions ($a^2=1_\calA$) and the square-zero elements ($a^2=0$).
Given an element $a$ of an $\F$-algebra $\calA$ together with a polynomial $p \in \F[t]$ with degree $2$ such that
$p(a)=0$, we set $a^\star:=(\tr p)\,1_\calA-a$, which we call the \textbf{$p$-conjugate} of $a$, and we note that $aa^\star=a^\star a=p(0)1_\calA$
(in this notation, the polynomial should normally be specified because of the possibility that $a$ be a scalar multiple of $1_\calA$, but it will always be clear from the context). Note that if $p$ is irreducible then $a$ and $a^\star$ are its roots in the quadratic extension $\F[a]$.

The following basic result will be used throughout the article so we state it and prove it right away. 

\begin{lemma}[Basic Commutation Lemma]\label{basiccommutelemma}
Let $p$ and $q$ be monic polynomials of $\F[t]$ with degree $2$. Let $a,b$ be elements of an $\F$-algebra $\calA$
such that $p(a)=q(b)=0$, and denote respectively by $a^\star$ and $b^\star$ the $p$-conjugate of $a$ and the $q$-conjugate of $b$.
Then, $a$ and $b$ commute with $ab^\star+b a^\star$.
\end{lemma}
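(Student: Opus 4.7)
The plan is to verify the two commutation identities $[a,\,ab^\star+ba^\star]=0$ and $[b,\,ab^\star+ba^\star]=0$ by direct expansion in $\calA$. First, I would assemble the elementary consequences of the quadratic relations: from $a^\star = (\tr p)\,1_\calA - a$ and $a a^\star = p(0)\,1_\calA$ one reads off
$$a^2 = (\tr p)\,a - p(0)\,1_\calA,$$
and symmetrically $b^2 = (\tr q)\,b - q(0)\,1_\calA$. These, together with $a a^\star = a^\star a = p(0)\,1_\calA$ (and likewise for $b$) and the trace identities $a+a^\star=(\tr p)\,1_\calA$, $b+b^\star=(\tr q)\,1_\calA$, are the only ingredients needed.

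For the first commutation relation, I would expand $a(ab^\star+ba^\star)$ and $(ab^\star+ba^\star)a$ separately. In $a(ab^\star+ba^\star) = a^2 b^\star + a b a^\star$, the term $a^2 b^\star$ is rewritten using $a^2=(\tr p)\,a-p(0)\,1_\calA$, and $aba^\star$ using $a^\star=(\tr p)\,1_\calA-a$. In $(ab^\star+ba^\star)a = ab^\star a + b\,(a^\star a)$, the second summand collapses to $p(0)\,b$, while $ab^\star a$ is rewritten using $b^\star=(\tr q)\,1_\calA-b$ and then reduced again through $a^2=(\tr p)\,a-p(0)\,1_\calA$. The goal is that, after collecting terms, both expressions share a common block $(\tr p)(\tr q)\,a - aba$, leaving a residue proportional to $p(0)$ that collapses thanks to $b+b^\star=(\tr q)\,1_\calA$.

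For the second commutation relation, I would simply invoke symmetry: the expression $ab^\star+ba^\star$ is manifestly invariant under the simultaneous exchange $(a,p)\leftrightarrow(b,q)$, so the computation carried out for $a$ applies verbatim to $b$, with $p$ and $q$ interchanged.

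The main obstacle is purely clerical. The calculation involves the three constants $\tr p$, $\tr q$, $p(0)$ scattered across multiple non-commuting monomials, and the cancellations depend on not combining the $b$- and $b^\star$-terms prematurely. I would therefore expand everything to a linear combination of the monomials $1_\calA, a, b, b^\star, ab, ab^\star, aba$ (with scalar coefficients in $\F$) before comparing the two sides. No deeper input than the basic identities above should be required.
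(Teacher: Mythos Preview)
Your proposal is correct and follows essentially the same strategy as the paper, namely a direct verification of the commutation by expanding both sides. The paper's execution is shorter, however: rather than reducing everything to monomials in $a,b$, it first observes the symmetric rewriting
\[
ab^\star+ba^\star \;=\; \tr(q)\,a+\tr(p)\,b-(ab+ba) \;=\; a^\star b+b^\star a,
\]
and then computes each side in one line: $(ab^\star+ba^\star)a=ab^\star a+p(0)b$ directly from $a^\star a=p(0)1_\calA$, while $a(ab^\star+ba^\star)=a(b^\star a+a^\star b)=ab^\star a+p(0)b$ from $aa^\star=p(0)1_\calA$. This sidesteps the bookkeeping in $\tr p,\tr q,p(0)$ that you anticipate; your expansion reaches the same identity but with more cancellations to track.
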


Note also that
$$a b^\star+ba^\star=\tr(q)\,a+\tr(p)\,b-(ab+ba)=a^\star b+b^\star a.$$

\begin{proof}
On the one hand
$$(ab^\star+b a^\star)a=ab^\star a+p(0)b,$$
and on the other hand, $ab^\star+b a^\star=b^\star a+a^\star b$, whence
$$a(ab^\star+b a^\star)=a(b^\star a+a^\star b)=ab^\star a+p(0)b.$$
Thus, $a$ commutes with $ab^\star+b a^\star$. Symmetrically, so does $b$.
\end{proof}

\subsection{The $(p,q)$-sums problem}

Let $p$ and $q$ be monic polynomials of degree $2$ over $\F$.
An element $x$ of an $\F$-algebra $\calA$ is called a $(p,q)$-\textbf{sum} (respectively, a \textbf{$(p,q)$-difference}) whenever
it splits as $x=a+b$ (respectively, $x=a-b$) where $a,b$ are elements of $\calA$ that satisfy $p(a)=0$ and $q(b)=0$.
In particular, by taking $\calA=\Mat_n(\F)$ or $\calA=\End(V)$ for some vector space $V$ over $\F$, we
have the notion of a $(p,q)$-sum and of a $(p,q)$-difference for square matrices over $\F$ and for endomorphisms of $V$.
Those two notions are easily connected: an element of $\calA$ is a $(p,q)$-sum if and only if it is a $(p,q(-t))$-difference.

We will focus only on $(p,q)$-differences, as many results turn out to have a more elegant formulation
 when expressed in terms of $(p,q)$-differences rather than in terms of $(p,q)$-sums.
Here is the first problem we will address here:
\begin{center}
When is a square matrix a $(p,q)$-difference?
\end{center}
Since the set of all matrices $A \in \Mat_n(\F)$ such that $p(A)=0$ is a union of similarity classes, and
ditto for $q$ instead of $p$, the set of all matrices in $\Mat_n(\F)$ that are $(p,q)$-differences is a union of similarity classes.
Hence, in theory it should be possible to find necessary and sufficient conditions for being a $(p,q)$-difference in terms
of either the Jordan normal form or the rational canonical form.

Before we go on, we need to note that the problem remains essentially unchanged in replacing one of the polynomials
$p$ or $q$ with one of its \emph{translated} polynomials.
For instance, if we let $d \in \F$ and consider $p_1:=p(t+d)$, we note that a matrix $A \in \Mat_n(\F)$
is annihilated by $p$ if and only if $A-dI_n$ is annihilated by $p_1$, and hence
a matrix $M$ is a $(p,q)$-difference if and only is $M-d I_n$ is a $(p_1,q)$-difference.
Throughout the text, we shall say that $p$ \textbf{equals $q$ up to a translation} whenever $q(t)=p(t+d)$ for some scalar $d \in \F$.

Special cases in the above problem were solved starting in the early nineteen nineties.
First, in \cite{HP} Hartwig and Putcha solved the case when $p=q=t^2-t$ over the field of complex numbers,
i.e.\ they determined the matrices that can be written as the difference of two idempotent complex matrices, and
they also determined those that can be written as the sum of two idempotent complex matrices (those problems are easily seen to be equivalent
by noting that a matrix $A$ of $\Mat_n(\F)$ is idempotent if and only if $I_n-A$ is idempotent).
Later, Wang and Wu \cite{WuWang} obtained similar characterizations for sums of two square-zero complex matrices,
and Wang alone \cite{Wang1} obtained a characterization of the matrices that are the sum of an idempotent matrix
and a square-zero one, again over the field of complex numbers.
In all those works, both the results and the methods can be generalized effortlessly to any algebraically closed field with characteristic not $2$.

More recently, the results of the above authors were extended to arbitrary fields, even those with characteristic $2$.
In \cite{dSPidem2}, we managed to obtain a description of all matrices that split into a linear combination of two idempotents
with fixed nonzero coefficients, over an arbitrary field.
It is easily seen that this yields a solution to the above problem in the slightly more general case when both $p$ and $q$
are split polynomials with simple roots. In \cite{Bothasquarezero}, Botha
extended the classification of sums of two square-zero matrices to an arbitrary field
(see also the appendix of \cite{dSP3squarezero} for an alternative proof).
The case when both polynomials $p$ and $q$ are split was finally completed in \cite{dSPsumoftwotriang}, where
Wang's result on the sum of an idempotent matrix and a square-zero one was extended to all fields.

Yet, to this day nothing was known on the case when one of the polynomials $p$ and $q$ is irreducible over $\F$.
It is our ambition here to complete the study by giving a thorough treatment of the remaining cases.

\subsection{The $(p,q)$-products problem}

Let $p$ and $q$ be monic polynomials of degree $2$ over $\F$.
An element $x$ of an $\F$-algebra $\calA$ is called a \textbf{$(p,q)$-product} whenever
it splits as $x=ab$ where $a,b$ are elements of $\calA$ that satisfy $p(a)=0$ and $q(b)=0$.
In particular, by taking $\calA=\Mat_n(\F)$ or $\calA=\End(V)$ for some vector space $V$ over $\F$, we
have the notion of a $(p,q)$-product for square matrices with entries in $\F$ and for endomorphisms of a vector space over $\F$.

Here is the second problem we will tackle:
\begin{center}
When is a square matrix a $(p,q)$-product?
\end{center}
Since the set of all matrices $A \in \Mat_n(\F)$ such that $p(A)=0$ is a union of similarity classes, and
ditto for $q$ instead of $p$, the set of all matrices in $\Mat_n(\F)$ that are $(p,q)$-products is a union of similarity classes. Hence, in theory it should be possible to find necessary and sufficient conditions for a matrix to be a $(p,q)$-product in terms of either its Jordan normal form or its rational canonical form.

Note that, in $\Mat_n(\F)$, the $(p,q)$-products are the $(q,p)$-products, since it is known that every square matrix over a field
is similar to its transpose.

Before we go on, we also need to note that the problem remains essentially unchanged should
$p$ or $q$ be replaced with one of its \emph{homothetic} polynomials:
Given $d \in \F \setminus \{0\}$, we set
$$H_d(p):=d^{-2}p(d t),$$
which is a monic polynomial of $\F[t]$ with degree $2$.
Note that an element $a$ of an $\F$-algebra $\calA$ is annihilated by $p$ if and only if $d^{-1}a$ is annilated by
$H_{d}(p)$. Hence, a matrix $M$ is a $(p,q)$-product if and only if $d^{-1}M$ is an $(H_d(p),q)$-product.
Throughout the text, we shall say that two monic polynomials $p_1$ and $p_2$ with degree $2$ in $\F[t]$ are \textbf{homothetic} over $\F$
whenever $p_2=H_d(p_1)$ for some $d \in \F \setminus \{0\}$.

Note also that if $p(0)q(0) \neq 0$, then a $(p,q)$-product must be invertible.
\vskip 3mm

The topic of $(p,q)$-products has a long history that started in the nineteen sixties:
\begin{itemize}
\item The first result was due to Wonenburger \cite{Wonenburger} who, over a field with characteristic not $2$,
classified the $(t^2-1,t^2-1)$-products in $\Mat_n(\F)$, i.e.\ the products of two involutions.
This result was shortly generalized to all fields by Djokovi\'c \cite{Djokovic}, and rediscovered independently by
Hoffman and Paige \cite{HoffmanPaige}. Famously, the solutions are the
invertible matrices that are similar to their inverse.
\item Almost simultaneously, Ballantine characterized the $(t^2-t,t^2-t)$-products in $\Mat_n(\F)$ (where $\F$ is an arbitrary field). In other words, he classified the matrices that split into the product of two idempotents (he even classified the ones that split into the product of $k$ idempotents for a given positive integer $k$).
\item In a series of articles, Wang obtained an almost complete classification of the remaining cases when
the field is the one of complex numbers (his proofs generalize effortlessly to any algebraically closed field).
In \cite{WangWu}, Wu and him solved the case when both $p$ and $q$ have a nonzero double root (which reduces to the situation where $p=q=(t-1)^2$). In \cite{Wang1} and \cite{WangII}, Wang considered the more general situation where $p(0)q(0) \neq 0$,
with some stringent restrictions in the case when $p$ or $q$ has a double root (essentially, in that situation he only tackled
the case when $p$ has a double root and $q$ has opposite distinct roots, and over an algebraically closed field with characteristic not $2$).
In \cite{WangIII}, he tackled the case when $p(t)=t^2-t$ and $q(0) \neq 0$.
\item In \cite{Novak}, Novak solved the case when $p(t)=q(t)=t^2$, over an arbitrary field.
\item In \cite{BothaUnipotentSquarezero}, Botha solved the case when $p(t)=t^2-t$ and $q(t)=t^2$, over an arbitrary field.
\end{itemize}

Thus, even over an algebraically closed field, the general problem of classifying $(p,q)$-products is still partly open.
Subsequent efforts were made to extend some of the above results to arbitrary fields:

\begin{itemize}
\item In \cite{Bunger}, B\"unger, Kn\"uppel and Nielsen characterized the $(p,p)$-products when $p$ splits over $\F$, $p(0)=1$ and
no fourth root of the unity is a root of $p$.
\item In \cite{Bothaunipotent}, Botha generalized Wang's characterization of the $((t-1)^2,(t-1)^2)$-products to an arbitrary field.
\end{itemize}

Hence, before the present article no general solution to our problem was known.
Even a full solution to the case when both polynomials $p$ and $q$ are split is missing from the literature.
It is our ambition here to contribute to the problem by giving a thorough treatment
of the case when $p(0)q(0) \neq 0$, which essentially amounts to determining the \emph{invertible} $(p,q)$-products.

Thus, after this article, the only remaining case in the $(p,q)$-products problem will be the one when $p(0)=0$.
Different methods are needed to solve this case, and we hope that a general solution to it will be found in the near future.

Assume now that $p(0)q(0) \neq 0$. An element $x$ of an $\F$-algebra $\calA$ is called a $(p,q)$-\textbf{quotient} (in $\calA$)
whenever there exist elements $a$ and $b$ of $\calA$ such that $x=ab^{-1}$ and $p(a)=q(b)=0$
(this is equivalent to $x$ being a $(p,q^\sharp)$-product where $q^\sharp:=q(0)^{-1}t^2 q(t^{-1})$ is the reciprocal polynomial of $q$).
It turns out that the characterization of quotients is more easily expressed than the one of
products, in particular in the case when $p$ and $q$ are irreducible; Therefore, in the
remainder of the article we will only consider the problem of classifying the $(p,q)$-quotients among the automorphisms
of a finite-dimensional vector space. From our results on quotients, giving the corresponding results on products
is an elementary task that requires no further explanation.

\subsection{Basic structure of the article}

The rest of the article is split into three parts. In Section \ref{WPQalgebra}, we study
an algebra that is connected to the two polynomials $p$ and $q$ and
which shares a resemblance with quaternion algebras.
The structural results for this algebra will help us obtain the classification of so-called
\emph{regular} $(p,q)$-differences and of \emph{regular} $(p,q)$-quotients
(in short, a $(p,q)$-difference is regular when it has no eigenvalue in $\Root(p)-\Root(q)$, and, when $p(0)q(0) \neq 0$, a $(p,q)$-quotient is regular when it has no eigenvalue in $\Root(p)\Root(q)^{-1}$).

The last two sections, which constitute the main bulk of this article are devoted to
the complete classification of $(p,q)$-differences (Section \ref{DiffSection}) and to the complete
classification of $(p,q)$-quotients (Section \ref{QuotSection}). In the final appendix, we state and prove a
result on block cyclic matrices which is used throughout the article.

\subsection{Miscellaneous notation and useful facts}

Throughout the article, we need some notation and standard results from the representation theory for one endomorphism.
Given a monic polynomial $r(t)=t^n-\underset{k=0}{\overset{n-1}{\sum}} a_k t^k$ of $\F[t]$,
we denote by
$$C(r):=\begin{bmatrix}
0 &   & & (0) & a_0 \\
1 & 0 & &   & a_1 \\
0 & \ddots & \ddots & & \vdots \\
\vdots & \ddots & & 0 & a_{n-2} \\
(0) & \cdots & 0 &  1 & a_{n-1}
\end{bmatrix}\in \Mat_n(\F)$$
its \textbf{companion matrix}.
Remember that the rational canonical form theorem states that for any endomorphism $u$ of a finite-dimensional vector
space over $\F$, there exists a unique sequence $(r_1,\dots,r_k)$ of monic nonconstant polynomials over $\F$ such that:
\begin{enumerate}[(i)]
\item The endomorphism $u$ is represented in some basis by $C(r_1) \oplus \cdots \oplus C(r_k)$;
\item The polynomial $r_{i+1}$ divides $r_i$ for all $i \in \lcro 1,k-1\rcro$.
\end{enumerate}
The polynomials $r_1,\dots,r_k$ are called the invariant factors of $u$. We extend this finite sequence into an infinite one
$(r_i)_{i \geq 1}$ by setting $r_i:=1$ whenever $i>k$.
We convene that $C(1)$ denotes the $0$ by $0$ matrix.

We also recall the \emph{primary canonical form theorem}: For any endomorphism $u$ of a finite-dimensional vector
space over $\F$, there exists a sequence $(r_1,\dots,r_k)$ of nonconstant polynomials over $\F$, each of which is a power of some monic
irreducible polynomial of $\F[t]$, such that $u$ is represented in some basis by $C(r_1) \oplus \cdots \oplus C(r_k)$.
This sequence is uniquely determined by $u$ up to a permutation of its terms. The polynomials $r_1,\dots,r_k$ are called
the \textbf{elementary invariants} of $u$.

We finish with a definition and some additional notation.

\begin{Def}
Let $(u_n)_{n \geq 1}$ and $(v_n)_{n \geq 1}$ be non-increasing sequences of non-negative integers.
Let $p>0$ be a positive integer.
We say that $(u_n)_{n \geq 1}$ and $(v_n)_{n \geq 1}$ are \textbf{$p$-intertwined} when
$$\forall n \geq 1, \; u_{n+p} \leq v_n \quad \text{and} \quad v_{n+p} \leq u_{n.}$$
\end{Def}

\begin{Not}
Given an endomorphism $u$ of a finite-dimensional vector space $V$ over $\F$, a scalar $\lambda \in \F$
and a positive integer $k$, we set
$$n_k(u,\lambda):=\dim \Ker (u-\lambda\, \id_V)^k-\dim \Ker (u-\lambda\, \id_V)^{k-1}$$
i.e.\ $n_k(u,\lambda)$ is the number of cells of size at least $k$
associated to the eigenvalue $\lambda$ in the Jordan reduction of $u$.
\end{Not}

\section{The key $4$-dimensional algebra}\label{WPQalgebra}

\subsection{Definition and basic facts}\label{keyalgebrasection}

Let $R$ be a commutative unital $\F$-algebra. Let $p(t)=t^2-\lambda t+\alpha$, $q(t)=t^2-\mu t+\beta$
be monic polynomials with degree $2$ over $\F$, and $x$ be an element of $R$.
We set $\delta:=\lambda-\mu$.

We consider the following three matrices of $\Mat_4(R)$ :
$$A:=\begin{bmatrix}
0 & -\alpha & 0 & 0 \\
1 & \lambda & 0 & 0 \\
0 & 0 & 0 & -\alpha \\
0 & 0 & 1 & \lambda
\end{bmatrix}, \;
B:=\begin{bmatrix}
0 & -x & -\beta & -\lambda \beta \\
0 & \mu & 0 & \beta \\
1 & \lambda & \mu & \lambda\mu-x \\
0 & -1 & 0 & 0
\end{bmatrix}$$
and
$$C:=AB=\begin{bmatrix}
0 & -\alpha\mu & 0 & -\alpha \beta \\
0 & \lambda\mu-x & -\beta & 0 \\
0 & \alpha & 0 & 0 \\
1 & 0 & \mu & \lambda\mu-x
\end{bmatrix}.$$
One checks that
\begin{equation}\label{basicrel}
AB+BA=\mu A+\lambda B-x I_4, \quad p(A)=0 \quad \text{and} \quad q(B)=0.
\end{equation}
From there, one deduces that $\Vect_R(I_4,A,B,C)$ is stable under multiplication.
Note from the first columns that $\Vect_R(I_4,A,B,C)$ is the free $R$-submodule of $\Mat_4(R)$
with basis $(I_4,A,B,C)$.

We define $\calW(p,q,x)_R$ as the set $\Vect_R(I_4,A,B,C)$ equipped with its structure of $R$-algebra
inherited from that of $\Mat_4(R)$. We shall simply write $\calW(p,q,x)$ instead of
$\calW(p,q,x)_R$ when the ring $R$ under consideration is obvious from the context.

Note that relations \eqref{basicrel} lead to
\begin{equation}\label{basicrel1}
(A-B)^2-\delta (A-B)=(x-\alpha-\beta)\,I_4
\end{equation}
and, if $q(0) \neq 0$, to 
\begin{equation}\label{basicrel2}
(AB^{-1})^2=-\delta I_4+x \,(AB^{-1}).
\end{equation}

\begin{Rem}
It can be shown that $\calW(p,q,x)_R$ is the quotient algebra of the unital free noncommutative $R$-algebra
in two generators $a$ and $b$ by the two-sided ideal generated by $p(a),q(b)$ and
$a(\mu 1_R-b)+b(\lambda 1_R-a)-x\,1_R$,
but this will be of no use to us in the remainder of the article.
\end{Rem}

Next, we define
$$\Tr : \calW(p,q,x) \rightarrow R$$
as the unique $R$-linear mapping such that
$\Tr(I_4)=2$, $\Tr(A)=\lambda$, $\Tr(B)=\mu$ and $\Tr(C)=\lambda\mu-x$.
For $h \in \calW(p,q,x)$, we set
$$h^\star:=\Tr(h)\,I_4-h \in \calW(p,q,x),$$
which we call the adjoint of $h$, so that $h \mapsto h^\star$ is an involution of the $R$-module $\calW(p,q,x)$.
Moreover, one checks on the basis $(I_4,A,B,C)$ that
$$\forall (h_1,h_2) \in \calW(p,q,x)^2, \; (h_1h_2)^\star=h_2^\star\, h_1^\star.$$

Finally, we define a quadratic form
$$aI_4+bA+cB+dC \mapsto a\bigl(a+\lambda b+\mu c+(\lambda\mu-x)d\bigr)+b\bigl(\alpha b+xc+\alpha\mu d\bigr)+\beta c^2+
\bigl(\lambda\beta c+\alpha \beta d\bigr)d$$
on $\calW(p,q,x)$, which we call the norm of $\calW(p,q,x)$ and denote by $N_{\calW(p,q,x)}$, or more simply by $N$
whenever $p,q,R,x$ are obvious from the context.
Again, one checks that
\begin{equation}\label{adjunctionandNorm}
\forall h \in \calW(p,q,x), \; hh^\star=h^\star h=N(h)\,I_4.
\end{equation}
We denote by $b_N$ the polar form of $N$ as defined by
$$b_N : (h_1,h_2) \in \calW(p,q,x)^2 \mapsto N(h_1+h_2)-N(h_1)-N(h_2),$$
so that
$$\forall (h_1,h_2) \in \calW(p,q,x)^2, \; h_1h_2^\star+h_2h_1^\star=b_N(h_1,h_2)\,I_4.$$

\begin{Rem}\label{isotropicnormremark}
If one of the polynomials $p$ and $q$ splits over $\F$,
then $N$ is isotropic, i.e.\ it vanishes at some non-zero element of $\calW(p,q,x)$.
Indeed, if $p$ has a root $z$ in $\F$, then one checks
that $N(A-z I_4)=0$, since, denoting by $z'$ the second root of $p$, we see that
$$(A-z I_4)(A-zI_4)^\star=(A-z I_4)(A^\star-z I_4)=(A-z I_4)(z' I_4-A)=-p(A)=0.$$
Likewise if $q$ has a root $y$ in $\F$, then $N(B-y I_4)=0$.
\end{Rem}

\subsection{A deeper study of the algebra $\calW(p,q,x)$: when $R$ is a field}\label{Walgebrafieldsection}

Here, we assume that $R$ is a field extension of $\F$, and we denote it by $\L$.
We fix monic polynomials $p=t^2-\lambda t+\alpha$ and $q=t^2-\mu t+\beta$ with degree $2$ in $\F[t]$, and we fix an element $x$ of $\L$.
The norm of $\calW(p,q,x)_\L$ is simply denoted by $N$. We start by analyzing when the quadratic form $N$ is degenerate.

\begin{prop}\label{nondegcharac}
Consider an algebraic closure $\overline{\L}$ of $\L$.
The quadratic form $N$ is degenerate if and only if
there exist elements $x_1,x_2,y_1,y_2$ of $\overline{\L}$ such that $p(t)=(t-x_1)(t-x_2)$, $q(t)=(t-y_1)(t-y_2)$,
and $x=x_1y_1+x_2y_2$.
\end{prop}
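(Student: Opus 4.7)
My plan is to compute the Gram matrix $G$ of the polar form $b_N$ in the basis $(I_4, A, B, C)$ of $\calW(p,q,x)_\L$ and show that its determinant factors as $[(x - x_1 y_1 - x_2 y_2)(x - x_1 y_2 - x_2 y_1)]^2$. Since $N$ is degenerate if and only if $\det G = 0$, and this condition is insensitive to scalar extension, I would first base-change to $\overline{\L}$, so that $p(t) = (t - x_1)(t - x_2)$ and $q(t) = (t - y_1)(t - y_2)$. Reading off entries from $h_1 h_2^\star + h_2 h_1^\star = b_N(h_1, h_2) \, I_4$ together with $N(C) = N(A) N(B) = \alpha\beta$ (multiplicativity of $N$), one finds
\[ G = \begin{pmatrix} 2 & \lambda & \mu & \lambda\mu - x \\ \lambda & 2\alpha & x & \alpha\mu \\ \mu & x & 2\beta & \lambda\beta \\ \lambda\mu - x & \alpha\mu & \lambda\beta & 2\alpha\beta \end{pmatrix}. \]

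In characteristic different from $2$, I would exploit the cleaner basis $(I_4, A', B', A'B')$ with $A' := A - (\lambda/2)\,I_4$ and $B' := B - (\mu/2)\,I_4$. These satisfy $(A')^2 = a'\, I_4$, $(B')^2 = b'\, I_4$, and $A'B' + B'A' = x'\, I_4$, where $a' := \lambda^2/4 - \alpha$, $b' := \mu^2/4 - \beta$, $x' := \lambda\mu/2 - x$. Since $\Tr(A') = \Tr(B') = 0$ and $\Tr(A'B') = x'$, the new Gram matrix has nonzero entries only on the pairs $\{I_4, A'B'\}$ and $\{A', B'\}$; after reordering, it splits as a direct sum of the $2 \times 2$ blocks $\begin{pmatrix} 2 & x' \\ x' & 2a'b' \end{pmatrix}$ and $\begin{pmatrix} -2a' & -x' \\ -x' & -2b' \end{pmatrix}$, each of determinant $4 a'b' - (x')^2$. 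An elementary expansion in the elementary symmetric functions of the $x_i, y_j$ gives $(x')^2 - 4 a'b' = (x - x_1 y_1 - x_2 y_2)(x - x_1 y_2 - x_2 y_1)$, so the desired factorization of $\det G$ follows.

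In characteristic $2$ the substitution above is unavailable, but then $G_{ii} = 2\, N(e_i) = 0$, so $G$ is alternating and $\det G = \mathrm{Pf}(G)^2$. A direct computation gives
\[ \mathrm{Pf}(G) = G_{12} G_{34} + G_{13} G_{24} + G_{14} G_{23} = x^2 + \lambda\mu\, x + \lambda^2 \beta + \alpha \mu^2, \]
and, using $(x_1 + x_2)(y_1 + y_2) = \lambda\mu$ together with $(x_1 y_1 + x_2 y_2)(x_1 y_2 + x_2 y_1) = \lambda^2 \beta + \alpha \mu^2$ (the latter because $x_1^2 + x_2^2 = \lambda^2$ in characteristic $2$), this factors as $(x + x_1 y_1 + x_2 y_2)(x + x_1 y_2 + x_2 y_1)$, giving the same formula for $\det G$ as above. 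In all cases $\det G = 0$ is equivalent to $x \in \{x_1 y_1 + x_2 y_2, \, x_1 y_2 + x_2 y_1\}$, which after possibly swapping $y_1$ and $y_2$ is exactly the stated condition. The main obstacle is treating characteristic $2$ on the same footing as the other cases: there the $A', B'$ diagonalization breaks down, but this is luckily compensated by the fact that $G$ becomes alternating, so the Pfaffian provides a substitute.
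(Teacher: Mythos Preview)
Your proof is correct and follows the same overall strategy as the paper: compute the Gram matrix of $b_N$ in the basis $(I_4,A,B,C)$ and show that its determinant equals $\bigl[(x-x_1y_1-x_2y_2)(x-x_1y_2-x_2y_1)\bigr]^2$. The paper simply expands the $4\times 4$ determinant by brute force (what it calls a ``tedious but straightforward computation''), recognizes it as the square $(x^2-\lambda\mu x+\gamma)^2$ with $\gamma=\lambda^2\beta+\mu^2\alpha-4\alpha\beta$, and factors that quadratic in $x$. Your execution is different and arguably cleaner: in characteristic $\neq 2$ you pass to the traceless basis $(I_4,A',B',A'B')$, where the Gram matrix becomes block-diagonal with two $2\times 2$ blocks of equal determinant $4a'b'-(x')^2$, so the factorization is immediate; in characteristic $2$ you observe that $G$ is alternating and read off the Pfaffian directly. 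The trade-off is that you need a case split on the characteristic, whereas the paper's computation is uniform; on the other hand, you avoid an opaque $4\times 4$ expansion and your argument makes the squared structure of $\det G$ transparent from the start.
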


\begin{proof}
Let us split $p(t)=(t-x_1)(t-x_2)$ and $q(t)=(t-y_1)(t-y_2)$.
The matrix of the polar form $b_N$ in the basis $(I_4,A,B,C)$ reads
$$M=\begin{bmatrix}
2 & \lambda & \mu & \lambda\mu-x \\
\lambda & 2\alpha & x & \alpha\mu \\
\mu & x & 2\beta & \lambda\beta \\
\lambda\mu-x & \alpha\mu & \lambda\beta & 2 \alpha \beta
\end{bmatrix}.$$
Setting $\gamma:=\lambda^2\beta+\mu^2 \alpha-4\alpha \beta$, a tedious but straightforward computation shows that
\begin{align*}
\det M & = x^4 -2 \lambda \mu  x^3+(2\gamma+\lambda^2\mu^2)x^2- 2\lambda \mu \gamma x + \gamma^2 \\
& = (x^2-\lambda\mu x+\gamma)^2 \\
& = \bigl(x-(x_1y_1+x_2y_2)\bigr)^2\bigl(x-(x_1y_2+x_2y_1)\bigr)^2.
\end{align*}
The claimed result follows.
\end{proof}

Next, in the situation where $N$ is non-degenerate, we further analyze the structure of $\calW(p,q,x)_\L$.

\begin{prop}\label{structureofWprop}
Assume that the quadratic form $N$ is non-degenerate.
Then, $\calW(p,q,x)_\L$ is a quaternion algebra over $\L$ and its norm of quaternion algebra is $N$.
\end{prop}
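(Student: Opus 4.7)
The plan is to recognize $\calW(p,q,x)_\L$ as a $4$-dimensional unital composition algebra over $\L$, and then invoke the classical structure theory of composition algebras (Hurwitz, extended to arbitrary characteristic by Jacobson) to conclude that it is a quaternion algebra with reduced norm $N$. All the scaffolding is already in place from Section~\ref{keyalgebrasection}: $\calW(p,q,x)_\L$ is a unital associative $\L$-algebra of dimension $4$, the map $\star$ is an $\L$-linear anti-involution on it, and $hh^\star = h^\star h = N(h)\,I_4$ for every $h$. By hypothesis $N$ is non-degenerate, and one reads off $N(I_4)=1$ directly from the defining formula.

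The only identity that is not literally present in Section~\ref{keyalgebrasection} is multiplicativity of $N$, and this drops out at once from the anti-multiplicativity of $\star$:
\[
N(h_1 h_2)\,I_4 \;=\; (h_1 h_2)(h_1 h_2)^\star \;=\; h_1 h_2 h_2^\star h_1^\star \;=\; N(h_2)\,h_1 h_1^\star \;=\; N(h_1)\,N(h_2)\,I_4.
\]
Thus $(\calW(p,q,x)_\L, N)$ is a unital composition algebra of dimension $4$. By the Hurwitz--Jacobson classification, it must be a quaternion algebra over $\L$, and $N$ is its reduced norm. The fact that $\star$ is then the canonical quaternion involution is automatic, since it is the unique anti-involution satisfying $h+h^\star \in \L\,I_4$ for all $h$.

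The main obstacle is not computational but one of framing: one has to be careful to invoke a form of the classification of composition algebras that is valid in \emph{arbitrary} characteristic, since in characteristic $2$ the naive presentation of a quaternion algebra via two anti-commuting square roots is no longer available. If one prefers to avoid citing Hurwitz--Jacobson directly, an equivalent route is to verify central simplicity of $\calW(p,q,x)_\L$ by hand: multiplicativity of $N$ implies that every $h$ with $N(h)\neq 0$ is a unit, with inverse $N(h)^{-1}\,h^\star$, which forces any proper two-sided ideal to consist of norm-zero elements; a short computation of the centralizer of $A$ and $B$ in the basis $(I_4,A,B,C)$ identifies the center with $\L\,I_4$, and Wedderburn's theorem then delivers the quaternion structure. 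Either route ends with $N$ identified with the reduced norm via the uniqueness of the canonical anti-involution.
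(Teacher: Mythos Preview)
Your proof is correct and takes a genuinely different route from the paper's. You package the data $(\calW(p,q,x)_\L,\,N,\,\star)$ as a $4$-dimensional unital composition algebra and invoke the Hurwitz--Jacobson classification to conclude directly. The paper instead works internally: it first checks that $\Tr\neq 0$ (otherwise $\car(\F)=2$ and $\lambda=\mu=x=0$, contradicting non-degeneracy), then picks a $2$-dimensional subspace $P$ of the trace-zero hyperplane $H=\Ker\Tr$ on which $N$ is regular, observes that $h^2=-N(h)\,I_4$ for $h\in P$, and thus obtains an algebra homomorphism from the Clifford algebra $C(-N_{|P})$ into $\calW(p,q,x)_\L$; since this Clifford algebra is simple and of dimension $4$, the map is an isomorphism, and the identification of $\star$ with the canonical conjugation is then done by hand. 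Your approach is shorter and more conceptual but leans on a substantial classification theorem as a black box; the paper's argument is more self-contained, explicitly exhibits the quaternion structure via a Clifford model, and cites only the simplicity of $C(-N_{|P})$ (already available in the author's own reference \cite{invitquad}). Your route also bypasses the preliminary verification that $\Tr\neq 0$, which the paper needs but you do not.
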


\begin{proof}
First of all, we note that the linear form $\Tr$ on $\calW(p,q,x)$ is nonzero. Indeed, if $\Tr=0$, we would find that
$\car(\F)=2$ and $\lambda=\mu=x=0$, but then
$p(t)=(t-x_1)^2$ and $q(t)=(t-y_1)^2$ for some scalars $x_1$ and $y_1$ in $\overline{\L}$, leading to $x=0=x_1y_1+x_1y_1$
and contradicting the non-degeneracy of $N$ (see Proposition \ref{nondegcharac}).

Next, we consider the linear hyperplane $H:=\Ker \Tr$ of the $\L$-vector space $\calW(p,q,x)$.
The radical of the restriction of $N$ to $H$ has dimension at most $1$,
whence we can find a $2$-dimensional subspace $P$ of $H$ on which $N$ is d-regular.
It follows that $\forall h \in P, \; h^2=-N(h)\,I_4$.
Hence, the identity of $P$ yields a morphism $\varphi : C(-N_{|P}) \rightarrow \calW(p,q,x)$ of $\L$-algebras
whose domain is the Clifford algebra $C(-N_{|P})$ over $\L$, and whose range includes $P$.
Yet, since $P$ has dimension $2$ and $N_{|P}$ is non-degenerate, it is known (for
fields with characteristic not 2, see \cite{invitquad} p.528 Theorem 1.1.5, otherwise
see \cite{invitquad} p.744 Theorem 2.2.3) that the $\L$-algebra
$C(-N_{|P})$ is simple. It follows that $\varphi$ is injective, and since $\dim_\L C(-N_{|P})=4=\dim_\L \calW(p,q,x)$,
we deduce that $\varphi$ is an isomorphism. Hence, $\calW(p,q,x)$ is a quaternion algebra.

Yet, in a quaternion algebra $C$ over $\L$, the set of all $z \in C$ such that $z^2 \in \L 1_C$
splits uniquely as the union $(\L 1_C) \cup H'$ for some linear hyperplane $H'$ of $C$ whose elements are called the pure quaternions,
and there is a unique antiautomorphism $h \mapsto \overline{h}$ of the $\L$-algebra $C$, called the conjugation,
whose restriction to $H'$ is $h \mapsto -h$. Yet, in $\calW(p,q,x)$, for every element $h$, we have both
$hh^\star=\Tr(h)h-h^2$ and $hh^\star \in \L 1_{\calW(p,q,x)}$, whence
$h^2 \in \L 1_{\calW(p,q,x)}$ if and only if $h \in \L 1_{\calW(p,q,x)}$ or $\Tr(h)=0$.
Hence, in the quaternion algebra $\calW(p,q,x)$, the pure quaternions are the elements of $H$.
Since $h^\star=-h$ for all such quaternions, and $h \mapsto h^\star$ is an antiautomorphism of $\calW(p,q,x)$,
we conclude that $h \mapsto h^\star$ is the conjugation of the quaternion algebra $\calW(p,q,x)$. Hence, Formula \eqref{adjunctionandNorm}
entails that the norm of the quaternion algebra $\calW(p,q,x)$ is $N$.
\end{proof}

From the classification of quaternion algebras  (for
fields with characteristic not 2, see \cite{invitquad} p.528 Theorem 1.1.5, otherwise
see \cite{invitquad} p.744 Theorem 2.2.3), we can conclude on the structure of $\calW(p,q,x)$
when $N$ is non-degenerate.

\begin{cor}\label{structureofWcor}
Assume that the quadratic form $N_{\calW(p,q,x)}$ is non-degenerate. \\
If it is non-isotropic, then $\calW(p,q,x)_\L$ is a skew field. \\
Otherwise, the $\L$-algebra $\calW(p,q,x)_\L$ is isomorphic to $\Mat_2(\L)$.
\end{cor}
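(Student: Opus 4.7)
The plan is to reduce everything to the classification theorem for quaternion algebras, invoked in the same way as in the proof of Proposition \ref{structureofWprop}. Proposition \ref{structureofWprop} has already identified $\calW(p,q,x)_\L$ as a quaternion algebra with norm form $N$ and conjugation $h \mapsto h^\star$, so the content of the corollary is really the standard fact that a quaternion algebra is either a skew field or isomorphic to $\Mat_2(\L)$, with the two cases distinguished by whether the norm is anisotropic or isotropic.

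First I would dispose of the skew field case directly, without having to invoke the classification. By formula \eqref{adjunctionandNorm}, every $h \in \calW(p,q,x)_\L$ satisfies $hh^\star=h^\star h=N(h)\,I_4$. If $N$ is anisotropic, then for any nonzero $h$ the scalar $N(h)$ is invertible in $\L$, so $N(h)^{-1}h^\star$ is a two-sided inverse of $h$. Hence every nonzero element is invertible and $\calW(p,q,x)_\L$ is a skew field.

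For the split case, I would simply cite the classification. Since $\calW(p,q,x)_\L$ is a quaternion algebra over $\L$ with norm $N$, and the norm form is assumed non-degenerate and isotropic, the dichotomy theorem for quaternion algebras (see \cite{invitquad}, Theorem 1.1.5 p.528 in characteristic not $2$, and Theorem 2.2.3 p.744 otherwise) asserts that the algebra is necessarily split, i.e.\ isomorphic to $\Mat_2(\L)$.

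There is no real obstacle: the only mildly delicate point is to treat characteristic $2$ correctly, but this has already been absorbed in the reference to \cite{invitquad}, which covers both cases uniformly. The verification that the conjugation $h \mapsto h^\star$ is the correct involution and that $N$ is the correct norm form of the quaternion algebra has already been carried out within Proposition \ref{structureofWprop}, so nothing extra is needed here.
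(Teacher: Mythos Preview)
Your proposal is correct and follows essentially the same approach as the paper, which simply deduces the corollary from Proposition \ref{structureofWprop} together with the classification of quaternion algebras cited in \cite{invitquad}. Your direct argument for the anisotropic case via \eqref{adjunctionandNorm} is a harmless elaboration of what the classification already gives.
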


\subsection{A deeper study of the algebra $\calW(p,q,x)$: when $R$ is a local quotient of $\F[t]$}

The following result generalizes the last statement of Corollary \ref{structureofWcor}.

\begin{prop}\label{localstructureprop}
Let $r$ be an irreducible polynomial of $\F[t]$, and $n \in \N^*$ be a non-zero integer.
Set $R:=\F[t]/(r^n)$ and let $x$ be the class of some polynomial of $\F[t]$ in $R$,
and $\overline{x}$ be the class of the same polynomial in the residue class field $\L:=\F[t]/(r)$.
Assume finally that the norm of $\calW(p,q,\overline{x})_\L$ is non-degenerate and isotropic.
Then, the $R$-algebra $\calW(p,q,x)_R$ is isomorphic to $\Mat_2(R)$.
\end{prop}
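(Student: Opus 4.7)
The ring $R=\F[t]/(r^n)$ is local Artinian with residue field $\L$ and nilpotent maximal ideal $\mathfrak{m}=rR$ satisfying $\mathfrak{m}^n=0$. The $R$-algebra $\calW(p,q,x)_R$ is free of rank $4$, and both its multiplication table and its norm form depend polynomially on $\lambda,\alpha,\mu,\beta,x$; hence the canonical reduction map induces an isomorphism of $\L$-algebras $\calW(p,q,x)_R\otimes_R\L \xrightarrow{\sim}\calW(p,q,\overline{x})_\L$ carrying $N$ to $N$. By the non-degeneracy and isotropy hypothesis combined with Corollary~\ref{structureofWcor}, the right-hand side is isomorphic to $\Mat_2(\L)$. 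My plan is to lift this splitting from $\L$ back to $R$ in two steps: idempotent lifting, followed by the construction of a full system of matrix units in $\calW(p,q,x)_R$.

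First, I fix a rank-one idempotent $\overline{e}$ of $\calW(p,q,\overline{x})_\L\cong\Mat_2(\L)$. Since $\mathfrak{m}\,\calW(p,q,x)_R$ is a nilpotent two-sided ideal of $\calW(p,q,x)_R$, the standard idempotent-lifting lemma yields an idempotent $e\in \calW(p,q,x)_R$ reducing to $\overline{e}$; set $f:=I_4-e$. In the Peirce decomposition
$$\calW(p,q,x)_R=e\calW e\oplus e\calW f\oplus f\calW e\oplus f\calW f,$$
each summand is a direct summand of the free $R$-module $\calW(p,q,x)_R$, hence is projective, hence is free since $R$ is local. Comparing the reductions modulo $\mathfrak{m}$ with the Peirce decomposition of $\Mat_2(\L)$ into four one-dimensional pieces shows that each summand is free of rank exactly $1$ over $R$; in particular $e\calW e=Re$ and $f\calW f=Rf$.

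To finish, I would pick generators $u\in e\calW f$ and $v\in f\calW e$ and write $uv=s\,e$ and $vu=s'\,f$ with $s,s'\in R$. Since $\overline{u}$ and $\overline{v}$ generate the rank-one off-diagonal Peirce corners of $\Mat_2(\L)$, the products $\overline{u}\,\overline{v}$ and $\overline{v}\,\overline{u}$ are nonzero, so $s$ and $s'$ reduce to units of $\L$ and are therefore units of $R$. The identity $(uv)u=u(vu)$ read in the free rank-one $R$-module $e\calW f$ with generator $u$ forces $s=s'$; after replacing $v$ by $s^{-1}v$, the quadruple $(e,f,u,v)$ satisfies the relations of the standard matrix units of $\Mat_2$, so sending the standard matrix units of $\Mat_2(R)$ to $(e,f,u,v)$ defines an $R$-algebra morphism $\Mat_2(R)\to\calW(p,q,x)_R$ which reduces modulo $\mathfrak{m}$ to an isomorphism; both sides being free of rank $4$ over $R$, the map is itself an isomorphism. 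The only non-trivial ingredient is the idempotent-lifting lemma across a nilpotent two-sided ideal, a textbook result; the remainder of the argument is bookkeeping with the Peirce decomposition and Nakayama's lemma.
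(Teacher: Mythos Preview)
Your argument is correct and takes a genuinely different route from the paper's. The paper performs an explicit Hensel-type lift: it identifies, inside the split quaternion algebra $\calW(p,q,\overline{x})_\L$, a pair $(X,Y)$ of trace-zero norm-zero elements with $b_N(X,Y)=-1$ (equivalently $X^2=Y^2=0$ and $XY+YX=I_4$), and then lifts the five scalar conditions $b_N(I_4,X)=b_N(I_4,Y)=N(X)=N(Y)=0$, $b_N(X,Y)=-1$ step by step modulo powers of $\epsilon=\overline{r}$, solving a linear system at each step thanks to the non-degeneracy of $b_{\overline N}$. You instead invoke the standard lifting of idempotents across the nilpotent ideal $\mathfrak m\,\calW(p,q,x)_R$, followed by a Peirce decomposition and Nakayama to force each corner to be free of rank one, and then normalise the off-diagonal generators into a full set of matrix units. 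Your approach is shorter and more conceptual, leaning on well-known ring-theoretic lemmas; the paper's approach is more self-contained and stays within the quadratic-form formalism already set up for $\calW(p,q,x)$, never needing to name idempotent lifting or Peirce decompositions explicitly. Both yield the same isomorphism, and neither uses anything beyond what is implicit in the other.
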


\begin{proof}
Denote by $\epsilon$ the class of $r$ in $\F[t]/(r^n)$.
With the construction from Section \ref{keyalgebrasection}, it is obvious that $\calW(p,q,\overline{x})_{\L}$
is naturally isomorphic to the quotient of $\calW(p,q,x)_R$ by the two-sided ideal $\epsilon \calW(p,q,x)_R$,
and we shall make this identification throughout the proof.
We will denote by $\overline{N}$ the norm of $\calW(p,q,\overline{x})_{\L}$, while $N$ still denotes the one of $\calW(p,q,x)_R$.

By Corollary \ref{structureofWcor}, we know that there exists an isomorphism $\varphi : \calW(p,q,\overline{x})_\L \overset{\simeq}{\rightarrow} \Mat_2(\L)$
of $\L$-algebras.
Moreover, such an isomorphism must be compatible with the enriched structure of quaternion algebra
(with its conjugation and norm). Yet, in $\Mat_2(\L)$ the conjugation is the classical adjunction $M \mapsto M^{\ad}$
(where $M^{\ad}$ is the transpose of the comatrix of $M$), and the norm is the determinant.

Given an arbitrary commutative $\F$-algebra $\calM$ and an element $u$ of it, we
say that a pair $(X,Y)$ of elements of $\calW(p,q,u)_\calM$ is \textbf{adapted} whenever it satisfies the following two conditions:
\begin{enumerate}[(i)]
\item $b_N(I_4,X)=b_N(I_4,Y)=0$, $N(X)=N(Y)=0$ and $b_N(X,Y)=-1$.
\item $(I_4,X,Y,XY)$ is a basis of the $\calM$-module $\calW(p,q,u)_\calM$.
\end{enumerate}

In the quaternion algebra $\Mat_2(\L)$, we see that $I_2 E_{1,2}^\ad+E_{1,2} I_2^{\ad}=-E_{1,2}+E_{1,2}=0$
and likewise with $E_{2,1}$ instead of $E_{1,2}$. Moreover $\det(E_{1,2})=0=\det(E_{2,1})$, and finally
$E_{1,2}E_{2,1}^\ad+E_{2,1} E_{1,2}^\ad=-E_{1,2}E_{2,1}-E_{2,1}E_{1,2}=-I_2$.
Finally, $(I_2,E_{1,2},E_{2,1},E_{1,2}E_{2,1})$ is a basis of the $\L$-vector space $\Mat_2(\L)$.
Hence, the pair $\bigl(\varphi^{-1}(E_{1,2}),\varphi^{-1}(E_{2,1})\bigr)$ is adapted in $\calW(p,q,\overline{x})_\L$.

Assuming for a moment that we have an adapted pair $(X,Y)$ in $\calW(p,q,x)_R$,
we claim that $\calW(p,q,x)_R$ is isomorphic to $\Mat_2(R)$.
Indeed, first of all we note that $b_N(I_4,X)=b_N(I_4,Y)=0$ means that
$X^\star=-X$ and $Y^\star=-Y$. Then, it follows from $N(X)=N(Y)=0$ that $X^2=Y^2=0$.
Finally, $b_N(X,Y)=-1$ reads $XY^\star+YX^\star=-I_4$, that is $XY+YX=I_4$.
Thus, condition (ii) yields an isomorphism $\psi : \calW(p,q,x)_R \rightarrow \Mat_2(R)$ of $R$-modules that maps
$I_4,X,Y,XY$ respectively to $I_2,E_{1,2},E_{2,1},E_{1,1}$, and the identities $X^2=Y^2=0$ and $YX=I_4-XY$
show that $\psi$ is actually a ring homomorphism.

It remains to prove that there exists an adapted pair in $\calW(p,q,x)_R$.
To do so, we shall use Hensel's method. Given $M \in \calW(p,q,x)_R$, we denote by $\overline{M}$
its class modulo $\epsilon$, and we shall see $\overline{M}$ as an element of the ring $\calW(p,q,\overline{x})_\L$.
Let $k \in \lcro 1,n-1\rcro$, and $(X_k,Y_k) \in \calW(p,q,x)_R^2$ be such that:
\begin{enumerate}[(a)]
\item $b_N(I_4,X_k)=0$ mod $\epsilon^k$, $b_N(I_4,Y_k)=0$ mod $\epsilon^k$, $N(X_k)=0$ mod $\epsilon^k$, $N(Y_k)=0$ mod $\epsilon^k$
and $b_N(X_k,Y_k)=-1$ mod $\epsilon^k$.
\item The family $(I_4,\overline{X_k},\overline{Y_k},\overline{X_kY_k})$ is a basis of the $\L$-vector space $\calW(p,q,\overline{x})_\L$.
\end{enumerate}
Then, we construct a pair $(X_{k+1},Y_{k+1}) \in \calW(p,q,x)_R$ such that $X_{k+1}=X_k$ mod $\epsilon^k$,
$Y_{k+1}=Y_k$ mod $\epsilon^k$, and the pair $(X_{k+1},Y_{k+1}) \in \calW(p,q,x)_R$ satisfies the above conditions at the step $k+1$.
To do so, let $Z \in \calW(p,q,x)_R$ be arbitrary, and set $X_{k+1}:=X_k+\epsilon^k Z$.
We write $b_N(I_4,X_k)=\epsilon^k h_1$ and $N(X_k)=\epsilon^k h_2$ for some $h_1,h_2$ in $\calW(p,q,x)_R$.
Then, $b_N(I_4,X_{k+1})=\epsilon^k (h_1+b_N(I_4,Z))$ and $N(X_{k+1})=\epsilon^k (h_2+b_N(X_k,Z))$ mod $\epsilon^{k+1}$.
Since $b_{\overline{N}}$ is non-degenerate and $\overline{I_4},\overline{X_k}$ are linearly independent over $\L$, the linear forms
$b_{\overline{N}}(\overline{I_4},-)$ and $b_{\overline{N}}(\overline{X_k},-)$ are independent, which shows that the linear system of equations
$$\begin{cases}
b_{\overline{N}}(\overline{I_4},U)=-\overline{h_1} \\
b_{\overline{N}}(\overline{X_k},U)=-\overline{h_2}
\end{cases}$$
has a solution $U$ in $\calW(p,q,\overline{x})_\L$. Lifting $U$, we recover that $Z \in \calW(p,q,x)_R$
can be chosen so as to have $b_N(I_4,Z)=-h_1$ mod $\epsilon$ and $b_N(X_k,Z)=-h_2$ mod $\epsilon$.
We choose such a $Z$ from now on, and hence we have $b_N(I_4,X_{k+1})=0$ mod $\epsilon^{k+1}$ and
$N(X_{k+1})=0$ mod $\epsilon^{k+1}$. Note that $b_N(X_{k+1},Y_k)=b_N(X_k,Y_k)$ mod $\epsilon^k$
whence $b_N(X_{k+1},Y_k)=-1$ mod $\epsilon^k$.

Next, let $T \in \calW(p,q,x)_R$ and set $Y_{k+1}:=Y_k+\epsilon^k T$.
We find three elements $h_3,h_4$ and $h_5$ of $\calW(p,q,x)_R$ such that
$b_N(I_4,Y_k)=\epsilon^k h_3$, $N(Y_k)=\epsilon^k h_4$ and $b_N(X_{k+1},Y_k)=-1+\epsilon^k h_5$.
As before, the linear system of equations
$$\begin{cases}
b_{\overline{N}}(\overline{I_4},V)=-\overline{h_3} \\
b_{\overline{N}}(\overline{Y_k},V)=-\overline{h_4} \\
b_{\overline{N}}(\overline{X_{k+1}},V)=-\overline{h_5}
\end{cases}$$
has a solution $V$ in $\calW(p,q,\overline{x})_{\mathbb{L}}$, and hence $T$ can be chosen as a representative of it,
in which case we find $b_N(I_4,Y_{k+1})=0$ mod $\epsilon^{k+1}$, $N(Y_{k+1})=0$ mod $\epsilon^{k+1}$ and
$b_N(X_{k+1},Y_{k+1})=-1$ mod $\epsilon^{k+1}$.
Hence, condition (a) is satisfied at the rank $k+1$ by $(X_{k+1},Y_{k+1})$. On the other hand, since condition (b)
is satisfied at the rank $k$ by $(X_k,Y_k)$, while $X_k$ and $X_{k+1}$ have the same reduction modulo $\epsilon$, and $Y_k$ and $Y_{k+1}$
have the same reduction modulo $\epsilon$, we obtain that condition (b) is also satisfied
by $(X_{k+1},Y_{k+1})$.

As we have shown that there exists an adapted pair in $\calW(p,q,\overline{x})_\L$, this construction yields, by induction,
a pair $(X,Y) \in \calW(p,q,x)_R^2$ that satisfies condition (i)
and for which $(I_4,\overline{X},\overline{Y},\overline{X}\overline{Y})$ is a basis of the $\L$-vector space $\calW(p,q,\overline{x})_\L$.
Since $R$ is a local ring with residue class field $\L$ and $\calW(p,q,x)_R$ is a free $R$-module with dimension $4$,
this shows that $(I_4,X,Y,XY)$ is a basis of the $R$-module $\calW(p,q,x)_R$.
Therefore, $(X,Y)$ is an adapted pair in $\calW(p,q,x)_R$, and a previous remark helps us conclude that
the $R$-algebra $\calW(p,q,x)_R$ is isomorphic to $\Mat_2(R)$.
\end{proof}

\section{The difference of two quadratic matrices}\label{DiffSection}

\subsection{The basic splitting}

Let $u$ be an endomorphism of a finite-dimensional vector space $V$ over $\F$.
Let $p$ and $q$ be monic polynomials with degree $2$ over $\F$,
which we write
$$p(t)=t^2-\lambda t+\alpha \quad \text{and} \quad q(t)=t^2-\mu t+\beta,$$
and set
$$\delta:=\lambda-\mu=\tr(p)-\tr(q).$$
The \textbf{s-fundamental polynomial} of the pair $(p,q)$ is defined as the resultant
$$F_{p,q}(t):=\res\bigl(p(x),q(x-t)\bigr) \in \F[t],$$
which is a polynomial of degree $4$.
More explicitly, if we split $p(z)=(z-x_1)(z-x_2)$ and $q(z)=(z-y_1)(z-y_2)$ in $\overline{\F}[z]$,
then
$$F_{p,q}(t)=\prod_{1 \leq i,j \leq 2}\bigl(t-(x_i-y_j)\bigr)=p(t+y_1)\,p(t+y_2)=q(x_1-t)\,q(x_2-t).$$
We set
$$E_{p,q}(u):=\underset{n \in \N}{\bigcup} \Ker F_{p,q}(u)^n \quad \text{and} \quad R_{p,q}(u):=\underset{n \in \N}{\bigcap} \im F_{p,q}(u)^n.$$
Hence, $V=E_{p,q}(u) \oplus R_{p,q}(u)$, and the endomorphism $u$ stabilizes both linear subspaces
$E_{p,q}(u)$ and $R_{p,q}(u)$.
The endomorphism $u$ is called \textbf{d-exceptional} with respect to $(p,q)$ (respectively, \textbf{d-regular} with respect to $(p,q)$)
whenever $E_{p,q}(u)=V$ (respectively, $R_{p,q}(u)=V$).
In other words, $u$ is d-exceptional (respectively, d-regular) with respect to $(p,q)$
if and only all the eigenvalues of $u$ in $\overline{\F}$ belong to $\Root(p)-\Root(q)$
(respectively, no eigenvalue of $u$ in $\overline{\F}$ belongs to $\Root(p)-\Root(q)$).

The endomorphism of $E_{p,q}(u)$ (respectively, of $R_{p,q}(u)$)
induced by $u$ is always d-exceptional (respectively, always d-regular) with respect to $(p,q)$
and we call it the \textbf{d-exceptional part} (respectively, the \textbf{d-regular part}) of $u$ with respect to $(p,q)$.

Finally, we set
$$\Lambda_{p,q}:=t^2+\bigl(2\bigl(p(0)+q(0)\bigr)-(\tr p)(\tr q)\bigr)\, t+F_{p,q}(0).$$
One sees that, for
$$v:=u^2-\delta u,$$
we have
$$\bigl(u-(x_1-y_1)\,\id_V\bigr)\bigl(u-(x_2-y_2)\,\id_V\bigr)=v+(x_1-y_1)(x_2-y_2)\,\id_V$$
and likewise
$$\bigl(u-(x_1-y_2)\,\id_V\bigr)\bigl(u-(x_2-y_1)\,\id_V\bigr)=v+(x_1-y_2)(x_2-y_1)\,\id_{V.}$$
Hence, a straightforward computation yields
$$F_{p,q}(u)=\Lambda_{p,q}(v).$$

\begin{Rem}\label{dconjugateremark}
Let $\calA$ be an $\F$-algebra, and let $a,b$ be elements of $\calA$ such that $p(a)=q(b)=0$.
Denote by $a^\star$ the $p$-conjugate of $a$
and by $b^\star$ the $q$-conjugate of $b$. Then, one sees that
\begin{equation}\label{u2-deltauvsconj}
(a-b)^2-\delta(a-b)=ab^\star+ba^\star-(p(0)+q(0))\,1_\calA.
\end{equation}
\end{Rem}

Our first basic result follows:

\begin{prop}\label{dseparEetR}
The endomorphism $u$ is a $(p,q)$-difference if and only if
both its d-exceptional part and its d-regular part are $(p,q)$-differences.
\end{prop}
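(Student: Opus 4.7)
The reverse implication is essentially tautological: if the d-exceptional part of $u$ on $E_{p,q}(u)$ can be written as $a_1-b_1$ with $p(a_1)=q(b_1)=0$ and the d-regular part on $R_{p,q}(u)$ can be written as $a_2-b_2$ with $p(a_2)=q(b_2)=0$, then in the decomposition $V=E_{p,q}(u)\oplus R_{p,q}(u)$ the pair $(a_1\oplus a_2,\,b_1\oplus b_2)$ realizes $u$ as a $(p,q)$-difference. So the real content is the forward implication.

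Assume $u=a-b$ with $p(a)=q(b)=0$, and let $a^\star,b^\star$ denote the respective $p$- and $q$-conjugates. The plan is to show that the splitting $V=E_{p,q}(u)\oplus R_{p,q}(u)$ is in fact stable under \emph{both} $a$ and $b$ (not merely under $u$). For this I would invoke the identity from Remark \ref{dconjugateremark},
$$(a-b)^2-\delta(a-b)=ab^\star+ba^\star-(p(0)+q(0))\,\id_V,$$
which rewrites as $v=ab^\star+ba^\star-(p(0)+q(0))\,\id_V$, where $v:=u^2-\delta u$. By the Basic Commutation Lemma \ref{basiccommutelemma}, $ab^\star+ba^\star$ commutes with both $a$ and $b$, and so does $v$.

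Now the identity $F_{p,q}(u)=\Lambda_{p,q}(v)$ established just before the proposition shows that $F_{p,q}(u)$ is a polynomial in $v$, hence commutes with $a$ and with $b$. Consequently every power $F_{p,q}(u)^n$ commutes with $a$ and $b$, so each subspace $\Ker F_{p,q}(u)^n$ and $\im F_{p,q}(u)^n$ is stable under both $a$ and $b$. Taking the union and the intersection over $n$ respectively, $E_{p,q}(u)$ and $R_{p,q}(u)$ are stable under $a$ and $b$.

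It remains to observe that the restrictions $a_1:=a|_{E_{p,q}(u)}$ and $b_1:=b|_{E_{p,q}(u)}$ still satisfy $p(a_1)=0$ and $q(b_1)=0$, and their difference is the d-exceptional part of $u$; symmetrically for the d-regular part. The only genuine step to check carefully is the commutation argument linking $v$ to $a$ and $b$ via the Basic Commutation Lemma; everything else is a direct transport of the $u$-invariant decomposition along the polynomial identity $F_{p,q}(u)=\Lambda_{p,q}(v)$.
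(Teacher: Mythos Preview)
Your proof is correct and follows essentially the same route as the paper's. The only cosmetic difference is that the paper packages the commutation of $a$ and $b$ with $v=u^2-\delta u$ into a separate Commutation Lemma (Lemma~\ref{dcommutelemma}), whereas you derive it directly from the Basic Commutation Lemma via the identity in Remark~\ref{dconjugateremark}; the subsequent use of $F_{p,q}(u)=\Lambda_{p,q}(v)$ to transfer stability to $E_{p,q}(u)$ and $R_{p,q}(u)$ is identical.
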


The proof of this result will use the following Corollary of the Basic Commutation Lemma:

\begin{lemma}[Commutation Lemma]\label{dcommutelemma}
Let $p$ and $q$ be monic polynomials of $\F[t]$ with degree $2$, and let $a$ and $b$ be endomorphisms of a vector space $V$
such that $p(a)=q(b)=0$. Then, both $a$ and $b$ commute with $(a-b)^2-(\tr(p)-\tr(q))(a-b)$.
\end{lemma}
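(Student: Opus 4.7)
The plan is essentially to read off the statement from Remark \ref{dconjugateremark} combined with the Basic Commutation Lemma, since all the necessary algebraic content has already been established.

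More precisely, I would first recall the Basic Commutation Lemma applied to $a$ and $b$ (viewed as elements of the $\F$-algebra $\End(V)$): denoting by $a^\star = \tr(p)\,\id_V - a$ the $p$-conjugate of $a$ and by $b^\star = \tr(q)\,\id_V - b$ the $q$-conjugate of $b$, this lemma asserts that both $a$ and $b$ commute with the element
$$S := ab^\star + ba^\star \in \End(V).$$

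Next, I would invoke identity \eqref{u2-deltauvsconj} from Remark \ref{dconjugateremark}, which gives
$$(a-b)^2 - \delta (a-b) = S - (p(0)+q(0))\,\id_V,$$
where $\delta = \tr(p) - \tr(q)$. Since $a$ and $b$ trivially commute with any scalar multiple of $\id_V$, and they commute with $S$ by the Basic Commutation Lemma, they commute with the sum $S - (p(0)+q(0))\,\id_V$, i.e.\ with $(a-b)^2 - \delta(a-b)$. This is exactly the claim.

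There is essentially no obstacle here: the proof is a direct combination of two facts already available in the paper. The only point that one might want to double-check is the correctness of formula \eqref{u2-deltauvsconj} in Remark \ref{dconjugateremark}, which is a routine expansion using $a^2 = \tr(p)\,a - p(0)\,\id_V$ and $b^2 = \tr(q)\,b - q(0)\,\id_V$; once that identity is accepted, nothing further is needed.
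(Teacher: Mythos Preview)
Your proposal is correct and matches the paper's own proof exactly: the paper simply states that the result follows from the Basic Commutation Lemma together with identity~\eqref{u2-deltauvsconj}. You have merely spelled out the (trivial) details of that combination.
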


\begin{proof}
This follows from the Basic Commutation Lemma and from identity \eqref{u2-deltauvsconj}.
\end{proof}

\begin{proof}[Proof of Proposition \ref{dseparEetR}]
The ``if" part is obvious. Conversely, assume that $u$ is a $(p,q)$-difference,
and split $u=a-b$ where $a$ and $b$ are endomorphisms of $V$ such that $p(a)=0$ and $q(b)=0$.
By the Commutation Lemma, both $a$ and $b$ commute with $v:=u^2-\delta u$.
Hence, $a$ and $b$ commute with $\Lambda_{p,q}(v)=F_{p,q}(u)$, and it follows that both stabilize $E_{p,q}(u)$ and $R_{p,q}(u)$.
Denote by $a'$ and $b'$ (respectively, by $a''$ and $b''$) the endomorphisms of $E_{p,q}(u)$ (respectively, of $R_{p,q}(u)$)
induced by $a$ and $b$. Then, the d-exceptional part of $u$ is $a'-b'$, and the d-regular part of $u$ is $a''-b''$.
Obviously, $p$ annihilates $a'$ and $a''$, and $q$ annihilates $b'$ and $b''$, which yields
that both the d-exceptional and the d-regular part of $u$ are $(p,q)$-differences.
\end{proof}

From there, it is clear that classifying $(p,q)$-differences amounts to classifying the d-exceptional ones and the
d-regular ones. The easier classification is the latter: as we shall see, it involves little discussion on the specific polynomials
$p$ and $q$ under consideration (whether they are split or not over $\F$, separable or not over $\F$, etc).
In contrast, the classification of d-exceptional $(p,q)$-differences involves a tedious case-by-case study.

\subsection{Statement of the results}\label{dresultssection}

We are now ready to state our results.
We shall frame them in terms of direct-sum decomposability.

Let $u$ be an endomorphism of a nonzero finite-dimensional vector space $V$.
Assume that $V$ splits into $V_1 \oplus V_2$, and that each linear subspace $V_1$ and $V_2$ is stable under $u$ and nonzero,
and both induced endomorphisms $u_{|V_1}$ and $u_{|V_2}$ are $(p,q)$-differences.
Then, $u$ is obviously a $(p,q)$-difference.
In the event when such a decomposition exists we shall say that $u$ is
a \textbf{decomposable} $(p,q)$-difference, otherwise and if $u$ is a $(p,q)$-difference, we shall say that $u$ is
an \textbf{indecomposable} $(p,q)$-difference. Obviously, if $V$ is nonzero then every $(p,q)$-difference in $\End(V)$
is the direct sum of indecomposable ones. Hence, it suffices to describe the indecomposable $(p,q)$-differences.

Moreover, if a $(p,q)$-difference is indecomposable then by Proposition \ref{dseparEetR} it is either d-regular or d-exceptional.

In each one of the following tables, we give a set of matrices. Each matrix represents an indecomposable $(p,q)$-difference,
and every indecomposable $(p,q)$-difference in $\End(V)$ is represented by one of those matrices, in some basis.
Throughout the classification, we set
$$\delta:=\tr p-\tr q.$$

We start with d-regular $(p,q)$-differences. In that situation the classification is rather simple:

\begin{table}[H]
\begin{center}
\caption{The classification of indecomposable d-regular $(p,q)$-differences.}
\label{dfigure1}
\begin{tabular}{| c | c |}
\hline
Representing matrix & Associated data  \\
\hline
\hline
 & $n\in \N^*$, $r \in \F[t]$ irreducible and monic,  \\
$C\bigl(r^n(t^2-\delta t)\bigr)$ & $r(t^2-\delta t)$ has no root in $\Root(p)-\Root(q)$ \\
& $N_{\calW(p,q,y+p(0)+q(0))_\L}$ is isotropic over $\L:=\F[t]/(r)$ \\
& for $y:=\overline{t}$ in $\L$ \\
\hline
$C\bigl(r^n(t^2-\delta t)\bigr)$ & $n\in \N^*$, $r \in \F[t]$ irreducible and monic, \\
$\oplus$  & $r(t^2-\delta t)$ has no root in $\Root(p)-\Root(q)$ \\
$C\bigl(r^n(t^2-\delta t)\bigr)$ & $N_{\calW(p,q,y+p(0)+q(0))_\L}$ is non-isotropic over $\L:=\F[t]/(r)$ \\
& for $y:=\overline{t}$ in $\L$ \\
\hline
\end{tabular}
\end{center}
\end{table}

Remember, by Remark \ref{isotropicnormremark}, that the norm of $\calW(p,q,x)_R$
is isotropic whenever one of $p$ and $q$ splits in $\F[t]$.

Next, we tackle the indecomposable d-exceptional $(p,q)$-difference. Here, there are many cases to consider.
We start with the known ones, in which both $p$ and $q$ are split over $\F$.
The three situations are described in the following tables
(see \cite{Bothasquarezero} for Table \ref{dfigure2}, \cite{dSPidem2} for Table \ref{dfigure3}, and \cite{dSPsumoftwotriang} for Table \ref{dfigure4}).

\begin{table}[H]
\begin{center}
\caption{The classification of indecomposable d-exceptional $(p,q)$-differences: When both $p$ and $q$ are split with a double root.}
\label{dfigure2}
\begin{tabular}{| c | c |}
\hline
Representing matrix & Associated data  \\
\hline
\hline
$C\bigl((t-x)^n\bigr)$ & $n \in \N^*$, $x\in \Root(p)-\Root(q)$ \\
\hline
\end{tabular}
\end{center}
\end{table}

\begin{table}[H]
\begin{center}
\caption{The classification of indecomposable d-exceptional $(p,q)$-differences: When both $p$ and $q$ are split with simple roots.}
\label{dfigure3}
\begin{tabular}{| c | c |}
\hline
Representing matrix & Associated data  \\
\hline
\hline
 & $n \in \N^*$, \\
$C\bigl((t-x)^n\bigr) \oplus C\bigl((t-\delta+x)^n\bigr)$ & $x \in \Root(p)-\Root(q)$  \\
& such that $x \neq \delta -x$ \\
\hline
 & $n \in \N$, \\
$C\bigl((t-x)^{n+1}\bigr) \oplus C\bigl((t-\delta+x)^{n}\bigr)$ & $x \in \Root(p)-\Root(q)$  \\
& such that $x \neq \delta -x$ \\
\hline
 & $n\in \N^*$,  \\
$C\bigl((t-x)^{n}\bigr)$ & $x \in \Root(p)-\Root(q)$ \\
& such that $x =\delta-x$ \\
\hline
\end{tabular}
\end{center}
\end{table}

\begin{table}[H]
\begin{center}
\caption{The classification of indecomposable d-exceptional $(p,q)$-differences: When one of $p$ and $q$ is split with a double root
 and the other one is split with simple roots.}
 \label{dfigure4}
\begin{tabular}{| c | c |}
\hline
Representing matrix & Associated data  \\
\hline
\hline
$C\bigl((t-x)^n\bigr) \oplus C\bigl((t-\delta+x)^n\bigr)$ & $n\in \N^*$, $x \in \Root(p)-\Root(q)$  \\
\hline
$C\bigl((t-x)^{n+1}\bigr) \oplus C\bigl((t-\delta+x)^n\bigr)$ & $n\in \N$, $x \in \Root(p)-\Root(q)$  \\
\hline
$C\bigl((t-x)^{n+2}\bigr) \oplus C\bigl((t-\delta+x)^n\bigr)$ & $n\in \N$, $x \in \Root(p)-\Root(q)$  \\
 \hline
\end{tabular}
 \end{center}
\end{table}

Now, we state our new results on the d-exceptional $(p,q)$-differences.
We start with the case when $p$ is irreducible but $q$ is split.
There are two cases to consider, whether the two polynomials obtained by translating
$p$ along the roots of $q$ are equal or not.

\begin{table}[H]
\begin{center}
\caption{The classification of indecomposable d-exceptional $(p,q)$-differences: When $p$ is irreducible, $q=(t-y_1)(t-y_2)$
for some $y_1,y_2$ in $\F$, and $p(t+y_1)=p(t+y_2)$.}
\label{dfigure5}
\begin{tabular}{| c | c |}
\hline
Representing matrix & Associated data  \\
\hline
\hline
$C\bigl(p(t+y)^n\bigr)$ &  $n\in \N^*$, $y \in \Root(q)$  \\
\hline
\end{tabular}
 \end{center}
\end{table}

\begin{table}[H]
\begin{center}
\caption{The classification of indecomposable d-exceptional $(p,q)$-differences: When $p$ is irreducible, $q=(t-y_1)(t-y_2)$
for some $y_1,y_2$ in $\F$, and $p(t+y_1)\neq p(t+y_2)$.}
\label{dfigure6}
\begin{tabular}{| c | c |}
\hline
Representing matrix & Associated data  \\
\hline
\hline
$C\bigl(p(t+y_1)^n\bigr) \oplus C\bigl(p(t+y_2)^n\bigr)$ & $n\in \N^*$ \\
\hline
$C\bigl(p(t+y_1)^{n+1}\bigr) \oplus C\bigl(p(t+y_2)^{n}\bigr)$ & $n\in \N$ \\
\hline
$C\bigl(p(t+y_2)^{n+1}\bigr) \oplus C\bigl(p(t+y_1)^{n}\bigr)$ & $n\in \N$ \\
\hline
\end{tabular}
 \end{center}
\end{table}

Next, we consider the situation where $p$ and $q$ are both irreducible in $\F[t]$, with the same
splitting field.

\begin{table}[H]
\begin{center}
\caption{The classification of indecomposable d-exceptional $(p,q)$-differences: When $p$ and $q$ are irreducible with the same splitting field $\L$.}
\label{dfigure7}
\begin{tabular}{| c | c |}
\hline
Representing matrix & Associated data  \\
\hline
\hline
$C\Bigl(\bigl(t^2-\delta t+N_{\L/\F}(x-y)\bigr)^{n}\Bigr)$
&  $n\in \N^*$,  \\
$\oplus$ &  $x \in \Root(p)$, $y \in \Root(q)$ \\
$C\Bigl(\bigl(t^2-\delta t+N_{\L/\F}(x-y)\bigr)^{n}\Bigr)$ & with $x-y \not\in \F$  \\
\hline
$C\Bigl(\bigl(t^2-\delta t+N_{\L/\F}(x-y)\bigr)^{n+1}\Bigr)$
&  $n\in \N$,  \\
$\oplus$ &  $x \in \Root(p)$, $y \in \Root(q)$ \\
$C\Bigl(\bigl(t^2-\delta t+N_{\L/\F}(x-y)\bigr)^{n}\Bigr)$ & with $x-y \not\in \F$  \\
\hline
$C\bigl((t-(x-y))^n\bigr) \oplus C\bigl((t-(x-y))^n\bigr)$ & $n \in \N^*$, $x \in \Root(p)$, \\
& $y \in \Root(q)$ with $x-y \in \F$ \\
\hline
\end{tabular}
 \end{center}
\end{table}

We finish with the case when $p$ and $q$ are both irreducible, with distinct splitting fields.
There are two subcases to consider, whether $p$ and $q$ have the same discriminant or not.
Note that the case when $p$ and $q$ have the same discriminant and distinct splitting fields
can occur only if $\F$ has characteristic $2$.

\begin{table}[H]
\begin{center}
\caption{The classification of indecomposable d-exceptional $(p,q)$-differences: When $p$ and $q$ are irreducible with distinct splitting fields and distinct discriminants.}
\label{dfigure8}
\begin{tabular}{| c | c |}
\hline
Representing matrix & Associated data  \\
\hline
\hline
$C(F_{p,q}^n) \oplus C(F_{p,q}^n)$ & $n \in \N^*$ \\
\hline
$C(F_{p,q}^{n+1}) \oplus C(F_{p,q}^n)$ & $n \in \N$ \\
\hline
\end{tabular}
 \end{center}
\end{table}

\begin{table}[H]
\begin{center}
\caption{The classification of indecomposable d-exceptional $(p,q)$-differences: When $p$ and $q$ are irreducible with distinct splitting fields and the same discriminant.}
\label{dfigure9}
\begin{tabular}{| c | c |}
\hline
Representing matrix & Associated data  \\
\hline
\hline
$C\Bigl(\bigl(t^2-(\tr p)\,t +p(0)+q(0)\bigr)^n\Bigr)$ & \\
$\oplus$ & $n \in \N^*$ \\
$C\Bigl(\bigl(t^2-(\tr p)\,t +p(0)+q(0)\bigr)^n\Bigr)$ &  \\
\hline
\end{tabular}
 \end{center}
\end{table}

\subsection{An example}

Here, we consider the case when $\F$ is the field $\R$ of real numbers, and $p=q=t^2+1$.
In other words, we determine the endomorphisms of a finite-dimensional real vector space $V$ that split
into the difference of two endomorphisms $a$ and $b$ such that $a^2=b^2=-\id_V$.

Here, $\Root(p)-\Root(q)=\{2i,-2i,0\}$ and $\delta=0$.
Let us consider the indecomposable $(p,q)$-differences.
Let $r \in \R[t]$ be an irreducible polynomial such that $r(t^2-\delta t)=r(t^2)$ has no root in $\Root(p)-\Root(q)$.
We set $\L:=\R[t]/(r)$ and we note that the class $\overline{t}$ of $t$ in $\L$ is a root of $r$.
If $r$ has degree $2$, then $\L$ is isomorphic to $\C$, which is algebraically closed, and it follows that
the norm of $\calW(p,q,\overline{t}+2)_{\L}$ is isotropic (as is any quadratic form with dimension at least $2$
over an algebraically closed field).

Assume now that $r$ has degree $1$, and denote by $x$ its root. Since $r(t^2)$ has no root in $\Root(p)-\Root(q)$,
we see that $x \not\in \{-4,0\}$.
Then, the norm of $\calW(p,q,x+2)_{\R}$ reads
$$a I_4+b A+c B+dC \mapsto a^2+b^2+c^2+d^2+(x+2)bc-(x+2)ad,$$
which is equivalent to the orthogonal direct sum of two copies of the quadratic form
$$Q : (a,b) \mapsto a^2+(x+2) ab+b^2.$$
We have $Q(1,0)>0$, and the discriminant of $Q$ equals $\frac{(x+2)^2-4}{4}$. Therefore,
either $|x+2|<2$ and hence $Q$ is positive definite,
or $|x+2|>2$ and $Q$ is isotropic.
It follows that if $x \in (-4,0)$, then
the norm of $\calW(p,q,x+2)_{\R}$ is non-isotropic, otherwise it is isotropic.

Hence, the following table gives a complete list of indecomposable $(t^2+1,t^2+1)$-differences, where
the d-exceptional ones (given in the last three rows) are obtained thanks to Table \ref{dfigure7}:

\begin{table}[H]
\begin{center}
\caption{The classification of indecomposable $(t^2+1,t^2+1)$-differences over $\R$.}
\begin{tabular}{| c | c |}
\hline
Representing matrix & Associated data  \\
\hline
\hline
$C\bigl((t^2-x)^n\bigr) \oplus C\bigl((t^2-x)^n\bigr)$ & $n\in \N^*$, $x \in \left(-4,0\right)$ \\
\hline
$C\bigl((t^2-x)^n\bigr)$ & $n\in \N^*$, $x \in \left(-\infty,-4\right) \cup \left(0,+\infty\right)$ \\
\hline
$C\bigl((t^4+\alpha t^2+\beta)^n\bigr)$ & $n \in \N^*$, $(\alpha,\beta)\in \R^2$ with $\alpha^2<4\beta$ \\
\hline
$C(t^n) \oplus C(t^n)$ & $n \in \N^*$ \\
\hline
$C((t^2+4)^n) \oplus C((t^2+4)^{n})$ & $n \in \N^*$ \\
\hline
$C((t^2+4)^{n+1}) \oplus C((t^2+4)^{n})$ & $n \in \N$ \\
\hline
\end{tabular}
 \end{center}
\end{table}

\subsection{Strategy, and structure of the remainder of the section}

Here, the study has two very distinct parts:
the methods used to characterize the d-regular $(p,q)$-differences are very different from the ones
used to characterize the d-exceptional ones.
For the former, which are dealt with in Section \ref{d-regularsection}, the key is to analyze the structure of the algebra $\calW(p,q,x)_R$ when $R$ is
the local ring $\F[t]/(r^n)$ for some monic irreducible polynomial $r$ and some positive integer $n$.
We shall see in particular that if $n=1$ and the norm of $\calW(p,q,x)_R$ is non-degenerate, then
$\calW(p,q,x)_R$ is actually a quaternion algebra over the field $R$ (see Proposition \ref{structureofWprop}).
The classification of d-regular $(p,q)$-differences will be easily derived from such structural results.
In this study, no specific discussion on the pair $(p,q)$ is required.

The study of d-exceptional $(p,q)$-differences is carried out in the last three sections:
we shall first consider the case when $p$ is irreducible while $q$ is split (Section \ref{d-exceptionalsectionI}),
then the one when $p$ and $q$ are both irreducible with the same splitting field  (Section \ref{d-exceptionalsectionII}),
and we will finish with the case when $p$ and $q$ are both irreducible but with distinct splitting fields (Section \ref{d-exceptionalsectionIII}).
The case when $p$ is split and $q$ is irreducible is easily deduced from the one where $p$ is irreducible and $q$ is split
(indeed, $u$ is a $(p,q)$-difference if and only if $-u$ is a $(q,p)$-difference),
whereas the case when both $p$ and $q$ are split was completed in \cite{Bothasquarezero,dSPidem2,dSPsumoftwotriang}.
In all those sections, several subcases need to be considered.
Two basic techniques will be used:
\begin{itemize}
\item The first one is based on the Commutation Lemma: If $u=a-b$ for some endomorphisms $a$ and $b$ of $V$ such that $p(a)=q(b)=0$, then
$a$ and $b$ commute with $v:=u^2-\delta u$ and hence they can be seen as endomorphisms of the $\F[v]$-module $V$.
Yet, if $u$ is d-exceptional then some power of $\Lambda_{p,q}$ annihilates $v$; if in addition $\Lambda_{p,q}$ is separable
the endomorphism $v$ has a Jordan-Chevalley decomposition $v=s+n$ where $s$ is a semi-simple endomorphism of $V$,
$n$ is a nilpotent endomorphism of $V$, and $s$ and $n$ are polynomials in $v$.
It follows in that case that $a$ and $b$ are endomorphisms of the $\F[s]$-vector space $V$.

In connection with this idea, we shall need results on block-cyclic matrices that are featured in the appendix.
\item The second technique consists in extending the field of scalars to one in which $p$ and $q$
are split, allowing us to use the characterization of $(p,q)$-differences that is already known when both polynomials are split.
\end{itemize}

As the second technique uses the known classification of  d-exceptional $(p,q)$-differences when $p$ and $q$ are both split, it is useful
to restate it somewhat differently than in Section \ref{resultssection}: We will do this in Section \ref{d-exceptionalsection0}.

\subsection{Regular $(p,q)$-differences}\label{d-regularsection}

\subsubsection{The initial reduction}

We start with a partial result on d-regular $(p,q)$-differences.

\begin{prop}\label{basicd-regularreduction}
Let $p$ and $q$ be monic polynomials with degree $2$ over $\F$, and set $\delta:=\tr p-\tr q$.
Let $u$ be an endomorphism of a finite-dimensional vector space $V$ and assume that $u$ is a d-regular $(p,q)$-difference.
Then:
\begin{enumerate}[(a)]
\item Each invariant factor of $u$ has the form $r(t^2-\delta t)$ for some monic polynomial $r$.
\item In some basis of $V$, the endomorphism $u$ is represented by
a block-diagonal matrix in which every diagonal block has the form $C\bigl(r^n(t^2-\delta t)\bigr)$ for some irreducible monic polynomial $r$
and some positive integer $n$. We shall call such a matrix a \textbf{$(p,q)$-reduced canonical form} of $u$.
\end{enumerate}
\end{prop}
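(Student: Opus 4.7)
The plan is to write $u = a - b$ with $p(a) = q(b) = 0$ and set $v := u^2 - \delta u$. By the Commutation Lemma, both $a$ and $b$ commute with $v$, and combining this with identity \eqref{u2-deltauvsconj} rewrites the relations among $a,b$ as $ab + ba = (\tr q)\,a + (\tr p)\,b - (v + p(0) + q(0))\,\id_V$, which is precisely the defining relation of the algebra $\calW(p,q,v+p(0)+q(0))_R$ over $R := \F[v]$. Via the universal property noted in Section \ref{keyalgebrasection}, this turns $V$ into a module over $\calW(p,q,v+p(0)+q(0))_R$ with $A$ acting as $a$ and $B$ acting as $b$.

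\textbf{Primary decomposition and structural dichotomy.} Factor the minimal polynomial of $v$ as $m_v = \prod_j r_j^{n_j}$ with distinct monic irreducibles $r_j$, and decompose $V = \bigoplus_j V_j$ with $V_j := \Ker\bigl(r_j(v)^{n_j}\bigr)$, so that each $V_j$ is a module over $\calW_j := \calW(p,q,\bar v_j+p(0)+q(0))_{R_j}$ for $R_j := \F[t]/(r_j^{n_j})$. Since d-regularity makes $\Lambda_{p,q}(v)$ invertible, the class $\bar{\bar v}_j$ of $v$ in the residue field $\L_j := \F[t]/(r_j)$ is not a root of $\Lambda_{p,q}$; Proposition \ref{nondegcharac} then gives non-degeneracy of the norm on $\calW(p,q,\bar{\bar v}_j+p(0)+q(0))_{\L_j}$, and Corollary \ref{structureofWcor} leaves two alternatives: either (Case A) $\calW(\cdot)_{\L_j} \cong \Mat_2(\L_j)$, lifted by Proposition \ref{localstructureprop} to $\calW_j \cong \Mat_2(R_j)$, or (Case B) $\calW(\cdot)_{\L_j}$ is a skew field.

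\textbf{Extracting invariant factors in each case.} In Case B, I would show that $X^2 - \delta X - \bar{\bar v}_j$ is irreducible over $\L_j$: otherwise $\overline{A-B}$, satisfying a factored quadratic in a skew field, would be forced into the center $\L_j$, contradicting the fact that $A$ and $B$ do not commute in $\calW$ (if they did, the identity $AB+BA = \mu A + \lambda B - x\,I$ would force $C = AB$ to be a linear combination of $I,A,B$, collapsing $\calW$ to dimension at most three and violating its free rank-four structure). Thus $r_j(t^2-\delta t)$ is irreducible in $\F[t]$; the minimal polynomial of $u$ on $V_j$ equals $r_j(t^2-\delta t)^{n_j}$, and the structure theorem for the local principal-ideal ring $\F[X]/(r_j(X^2-\delta X)^{n_j})$ decomposes $V_j$ into cyclic $\F[u]$-modules with invariant factors $r_j(t^2-\delta t)^{e_l}$. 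In Case A, Morita equivalence gives $V_j \cong R_j^2 \otimes_{R_j} M_j$ for some $R_j$-module $M_j$; the same non-commutativity of $A,B$ in $\calW$ forces the image $u_0 \in \Mat_2(R_j)$ of $A-B$ to be non-scalar, so $R_j[u_0]$ has rank two over $R_j$ and $R_j^2$ is cyclic of rank one over $R_j[u_0]$. Decomposing $M_j \cong \bigoplus_l R_j/(r_j^{e_l})$ by the structure theorem over the local ring $R_j$, one obtains $V_j \cong \bigoplus_l \F[X]/(r_j(X^2-\delta X)^{e_l})$ as $\F[u]$-modules; whether $r_j(t^2-\delta t)$ is irreducible, factors as $s\cdot\sigma(s)$ with $s \neq \sigma(s)$, or equals a square $s^2$, the Chinese Remainder splitting of each cyclic summand produces the elementary divisors $s^{e_l}$ and $\sigma(s)^{e_l}$ in equal multiplicities, which recombine into invariant factors of the form $r_j(t^2-\delta t)^{e_l}$.

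\textbf{Conclusion and anticipated obstacle.} Collecting the contributions over all $j$ proves (a); statement (b) follows by factoring each invariant factor $r_j(t^2-\delta t)^{e_l}$ over $\F[t]$ into pieces of the form $r(t^2-\delta t)^n$ with $r$ monic irreducible. The main obstacle I foresee is the bookkeeping in Case A when $r_j(t^2-\delta t)$ splits non-trivially --- keeping track that the $s$- and $\sigma(s)$-parts of the elementary-divisor structure occur with matching multiplicities, so that they combine into symmetric invariant factors; but this symmetry is automatic from the Chinese Remainder decomposition of each cyclic summand $\F[X]/(r_j(X^2-\delta X)^{e_l})$, so the obstacle is essentially one of careful indexing rather than a substantive difficulty.
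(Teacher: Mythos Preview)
Your approach is correct but takes a genuinely different route from the paper's.

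The paper proceeds by scalar extension to $\overline{\F}$: it first proves the result when $p$ and $q$ are \emph{split} over $\F$ (Proposition~\ref{basicd-regularinvariants}), via a direct eigenspace-counting argument showing that the eigenspaces of $a$ and $b$ are complementary of dimension $n/2$, which yields an explicit block-matrix form for $u$ whose invariant factors are read off through Corollary~\ref{dcorblockinvariants}; then it applies this split-case result to $\overline{u}$, uses that the invariant factors of $\overline{u}$ equal those of $u$, and finishes by a straightforward coefficient comparison showing that if $h(t^2-\delta t)\in\F[t]$ with $h$ monic then $h\in\F[t]$.

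Your route instead stays over $\F$ and exploits the full structure theory of $\calW(p,q,x)_R$ --- the non-degeneracy criterion, the quaternion dichotomy of Corollary~\ref{structureofWcor}, and the Hensel-type lifting of Proposition~\ref{localstructureprop} --- turning $V$ into a module over $\calW(p,q,v+p(0)+q(0))_{\F[v]}$ and analyzing each primary piece via Morita equivalence (isotropic case) or a field-degree count (skew-field case). This is conceptually appealing and dovetails nicely with the later proof of Theorem~\ref{d-regulartheo}, but it imports heavier machinery than this preliminary proposition requires, including the universal property of $\calW$ that the paper explicitly sets aside as unnecessary. One small remark: your ``non-scalar'' claim for $u_0$ (and for $\overline{A-B}$ in the skew-field case) is most directly justified by noting that $(I_4,A,B,C)$ remains a basis after reduction modulo the maximal ideal of $R_j$, so $I_4$ and $A-B$ stay $\L_j$-linearly independent there --- simpler and more robust than the non-commutativity detour you sketch. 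The paper's proof is more elementary and self-contained; yours reveals the underlying representation theory of $\calW$ and avoids the passage to $\overline{\F}$.
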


It can be easily shown that a $(p,q)$-reduced canonical form is unique up to a permutation of the diagonal blocks.

Before we prove Proposition \ref{basicd-regularreduction}, we need the corresponding special case when both polynomials $p$ and $q$ are split over $\F$:
this result can be obtained by collecting various results from \cite{Bothasquarezero}, \cite{dSPidem2} and \cite{dSPsumoftwotriang}, but we give a synthetic proof here (that uses the same technique as in those articles).

\begin{prop}\label{basicd-regularinvariants}
Let $p$ and $q$ be split monic polynomials with degree $2$ over $\F$, and set $\delta:=\tr p-\tr q$.
Let $u$ be an endomorphism of a finite-dimensional vector space $V$ and assume that $u$ is a d-regular $(p,q)$-difference.
Then, each invariant factor of $u$ is a polynomial in $t^2-\delta t$.
\end{prop}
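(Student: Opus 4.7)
The plan is to leverage the Commutation Lemma to equip $V$ with a module structure over the algebra $\calW(p,q,x)_R$, then apply the structural results of Section \ref{WPQalgebra} to reduce to a Morita-equivalent matrix algebra, from which the invariant factors of $u$ can be read off directly. First I would write $u = a - b$ with $p(a)=q(b)=0$ and set $v := u^2 - \delta u$; by Lemma \ref{dcommutelemma}, $a$ and $b$ commute with $v$, so they preserve each primary component of $V$ for $v$. Since the conclusion is stable under direct sums, it suffices to treat one such component, where I would assume $\mu_v = h^n$ for some monic irreducible $h \in \F[t]$ and some $n \geq 1$. This makes $V$ a faithful module over the local Artinian ring $R := \F[t]/(h^n)$ via $t \mapsto v$, with residue field $\L := R/(h)$.

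The identities $p(a)=q(b)=0$ together with $(a-b)^2 - \delta(a-b) = v$ equip $V$ with a module structure over $\calW(p,q,x)_R$, where $x := \tau + p(0) + q(0)$ and $\tau$ denotes the class of $t$ in $R$. Because $p$ and $q$ split, Remark \ref{isotropicnormremark} gives isotropy of the norm on $\calW(p,q,\overline{x})_\L$, and d-regularity of $u$ forbids $\overline{\tau}$ from coinciding with a root of $\Lambda_{p,q}$, so Proposition \ref{nondegcharac} forces non-degeneracy of that norm; Corollary \ref{structureofWcor} combined with Proposition \ref{localstructureprop} then supplies an isomorphism $\calW(p,q,x)_R \cong \Mat_2(R)$. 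Morita equivalence now yields $V \cong W^{\oplus 2}$ as $R$-module for some finitely generated $R$-module $W$, and decomposing $W \cong \bigoplus_j R/(h^{m_j})$ produces summands $V_j := (R/(h^{m_j}))^{\oplus 2}$ on each of which $u$ acts by a matrix $M_j \in \Mat_2(R_j)$, with $R_j := R/(h^{m_j})$, satisfying $\tr M_j = \delta$ and $\det M_j = -\tau_j$.

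The main obstacle will be showing that $M_j$ is cyclic over $R_j$; by Nakayama's lemma this reduces to proving that $\overline{M_j} \in \Mat_2(\L)$ is not scalar. Were $\overline{M_j} = cI$ for some $c \in \L$ (with $c$ forced to be $\delta/2$ in characteristic $\neq 2$, and an analogous value in characteristic $2$), a short Nakayama-type argument would make $u - c\,\id_{V_j}$ map $V_j$ into $h(v)V_j$ and hence be nilpotent on $V_j$, so that $c$ would be the unique eigenvalue of $u|_{V_j}$. Reducing $u = a - b$ modulo $h(v)$ on the nonzero quotient $V_j/h(v)V_j$ would then force the eigenvalues of the induced action of $b$ to lie in the intersection $\Root(q) \cap (\Root(p) - c)$, which is precisely the condition $c \in \Root(p) - \Root(q)$; a short case analysis (based on whether $c$ could a priori belong to $\Root(p) - \Root(q)$, which for split $p,q$ happens exactly when they share the same spread) shows that d-regularity always makes this intersection empty, contradicting the non-vanishing of the quotient.

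Once $M_j$ is cyclic, $V_j$ is free of rank one over $R_j[T]/(T^2 - \delta T - \tau_j)$, and the substitution $t \leftrightarrow T^2 - \delta T$ identifies this ring with $\F[T]/((h \circ g)^{m_j})$, where $g(t) := t^2 - \delta t$. Therefore $V \cong \bigoplus_j \F[T]/((h \circ g)^{m_j})$ as an $\F[T]$-module (with $T$ acting as $u$); a routine combination of elementary invariants across the (possibly several) $\F$-irreducible factors of $h \circ g$ — whose multiplicities in each $V_j$ come paired with the common exponent $m_j$ — then produces invariant factors of the form $(h \circ g)^{m_{(i)}}$, each manifestly a polynomial in $g$, as required.
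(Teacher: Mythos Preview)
Your approach is correct but substantially different from the paper's. The paper gives an elementary, direct argument: pick eigenvalues $x$ of $a$ and $y$ of $b$ of maximal geometric multiplicity; d-regularity forces $\Ker(a-x\,\id)$ and $\Ker(b-y\,\id)$ to be complementary subspaces of dimension $n/2$ each; an explicit basis adapted to this splitting puts $u$ in the block form $\begin{bmatrix} (x-y)I & -N \\ I & (x'-y')I \end{bmatrix}$, and Corollary~\ref{dcorblockinvariants} (built on Lemma~\ref{dblockmatrixlemma}) then reads off that every invariant factor is a polynomial in $(t-(x-y))(t-(x'-y'))=t^2-\delta t+\text{const}$. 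No appeal to $\calW(p,q,x)$ is made. Your route instead deploys the machinery of Section~\ref{WPQalgebra}---non-degeneracy from Proposition~\ref{nondegcharac}, isotropy from the split hypothesis, and the isomorphism $\calW(p,q,x)_R\cong\Mat_2(R)$ of Proposition~\ref{localstructureprop}---together with Morita equivalence. This is heavier, but it has the virtue of unifying the present statement with the Type~1 analysis of Lemma~\ref{dType1blocklemma}: in effect you are showing that for split $p,q$ every relevant irreducible $h$ behaves as Type~1. Two minor points: the parenthetical ``$c$ forced to be $\delta/2$'' is incorrect and unnecessary; and the non-scalarity of $\overline{M_j}$ follows much more cheaply from the linear independence of $I_4,A,B$ in $\calW(p,q,\overline{x})_\L$, since an algebra isomorphism sends non-central elements to non-central elements. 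Finally, your ``stable under direct sums'' reduction and the closing recombination of invariant factors across primary components of $v$ are valid but deserve a word of justification---they rest on the fact that $P\mapsto P(t^2-\delta t)$, arising from the flat extension $\F[s]\hookrightarrow\F[t]$, preserves gcds, so that invariant-factor decompositions push forward.
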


The proof requires the following basic lemma, which is proved in \cite{dSPidem2} (see Lemma 14 there, in which the assumption
 that $\alpha$ and $\beta$ be nonzero is unnecessary) and which will be used later in this article:

\begin{lemma}\label{dblockmatrixlemma}
Let $r\in \F[t]$ be a monic polynomial with degree $n$, and let $x$ and $y$ be scalars. Then,
$$\begin{bmatrix}
x I_n & C(r) \\
I_n & y I_n
\end{bmatrix} \simeq C\bigl(r((t-x)(t-y))\bigr)$$
\end{lemma}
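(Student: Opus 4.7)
Denote by $M$ the $2n\times 2n$ block matrix on the left-hand side, and set $s(t):=r\bigl((t-x)(t-y)\bigr)$, a monic polynomial of degree $2n$. The plan is to show that $M$ is cyclic and is annihilated by $s$; since $\deg s=2n=\dim$, this forces $s$ to be simultaneously the minimal and characteristic polynomial of $M$, and hence $M\simeq C(s)$.

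The first step is a block-wise computation of $(M-xI_{2n})(M-yI_{2n})$. Writing
$$M-xI_{2n}=\begin{bmatrix} 0 & C(r) \\ I_n & (y-x)I_n \end{bmatrix},\qquad M-yI_{2n}=\begin{bmatrix} (x-y)I_n & C(r) \\ I_n & 0 \end{bmatrix},$$
the product is readily computed; the off-diagonal contributions cancel and one gets
$$N:=(M-xI_{2n})(M-yI_{2n})=\begin{bmatrix} C(r) & 0 \\ 0 & C(r) \end{bmatrix}=C(r)\oplus C(r).$$
Consequently $s(M)=r(N)=r(C(r))\oplus r(C(r))=0$, so $s$ annihilates $M$.

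The second step is to exhibit a cyclic vector. I would take $v:=e_1\in\F^{2n}$, the first standard basis vector (in the top block). Reading off the $i$-th column of $M-xI_{2n}$ directly gives $(M-xI_{2n})e_i=e_{n+i}$ for $1\le i\le n$; on the other hand, by construction of $N$, its action on the top block coincides with $C(r)$ acting on $e_1,\dots,e_n$, so $N^k e_1=e_{k+1}$ for $0\le k\le n-1$. Combining the two, the $2n$ vectors
$$N^k e_1=e_{k+1}\quad\text{and}\quad (M-xI_{2n})N^k e_1=e_{n+k+1}\qquad (0\le k\le n-1)$$
together form the standard basis of $\F^{2n}$. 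Each of them belongs to the $\F[M]$-submodule generated by $e_1$ (since $N$ and $M-xI_{2n}$ are polynomials in $M$), so this submodule is all of $\F^{2n}$, proving that $e_1$ is cyclic.

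The only genuinely computational step is the block product in the first paragraph, and that is the one spot where it is easy to make a sign slip; beyond that, the argument is formal. Once cyclicity and annihilation by $s$ are both in hand, invariance of the minimal polynomial under similarity together with the equality of its degree to the dimension of the ambient space gives $M\simeq C(s)$, as required.
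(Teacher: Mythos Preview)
Your proof is correct. The block computation checks out: with
\[
M-xI_{2n}=\begin{bmatrix} 0 & C(r) \\ I_n & (y-x)I_n \end{bmatrix},\qquad
M-yI_{2n}=\begin{bmatrix} (x-y)I_n & C(r) \\ I_n & 0 \end{bmatrix},
\]
the product indeed collapses to $C(r)\oplus C(r)$, so $s(M)=0$; and your cyclic-vector argument is clean, since both $N$ and $M-xI_{2n}$ lie in $\F[M]$ and the vectors $N^k e_1$, $(M-xI_{2n})N^k e_1$ for $0\le k\le n-1$ recover the full standard basis.

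As for comparison with the paper: the paper does not actually prove this lemma here but cites it from \cite{dSPidem2} (Lemma~14 there). Your self-contained argument is exactly the kind of proof one expects for such a statement and matches in spirit what the later Lemma~\ref{qblockmatrixlemma} does for the multiplicative analogue (show annihilation by the target polynomial, then verify that no polynomial of smaller degree kills the matrix). Your use of an explicit cyclic vector is arguably more transparent than arguing via the minimal polynomial alone.
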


\begin{cor}\label{dcorblockinvariants}
Let $N$ be an arbitrary matrix of $\Mat_n(\F)$, and let $x$ and $y$ be scalars. Then, the invariant factors of
$$K(N):=\begin{bmatrix}
x I_n & N \\
I_n & y I_n
\end{bmatrix}$$
are polynomials in $(t-x)(t-y)$.
\end{cor}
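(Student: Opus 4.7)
The plan is to reduce the general case to the case where $N$ is a companion matrix, by putting $N$ in rational canonical form and using Lemma \ref{dblockmatrixlemma} block-by-block.

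First I would perform a similarity on $K(N)$ of the form $\mathrm{diag}(P,P)$, which conjugates the off-diagonal block $N$ into $P^{-1}NP$ while leaving the scalar blocks $xI_n$ and $yI_n$ untouched. Choosing $P$ so that $P^{-1}NP = C(r_1) \oplus \cdots \oplus C(r_k)$ is the rational canonical form of $N$, I reduce to the case where $N$ itself is block-diagonal with companion blocks. Writing $n_i := \deg r_i$ and partitioning accordingly, a simple reshuffle of the basis vectors (grouping together the $i$-th block of the top half with the $i$-th block of the bottom half) exhibits $K(N)$ as a block-diagonal matrix whose $i$-th diagonal block is
\[
\begin{bmatrix} xI_{n_i} & C(r_i) \\ I_{n_i} & yI_{n_i} \end{bmatrix}.
\]

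Next I would apply Lemma \ref{dblockmatrixlemma} to each of these blocks, obtaining that $K(N)$ is similar to $C\bigl(r_1((t-x)(t-y))\bigr) \oplus \cdots \oplus C\bigl(r_k((t-x)(t-y))\bigr)$. Since the invariant factors of $N$ satisfy $r_{i+1} \mid r_i$, substituting $t \leftarrow (t-x)(t-y)$ preserves this divisibility chain, so the above direct sum is in fact the rational canonical form of $K(N)$. Its invariant factors are therefore the $r_i\bigl((t-x)(t-y)\bigr)$, which are manifestly polynomials in $(t-x)(t-y)$.

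There is essentially no obstacle here once Lemma \ref{dblockmatrixlemma} is granted; the only minor point requiring care is verifying that after the basis reshuffle one really obtains the block form claimed, which is a straightforward bookkeeping check on the indices, and that the divisibility chain of the $r_i$'s transports to a divisibility chain of the $r_i\bigl((t-x)(t-y)\bigr)$'s, so that we indeed read off the invariant factors (and not merely a direct sum of companion matrices whose associated polynomials still need to be regrouped).
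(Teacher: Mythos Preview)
Your proposal is correct and follows essentially the same approach as the paper's own proof: conjugate by $P\oplus P$ to put $N$ in rational canonical form, permute basis vectors to block-diagonalize $K(N)$ into the blocks $K(C(r_i))$, apply Lemma~\ref{dblockmatrixlemma} to each, and observe that the divisibility chain $r_{i+1}\mid r_i$ transports to $r_{i+1}((t-x)(t-y))\mid r_i((t-x)(t-y))$ so that these are indeed the invariant factors.
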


\begin{proof}[Proof of Corollary \ref{dcorblockinvariants}]
We note that the similarity class of $K(N)$ depends only on that of $N$: Indeed, for all
$P \in \GL_n(\K)$, the invertible matrix $Q:=P \oplus P$ satisfies
$Q K(N)Q^{-1}=K(PNP^{-1})$.
Next, if $N$ splits into $N=N_1 \oplus \cdots \oplus N_r$ for some square matrices $N_1,\dots,N_r$ then, by permuting the basis
vectors, we gather that $K(N) \simeq K(N_1) \oplus \cdots \oplus K(N_r)$.
Considering the rational canonical form $N \simeq C(r_1)\oplus \cdots \oplus C(r_k)$, we
obtain
$$K(N) \simeq C\bigl(r_1\bigl((t-x)(t-y)\bigr)\bigr) \oplus \cdots \oplus  C\bigl(r_k\bigl((t-x)(t-y)\bigr)\bigr).$$
Moreover, the polynomials $r_1\bigl((t-x)(t-y)\bigr),\dots,r_k\bigl((t-x)(t-y)\bigr)$ are all monic and
$r_{i+1}\bigl((t-x)(t-y)\bigr)$ divides $r_{i}\bigl((t-x)(t-y)\bigr)$
for all $i \in \lcro 1,k-1\rcro$. Hence, we have found the invariant factors of $K(N)$, which proves the claimed result.
\end{proof}

\begin{proof}[Proof of Proposition \ref{basicd-regularinvariants}]
Let $a$ and $b$ be endomorphisms of $V$ such that $p(a)=q(b)=0$ and $u=a-b$.
Denote by $x$ (respectively, by $y$) an eigenvalue of $a$ (respectively, of $b$)
with maximal geometric multiplicity, and split $p(t)=(t-x)(t-x')$ and $q(t)=(t-y)(t-y')$.
We claim that
$$\dim \Ker(a-x\,\id_V) \geq \frac{n}{2}\cdot$$
Indeed, since $p(a)=0$ we have $\im(a- x'\, \id_V) \subset \Ker(a-x\,\id_V)$, which yields
$\dim \Ker(a-x\,\id_V)+\dim \Ker(a-x'\,\id_V) \geq n$. Since $\dim \Ker(a-x\,\id_V) \geq \dim \Ker(a-x'\,\id_V)$,
the claimed inequality follows.

Likewise, $\dim \Ker(b-y\,\id_V) \geq \frac{n}{2}\cdot$
Since $u$ is d-regular, any eigenspace of $a$ is linearly disjoint from any eigenspace of $b$.
In particular, $\Ker(a-x \id_V) \cap \Ker(b-y\,\id_V)=\{0\}$.
It follows that $\dim \Ker(a-x\,\id_V)=\frac{n}{2}=\dim \Ker(b-y\,\id_V)$, $n$ is even and
$V=\Ker(a-x \id_V) \oplus \Ker(b-y\,\id_V)$.
Next, we deduce that $\frac{n}{2}=\dim \im (a-x \id_V)$ and $\dim \Ker(a-x' \id_V) \leq \frac{n}{2}$ by choice of $x$.
However, $\im(a-x\id_V) \subset \Ker(a-x' \id_V)$, and hence it follows that
$\im(a-x\id_V) = \Ker(a-x' \id_V)$. Likewise, $\im(b-y\id_V) = \Ker(b-y' \id_V)$,
and it follows that $x'$ has geometric multiplicity $\frac{n}{2}$ with respect to $a$,
and ditto for $y'$ with respect to $b$.
In turn, this shows that $\im(a-x'\id_V)=\Ker(a-x\id_V)$ and $\im(b-y'\id_V)=\Ker(b-y\id_V)$,
and any eigenspace of $a$ is a complementary subspace of any eigenspace of $b$.

Let us write $s:=\frac{n}{2}$ and choose a basis $(e_1,\dots,e_s)$ of
$\Ker(b-y \id_V)$. Then, we have $V=\Ker(b-y \id_V) \oplus \Ker(a-x \id_V)$, whence
$(e_{s+1},\dots,e_n):=((a-x\id_V)(e_1),\dots,(a-x\id_V)(e_s))$ is a basis of
$\im(a-x \id_V)=\Ker (a-x' \id_V)$.
Since $\Ker(b-y \id_V) \oplus \Ker (a-x' \id_V)=V$, we deduce that
$\bfB:=(e_1,\dots,e_n)$ is a basis of $V$. Obviously
$$\Mat_\bfB(a)=\begin{bmatrix}
x I_s & 0 \\
I_n & x' I_s
\end{bmatrix}.$$
On the other hand, since $\Ker(b-y \id_V)=\im(b-y' \id_V)$, we find
$$\Mat_\bfB(b)=\begin{bmatrix}
y I_s & N \\
0 & y' I_s
\end{bmatrix}$$
for some matrix $N \in \Mat_s(\F)$.
Hence,
$$\Mat_\bfB(u)=\begin{bmatrix}
(x-y) I_s & -N \\
I_n & (x'-y') I_s
\end{bmatrix}.$$
Since $(t-(x-y))(t-(x'-y'))=t^2-\delta t+(x-y)(x'-y')$, we conclude from Lemma \ref{dblockmatrixlemma} that
all the invariant factors of $u$ are polynomials in $t^2-\delta t$.
\end{proof}

\begin{proof}[Proof of Proposition \ref{basicd-regularreduction}]
We start with point (a). Let us extend the field of scalars to $\overline{\F}$.
The resulting extension $\overline{u}$ of $u$ is still a $(p,q)$-difference.
Hence, by Corollary \ref{dcorblockinvariants} its invariant factors are $p_1(t^2-\delta t),\dots,p_r(t^2-\delta t)$
for some monic polynomials $p_1,\dots,p_r$ of $\overline{\F}[t]$ such that $p_{i+1}$ divides $p_i$ for all $i \in \lcro 1,r-1\rcro$.
Yet, the invariant factors of $\overline{u}$ are known to be the ones of $u$.
Finally, given a monic polynomial $h \in \overline{\F}[t]$ such that $h(t^2-\delta t)\in \F[t]$,
we obtain by downward induction that all the coefficients of $h$ belong to $\F$:
Indeed, if we write $h(t)=t^N-\underset{i=0}{\overset{N-1}{\sum}} \alpha_i\,t^i$ and we know that $\alpha_{N-1},\dots,\alpha_{k+1}$ all belong to $\F$
for some $k \in \lcro 0,N-1\rcro$, then $\underset{i=0}{\overset{k}{\sum}} \alpha_i\,(t^2-\delta t)^i=(t^2-\delta t)^N-\underset{i=k+1}{\overset{N-1}{\sum}} \alpha_i\,(t^2-\delta t)^i$
belongs to $\F[t]$, and by considering the coefficient on $t^{2k}$, we gather that $\alpha_k \in \F$.
It follows that $p_1,\dots,p_r$ all belong to $\F[t]$, which completes the proof of statement (a).

From point (a), we easily derive point (b): indeed, consider an invariant factor
$r(t^2-\delta t)$ of $u$ for some monic polynomial $r \in \F[t]$. Then, we split $r=r_1^{n_1}\cdots r_k^{n_k}$ where $r_1,\dots,r_k$ are pairwise distinct
irreducible monic polynomials of $\F[t]$, and $n_1,\dots,n_k$ are positive integers.
Then, the polynomials $r_1^{n_1}(t^2-\delta t),\dots,r_k^{n_k}(t^2-\delta t)$ are pairwise coprime and their product equals
$r(t^2-\delta t)$, whence
$$C\bigl(r(t^2-\delta t)\bigr) \simeq C\bigl(r_1^{n_1}(t^2-\delta t)\bigr) \oplus \cdots \oplus C\bigl(r_k^{n_k}(t^2-\delta t)\bigr).$$
Using point (a), we deduce that statement (b) holds true.
\end{proof}

\subsubsection{Application of the $\calW(p,q,x)$ algebra to the characterization of d-regular $(p,q)$-differences}

We are now ready to complete our study of d-regular $(p,q)$-differences.
An additional definition will be useful in this prospect:

\begin{Def}
Let $p$ and $q$ be monic polynomials with degree $2$ in $\F[t]$.
Set $\delta:=\tr(p)-\tr(q)$.
Let $r$ be an irreducible monic polynomial of $\F[t]$, and set
$\L:=\F[t]/(r)$. Denote by $\overline{t}$ the class of $t$ in $\L$.
We say that $r$ has:
\begin{itemize}
\item \textbf{Type $1$ with respect to $(p,q)$} if $r(t^2-\delta t)$ has no root in $\Root(p)-\Root(q)$ and
the norm of $\calW\bigl(p,q,\overline{t}+p(0)+q(0)\bigr)_{\L}$ is isotropic.
\item \textbf{Type $2$ with respect to $(p,q)$} if $r(t^2-\delta t)$ has no root in $\Root(p)-\Root(q)$ and
the norm of $\calW\bigl(p,q,\overline{t}+p(0)+q(0)\bigr)_{\L}$ is nonisotropic.
\end{itemize}
\end{Def}

First of all, we use the structural results on $\calW(p,q,x)_R$ to obtain various $(p,q)$-differences.
Our first result is actually not restricted to d-regular $(p,q)$-differences and will be used later in the article.

\begin{lemma}[Duplication Lemma]\label{dduplicationlemma}
Let $p$ and $q$ be monic polynomials of $\F[t]$ with degree $2$, and set $\delta:=\tr p-\tr q$.
Let $r$ be a nonconstant monic polynomial of $\F[t]$.
Then, $C\bigl(r(t^2-\delta t)\bigr) \oplus C\bigl(r(t^2-\delta t)\bigr)$ is a $(p,q)$-difference.
\end{lemma}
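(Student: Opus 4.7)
The plan is to produce, for any nonconstant monic polynomial $r \in \F[t]$ of degree $n$, a pair of matrices in $\Mat_{4n}(\F)$ annihilated respectively by $p$ and $q$ whose difference is similar to $C(r(t^2-\delta t)) \oplus C(r(t^2-\delta t))$. The natural source of such a pair is the algebra $\calW(p,q,x)_R$ introduced in Section \ref{keyalgebrasection}, applied over a carefully chosen base ring $R$.

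First I would set $R := \F[s]/(r(s))$, let $\overline{s}$ denote the class of $s$ in $R$ (so that $R$ has $\F$-dimension $n$), and specialize the construction of Section \ref{keyalgebrasection} at $x := \overline{s} + p(0) + q(0)$. This yields matrices $A, B \in \Mat_4(R)$ with $p(A) = 0$, $q(B) = 0$, and, by identity \eqref{basicrel1},
$$(A-B)^2 - \delta (A-B) = \overline{s}\,I_4.$$
Viewing $R$ as an $n$-dimensional $\F$-vector space identifies $R^4$ with $\F^{4n}$ and provides an $\F$-linear embedding of $\Mat_4(R)$ into $\Mat_{4n}(\F)$; this turns $A$ and $B$ into matrices $A_0, B_0 \in \Mat_{4n}(\F)$ still satisfying $p(A_0) = q(B_0) = 0$. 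Setting $u := A_0 - B_0$, it suffices to prove that $u$ is similar over $\F$ to $C(r(t^2-\delta t)) \oplus C(r(t^2-\delta t))$.

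Next I would identify $\F[u]$. Since the $(2,1)$-entry of $A - B$ is $1$, the element $u$ is not a scalar of $\Mat_4(R)$, so the displayed identity shows that $t^2 - \delta t - \overline{s}$ is the minimal polynomial of $u$ over $R$. Consequently
$$\F[u] = R[u] \cong R[t]/(t^2 - \delta t - \overline{s}) \cong \F[t]/\bigl(r(t^2 - \delta t)\bigr),$$
the last step being obtained by using the relation $\overline{s} = t^2 - \delta t$ to eliminate $\overline{s}$ in the presentation $\F[s,t]/(r(s),\, t^2 - \delta t - s)$. Hence $\dim_\F \F[u] = 2n$ and the minimal polynomial of $u$ over $\F$ is $r(t^2 - \delta t)$.

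Finally I would decompose $R^4$ as an $\F[u]$-module. Writing $(e_1, e_2, e_3, e_4)$ for the standard basis of $R^4$, a direct inspection of the explicit forms of $A$ and $B$ yields
$$u\, e_1 = e_2 - e_3 \quad \text{and} \quad u\, e_3 = \beta\, e_1 - \mu\, e_3 + e_4,$$
so $e_2$ and $e_4$ both lie in $R[u]\, e_1 + R[u]\, e_3$, and this sum therefore equals $R^4$. Each summand is a cyclic $\F[u]$-module, hence has $\F$-dimension at most $\dim_\F \F[u] = 2n$; since their sum has $\F$-dimension $4n$, each summand has dimension exactly $2n$ and the sum is direct. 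A cyclic $\F[u]$-module of dimension $\dim_\F \F[u]$ is isomorphic to $\F[u] \cong \F[t]/(r(t^2-\delta t))$, whose rational canonical block is $C(r(t^2-\delta t))$. This yields the desired similarity.

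The main obstacle is the last step: verifying that $e_1$ and $e_3$ generate $R^4$ as an $R[u]$-module, since this is what lets the dimension count force a direct-sum decomposition into two cyclic blocks of exactly the right size. Once the two explicit formulas for $u\,e_1$ and $u\,e_3$ are written down, the rest is bookkeeping of $\F$-dimensions.
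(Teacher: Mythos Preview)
Your proof is correct and rests on the same idea as the paper's: work in $\calW(p,q,x)_R$ with $R=\F[t]/(r)$ and $x=\overline{s}+p(0)+q(0)$, so that identity \eqref{basicrel1} forces $(A-B)^2-\delta(A-B)=\overline{s}\,I_4$. The difference lies only in how the rational canonical form of the difference is extracted. The paper passes to the left-regular representation of $\calW(p,q,x)_R$ on itself, exhibits the explicit $R$-basis $(I_4,\,A-B,\,B,\,(A-B)B)$ in which left multiplication by $A-B$ is visibly a direct sum of two $2\times 2$ companion-type blocks, and then invokes Lemma~\ref{dblockmatrixlemma}. You instead keep the defining matrices acting on $R^4$, identify $\F[u]\cong\F[t]/\bigl(r(t^2-\delta t)\bigr)$ via the relation $\overline{s}=u^2-\delta u$, and obtain the two cyclic summands from the generators $e_1,e_3$ by a dimension count. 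Both routes are short; the paper's is more explicitly computational, yours is more module-theoretic and avoids the auxiliary block lemma.
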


\begin{proof}
We work with the commutative $\F$-algebra $R:=\F[C(r)]$, which is isomorphic to the quotient ring $\F[t]/(r)$,
and with the element $x:=\bigl(p(0)+q(0)\bigr)\,1_R+C(r)$.
Using $q(B)=0$, it is easily seen that
$\bfB:=(I_2,A-B,B,(A-B)B)$ is still a basis of the free $R$-module $\calW(p,q,x)$.
Then, we consider the endomorphisms $a : X \mapsto AX$ and $b:X \mapsto BX$ of $\calW(p,q,x)$.
Since $p(A)=0$ and $q(B)=0$, we get $p(a)=0$ and $q(b)=0$.
Denote by $A'$ and $B'$ the respective matrices of $a$ and $b$ in $\bfB$.
Using $(A-B)^2=\delta (A-B)+\bigl(x-p(0)-q(0)\bigr) I_4$, we get
that
$$A'-B'=\begin{bmatrix}
0 & C(r) & 0 & 0  \\
1_R & \delta\,1_R & 0 & 0 \\
0 & 0 & 0 & C(r) \\
0 & 0 & 1_R & \delta\,1_R
\end{bmatrix},$$
whence the matrix $A'-B'$ of $\Mat_{4d}(\F)$ (where $d$ denotes the degree of $r$) is similar to $C\bigl(r(t(t-\delta))\bigr) \oplus C\bigl(r(t(t-\delta))\bigr)$ by Lemma \ref{dblockmatrixlemma}.
Since $p(A')=0$ and $q(B')=0$, the conclusion follows.
\end{proof}

Our next result deals with certain companion matrices that are associated with irreducible polynomials of Type 1.

\begin{lemma}\label{dType1blocklemma}
Let $p$ and $q$ be monic polynomials of $\F[t]$ with degree $2$, and set $\delta:=\tr p-\tr q$.
Let $r$ be an irreducible monic polynomial of $\F[t]$ of Type 1 with respect to $(p,q)$.
Then, for all $n \in \N^*$, the companion matrix $C\bigl(r^n(t^2-\delta t)\bigr)$ is a $(p,q)$-difference.
\end{lemma}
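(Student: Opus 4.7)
The plan is to realize $C(r^n(t^2-\delta t))$ as the $(p,q)$-difference $A - B$ in the natural representation of a suitable algebra. Set $R := \F[t]/(r^n)$, let $t_R$ denote the class of $t$ in $R$, and let $x := t_R + p(0) + q(0) \in R$, so that its image $\overline{x}$ in $\L$ is $\overline{t} + p(0) + q(0)$. In order to apply Proposition~\ref{localstructureprop} to $\calW(p,q,x)_R$ one needs the norm of $\calW(p,q,\overline{x})_\L$ to be both non-degenerate and isotropic. Isotropy is built into the Type~1 hypothesis. For non-degeneracy, I would substitute $\overline{x} = \overline{t} + \alpha + \beta$ into the formula $\det M = (x^2 - \lambda\mu x + \gamma)^2$ recovered in the proof of Proposition~\ref{nondegcharac}: a short direct computation shows $\overline{x}^2 - \lambda\mu\,\overline{x} + \gamma = \Lambda_{p,q}(\overline{t})$, and this quantity is nonzero precisely because $r$ shares no root with $\Lambda_{p,q}$, i.e., because $r(t^2-\delta t)$ has no root in $\Root(p) - \Root(q)$ (using the factorisation $F_{p,q}(t) = \Lambda_{p,q}(t^2-\delta t)$). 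Proposition~\ref{localstructureprop} then yields an isomorphism of $R$-algebras $\calW(p,q,x)_R \simeq \Mat_2(R)$.

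Under this isomorphism, $A$ and $B$ become $2 \times 2$ matrices over $R$ with $p(A)=q(B)=0$. I would let them act on $V := R^2$, viewed as an $\F$-space of dimension $2nd$ where $d := \deg r$, and set $w := A - B$. Thanks to relation~\eqref{basicrel1} together with our choice $x - p(0) - q(0) = t_R$, we obtain $w^2 - \delta w = t_R\, I_2$ in $\Mat_2(R)$. Since $t_R$ has minimal polynomial $r^n$ over $\F$, applying $r^n$ yields $r^n(w^2-\delta w) = r^n(t_R)I_2 = 0$, so the $\F$-minimal polynomial of $w$ on $V$ divides $r^n(t^2-\delta t)$, a polynomial whose degree is exactly $2nd = \dim_\F V$. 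Hence it suffices to establish that $V$ is cyclic as an $\F[w]$-module, for then $w$ will be similar to $C(r^n(t^2-\delta t))$ (with equality in the degree bound), and the lemma will follow since $p(A)=q(B)=0$ makes $w$ a genuine $(p,q)$-difference.

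For cyclicity, I would reduce modulo the maximal ideal $(r)$ of $R$: the reduction $\overline{w} \in \Mat_2(\L)$ corresponds under the isomorphism to the class of $A - B$ in $\calW(p,q,\overline{x})_\L$, whose coordinates $(0,1,-1,0)$ in the basis $(I_4, A, B, AB)$ show that it is not an $\L$-multiple of $I_4$. Consequently $\overline{w}$ is not a scalar matrix in $\Mat_2(\L)$, and a generic $\overline{v_0} \in \L^2$ produces an $\L$-basis $(\overline{v_0}, \overline{w}\,\overline{v_0})$ of $\L^2$. Lifting $\overline{v_0}$ to any $v_0 \in V$, Nakayama's lemma for the local ring $R$ guarantees that $(v_0, w v_0)$ is an $R$-basis of $V$, and since $R \cdot I_2 = \F[w^2 - \delta w] \subseteq \F[w]$, we obtain $\F[w] \cdot v_0 \supseteq R v_0 + R w v_0 = V$, as required. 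The main obstacles are the non-degeneracy check in the first paragraph and the Nakayama-style lifting in this last step; both are routine once the local structure of $\calW(p,q,x)_R$ furnished by Section~\ref{WPQalgebra} is at hand.
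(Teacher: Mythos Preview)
Your proof is correct and follows essentially the same route as the paper's: set up $R=\F[t]/(r^n)$ with $x=t_R+p(0)+q(0)$, invoke Proposition~\ref{localstructureprop} to identify $\calW(p,q,x)_R$ with $\Mat_2(R)$, observe that $w=A-B$ satisfies $w^2-\delta w=t_R I_2$, and conclude by producing an $R$-basis $(v_0,wv_0)$ of $R^2$ via reduction mod $(r)$ and Nakayama. The only cosmetic difference is that the paper, after obtaining this $R$-basis, rewrites $c=a-b$ explicitly as $\begin{bmatrix}0 & C(r^n)\\ 1_R & \delta 1_R\end{bmatrix}$ and quotes Lemma~\ref{dblockmatrixlemma}, whereas you argue cyclicity directly from $R\subseteq\F[w]$; these are equivalent. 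Your explicit verification that the norm of $\calW(p,q,\overline{x})_\L$ is non-degenerate (via $\overline{x}^2-\lambda\mu\,\overline{x}+\gamma=\Lambda_{p,q}(\overline{t})\neq 0$) is a welcome addition, since the paper invokes Proposition~\ref{localstructureprop} without spelling this out.
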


\begin{proof}
We naturally identify $\L$ with the subalgebra $\F[C(r)]$ of $\Mat_d(\F)$, where $d$ denotes the degree of $r$.
Let $n \in \N^*$. Set $R:=\F[C(r^n)]$, seen as a subalgebra of $\Mat_{nd}(\F)$, and set $x:=C(r^n)+\bigl(p(0)+q(0)\bigr)I_{nd}$.
The $\F$-algebra $R$ is isomorphic to $\F[t]/(r^n)$.
By Proposition \ref{localstructureprop}, it follows that $\calW(p,q,x)_R$ is isomorphic to $\Mat_2(R)$.
We choose an isomorphism $\varphi : \calW(p,q,x)_R \overset{\simeq}{\longrightarrow} \Mat_2(R)$,
and we set $a:=\varphi(A)$ and $b:=\varphi(B)$. Note that $p(a)=q(b)=0$,
whereas $c:=a-b$ satisfies $c(c-\delta I_2)=\bigl(x-(p(0)+q(0))1_R\bigr)\,I_2$.
The mapping $X \in R^2 \mapsto cX \in R^2$ yields an endomorphism $\overline{c}$ of $\L^2$.
Yet, $\overline{c}$ cannot be a scalar multiple of the identity (otherwise, $(I_2,a,b,ab)$ would not be a basis of the $R$-module $\Mat_2(R)$).
Hence, we find a vector $e$ of $\L^2$ such that $\bigl(e,\overline{c}(e)\bigr)$ is a basis of $\L^2$.
Lifting $e$ to a vector $E$ of $R^2$, we deduce that $(E,cE)$ is a basis of the $R$-module $R^2$.
Hence, composing $\varphi$ with an additional interior automorphism of the $R$-algebra $\Mat_2(R)$,
we see that no generality is lost in assuming that the first column of $c$
reads $\begin{bmatrix}
0_R \\
1_R
\end{bmatrix}$. Then, $c(c-\delta I_2)=\bigl(x-(p(0)+q(0))1_R\bigr)\,I_2$ yields
$$c=\begin{bmatrix}
0 & C(r^n) \\
1_R & \delta 1_R
\end{bmatrix}.$$
It follows that the matrix
$\begin{bmatrix}
0 & C(r^n) \\
I_{nd} & \delta I_{nd}
\end{bmatrix}$ of $\Mat_{2nd}(\F)$ is a $(p,q)$-difference. By Lemma \ref{dblockmatrixlemma}, this matrix is similar to
$C\bigl(r^n(t^2-\delta t)\bigr)$, which completes the proof.
\end{proof}

Combining Lemma \ref{dduplicationlemma} with Lemma \ref{dType1blocklemma}, we conclude that the implication (iii) $\Rightarrow$ (i)
in the following theorem holds true.

\begin{theo}[Classification of d-regular $(p,q)$-differences]\label{d-regulartheo}
Let $p$ and $q$ be monic polynomials of degree $2$ in $\F[t]$. Let $u$ be an endomorphism of a finite-dimensional vector space $V$ over
$\F$. Assume that $u$ is d-regular with respect to $(p,q)$ and set $\delta:=\tr(p)-\tr(q)$.
The following conditions are equivalent:
\begin{enumerate}[(i)]
\item The endomorphism $u$ is a $(p,q)$-difference.
\item The invariant factors of $u$ read $p_1(t^2-\delta t), \dots, p_{2n-1}(t^2-\delta t),p_{2n}(t^2-\delta t),\dots$
where, for every irreducible monic polynomial $r \in \F[t]$
that has Type $2$ with respect to $(p,q)$ and every positive integer $n$ the polynomials $p_{2n-1}$ and $p_{2n}$ have the
same valuation with respect to $r$.
\item There is a basis of $V$ in which $u$ is represented by a block-diagonal matrix in which every diagonal block
equals either $C\bigl(r^n(t^2-\delta t)\bigr)$ for some irreducible monic polynomial $r \in \F[t]$ of Type $1$ with respect to $(p,q)$
and some $n \in \N^*$,
or $C\bigl(r^n(t^2-\delta t)\bigr) \oplus C\bigl(r^n(t^2-\delta t)\bigr)$ for some irreducible monic polynomial $r \in \F[t]$
and some $n \in \N^*$.
\end{enumerate}
\end{theo}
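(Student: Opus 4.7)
The plan is to establish three implications: (iii) $\Rightarrow$ (i) $\Rightarrow$ (ii) $\Rightarrow$ (iii). The first is immediate: each single block $C(r^n(t^2-\delta t))$ with $r$ of Type $1$ is a $(p,q)$-difference by Lemma \ref{dType1blocklemma}, while each doubled block $C(r^n(t^2-\delta t)) \oplus C(r^n(t^2-\delta t))$ is one by the Duplication Lemma \ref{dduplicationlemma}.

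For (ii) $\Rightarrow$ (iii), I would factor each invariant factor as $p_i = \prod_{r} r^{v_i(r)}$ in $\F[t]$. Any common irreducible factor of $r(t^2-\delta t)$ and $r'(t^2-\delta t)$ with $r \neq r'$ would produce a common root of $r$ and $r'$ in $\overline{\F}$, so these substituted factors are pairwise coprime and
\[
C\bigl(p_i(t^2-\delta t)\bigr) \simeq \bigoplus_r C\bigl(r^{v_i(r)}(t^2-\delta t)\bigr).
\]
D-regularity of $u$ forces every $r$ contributing non-trivially to be of Type $1$ or $2$. The Type $1$ contributions already have the shape prescribed by (iii); the Type $2$ contributions group, thanks to the hypothesis $v_{2k-1}(r) = v_{2k}(r)$, into consecutive pairs of identical blocks, yielding the doubled blocks of (iii).

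The heart of the proof is (i) $\Rightarrow$ (ii). Write $u = a - b$ with $p(a) = q(b) = 0$; Proposition \ref{basicd-regularreduction} then supplies invariant factors of the form $p_i(t^2-\delta t)$. Fix a Type $2$ irreducible $r$, and set $v := u^2 - \delta u$ and $c := p(0) + q(0)$. By the Commutation Lemma \ref{dcommutelemma}, both $a$ and $b$ commute with $v$; hence they stabilise the primary component $V_r := \Ker r(v)^\infty$ and each step of the filtration $W_k := \Ker r(v)^k$. On every quotient $W_k/W_{k-1}$, the operator $v$ acts as the class $\overline{t}$ of $t$ in $\L := \F[t]/(r)$, so the induced $\L$-linear operators $\overline{a}, \overline{b}$ satisfy $p(\overline{a}) = q(\overline{b}) = 0$ and $\overline{a}\,\overline{b} + \overline{b}\,\overline{a} = \mu\overline{a} + \lambda\overline{b} - (\overline{t}+c)\,\id$, realising $W_k/W_{k-1}$ as a module over $\calA := \calW(p,q,\overline{t}+c)_\L$. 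By the Type $2$ hypothesis together with Corollary \ref{structureofWcor}, $\calA$ is a skew field of $\L$-dimension $4$, so $\dim_\L(W_k/W_{k-1})$ is divisible by $4$ for every $k \geq 1$.

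It remains to convert this divisibility into the claim about invariant factors. Each cyclic summand $V^{(i)} \simeq \F[t]/\bigl(p_i(t^2-\delta t)\bigr)$ of the $\F[u]$-module $V$ is, as $\F[v]$-module, free of rank $2$ over $\F[v]|_{V^{(i)}} = \F[t]/(p_i)$: it is generated by the classes of $1$ and $u$, which are $\F[v]$-linearly independent by a dimension count. Its $r$-primary component is therefore $(\F[t]/(r^{e_i}))^{\oplus 2}$ with $e_i := \operatorname{val}_r(p_i)$, and summation yields $\dim_\L(W_k/W_{k-1}) = 2\,\#\{i : e_i \geq k\}$. Divisibility by $4$ then forces $\#\{i : e_i \geq k\}$ to be even for every $k \geq 1$, and combined with monotonicity $e_1 \geq e_2 \geq \cdots$ this yields $e_{2k-1} = e_{2k}$ for all $k$, which is (ii). The principal obstacle is exactly this step: realising each $W_k/W_{k-1}$ as a module over the $4$-dimensional skew field $\calA$ and converting the resulting divisibility-by-$4$ into the $r$-adic pairing of the invariant factors of $u$ through the rank-two freeness of the cyclic $\F[v]$-summands.
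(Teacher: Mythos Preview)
Your proof is correct and follows essentially the same route as the paper: (iii) $\Rightarrow$ (i) via the Duplication Lemma and Lemma \ref{dType1blocklemma}, (ii) $\Leftrightarrow$ (iii) via the coprimeness of the $r(t^2-\delta t)$, and (i) $\Rightarrow$ (ii) by realising each quotient $\Ker r(v)^k/\Ker r(v)^{k-1}$ as a module over the skew field $\calW(p,q,\overline t + p(0)+q(0))_\L$. Your explicit rank-$2$ freeness argument for $V^{(i)}$ over $\F[v]$ is a welcome clarification of the dimension count $\dim_\L(W_k/W_{k-1}) = 2\,\#\{i : e_i \ge k\}$, which the paper states somewhat tersely.
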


Note that this result, combined with the observation that
$C\bigl(r^n(t^2-\delta t)\bigr)$ is d-regular with respect to $(p,q)$ for every monic polynomial $r \in \F[t]$
such that $r(t^2-\delta t)$ has no root in $\Root(p)-\Root(q)$, yields the classification of indecomposable d-regular $(p,q)$-differences as given
in Table \ref{dfigure1}.
Moreover, by using the method from the last part of the proof of Proposition \ref{basicd-regularreduction},
it is easily seen that conditions (ii) and (iii) are equivalent.

In order to conclude on Theorem \ref{d-regulartheo}, it only remains to prove that condition (i) implies condition (ii),
which we shall now do thanks to the structural results on $\calW(p,q,x)_R$.

\begin{proof}[Proof of the implication (i) $\Rightarrow$ (ii)]
Let us assume that $u$ is a $(p,q)$-difference.
Let $r$ be an irreducible monic polynomial of $\F[t]$, with Type 2 with respect to $(p,q)$. Let $n \in \N^*$.
All we need is to prove that, in the canonical form of $u$ from Proposition \ref{basicd-regularreduction},
the number $m$ of diagonal blocks that equal $C\bigl(r^n(t^2-\delta t)\bigr)$ is even.

Let us choose endomorphisms $a$ and $b$ of $V$ such that $u=a-b$ and $p(a)=q(b)=0$.
By the Commutation Lemma (Lemma \ref{dcommutelemma}), we know that $a$ and $b$ commute with $v:=u^2-\delta u$, and hence
all three endomorphisms $a,b,u$ yield endomorphisms $\overline{a}$, $\overline{b}$ and $\overline{u}$
of the vector space $E:=\Ker \bigl(r^n(v)\bigr)/\Ker \bigl(r^{n-1}(v)\bigr)$ such that $\overline{u}=\overline{a}-\overline{b}$, and
$r$ annihilates $\overline{v}:=\overline{u}^2-\delta \overline{u}$.
Again, $\overline{a}$ and $\overline{b}$ commute with $\overline{v}$, and hence
they are endomorphisms of the $\F[\overline{v}]$-module $E$.
Since $r$ is irreducible, we have $\F[\overline{v}] \simeq \F[t]/(r)$, and
$\L:=\F[\overline{v}]$ is a field. We shall write $y:=\overline{v}$, which we see as an element of $\L$.

On the other hand, we see that $2m\deg(r)$ is the dimension of the $\F$-vector space $E$, and hence
$2m$ is the dimension of the $\L$-vector space $E$.

Using the structure of $\L$-vector space, we can write $\overline{u}^2-\delta \overline{u}=y\,\id_E$,
and hence $\overline{a}$ and $\overline{b}$ yield a representation of the $\L$-algebra $\calW\bigl(p,q,y+p(0)+q(0)\bigr)_{\L}$
on the $\L$-vector space $E$. By Corollary \ref{structureofWcor}, the algebra
$\calW\bigl(p,q,y+p(0)+q(0)\bigr)_{\L}$ is a $4$-dimensional skew-field over $\L$, whence the $\L$-vector space
$E$ is isomorphic to a power of $\calW\bigl(p,q,y+p(0)+q(0)\bigr)_{\L}$, and it follows that its dimension over $\L$ is
a multiple of $4$. Therefore, $m$ is a multiple of $2$, which completes the proof.
\end{proof}

Therefore, we have completed the classification of d-regular $(p,q)$-differences.

\subsection{Exceptional $(p,q)$-differences (I): When both $p$ and $q$ are split}\label{d-exceptionalsection0}

Here, we restate the known results on the characterization of d-exceptional $(p,q)$-differences in the
case when both polynomials $p$ and $q$ are split. The three following results are taken from \cite{Bothasquarezero}, \cite{dSPsumoftwotriang} and \cite{dSPidem2} (respectively):

\begin{theo}\label{theoPandQsplitSoleRoot}
Let $p$ and $q$ be split monic polynomials with degree $2$ in $\F[t]$, each with a sole root.
Write $p(t)=(t-x)^2$ and $q(t)=(t-y)^2$.
Then, an endomorphism $u$ of a finite-dimensional vector space over $\F$  is a d-exceptional $(p,q)$-difference if and only if
the characteristic polynomial of $u$ is a power of $t-(x-y)$.
\end{theo}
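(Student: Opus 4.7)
The plan is to treat the two implications separately. The forward direction follows from the definition alone: if $u$ is d-exceptional with respect to $(p,q)$, then every eigenvalue of $u$ in $\overline{\F}$ lies in $\Root(p)-\Root(q)$, and since $p=(t-x)^2$ and $q=(t-y)^2$ each have a single root, this difference set is the singleton $\{x-y\}$. Hence the characteristic polynomial of $u$ must be $(t-(x-y))^{\dim V}$.

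For the converse, I would reduce to the nilpotent case by a translation. Setting $\delta := x-y$, if we can exhibit a splitting $u-\delta\,\id_V=c-d$ with $c^2=d^2=0$, then $a := c+x\,\id_V$ and $b := d+y\,\id_V$ satisfy $p(a)=q(b)=0$ and $u=a-b$, and conversely any such $(p,q)$-difference arises this way. Thus the problem reduces to showing that every nilpotent endomorphism is the difference of two square-zero endomorphisms. Using the primary canonical form for a nilpotent endomorphism, it further suffices to handle a single cyclic nilpotent block $w = C(t^n)$ acting on a basis $(e_1,\dots,e_n)$ by $we_i = e_{i+1}$ (with the convention $e_{n+1}:=0$).

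On this cyclic block I would give an explicit decomposition: define $N_1$ by $N_1 e_i := e_{i+1}$ when $i$ is odd and $N_1 e_i := 0$ otherwise, and $N_2$ by $N_2 e_i := e_{i+1}$ when $i$ is even and $N_2 e_i := 0$ otherwise. Each of $N_1$ and $N_2$ sends any basis vector either to $0$ or to a vector that it then annihilates, so $N_1^2=N_2^2=0$, and by construction $N_1+N_2=w$. Writing $w = N_1 - (-N_2)$ finishes the argument.

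The main subtlety worth flagging is characteristic $2$, where a more classical reduction based on alternating signs along a Jordan chain would collapse. The parity-based splitting proposed above works uniformly in every characteristic and so sidesteps that obstacle cleanly, while no further discussion is needed for the split character of $p$ and $q$ because each has a single root.
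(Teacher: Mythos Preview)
Your proof is correct. The forward direction is immediate from the definition of d-exceptional, and your converse argument via translation to the nilpotent case followed by the parity splitting of each Jordan block is a clean, characteristic-free construction showing that every nilpotent endomorphism is a difference (equivalently, sum) of two square-zero endomorphisms. One tiny omission: in the converse you should also record that the hypothesis on the characteristic polynomial forces $u$ to be d-exceptional (all eigenvalues equal $x-y\in\Root(p)-\Root(q)$); this is trivial but part of the claimed equivalence.

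As for comparison with the paper: the paper does not actually prove this theorem. It is stated in Section~\ref{d-exceptionalsection0} as a known result imported from Botha's classification of sums of two square-zero matrices over an arbitrary field \cite{Bothasquarezero}. Your argument is essentially the standard direct proof of that special case, so it matches the cited source in spirit; the paper itself simply invokes the reference.
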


\begin{theo}\label{theoPandQsplitSoleRootforQ}
Let $p$ and $q$ be split monic polynomials with degree $2$ in $\F[t]$.
Assume that $p$ has two simple roots $x_1$ and $x_2$ and that $q$ has a double root $y$.
Let $u$ be an endomorphism of a finite-dimensional vector space over $\F$.
Then, the following conditions are equivalent:
\begin{enumerate}[(i)]
\item $u$ is a d-exceptional $(p,q)$-difference.
\item $u$ is triangularizable with eigenvalues in $\{x_1-y,x_2-y\}$, and the sequences
$\bigl(n_k(u,x_1-y)\bigr)_{k \geq 1}$ and $\bigl(n_k(u,x_2-y)\bigr)_{k \geq 1}$ are $2$-intertwined.
\end{enumerate}
\end{theo}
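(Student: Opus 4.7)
The plan is to first normalize the problem via a translation, then establish each direction separately. Setting $\xi_i:=x_i-y$ and replacing $(a,b)$ by $(a-y\,\id_V,\,b-y\,\id_V)$, the problem reduces to characterizing endomorphisms $u$ that split as $u=a-b$ where $a$ is diagonalizable with spectrum in $\{\xi_1,\xi_2\}$ and $b^2=0$, with the target characterization becoming: $u$ is triangularizable with eigenvalues in $\{\xi_1,\xi_2\}$ and the sequences $\bigl(n_k(u,\xi_1)\bigr)_{k \geq 1}$ and $\bigl(n_k(u,\xi_2)\bigr)_{k \geq 1}$ are $2$-intertwined.

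For $(i) \Rightarrow (ii)$, the triangularizability part is immediate from d-exceptionality: $F_{p,q}(u)=\bigl((u-\xi_1\,\id_V)(u-\xi_2\,\id_V)\bigr)^2$ is nilpotent, and because the two factors commute and $\xi_1\ne\xi_2$, the minimal polynomial of $u$ divides some power of $(t-\xi_1)(t-\xi_2)$. For the $2$-intertwining, I would apply the Commutation Lemma to deduce that $a$ and $b$ both commute with the nilpotent operator $N:=(u-\xi_1\,\id_V)(u-\xi_2\,\id_V)$, hence preserve the filtration $(\Ker N^k)_{k\ge 0}$. Decomposing $V=W_1\oplus W_2$ along the eigenspaces of $a$ and writing $b$ in block form, the identity $b^2=0$ together with the commutation of $b$ with $N$ should translate, on the successive quotients $\Ker N^{k+1}/\Ker N^k$, into rank inequalities that force $n_{k+2}(u,\xi_1)\le n_k(u,\xi_2)$ and $n_{k+2}(u,\xi_2)\le n_k(u,\xi_1)$; the shift by $2$ reflects the square-zero nature of $b$.

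For $(ii) \Rightarrow (i)$, I would argue that every $u$ satisfying (ii) is similar to a direct sum of the indecomposable matrices listed in Table \ref{dfigure4}. The $2$-intertwining assumption permits us to pair each Jordan block of $u$ at $\xi_1$ with a Jordan block of $u$ at $\xi_2$ whose size differs by at most $2$ (with one of the two blocks possibly being trivial); for each such paired block, I would exhibit explicit matrices $a$ (diagonalizable, with spectrum in $\{\xi_1,\xi_2\}$) and $b$ (square-zero) whose difference is similar to the corresponding companion matrix, possibly making use of Lemma \ref{dblockmatrixlemma} to identify the result with the desired companion form.

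The main obstacle is the derivation of the $2$-intertwining in $(i) \Rightarrow (ii)$. Extracting the precise shift-by-$2$ requires tracking, at each step of the filtration by $(\Ker N^k)_{k\ge 0}$, how the square-zero operator $b$ shuffles the $W_1$-contribution and the $W_2$-contribution of each graded piece; the block-cyclic matrix results announced in the appendix appear to be the right tool to organize this rank analysis uniformly across all graded pieces and to reassemble the pointwise information into the required sequence inequalities.
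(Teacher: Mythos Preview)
The paper does not actually prove this theorem: it is stated in Section~\ref{d-exceptionalsection0} as one of three results quoted verbatim from the earlier literature, this one from \cite{dSPsumoftwotriang}. So there is no ``paper's own proof'' to compare against directly. That said, the paper does prove the exact multiplicative analogue, Theorem~\ref{qtheoPandQsplitSoleRootforQ}, and your proposal can be usefully compared to that argument.

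Your outline of $(ii)\Rightarrow(i)$ is correct and matches the standard approach: once the Jordan data are $2$-intertwined, one pairs blocks and exhibits explicit $(a,b)$, essentially as in Lemmas~\ref{splitcompanionlemma1} and~\ref{splitcompanionlemma2} (or their additive analogues).

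For $(i)\Rightarrow(ii)$ your setup is right (normalize by translation, use the Commutation Lemma to get that $a$ and $b$ preserve the filtration by $\Ker N^k$, pass to graded pieces), but the tool you reach for at the end is the wrong one. The appendix result, Proposition~\ref{blockcyclicprop}, computes the invariant factors of a very specific block-Jordan matrix; it is a \emph{recognition} lemma used in the constructive direction, not a device for extracting rank inequalities from an arbitrary decomposition $u=a-b$. What is actually needed is the mechanism visible in the paper's proof of the quotient analogue: reduce to a local situation (there, $u$ annihilated by $(t-z)^3(t-z')^3$), write $b$ in block form relative to the two characteristic subspaces, exploit the commutation with $N$ to relate the off-diagonal blocks to $N$ and $N'$, and then invoke a Wang-type lemma (Lemma~\ref{WangLemma}) to convert a factorization of $N^2$ through the off-diagonal block into the inequality $n_3(N,0)\le n_1(N',0)$. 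The square-zero condition on $b$ is precisely what produces the factorization $N^2=(\text{off-diag})\cdot(\text{off-diag})$ and hence the shift by~$2$. Your sketch gestures at this (``the shift by $2$ reflects the square-zero nature of $b$'') but does not supply the actual algebraic step; modeling the argument on Lemma~\ref{2intertwinedSpecialLemma} and Proposition~\ref{2intertwinedGeneralProp} would close the gap.
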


\begin{theo}\label{theoPandQsplitSingleroots}
Let $p$ and $q$ be split monic polynomials with degree $2$ in $\F[t]$, both with simple roots.
Let $u$ be an endomorphism of a finite-dimensional vector space over $\F$.
Set $\delta:=\tr p-\tr q$. Then, the following conditions are equivalent:
\begin{enumerate}[(i)]
\item $u$ is a d-exceptional $(p,q)$-difference.
\item $u$ is triangularizable with eigenvalues in $\Root(p)-\Root(q)$, and
for every $x \in \Root(p)-\Root(q)$ such that $x \neq \delta-x$, the sequences
$\bigl(n_k(u,x)\bigr)_{k \geq 1}$ and $\bigl(n_k(u,\delta-x)\bigr)_{k \geq 1}$ are $1$-intertwined.
\end{enumerate}
\end{theo}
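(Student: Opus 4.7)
The plan is to handle the two implications separately, with both directions organized around the generalized-eigenspace decomposition of $u$ induced by $v:=u^2-\delta u$. By the Commutation Lemma, $a$ and $b$ commute with $v$ whenever $u=a-b$ with $p(a)=q(b)=0$, so this decomposition reduces the analysis to invariant subspaces on which $u$ carries only two paired eigenvalues, related by the involution $z \mapsto \delta - z$ on $\Root(p)-\Root(q)$.

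For the forward implication, write $p(t)=(t-x_1)(t-x_2)$ and $q(t)=(t-y_1)(t-y_2)$, assume $u=a-b$ with $p(a)=0$ and $q(b)=0$, and fix $x \in \Root(p)-\Root(q)$ with $x \neq \delta-x$. Since $F_{p,q}(u)=\Lambda_{p,q}(v)$ and the polynomial $t^2-\delta t+x(\delta-x)$ has simple roots $x$ and $\delta-x$, the generalized eigenspace $W$ of $v$ for the eigenvalue $-x(\delta-x)$ coincides with $V_x \oplus V_{\delta-x}$, where $V_y:=\underset{k \in \N}{\bigcup}\Ker(u-y\,\id_V)^k$, and $W$ is stable under $a$ and $b$. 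Because $p$ and $q$ have simple roots, $a|_W$ and $b|_W$ are diagonalizable over $\F$: $W=\Ker(a-x_1\,\id_V)\oplus\Ker(a-x_2\,\id_V)=\Ker(b-y_1\,\id_V)\oplus\Ker(b-y_2\,\id_V)$. Writing $x=x_i-y_j$, I would construct a linear map $T:V_x \to V_{\delta-x}$ as the composition of left-multiplication by $a-x_i\,\id_V$ with the projection of $W$ onto $V_{\delta-x}$ along $V_x$, and verify that $T$ induces an injection $\Ker(u-x\,\id_V)^{k+1}/\Ker(u-x\,\id_V)^k \hookrightarrow \Ker(u-(\delta-x)\,\id_V)^k/\Ker(u-(\delta-x)\,\id_V)^{k-1}$ for every $k \geq 1$. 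This immediately yields $n_{k+1}(u,x) \leq n_k(u,\delta-x)$; the reverse inequality follows by a symmetric argument using $b$.

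For the converse, I would produce a direct-sum construction. The $1$-intertwining condition precisely permits the Jordan blocks of $u$ associated to a non-self-paired pair $\{x,\delta-x\}$ to be grouped into couples of sizes $(n,n)$, $(n+1,n)$, or $(n,n+1)$, with Jordan blocks corresponding to the self-paired eigenvalues (those satisfying $2x=\delta$) left alone. Each such indecomposable summand appears in Table \ref{dfigure3} as a $(p,q)$-difference, so taking a direct sum of the corresponding building blocks produces endomorphisms $a$ and $b$ satisfying $p(a)=q(b)=0$ and $a-b=u$.

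The main obstacle is the injectivity step in the forward direction: controlling the shift by one in the Jordan filtration requires a delicate analysis of how the eigenspace decompositions of $W$ by $a$ and by $b$ interact with the Jordan filtrations of $u$. Since $a$ and $b$ need not commute, the argument must exploit both the diagonalizability of $a|_W$ and $b|_W$ and the fact that $W$ contains no $u$-eigenvector for the two other differences $x_i-y_{3-j}$ and $x_{3-i}-y_j$. The converse direction, by contrast, amounts to bookkeeping with the already-tabulated building blocks.
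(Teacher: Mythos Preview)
The paper does not prove this theorem; it is quoted from \cite{dSPidem2} as a known result (see the sentence introducing Theorems \ref{theoPandQsplitSoleRoot}--\ref{theoPandQsplitSingleroots}). There is therefore no in-paper proof to compare against directly. However, the paper \emph{does} prove the exact quotient analogue, Theorem \ref{qtheoPandQsplitSingleroots}, and that proof supplies the expected template.

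Your forward direction has a genuine gap at precisely the point you flag as the main obstacle. The map $T=\pi_{\delta-x}\circ(a-x_i\,\id)$ does not, on the information given, shift the Jordan filtration by one: $a$ commutes with $v=(u-x\,\id)(u-(\delta-x)\,\id)$ but not with $u-x\,\id$ itself, so applying $a-x_i\,\id$ to an element of $\Ker(u-x\,\id)^{k+1}\subset\Ker v^{k+1}$ only keeps you in $\Ker v^{k+1}$, and projecting to $V_{\delta-x}$ lands in $\Ker(u-(\delta-x)\,\id)^{k+1}$, not $\Ker(u-(\delta-x)\,\id)^{k}$. Neither the diagonalizability of $a_{|W}$ and $b_{|W}$ nor the absence of ``wrong'' common eigenvectors gives you that extra drop of one; you would need an additional identity relating $(a-x_i\,\id)$ to $(u-x\,\id)$ on $W$, and none is supplied. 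As written, the injectivity claim is an assertion, not an argument.

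The route taken in the paper for the quotient version is computational rather than geometric. One first reduces, via the Commutation Lemma and passage to the quotient $\Ker w^{k+2}/\Ker w^{k}$ for $w=(u-z\,\id)(\id-z'u^{-1})$, to the situation where $(u-z\,\id)^2(u-z'\,\id)^2=0$ (Proposition \ref{1intertwinedGeneralProp}). In that special case (Lemma \ref{1intertwinedSpecialLemma}) one writes $u$ and $b$ in $2\times 2$ block form relative to $V_z\oplus V_{z'}$, uses commutation to force $B_1$ to commute with the nilpotent part $N$ and to obtain $\Ker(NB_3)=\Ker(B_3 N')$, then computes $q(B_1)$ explicitly as a nonzero scalar multiple of $N$. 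Wang's Lemma (Lemma \ref{WangLemma}) with $k=1$ then gives $n_2(N,0)\le n_1(N',0)$, i.e.\ the desired $1$-intertwining inequality. The difference case in \cite{dSPidem2} follows the same pattern with $a-b$ in place of $ab^{-1}$; adapting that computation would close your gap.

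Your converse direction is correct and is exactly what Table \ref{dfigure3} records.
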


\subsection{Exceptional $(p,q)$-differences (II): When $p$ is irreducible but $q$ is not}\label{d-exceptionalsectionI}

In this section and the following ones, we fix two monic polynomials $p$ and $q$ of degree $2$ in $\F[t]$.
We set $\delta:=\tr p-\tr q$.

Here, we assume that $p$ is irreducible but not $q$.
We split $q(t)=(t-y_1)(t-y_2)$ with $y_1$ and $y_2$ in $\F$.
Then, $F_{p,q}(t)=p(t+y_1)p(t+y_2)$, the polynomials $p(t+y_1)$ and $p(t+y_2)$
are monic with degree $2$ in $\F[t]$ and they are irreducible.

Hence:
\begin{itemize}
\item Either $p(t+y_1) \neq p(t+y_2)$, in which case an endomorphism of
a finite-dimensional vector space over $\F$ is d-exceptional with respect to $(p,q)$ if and only if
the irreducible monic divisors of its minimal polynomial belong to $\bigl\{p(t+y_1),p(t+y_2)\bigr\}$;
\item Or $p(t+y_1) = p(t+y_2)$, in which case an endomorphism of
a finite-dimensional vector space over $\F$ is d-exceptional with respect to $(p,q)$ if and only if
its minimal polynomial is a power of $p(t+y_1)$.
\end{itemize}

Note that, given $\lambda \in \F$, one has $p(t+\lambda)=p(t)$ if and only if $\lambda=0$
or $\car(\F)=2$ and $\lambda=\tr(p)$. Hence,
$p(t+y_1)=p(t+y_2)$ if and only if $y_1=y_2$ or $\car(\F)=2$ and $\tr(p)=\tr(q)$.

\subsubsection{A common lemma}

\begin{lemma}\label{commonQsplitsnotP}
Let $p$ be a  monic polynomial with degree $2$ over $\F$, and let $y_1$ and $y_2$ be scalars in $\F$.
Set $q:=(t-y_1)(t-y_2)$. \\
Then, for all $n \in \N$, the companion matrices $C\bigl(p(t+y_1)^{n+1}\,p(t+y_2)^n\bigr)$ and $C\bigl(p(t+y_1)^{n}\,p(t+y_2)^{n}\bigr)$
are $(p,q)$-differences.
\end{lemma}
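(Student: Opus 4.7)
The plan is to produce explicit pairs $(A,B)$ satisfying $p(A)=q(B)=0$ whose difference realizes each of the two companion matrices. In both cases the construction takes place in $\Mat_2(R)$ for a well-chosen commutative $\F$-algebra $R$, very much in the spirit of the Duplication Lemma's proof but with $R^2$ playing the role of $\calW(p,q,x)$. The key observation, made possible by the Cayley--Hamilton theorem for $2\times 2$ matrices over a commutative ring, is that $p(M)=0$ for $M\in \Mat_2(R)$ is equivalent to the two scalar identities $\tr M=\tr p$ and $\det M=p(0)$ (with the analogous statement for $q$).

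For the companion matrix $C\bigl(p(t+y_1)^n p(t+y_2)^n\bigr)$, I would set $R:=\F[C(\Lambda_{p,q}^n)]$ (isomorphic to $\F[t]/(\Lambda_{p,q}^n)$) and let $v\in R$ be the class of $C(\Lambda_{p,q}^n)$. Consider
$$A:=\begin{pmatrix} y_1 & -p(y_1) \\ 1 & \lambda-y_1 \end{pmatrix}, \qquad B:=\begin{pmatrix} y_1 & -p(y_1)-v \\ 0 & y_2 \end{pmatrix}\in \Mat_2(R).$$
A one-line calculation gives $\tr A=\lambda$, $\det A=p(0)$, $\tr B=y_1+y_2=\tr q$, $\det B=y_1y_2=q(0)$, whence $p(A)=0$ and $q(B)=0$. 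One then reads off $A-B=\begin{pmatrix}0 & v \\ 1 & \delta\end{pmatrix}$, and viewing this as an $\F$-linear endomorphism of $R^2\simeq \F^{4n}$ we apply Lemma \ref{dblockmatrixlemma} with $x=0$, $y=\delta$, $r=\Lambda_{p,q}^n$: the result is $A-B\simeq C\bigl(\Lambda_{p,q}^n(t^2-\delta t)\bigr)=C(F_{p,q}^n)=C\bigl(p(t+y_1)^n p(t+y_2)^n\bigr)$, as required.

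For the companion matrix $C\bigl(p(t+y_1)^{n+1} p(t+y_2)^n\bigr)$ the situation is subtler, because this polynomial is \emph{not} of the form $r(t^2-\delta t)$ in general (such a form would force the $p(t+y_1)$ and $p(t+y_2)$ factors to appear with equal multiplicities). I would handle the base case $n=0$ directly by taking $b:=y_1 I_2$ (which is annihilated by $q$) and $a:=b+C(p(t+y_1))$, since $a-y_1 I_2$ has minimal polynomial $p(t+y_1)$, whence $p(a)=0$. For $n\geq 1$, I would combine the first construction (on a subspace $V_0$ of dimension $4n$, giving $A_0-B_0\simeq C(p(t+y_1)^n p(t+y_2)^n)$) with a two-dimensional extension $V_1=\F^2$ housing a copy of $C(p(t+y_1))$, taking
$$a:=\begin{pmatrix} A_0 & X \\ 0 & y_1 I_2+C(p(t+y_1)) \end{pmatrix}, \quad b:=\begin{pmatrix} B_0 & Y \\ 0 & y_1 I_2 \end{pmatrix},$$
with $X$ and $Y$ satisfying the Sylvester-type intertwining conditions $A_0 X=X\bigl((\lambda-y_1)I-C(p(t+y_1))\bigr)$ and $\bigl(B_0-y_2 I\bigr)Y=0$ (which are equivalent, respectively, to $p(a)=0$ and $q(b)=0$).

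The hard part is choosing $X,Y$ so that the off-diagonal block $X-Y$ of $u:=a-b$ is a ``linking'' map producing a cyclic vector: concretely, one needs $u$ to have minimal (equivalently, characteristic) polynomial $p(t+y_1)^{n+1}p(t+y_2)^n$, which requires increasing by one the $p(t+y_1)$-primary part coming from $A_0-B_0$. This amounts to showing that the relevant Sylvester equation $U_0 Z-Z\,C(p(t+y_1))=X-Y$ has \emph{no} solution $Z$, i.e.\ that $X-Y$ lies outside the image of the Sylvester map, which has codimension exactly $\deg p=2$ in $\Hom(V_1,V_0)$. Carrying this through explicitly, with the aid of the block-cyclic result stated in the appendix, yields the desired cyclic $u$ and therefore realises $C\bigl(p(t+y_1)^{n+1}p(t+y_2)^n\bigr)$ as a $(p,q)$-difference.
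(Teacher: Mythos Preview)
Your treatment of the even case $C\bigl(p(t+y_1)^{n}p(t+y_2)^{n}\bigr)$ is correct: the $\Mat_2(R)$ construction with $R=\F[t]/(\Lambda_{p,q}^n)$, combined with Cayley--Hamilton and Lemma~\ref{dblockmatrixlemma}, does the job. (A minor quibble: $\tr M=\tr p$ and $\det M=p(0)$ are \emph{sufficient} for $p(M)=0$ over a commutative ring, not equivalent to it---consider scalar matrices---but you only use sufficiency.)

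The odd case, however, is not proved. You correctly set up the block-triangular extension and the intertwining constraints on $X$ and $Y$, and you correctly observe that cyclicity of $u$ is equivalent to $X-Y$ lying outside the image of the Sylvester operator $Z\mapsto U_0 Z - Z\,C(p(t+y_1))$. But the sentence ``Carrying this through explicitly \dots\ yields the desired cyclic $u$'' is exactly where the content should be: you never exhibit a concrete pair $(X,Y)$, nor verify that any such pair satisfies the non-solvability condition. Proposition~\ref{blockcyclicprop} in the appendix concerns a different block pattern (diagonal copies of a single $N$ with identity subdiagonals) and does not apply to your $u$. So the odd case is a sketch with the hard step missing.

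The paper sidesteps all of this with a single uniform construction. For each $s\ge 1$ it takes
\[
A_s=\bigoplus_{k=1}^{s}\bigl(C(p(t+y_k))+y_k I_2\bigr),\qquad
B_s=\begin{bmatrix}
y_1 I_2 & & & \\
-K & y_2 I_2 & & \\
 & \ddots & \ddots & \\
 & & -K & y_s I_2
\end{bmatrix},
\]
with $K=E_{1,2}$ and $(y_k)$ the $2$-periodic extension of $(y_1,y_2)$. Then $p(A_s)=q(B_s)=0$ is immediate, and $A_s-B_s$ is block lower-bidiagonal with diagonal blocks $C(p(t+y_k))$ and subdiagonal blocks $K$: the first standard basis vector is visibly cyclic, and the characteristic polynomial is $\prod_{k=1}^{s} p(t+y_k)$. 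Taking $s=2n$ and $s=2n+1$ gives both companion matrices at once, with no extension argument and no Sylvester analysis. Your even-case route is a legitimate alternative to the paper's $s=2n$ instance, but for the odd case the paper's construction is both simpler and complete.
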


\begin{proof}
Let $s$ be a non-negative integer.
We extend $(y_1,y_2)$ into a $2$-periodical sequence $(y_k)_{k \geq 1}$.
Set $K:=\begin{bmatrix}
0 & 1 \\
0 & 0
\end{bmatrix} \in \Mat_2(\F)$.
Set $$A_s:=\bigl(C\bigl(p(t+y_1)\bigr)+ y_1 I_2\bigr) \oplus \cdots \oplus \bigl(C\bigl(p(t+y_s)\bigr)+y_s I_2\bigr) \in \Mat_{2s}(\F)$$
and
$$B_s:=\begin{bmatrix}
y_1 I_2 & 0 & \cdots & \cdots & (0) \\
-K & y_2 I_2 & \ddots & & \vdots \\
0 & \ddots & \ddots & \ddots & \vdots \\
\vdots & & \ddots & y_{s-1} I_2 & 0 \\
(0) & \cdots & 0 & -K & y_s I_2
\end{bmatrix} \in \Mat_{2s}(\F).$$
Note that $p(A_s)=0=q(B_s)$ and that
$$A_s-B_s= \begin{bmatrix}
C\bigl(p(t+y_1)\bigr) & 0 & \cdots & \cdots & (0) \\
K & C\bigl(p(t+y_2)\bigr) & \ddots & & \vdots \\
0 & \ddots & \ddots & \ddots & \vdots \\
\vdots & & \ddots & C\bigl(p(t+y_{s-1})\bigr) & 0 \\
(0) & \cdots & 0 & K & C\bigl(p(t+y_{s})\bigr)
\end{bmatrix}.$$
Hence, we recognize that $A_s-B_s$ is cyclic with characteristic polynomial $\underset{k=1}{\overset{s}{\prod}} p(t+y_k)$.
If $s$ is even, this characteristic polynomial equals $p(t+y_1)^{s/2} p(t+y_2)^{s/2}$;
otherwise it equals $p(t+y_1)^{(s+1)/2} p(t+y_2)^{(s-1)/2}$.
Varying $s$ yields the claimed result.
\end{proof}

\subsubsection{The case when $p(t+y_1)=p(t+y_2)$}

The following theorem is an obvious consequence of Lemma \ref{commonQsplitsnotP} and of the considerations of the start of Section
\ref{d-exceptionalsectionI}:

\begin{theo}\label{theoQsplitswithdouble}
Let $p$ and $q$ be monic polynomials with degree $2$ of $\F[t]$.
Assume that $p$ is irreducible over $\F$ and that $q$ splits over $\F$.
Assume furthermore that $q$ has a double root or that $\car(\F)=2$ and $\tr(q)=\tr(p)$.
Choose a root $y$ of $q$ in $\F$.
Then, given an endomorphism $u$ of a finite-dimensional vector space $V$ over $\F$, the following conditions are equivalent:
\begin{enumerate}[(i)]
\item $u$ is a d-exceptional $(p,q)$-difference.
\item The minimal polynomial of $u$ is a power of $p(t+y)$.
\item In some basis of $V$, the endomorphism $u$ is represented by a block-diagonal matrix in which every diagonal block
equals $C\bigl(p(t+y)^n\bigr)$ for some positive integer $n$.
\end{enumerate}
\end{theo}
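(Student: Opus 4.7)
The plan is to prove the three implications (iii) $\Rightarrow$ (i) $\Rightarrow$ (ii) $\Rightarrow$ (iii), and nearly all the work has already been done: Lemma \ref{commonQsplitsnotP} provides the building blocks for (iii) $\Rightarrow$ (i), the discussion at the beginning of Section \ref{d-exceptionalsectionI} yields (i) $\Rightarrow$ (ii), and the primary canonical form theorem handles (ii) $\Leftrightarrow$ (iii).

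First I would observe that under the standing hypotheses, $p(t+y_1)=p(t+y_2)$ for the two (possibly equal) roots $y_1,y_2$ of $q$ in $\F$, because either $y_1=y_2=y$, or $\car(\F)=2$ and $\tr(p)=\tr(q)$, in which case an easy coefficient computation shows that $p(t+y_1)-p(t+y_2)=2(y_1-y_2)\,t+(y_1-y_2)(y_1+y_2-\tr p)=0$. Either way, $p(t+y_1)=p(t+y_2)=p(t+y)$, and this common polynomial is monic, irreducible of degree $2$ in $\F[t]$.

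For (iii) $\Rightarrow$ (i): I would apply Lemma \ref{commonQsplitsnotP} directly. In our situation the two cases of that lemma collapse to a single statement: for every $n \in \N$, the companion matrices $C\bigl(p(t+y)^{2n}\bigr)$ and $C\bigl(p(t+y)^{2n+1}\bigr)$ are $(p,q)$-differences. Consequently $C\bigl(p(t+y)^m\bigr)$ is a $(p,q)$-difference for every $m \in \N^*$. Since the direct sum of finitely many $(p,q)$-differences is again a $(p,q)$-difference, a matrix of the block-diagonal form described in (iii) is a $(p,q)$-difference; carried over to $V$ via the given basis, this shows that $u$ itself is a $(p,q)$-difference, and it is automatically d-exceptional because its minimal polynomial is then a power of $p(t+y)$ and every root of $p(t+y)$ in $\overline{\F}$ lies in $\Root(p)-\Root(q)$.

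For (i) $\Rightarrow$ (ii): The discussion preceding the theorem shows that a d-exceptional endomorphism with respect to $(p,q)$ has minimal polynomial whose irreducible monic divisors lie in $\bigl\{p(t+y_1),p(t+y_2)\bigr\}$; under our hypotheses this set is the singleton $\{p(t+y)\}$, so the minimal polynomial of $u$ is a power of $p(t+y)$.

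For (ii) $\Rightarrow$ (iii): This is immediate from the primary canonical form theorem recalled in the introduction. Since $p(t+y)$ is an irreducible monic polynomial of $\F[t]$, condition (ii) forces every elementary invariant of $u$ to be a power of $p(t+y)$, and such a canonical decomposition yields a basis of $V$ in which $u$ is represented by a block-diagonal matrix whose diagonal blocks all have the form $C\bigl(p(t+y)^n\bigr)$ with $n \in \N^*$.

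No single step is a real obstacle here: the content is entirely packaged in Lemma \ref{commonQsplitsnotP} and in the preliminary remark identifying the irreducible divisors of $F_{p,q}$. The only point that deserves a brief verification is the claim $p(t+y_1)=p(t+y_2)$, which is the content of the characteristic-$2$ subtlety highlighted just before the theorem.
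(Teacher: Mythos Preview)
Your proof is correct and follows exactly the approach the paper intends: the paper itself merely declares the theorem ``an obvious consequence of Lemma \ref{commonQsplitsnotP} and of the considerations of the start of Section \ref{d-exceptionalsectionI},'' and you have spelled out precisely those two ingredients together with the primary canonical form. Your explicit verification that $p(t+y_1)=p(t+y_2)$ under the hypotheses, and your remark that the resulting $u$ is automatically d-exceptional, are the only details worth writing down, and you have handled them correctly.
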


\subsubsection{The case when $p(t+y_1) \neq p(t+y_2)$}

\begin{theo}
Let $p$ and $q$ be monic polynomials with degree $2$ of $\F[t]$.
Assume that $p$ is irreducible over $\F$ and that $q$ splits over $\F$ with distinct roots $y_1$ and $y_2$.
Assume furthermore that if $\car(\F)=2$ then $\tr(p) \neq \tr(q)$. Set $\delta:=\tr p-\tr q$.
Let $u$ be an endomorphism of  a finite-dimensional vector space $V$ over $\F$. The following conditions are then equivalent:
\begin{enumerate}[(i)]
\item $u$ is a  d-exceptional $(p,q)$-difference.
\item The irreducible monic divisors of the minimal polynomial of $u$ belong to $\bigl\{p(t+y_1),p(t+y_2)\bigr\}$.
Moreover, in writing the invariant factors of $u$ as $r_1=p(t+y_1)^{\alpha_1}p(t+y_2)^{\beta_1},\dots,r_k=p(t+y_1)^{\alpha_k}p(t+y_2)^{\beta_k},\dots$, we have $|\alpha_k-\beta_k| \leq 1$ for all $k \in \N^*$.

\item In some basis of $V$, the endomorphism $u$ is represented by a block-diagonal matrix in which every diagonal block
equals $C\bigl(p(t+y_1)^n\bigr) \oplus C\bigl(p(t+y_2)^n\bigr)$,
$C\bigl(p(t+y_1)^n\bigr) \oplus C\bigl(p(t+y_2)^{n-1}\bigr)$
or $C\bigl(p(t+y_1)^{n-1}\bigr) \oplus C\bigl(p(t+y_2)^n\bigr)$ for some positive integer $n$.
\end{enumerate}
\end{theo}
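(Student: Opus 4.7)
My plan is to establish (iii) $\Rightarrow$ (i), (ii) $\Leftrightarrow$ (iii), and (i) $\Rightarrow$ (ii), the last being the substantive step. The first follows directly from Lemma \ref{commonQsplitsnotP}. For (ii) $\Leftrightarrow$ (iii), note that $p(t+y_1)$ and $p(t+y_2)$ are coprime, distinct monic irreducibles, so the Chinese Remainder Theorem yields
$$C\bigl(p(t+y_1)^{\alpha}\,p(t+y_2)^{\beta}\bigr) \simeq C\bigl(p(t+y_1)^{\alpha}\bigr) \oplus C\bigl(p(t+y_2)^{\beta}\bigr).$$
When the invariant factors satisfy $|\alpha_k-\beta_k|\leq 1$, each $C(r_k)$ so decomposes into exactly one of the three block shapes listed in (iii); conversely, reading the elementary divisors off any block-diagonal form of the prescribed type and re-assembling the invariant factors produces a sequence with $|\alpha_k-\beta_k|\leq 1$.

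For (i) $\Rightarrow$ (ii), I would extend scalars to the splitting field $\L$ of $p$ over $\F$, write $p(t)=(t-x_1)(t-x_2)$ in $\L[t]$, and observe that the extended endomorphism $u_\L$ is still a d-exceptional $(p,q)$-difference, now with both $p$ and $q$ split over $\L$. I then separate the separable and inseparable cases for $p$. In the separable case, the four differences $x_i-y_j$ are pairwise distinct (if $x_1-y_1=x_2-y_2$ then $x_1-x_2=y_1-y_2\in\F$, forcing $x_1,x_2\in\F$, a contradiction), so Theorem \ref{theoPandQsplitSingleroots} applied over $\L$ gives $1$-intertwining of $\bigl(n_k(u_\L,x)\bigr)_k$ and $\bigl(n_k(u_\L,\delta-x)\bigr)_k$ for each of the pairs $\{x_1-y_1,x_2-y_2\}$ and $\{x_1-y_2,x_2-y_1\}$. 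Galois descent, applied to the non-trivial $\F$-automorphism $x_1\leftrightarrow x_2$ of $\L$, forces $n_k(u_\L,x_1-y_j)=n_k(u_\L,x_2-y_j)=:c_k^{(j)}$, so the intertwining condition collapses to $c_{k+1}^{(1)}\leq c_k^{(2)}$ and $c_{k+1}^{(2)}\leq c_k^{(1)}$. Over $\L$, the invariant factor $r_k=p(t+y_1)^{\alpha_k}p(t+y_2)^{\beta_k}$ contributes Jordan blocks of size $\alpha_k$ at each of $x_1-y_1$ and $x_2-y_1$, and of size $\beta_k$ at each of $x_1-y_2$ and $x_2-y_2$; hence $(c_k^{(1)})$ and $(c_k^{(2)})$ are the conjugate partitions of $(\alpha_k)$ and $(\beta_k)$, and an elementary partition-combinatorics check shows that $1$-intertwining of the conjugates is equivalent to $|\alpha_k-\beta_k|\leq 1$ for all $k$.

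In the inseparable case, $\car(\F)=2$ and $p=(t-x)^2$ over $\L$; here I would apply Theorem \ref{theoPandQsplitSoleRootforQ}, which gives $2$-intertwining of $\bigl(n_k(u_\L,x-y_1)\bigr)_k$ and $\bigl(n_k(u_\L,x-y_2)\bigr)_k$. Since $C\bigl(p(t+y_i)^{\alpha}\bigr)$ is cyclic over $\L$ with characteristic polynomial $(t-(x-y_i))^{2\alpha}$, it is a single Jordan block of size $2\alpha$ over $\L$, so the $\L$-conjugate partitions are the entrywise-doubled duals of $(\alpha_k)$ and $(\beta_k)$. Then $2$-intertwining of the doubled sequences is equivalent to $1$-intertwining of the original duals, which again gives $|\alpha_k-\beta_k|\leq 1$, uniformly with the separable case.

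The main obstacle will be the bookkeeping in (i) $\Rightarrow$ (ii): one must carefully translate between Jordan data over $\L$ attached to the eigenvalues $x_i-y_j$ and invariant-factor exponents over $\F$, using Galois descent to pair conjugate eigenvalues and, in the inseparable case, the doubling of block sizes that arises on extending $C(p(t+y_i)^\alpha)$ to $\L$. Once this dictionary is in place, the intertwining conditions delivered by the split-case theorems of Section \ref{d-exceptionalsection0} translate cleanly into the bound $|\alpha_k-\beta_k|\leq 1$.
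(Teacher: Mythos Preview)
Your approach is correct and is essentially the paper's: (iii) $\Rightarrow$ (i) via Lemma~\ref{commonQsplitsnotP}, (ii) $\Rightarrow$ (iii) by elementary rearrangement, and (i) $\Rightarrow$ (ii) by extending scalars to the splitting field $\L$ of $p$, then invoking the split-case Theorems~\ref{theoPandQsplitSingleroots} and~\ref{theoPandQsplitSoleRootforQ} and translating the resulting intertwining conditions back to the exponents $\alpha_k,\beta_k$.

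Two small points need tightening. First, in the separable case your parenthetical ``$x_1-x_2=y_1-y_2\in\F$ forces $x_1,x_2\in\F$'' fails when $\car(\F)=2$ (there $x_1-x_2=\tr p$ is already in $\F$); the paper handles both characteristics at once by noting that $p$ and $q$ have distinct discriminants, which directly gives $x_1-y_1\neq x_2-y_2$. Second, in the inseparable case Theorem~\ref{theoPandQsplitSoleRootforQ} is stated for $p$ with simple roots and $q$ with a double root, which is the opposite of your situation over $\L$; one must apply it to $-u^\L$ and the swapped pair $(q,p)$, as the paper does. With these two fixes your argument goes through, and the detour through Galois descent and conjugate partitions, while correct, is not needed: the paper simply identifies $n_k(u^\L,x_1-y_1)=a_k$ and $n_k(u^\L,x_2-y_2)=b_k$ directly.
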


\begin{proof}
Lemma \ref{commonQsplitsnotP} shows that condition (iii) implies condition (i).
Moreover, it is also clear that condition (ii) implies condition (iii).

Now, we assume that property (i) holds, and we aim at proving property (ii).
Note first that $p$ and $q$ have distinct discriminants.
Indeed, if $\car(\F) \neq 2$ then this comes from the assumption that
$p$ is irreducible and $q$ is not; Otherwise this comes from the assumption that $\tr p \neq \tr q$.

For $k \in \N^*$, denote by $a_k$ (respectively, by $b_k$)
the number of elementary factors of $u$ of the form $p(t+y_1)^l$ (respectively, of the form $p(t+y_2)^l$) for some integer $l \geq k$.
Let us temporarily assume that $(a_k)_{k \geq 1}$ and $(b_k)_{k \geq 1}$ are $1$-intertwined.
Then, we claim that condition (ii) holds.
Indeed, since $u$ is d-exceptional with respect to $(p,q)$ we already know that the monic irreducible divisors of its minimal polynomial
are among $p(t+y_1)$ and $p(t+y_2)$.
Next, denote by $r_1,\dots,r_k,\dots$ the invariant factors of $u$, and further write
$r_k=p(t+y_1)^{\alpha_k}p(t+y_2)^{\beta_k}$ for some non-negative integers $\alpha_k$ and $\beta_k$.
Assume that, for some positive integer $k$, we have $\alpha_k>\beta_k+1$. Setting $s:=\beta_k+2$, we get that
$$a_s \geq k \quad \text{and} \quad b_{s-1}<k,$$
which contradicts the assumption that $(a_i)_{i \geq 1}$ and $(b_i)_{i \geq 1}$ be $1$-intertwined. Hence, $\alpha_k \leq \beta_k+1$, and likewise
$\beta_k \leq \alpha_k+1$. Hence condition (ii) holds.

It remains to prove that $(a_k)_{k \geq 1}$ and $(b_k)_{k \geq 1}$ are $1$-intertwined.
To do so, we need to distinguish between two cases.
In both of them, we denote by $\L$ the splitting field of $p$ (as defined by $\L:=\F[t]/(p)$) and
we consider the $\L$-vector space $V^{\L}:=V \otimes_{\F} \L$ and the endomorphism
$u^{\L}$ of $V^{\L}$ deduced from $u$ by extending the field of scalars. Note that $u^{\L}$ is a $(p,q)$-difference
in the algebra of all endomorphisms of $V^\L$.

\begin{itemize}
\item Assume first that $p$ has two distinct roots $x_1$ and $x_2$ in $\L$. \\
Since $p$ and $q$ have distinct discriminants, we have $x_1-y_1 \neq x_2-y_2$.
It follows that, for all $k \in \N^*$,
$$n_k\bigl(u^\L,x_1-y_1\bigr)=a_k \quad \text{and} \quad n_k\bigl(u^\L,x_2-y_2\bigr)=b_k,$$
and Theorem \ref{theoPandQsplitSingleroots} yields that
the sequences $\Bigl(n_k\bigl(u^\L,x_1-y_1\bigr)\Bigr)_{k \geq 1}$ and $\Bigl(n_k\bigl(u^\L,x_2-y_2\bigr)\Bigr)_{k \geq 1}$
are $1$-intertwined.

\item Assume now that $p$ has a sole root $x$ in its splitting field $\L$ (note that this can only happen
if $\car(\F)=2$). \\
Then, as $x-y_1 \neq x-y_2$, one sees that, for all $k \in \N^*$,
$$n_{2k}\bigl(u^\L,x-y_1\bigr)=a_k \quad \text{and} \quad
n_{2k}\bigl(u^\L,x-y_2\bigr)=b_k.$$
Here, Theorem \ref{theoPandQsplitSoleRootforQ} applies to $-u^{\L}$ and to the pair $(q,p)$, and hence the sequences
$\Bigl(n_k\bigl(u^\L,x-y_1\bigr)\Bigr)_{k \geq 1}$ and
$\Bigl(n_k\bigl(u^\L,x-y_2\bigr)\Bigr)_{k \geq 1}$ are $2$-intertwined.
\end{itemize}
Hence, in any case we see that $(a_k)_{k \geq 1}$ and $(b_k)_{k \geq 1}$ are $1$-intertwined, which completes the proof that condition (i)
implies condition (ii).
\end{proof}

\subsection{Exceptional $(p,q)$-differences (III): When $p$ and $q$ are irreducible with the same splitting field}\label{d-exceptionalsectionII}

Here, we assume that $p$ and $q$ are both irreducible, with the same splitting field $\L$ over $\F$ (in $\overline{\F}$).
Denote by $\sigma$ the non-identity automorphism of $\L$ over $\F$ if $\L$ is separable over $\F$,
otherwise set $\sigma:=\id_\L$. In any case, splitting $p(t)=(t-x_1)(t-x_2)$ and $q(t)=(t-y_1)(t-y_2)$
over $\L$, we find that $\sigma$ exchanges $x_1$ and $x_2$, and that it exchanges $y_1$ and $y_2$.
Hence, $(x_1-y_1)(x_2-y_2)=(x_1-y_1)\sigma(x_1-y_1)=N_{\L/\F}(x_1-y_1)$ and
likewise $(x_1-y_2)(x_2-y_1)=N_{\L/\F}(x_1-y_2)$.
Hence, with $\delta:=\tr(p)-\tr(q)$, we have
$$F_{p,q}(t)=\bigl(t^2-\delta t+N_{\L/\F}(x_1-y_1)\bigr)\bigl(t^2-\delta t+N_{\L/\F}(x_1-y_2)\bigr)$$
and
$$\Lambda_{p,q}(t)=\bigl(t+N_{\L/\F}(x_1-y_1)\bigr)\bigl(t+N_{\L/\F}(x_1-y_2)\bigr).$$
Next, assume that $F_{p,q}$ has a root in $\F$. Then,
we have respective roots $x$ and $y$ of $p$ and $q$, together with some scalar $d\in \F$ such that $x=y+d$.
Hence, $\sigma(x)=\sigma(y)+d$ and it follows that $q(t)=p(t+d)$.

Conversely, assume that $q(t)=p(t+d)$ for some $d \in \F$.
Then, we see that
$$F_{p,q}(t)=(t-d)^2 \bigl((t-d)^2-\Delta\bigr)$$
where $\Delta$ denotes the discriminant of $q$ (that is, $(y_1-y_2)^2$), which equals that of $p$.
Then:
\begin{itemize}
\item Either $\car(\F) \neq 2$, in which case
$(t-d)^2-\Delta=p\Bigl(t-d+\frac{\tr p}{2}\Bigr)$ is irreducible over $\F$.
\item Or $\car(\F)=2$ and $(t-d)^2-\Delta=(t-d-\tr q)^2=(t-d-\tr p)^2$.
\end{itemize}

Finally, let $u$ be an endomorphism of a finite-dimensional vector space $V$ and assume that $u$ is d-exceptional with respect to $(p,q)$.
Then, $v:=u^2-\delta u$ is annihilated by some power of $\Lambda_{p,q}$, which is split over $\F$;
hence, $v$ is triangularizable. In the next section, we study the case when $v$ has a sole eigenvalue.

\subsubsection{When $v$ has a sole eigenvalue}

\begin{lemma}\label{CNsamesplitbasicLemma}
Let $p$ and $q$ be split monic polynomials with degree $2$ over $\F$, and assume that both $p$ and $q$ are irreducible, with the same
splitting field. Write $p(t)=(t-x_1)(t-x_2)$ and $q(t)=(t-y_1)(t-y_2)$. Let $a,b$ be endomorphisms of a finite-dimensional vector
space $V$, and assume that $p(a)=q(b)=0$. Denote respectively by $a^\star$ and $b^\star$ the $p$-conjugate of $a$ and the $q$-conjugate of $b$.
Set $w:=ab^\star+ba^\star$ and $z:=x_1y_2+x_2y_1$. Let $k \in \N^*$ be such that
$n_k(w,z)=n_{k+1}(w,z)$. Then, $n_{k+1}(w,z)$ is a multiple of $4$.
\end{lemma}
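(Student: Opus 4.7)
The plan is to localize the question to a suitable subquotient of $V$ on which the hypothesis produces a clean square-zero structure, and then to exploit the action of a nilpotent deformation of the algebra $\calW(p,q,z)_\F$ studied in Section \ref{WPQalgebra}.

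First I would restrict to the generalized eigenspace $V_z:=\Ker(w-z\,\id_V)^\infty$. Since $a$ and $b$ commute with $w$ by Lemma \ref{basiccommutelemma}, they both preserve $V_z$ together with the filtration $K_j:=\Ker(w-z\,\id_V)^j\cap V_z$. Setting $N:=(w-z\,\id_V)|_{V_z}$ and passing to the subquotient $M:=K_{k+1}/K_{k-1}$, the induced nilpotent $\bar N$ on $M$ satisfies $\bar N^2=0$. The hypothesis $n_k(w,z)=n_{k+1}(w,z)$ says exactly that the always-injective map $\bar N:K_{k+1}/K_k\to K_k/K_{k-1}$ is in fact an isomorphism, which gives $\Ker\bar N=\im\bar N=K_k/K_{k-1}$ and makes $M$ a free module of rank $n_{k+1}(w,z)$ over the local ring $R:=\F[\bar N]\simeq\F[t]/(t^2)$.

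Next, the induced endomorphisms $\bar a,\bar b$ on $M$ are $R$-linear (they commute with $\bar N$ because $a,b$ commute with $w$) and satisfy $p(\bar a)=q(\bar b)=0$ together with $\bar a\bar b^\star+\bar b\bar a^\star=\bar w=z\,\id_M+\bar N$. These are precisely the relations defining $\calW(p,q,z+\bar N)_R$, so $M$ becomes a module over this $R$-algebra and $\dim_\F M=2\,n_{k+1}(w,z)$. To conclude, I need to show that the $R$-rank of $M$ is divisible by $4$.

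The main obstacle lies here. A direct computation in the spirit of the determinant calculation in the proof of Proposition \ref{nondegcharac} shows that the polar form of $\calW(p,q,z+\bar N)_R$ has determinant $\bar N^2(z-z'+\bar N)^2=0$ in $R$, where $z':=x_1y_1+x_2y_2$; so the norm is degenerate and Proposition \ref{localstructureprop} does not apply directly. The way around this is to adapt the Hensel-style lifting of Proposition \ref{localstructureprop} to the degenerate setting by extending scalars to the common splitting field $\L$ of $p$ and $q$: over $\L$ the residue algebra $\calW(p,q,z)_\L$ becomes a well-understood non-simple four-dimensional algebra (essentially $\L\otimes_\F\L$), and combining the Galois action on $V\otimes_\F\L$ (which swaps the generalized eigenspaces of $a^\L-b^\L$ at $x_1-y_1$ and $x_2-y_2$, giving one factor of $2$) with a Hensel lift of a canonical idempotent decomposition of $\calW(p,q,z+\bar N)_{R\otimes_\F\L}$ back to $R$ (giving a second factor of $2$) should exhibit $M$ as a direct sum of copies of a distinguished $\calW(p,q,z+\bar N)_R$-module of $R$-rank $4$. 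The delicate point, and the main technical difficulty, is carrying out this adapted lifting in the presence of the degenerate norm: one must show that the saturation condition $n_k(w,z)=n_{k+1}(w,z)$, which is what allowed $\bar N$ to have maximal rank $n_{k+1}(w,z)$ on $M$, is precisely what makes the two-level lift (Galois, then Hensel) go through and force the divisibility by $4$.
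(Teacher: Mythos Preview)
Your localization to the subquotient $M=K_{k+1}/K_{k-1}$ is exactly right and matches the paper's starting point. You are also correct that, with the hypothesis $n_k(w,z)=n_{k+1}(w,z)$, this subquotient is free of rank $s:=n_{k+1}(w,z)$ over $R=\F[\bar N]\simeq\F[t]/(t^2)$, and that $\bar a,\bar b$ give $M$ a module structure over $\calW(p,q,z+\bar N)_R$.

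The gap is in what comes next. You correctly diagnose that the norm of $\calW(p,q,z)_\L$ is degenerate (indeed $z=x_1y_2+x_2y_1$ is exactly one of the two special values from Proposition~\ref{nondegcharac}), so Propositions~\ref{structureofWprop} and~\ref{localstructureprop} do not apply. But your proposed repair is not a proof: the Hensel-type lifting in Proposition~\ref{localstructureprop} works precisely because the non-degeneracy of $b_{\overline N}$ lets one solve the successive linear systems, and you give no mechanism replacing that here. The phrase ``the saturation condition \ldots\ is precisely what makes the two-level lift go through'' is a hope, not an argument. Also, your mention of the Galois action swapping generalized eigenspaces of $a^\L-b^\L$ at $x_1-y_1$ and $x_2-y_2$ is confused: those eigenvalues sit over the \emph{same} value of $w$ (namely $z$), so the Galois action fixes the relevant eigenspace of $w$ and does not by itself contribute a factor of $2$ in the way you suggest.

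The paper avoids $\calW(p,q,x)$ entirely here and instead does something much more concrete. First it reduces to $p=q$ (so $z=2p(0)$) by observing that translating $b$ by a scalar and rescaling $b$ by a nonzero scalar both preserve all the $n_k(w,z)$; since $p$ and $q$ share a splitting field, $y_1=\gamma+\eta x_1$ for some $\gamma\in\F$, $\eta\in\F^\times$, so one can arrange $(y_1,y_2)=(x_1,x_2)$. On the subquotient $E=M$, one then picks an $a'$-stable complement $G$ of $F:=\Ker\bar N$ (using that $p$ is irreducible) and writes $a',b',w'$ in matrix form with respect to $E=F\oplus G$: this produces a single matrix $N:=A-B\in\Mat_s(\F)$ satisfying
\[
N^2=0,\qquad NB^\star=BN,\qquad I_s=-NC-CN,
\]
where $B\in\Mat_s(\F)$ satisfies $p(B)=0$. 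The third identity forces $\rk N\geq s/2$, the first forces $\rk N\leq s/2$, so $s=2\rk N$; and the second says that $X\mapsto NX$ is semilinear over the field $\F[B]\simeq\L$, so $\rk N$ is even. Hence $s$ is a multiple of $4$. The key idea you are missing is this extraction of a square-zero, $\L$-semilinear operator whose rank controls $s$; it replaces the $\calW$-module theory that fails in the degenerate case.
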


\begin{proof}
Our first step consists in reducing the situation to the one where $p=q$.
We note that the situation is unchanged by performing two basic transformations:
\begin{itemize}
\item Let $d \in \F$, set $b':=b-d\,\id_V$, and note that $q(t+d)=(t+d-y_1)(t+d-y_2)$ annihilates $b'$.
With $(b')^\star:=\tr(q(t+d))\,\id_V-b'=(\tr q-2d)\,\id_V-b'=b^\star-d\, \id_V$, we set
$w':=a(b')^\star+b' a^\star=w-d a-da^\star=w-(d \tr p)\,\id_V$, and
with $z':=x_1(y_2-d)+x_2(y_1-d)$, we have $z'=z-d(x_1+x_2)=z-d \tr p$, whence $n_k(w,z)=n_k(w',z')$.

\item Let $d \in \F \setminus \{0\}$, set $b':=d b$, and note that $d^2 q(d^{-1}t)=(t-dy_1)(t-dy_2)$ annihilates $b'$.
With $(b')^\star:=\tr(d^2 q(d^{-1}t))\id_V-b'=(d \tr q)\id_V-d b=d b^\star$, we set
$w':=a(b')^\star+b' a^\star=dw$, and with $z':=x_1(dy_2)+x_2(dy_1)$, we have $z'=dz$, whence
$n_k(w,z)=n_k(w',z')$.
\end{itemize}
Now, denoting by $\L$ the splitting field of $p$ in $\overline{\F}$, the pair $(1,x_1)$ is a basis of the $\F$-vector space $\L$,
and hence there is a pair $(\gamma,\eta) \in \F^2$ such that $y_1=\gamma +\eta x_1$.
Note that $\eta \neq 0$ since $y_1 \not\in \F$. If $\L$ is inseparable over $\F$, then we also have $y_2=\gamma+\eta x_2$;
otherwise, we denote by $\sigma$ the non-identity element in the Galois group of $\L$ over $\F$, and we find
$y_2=\sigma(y_1)=\gamma+\eta\, \sigma(x_1)=\gamma+\eta\, x_2$. Thus, by combining the above two basic transformations,
we see that the pair $(y_1,y_2)$ can be replaced with $(x_1,x_2)$ without losing any generality whatsoever.

Hence, in the remainder of the proof, we shall consider only the case when $(x_1,x_2)=(y_1,y_2)$.
Note in particular that $p=q$ and that $z=2x_1x_2=2p(0)$.

We assume that $n_k(w,z)=n_{k+1}(w,z)$, and we shall prove that $s:=n_{k+1}(w,z)$ is a multiple of $4$.
Set $$E:=\Ker\bigl((w-z\,\id_V)^{k+1}\bigr)/\Ker\bigl((w-z\,\id_V)^{k-1}\bigr)$$
and
$$F:=\Ker\bigl((w-z\,\id_V)^{k}\bigr)/\Ker\bigl((w-z\,\id_V)^{k-1}\bigr).$$
Since $n_k(w,z)=n_{k+1}(w,z)$, we see that $E$ has dimension $2 s$ and
$F$ has dimension $s$.

By the Basic Commutation Lemma, we know that $a$ and $b$ commute with $w$, and hence they induce
endomorphisms of $E$ which we respectively denote by $a'$ and $b'$.
We still write $(a')^\star:=(\tr p)\,\id_E-a'$ and $(b')^\star:=(\tr p)\,\id_E-b'$, and we note that
$$w'=a'(b')^\star+b'(a')^\star \quad \text{and} \quad F=\Ker(w'-z\, \id_E).$$
Moreover, since $\dim E=2 \dim F$ and $(w'-z \,\id_E)^2=0$, we find that $F=\im(w'-z\, \id_E)$.

Note that $p(a')=p(b')=0$.
Since $p$ is irreducible and $a'$ stabilizes $F$, we can choose a complementary subspace $G$ of $F$ in $E$
that is stable under $a'$. Moreover, $w'-z\,\id_E$ induces an isomorphism from $G$ to $\im(w'-z\,\id_E)=F$,
and hence, for some basis $\bfB$ of $E$ that is adapted to the decomposition $E=F \oplus G$, we have
$$\Mat_\bfB(w')=\begin{bmatrix}
z I_s & I_s \\
0_s & z I_s
\end{bmatrix}.$$
Since $a'$ stabilizes $F$ and $G$, and since $a'$ commutes with $w'$,
we have, for some matrix $A \in \Mat_s(\F)$,
$$\Mat_\bfB(a')=\begin{bmatrix}
A & 0_s \\
0_s & A
\end{bmatrix} \quad \text{and} \quad p(A)=0.$$
Likewise, for some $B$ and $C$ in $\Mat_s(\F)$, we have
$$\Mat_\bfB(b')=\begin{bmatrix}
B & C \\
0_s & B
\end{bmatrix} \quad \text{with} \quad p(B)=0.$$
Denoting by $A^\star$ the $p$-conjugate of $A$ and by $B^\star$ the one of $B$, we
deduce that
$$\Mat_\bfB\bigl((a')^\star\bigr)=\begin{bmatrix}
A^\star & 0_s \\
0_s & A^\star
\end{bmatrix} \quad \text{and} \quad
\Mat_\bfB\bigl((b')^\star\bigr)=\begin{bmatrix}
B^\star & -C \\
0_s & B^\star
\end{bmatrix}.$$
As $b'(b')^\star=p(0)\,\id_E$, we obtain $BC=CB^\star$.
Now, set
$$N:=A-B,$$
and note that $A=B+N$ and $A^\star=(\tr p)I_s-A=B^\star-N$.
We shall prove the following three identities:
\begin{enumerate}[(1)]
\item $N^2=0$;
\item $NB^\star=B N$;
\item $I_s=-NC-CN$.
\end{enumerate}
Using $w'=a'(b')^\star+b' (a')^\star$, we obtain the following two identities from the upper-left and upper-right blocks:
$$AB^\star+B A^\star=z I_s \quad \text{and} \quad I_s=-AC+CA^\star.$$
The first equality yields
$$(B+N)B^\star+B(B^\star-N)=2p(0) I_s,$$
and since $BB^\star=p(0)I_s$, we obtain identity (2).
The second equality yields $I_s=-(B+N)C+C(B^\star-N)=-NC-CN$ since $CB^\star=BC$, and hence identity (3) is proved.
Finally,
$$N^2=(A-B)(B^\star-A^\star)=AB^\star+BA^\star-BB^\star-AA^\star=z I_s-2p(0)I_s=0.$$

Now, we can conclude. Identity (3) yields $2 \rk N \geq s$.
On the other hand, identity (1) yields $2\rk N \leq s$, and hence $s=2 \rk N$.
Finally, identity (2) shows that the endomorphism
$X \mapsto NX$ of the $\F$-vector space $\F^s$ is semi-linear with respect to the structure of $\F[B]$-vector space
induced by $B$ (note that the $\F$-algebra $\F[B]$ is isomorphic to the splitting field of $p$ since $p$ is irreducible and $p(B)=0$).
It follows that the rank of $N$ is even. Therefore, $s=2 \rk N$ is a multiple of $4$, which completes the proof.
\end{proof}

Now, we come back to the assumptions and notation from the last paragraph of the preceding section and we assume furthermore that $v$
has a sole eigenvalue. By renaming the roots of $p$ and $q$ in $\L$, we can assume that this eigenvalue is
$$z:=-N_{\L/\F}(x_1-y_1)=-(x_1-y_1)(x_2-y_2).$$

Remembering that
$$v=ab^\star+ba^\star-\bigl(p(0)+q(0)\bigr)\,\id_V$$
and that
$$z=-x_1x_2-y_1y_2+(x_1y_2+x_2y_1)=-p(0)-q(0)+(x_1y_2+x_2y_1),$$
the preceding lemma has the following corollary:

\begin{cor}\label{CNsamesplit}
Let $k \in \N^*$ be such that $n_k(v,z)=n_{k+1}(v,z)$.
Then, $n_{k+1}(v,z)$ is a multiple of $4$.
\end{cor}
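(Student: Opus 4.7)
The plan is to observe that this corollary is a direct translation of Lemma \ref{CNsamesplitbasicLemma} via a trivial shift. Since $u$ is a $(p,q)$-difference, I would begin by writing $u = a - b$ for some endomorphisms $a,b$ of $V$ with $p(a) = q(b) = 0$, and denote by $a^\star$ and $b^\star$ their respective conjugates. The key identity \eqref{u2-deltauvsconj} from Remark \ref{dconjugateremark} then gives
$$v = u^2 - \delta u = ab^\star + ba^\star - \bigl(p(0)+q(0)\bigr)\,\id_V = w - \bigl(p(0)+q(0)\bigr)\,\id_V,$$
where $w := ab^\star + ba^\star$ is precisely the endomorphism appearing in Lemma \ref{CNsamesplitbasicLemma}.

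Next, setting $z_w := x_1 y_2 + x_2 y_1$ (the eigenvalue singled out in the lemma) and recalling the identity $z = -p(0) - q(0) + (x_1 y_2 + x_2 y_1)$ highlighted in the paragraph preceding the corollary, we obtain $z_w = z + p(0) + q(0)$, and therefore
$$v - z\,\id_V = w - z_w\,\id_V.$$
Hence the generalized eigenspaces of $v$ at $z$ coincide, as filtered vector spaces, with those of $w$ at $z_w$, so that $n_k(v,z) = n_k(w, z_w)$ for every $k \in \N^*$. Under this translation, the hypothesis $n_k(v,z) = n_{k+1}(v,z)$ becomes $n_k(w, z_w) = n_{k+1}(w, z_w)$, and Lemma \ref{CNsamesplitbasicLemma} applied to this pair $(a,b)$ yields that $n_{k+1}(w, z_w) = n_{k+1}(v, z)$ is a multiple of $4$, as desired. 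There is essentially no obstacle here, since all the real work was done in the lemma; the only care needed is to match the shift between $v$ and $w$ correctly with the shift between the eigenvalues $z$ and $z_w$.
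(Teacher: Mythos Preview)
Your proof is correct and follows exactly the approach the paper intends: the paper does not spell out a proof of the corollary, but immediately before stating it records the two identities $v=ab^\star+ba^\star-(p(0)+q(0))\,\id_V$ and $z=-p(0)-q(0)+(x_1y_2+x_2y_1)$, from which the corollary is meant to follow by the shift argument you wrote out.
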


Now, we take on a converse statement:

\begin{lemma}\label{samesplitconstructivelemma}
Let $p$ and $q$ be monic irreducible polynomials with degree $2$ over $\F$.
Assume that $p$ and $q$ have the same splitting field $\L$. Let $x$ and $y$ be respective roots of $p$ and $q$ in $\L$ such that
$x-y \not\in \F$. Set $z:=-N_{\L/\F}(x-y)$.
Then, for all $n \in \N$, the matrix $C\bigl((t^2-\delta t-z)^{n+1}\bigr) \oplus C\bigl((t^2-\delta t-z)^n\bigr)$
is a $(p,q)$-difference (and a d-exceptional one).
\end{lemma}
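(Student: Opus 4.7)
The plan is to construct a $(p,q)$-difference of the desired shape over the extension field $\L$ and then descend to $\F$ by restriction of scalars. Set $s(t) := t^2 - \delta t - z$. Over $\L$ one has $s(t) = \bigl(t - (x-y)\bigr)\bigl(t - (x'-y')\bigr)$, where primes denote the image under the nontrivial element of $\Gal(\L/\F)$ when $\L/\F$ is separable and agree with the identity otherwise (a case that forces $\car(\F)=2$, $\delta=0$, and $s(t) = \bigl(t-(x-y)\bigr)^2$). In either situation, the hypothesis $x - y \notin \F$ forces $s$ to be irreducible over $\F$.

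Since both $p$ and $q$ split over $\L$, on the $(2n+1)$-dimensional $\L$-vector space $V_1 := \L^{2n+1}$ I would construct $\L$-linear endomorphisms $a_1, b_1$ satisfying $p(a_1) = q(b_1) = 0$ such that $u_1 := a_1 - b_1$ is $\L$-cyclic, with $\L$-characteristic polynomial $\bigl(t - (x-y)\bigr)^{n+1}\bigl(t - (x'-y')\bigr)^n$ in the separable case (via Theorem \ref{theoPandQsplitSingleroots}, taking the indecomposable model of Table \ref{dfigure3}) and $\bigl(t - (x-y)\bigr)^{2n+1}$ in the inseparable case (via Theorem \ref{theoPandQsplitSoleRoot}).

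Next, view $V_1$ as a $(4n+2)$-dimensional $\F$-vector space. The maps $a_1, b_1, u_1$ remain $\F$-linear and continue to satisfy the $\F$-rational relations $p(a_1)=0$, $q(b_1)=0$, and $u_1 = a_1 - b_1$, so $u_1$ is a $(p,q)$-difference over $\F$. It remains to identify its $\F$-similarity class. The norm identity $\det_\F(t\,\id - u_1) = N_{\L[t]/\F[t]}\bigl(\det_\L(t\,\id - u_1)\bigr)$, which is routinely checked by expanding in an $\F$-basis of $V_1$ of the form $(e_i, \theta e_i)$ for some generator $\theta$ of $\L/\F$, yields the $\F$-characteristic polynomial $s(t)^{2n+1}$ in both cases. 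A short computation similarly produces the $\F$-minimal polynomial $s(t)^{n+1}$, namely the smallest $\F$-rational polynomial divisible in $\L[t]$ by the $\L$-minimal polynomial of $u_1$. Finally, direct inspection of the cyclic decomposition gives $\dim_\L \Ker s(u_1) = 2$, whence $\dim_\F \Ker s(u_1) = 4$. Since $s$ is the only $\F$-irreducible divisor of the characteristic polynomial, every invariant factor of $u_1$ over $\F$ is a power of $s$, and each nonzero one contributes $\deg s = 2$ to $\dim_\F \Ker s(u_1)$; hence there are exactly two nonzero invariant factors, which combined with the minimal and characteristic polynomials must be $s^{n+1}$ and $s^n$. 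This gives the rational canonical form $C(s^{n+1}) \oplus C(s^n)$, and since $s$ divides $F_{p,q}$ the endomorphism is d-exceptional, as required.

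The main delicate point is the uniform treatment of the separable and inseparable cases in the restriction-of-scalars step: in particular the verification of the norm formula when $\L/\F$ is inseparable, where the usual product-of-Galois-conjugates description collapses and one must resort to a direct matrix calculation. Once this is handled, the identification of the $\F$-invariant factors is essentially routine module theory over the local principal ideal domain $\F[t]_{(s)}$.
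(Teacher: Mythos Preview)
Your proof is correct and follows essentially the same route as the paper: construct the $(p,q)$-difference over $\L$ (using Theorem~\ref{theoPandQsplitSingleroots} in the separable case and Theorem~\ref{theoPandQsplitSoleRoot} in the inseparable case) on a $(2n+1)$-dimensional $\L$-space, then view it over $\F$ and identify the $\F$-similarity class. The only substantive difference is in that last identification step. The paper chooses explicit Jordan-block representatives over $\L$, interprets each $\L$-entry as a $2\times 2$ block over $\F$, and then invokes the dedicated block-cyclic result (Proposition~\ref{blockcyclicprop}) to read off the invariant factors. You instead compute the invariants directly: the norm formula gives the $\F$-characteristic polynomial $s^{2n+1}$, the $\F$-minimal polynomial is the least $\F$-multiple of the $\L$-minimal polynomial (namely $s^{n+1}$), and the count $\dim_\F \Ker s(u_1)=4$ pins down exactly two invariant factors. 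Both arguments are sound; yours avoids the appendix proposition at the cost of the small ad hoc invariant computation, while the paper's approach packages that computation into a reusable lemma it needs repeatedly elsewhere.
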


\begin{proof}
Set $r:=t^2-\delta t-z$ and note that $r=(t-(x-y))(t-(\delta-(x-y)))$, whence $r$ is irreducible over $\F$
and its splitting field is $\L$.
We can therefore assume that $\L$ is the subalgebra of $\Mat_2(\F)$ generated by $C(r)$.

Let $n \in \N$. In order to prove that $C(r^{n+1}) \oplus C(r^n)$ is a $(p,q)$-difference,
we distinguish between two cases.

\vskip 2mm
\noindent \textbf{Case 1. $\L$ is separable over $\F$.} \\
By Theorem \ref{theoPandQsplitSingleroots}, the direct sum
$$\bigl(C(t^{n+1})+(x-y) I_{n+1}\bigr)\oplus \bigl(C(t^{n})+(\delta-x+y) I_{n}\bigr)$$
in $\Mat_{2n+1}(\L)$ is a $(p,q)$-difference. Viewing every entry of this matrix as a $2$-by-$2$ matrix with entries in $\F$,
we gather that, for some elements $R_1$ and $R_2$ of $\Mat_2(\F)$ that are annihilated by $r$,
the matrix
$$S:=\underbrace{
\begin{bmatrix}
R_1 & 0 & \cdots & \cdots & (0) \\
I_2 & R_1 & \ddots & & \vdots \\
0 & \ddots & \ddots & \ddots & \vdots \\
\vdots & & \ddots & R_1 & 0 \\
(0) & \cdots & 0 & I_2 & R_1
\end{bmatrix}}_{\in \Mat_{2n+2}(\F)} \oplus
\underbrace{
\begin{bmatrix}
R_2 & 0 & \cdots & \cdots & (0) \\
I_2 & R_2 & \ddots & & \vdots \\
0 & \ddots & \ddots & \ddots & \vdots \\
\vdots & & \ddots & R_2 & 0 \\
(0) & \cdots & 0 & I_2 & R_2
\end{bmatrix}}_{\in \Mat_{2n}(\F)}$$
is a $(p,q)$-difference.

The polynomial $r$ is irreducible over $\F$ and splits over $\L$, whence $r$ is separable over $\F$. It then follows from
Proposition \ref{blockcyclicprop} (in the appendix) that
$S \simeq C(r^{n+1}) \oplus C(r^n)$, which proves the claimed result.

\vskip 2mm
\noindent \textbf{Case 2. $\L$ is inseparable over $\F$.} \\
Then, $\tr(p)=\tr(q)=0$ and $\delta=0$.
Over $\L$, the polynomials $p$ and $q$ are split with sole respective roots $x$ and $y$. By Theorem \ref{theoPandQsplitSoleRoot}, the matrix
$C(t^{2n+1}) + (x-y) I_{2n+1}$ of $\Mat_{2n+1}(\L)$ is a $(p,q)$-difference.
Viewing $x-y$ as a matrix $R\in \Mat_2(\F)$ annihilated by $r$, we gather that the matrix
$$T:=\begin{bmatrix}
R & 0 & \cdots & \cdots & (0) \\
I_2 & R & \ddots & & \vdots \\
0 & \ddots & \ddots & \ddots & \vdots \\
\vdots & & \ddots & R & 0 \\
(0) & \cdots & 0 & I_2 & R
\end{bmatrix}$$
of $\Mat_{4n+2}(\F)$ is a $(p,q)$-difference.
As the root of $r$ has multiplicity $2$, we deduce from Proposition \ref{blockcyclicprop} that
$T \simeq C(r^{n+1}) \oplus C(r^n)$, which proves that the latter matrix is a $(p,q)$-difference.
\end{proof}

\subsubsection{On the case when $p=q$}

\begin{lemma}\label{stablep=qlemma}
Let $p$ be a monic polynomial with degree $2$, and let $a$ and $b$ be
endomorphisms of a vector space $V$ such that $p(a)=p(b)=0$. Then, $\Ker(a-b)$ is stable under $a$ and $b$.
\end{lemma}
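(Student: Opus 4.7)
The plan is to argue by direct computation, exploiting the fact that both $a$ and $b$ satisfy the \emph{same} quadratic relation. Write $p(t)=t^2-\lambda t+\alpha$, so that $a^2=\lambda a-\alpha\,\id_V$ and $b^2=\lambda b-\alpha\,\id_V$.

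Let $x\in\Ker(a-b)$, that is, $a(x)=b(x)$. To show $a(x)\in\Ker(a-b)$, I will compute $(a-b)(a(x))=a^2(x)-b(a(x))$. On the one hand, $a^2(x)=\lambda a(x)-\alpha x$. On the other hand, since $a(x)=b(x)$, we have $b(a(x))=b(b(x))=b^2(x)=\lambda b(x)-\alpha x=\lambda a(x)-\alpha x$. Hence $(a-b)(a(x))=0$, proving that $a(x)\in\Ker(a-b)$. By symmetry (the argument uses only $p(a)=p(b)=0$ and $a(x)=b(x)$), the same computation with $a$ and $b$ interchanged shows $b(x)\in\Ker(a-b)$.

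There is essentially no obstacle here; the statement is a one-line consequence of the identity $a^2-b^2=\lambda(a-b)$, which follows from $p(a)=p(b)=0$ and implies that the constant term $\alpha$ drops out and only the linear term $\lambda(a-b)$ remains, vanishing on $\Ker(a-b)$.
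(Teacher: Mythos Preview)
Your proof is correct and essentially identical to the paper's: both write $p(t)=t^2-\lambda t+\alpha$, take $x\in\Ker(a-b)$, and use $a(x)=b(x)$ together with $a^2=\lambda a-\alpha\,\id_V$ and $b^2=\lambda b-\alpha\,\id_V$ to check that $a(a(x))=b(a(x))$. The paper phrases this by setting $y:=a(x)=b(x)$ and comparing $a(y)$ with $b(y)$, while you compute $(a-b)(a(x))$ directly, but the underlying computation is the same.
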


\begin{proof}
Let us write $p(t)=t^2-\lambda t+\alpha$. Let $x \in \Ker(a-b)$ and set $y:=a(x)=b(x)$.
Then, $a(y)=a^2(x)=(\tr p)\, a(x)-p(0)\, x$ and $b(y)=b^2(x)=(\tr p)\, b(x)-p(0)\, x$ and hence $a(y)=b(y)$, i.e.\ $a(x) \in \Ker (a-b)$.
\end{proof}

\begin{prop}\label{stablep=qeven}
Let $p$ be an irreducible monic polynomial with degree $2$, and let $a$ and $b$ be
endomorphisms of a finite-dimensional vector space $V$ such that $p(a)=p(b)=0$.
Then, for every integer $k \geq 1$, the endomorphism $a-b$ has an even number of Jordan cells of size $k$
for the eigenvalue $0$.
\end{prop}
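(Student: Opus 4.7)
The plan is to prove the stronger statement that, setting $u := a-b$, the subspace $\ker u^k$ is stable under both $a$ and $b$ for every integer $k \geq 0$. Once this is established, since $p$ is irreducible and $p(a)=0$, the subalgebra $\F[a]$ of $\End(V)$ is isomorphic to the quadratic field extension $\L := \F[t]/(p)$. The restriction of $a$ to $\ker u^k$ endows the latter with a natural structure of $\L$-vector space, so $\dim_\F \ker u^k$ is a multiple of $2$ for every $k$.

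I would establish the stability claim by induction on $k$. The base case $k=1$ is precisely Lemma \ref{stablep=qlemma}. For the inductive step, assuming that $\ker u^k$ is stable under $a$ and $b$, the endomorphisms $a$ and $b$ descend to endomorphisms $\bar a$ and $\bar b$ of the quotient space $\bar V := V/\ker u^k$; both are still annihilated by $p$, and $\bar a - \bar b$ is the endomorphism induced by $u$. Applying Lemma \ref{stablep=qlemma} to $\bar a$ and $\bar b$ yields that $\ker(\bar a - \bar b)$ is stable under $\bar a$ and $\bar b$. But $\ker(\bar a - \bar b)$ is precisely $\ker u^{k+1}/\ker u^k$, so pulling back to $V$ we obtain that $\ker u^{k+1}$ is stable under $a$ and $b$.

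Once the evenness of $\dim_\F \ker u^k$ is known for every $k \geq 0$ (with $\dim_\F \ker u^0 = 0$), we conclude that
$$n_k(u,0) = \dim_\F \ker u^k - \dim_\F \ker u^{k-1}$$
is even for every $k \geq 1$. The number of Jordan cells of size exactly $k$ attached to the eigenvalue $0$ of $u$ is $n_k(u,0) - n_{k+1}(u,0)$, a difference of two even integers, hence even. This yields the claim.

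The main obstacle is a modest one: it consists in realizing that Lemma \ref{stablep=qlemma} cannot be applied directly to iterated kernels (since $a$ and $b$ need not commute with $u$), but can be bootstrapped to all powers $u^k$ via the quotient construction above. The rest of the argument is essentially formal, exploiting the fact that a vector space stable under an element of $\End(V)$ whose minimal polynomial is an irreducible quadratic automatically has even $\F$-dimension.
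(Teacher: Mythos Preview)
Your proof is correct and, in fact, slightly more streamlined than the paper's. The paper argues separately for even and odd exponents: for even $n=2m$ it invokes the Commutation Lemma (Lemma \ref{dcommutelemma}) to see that $a$ and $b$ commute with $(a-b)^2$, hence stabilize $\Ker(a-b)^{2m}$ directly; for odd $n=2m+1$ it passes to the quotient $\Ker(a-b)^{2m+2}/\Ker(a-b)^{2m}$ and applies Lemma \ref{stablep=qlemma} there. You instead run a single induction on $k$, using only Lemma \ref{stablep=qlemma} at each step via the quotient $V/\ker u^k$, and in doing so you actually establish the stronger fact that every $\ker u^k$ is stable under $a$ and $b$. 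Your route dispenses with the Commutation Lemma entirely for this proposition; the paper's version has the (minor) expository advantage of keeping the Commutation Lemma visible, since that lemma is the workhorse throughout the article.
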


\begin{proof}
Classically, this amounts to proving that $\Ker((a-b)^n)$ is even-dimensional for all $n \in \N$.
Yet, by the Commutation Lemma, we know that both $a$ and $b$ commute with $(a-b)^2$.
Hence, for all $n \in \N$, the subspace $\Ker\bigl((a-b)^{2n}\bigr)$ is stable under both $a$ and $b$.
Fixing $n \in \N$, the endomorphism of $\Ker\bigl((a-b)^{2n}\bigr)$
induced by $a$ is annihilated by $p$, which is irreducible with degree $2$, and hence $\dim \Ker\bigl((a-b)^{2n}\bigr)$
is a multiple of $2$. Next, $a$ and $b$ induce endomorphisms $a'$ and $b'$ of the quotient space
$\Ker \bigl((a-b)^{2n+2}\bigr)/\Ker \bigl((a-b)^{2n}\bigr)$, and the kernel of $a'-b'$ is the quotient subspace
$\Ker \bigl((a-b)^{2n+1}\bigr)/\Ker \bigl((a-b)^{2n}\bigr)$.
Hence, by Lemma \ref{stablep=qlemma}, this subspace is stable under $a'$. Again, since $p(a')=0$ we deduce that the dimension of
$\Ker \bigl((a-b)^{2n+1}\bigr)/\Ker \bigl((a-b)^{2n}\bigr)$ is even, and hence the one of $\Ker \bigl((a-b)^{2n+1}\bigr)$ is also even.
\end{proof}

Now, we prove a converse statement:

\begin{lemma}\label{CSp=q}
Let $p$ be an irreducible monic polynomial with degree $2$ over $\F$, and $n$ be a positive integer.
Then, the matrix $C(t^n) \oplus C(t^n)$ is a $(p,p)$-difference.
\end{lemma}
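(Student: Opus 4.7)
The plan is to extend scalars to $\L:=\F[t]/(p)$, where $p$ splits, apply the known classification of $(p,p)$-differences in the split case over $\L$ to a nilpotent matrix of the right shape, and then descend the construction to $\F$ by realizing $\Mat_n(\L)$ as a subalgebra of $\Mat_{2n}(\F)$ via the regular representation of $\L$ on itself.

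First, let $\alpha\in\L:=\F[t]/(p)$ be the image of $t$, so that $[\L:\F]=2$ and $p$ factors in $\L[t]$ as $(t-\alpha)(t-\alpha^\star)$. Here $\delta=0$ and the nilpotent matrix $u:=C(t^n)\in\Mat_n(\L)$ has its sole eigenvalue at $0\in\Root(p)-\Root(p)$. In the inseparable case $\alpha=\alpha^\star$, Theorem \ref{theoPandQsplitSoleRoot} directly gives that $u$ is a d-exceptional $(p,p)$-difference over $\L$, since the characteristic polynomial of $u$ is $t^n=(t-(\alpha-\alpha))^n$. In the separable case $\alpha\neq\alpha^\star$, Theorem \ref{theoPandQsplitSingleroots} applies: all eigenvalues of $u$ lie in $\Root(p)-\Root(p)$, and the $1$-intertwining condition for any $x\neq\delta-x$ in that set is vacuous since $u$ has no nonzero eigenvalue. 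In either case I therefore obtain matrices $A_\L,B_\L\in\Mat_n(\L)$ with $p(A_\L)=p(B_\L)=0$ and $A_\L-B_\L$ similar over $\L$ to $C(t^n)$, and after conjugating both by a common element of $\GL_n(\L)$ I may assume $A_\L-B_\L=C(t^n)$.

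Second, I would view $A_\L,B_\L$ as $\F$-linear endomorphisms of $\L^n\cong\F^{2n}$ via the canonical $\F$-algebra embedding $\Mat_n(\L)\hookrightarrow\End_\F(\L^n)\cong\Mat_{2n}(\F)$, obtaining matrices $A,B\in\Mat_{2n}(\F)$ which still satisfy $p(A)=p(B)=0$ because this embedding is an injective morphism of unital $\F$-algebras. The difference $A-B$ is the $\F$-linear realization of $C(t^n)\in\Mat_n(\L)$; since any $\L$-linear endomorphism $M$ of $\L^n$ has $\F$-rank equal to twice its $\L$-rank (its image being an $\L$-subspace), a direct computation with the powers of $C(t^n)$ yields $n_k(A-B,0)=2$ for $1\leq k\leq n$ and $0$ otherwise. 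Thus $A-B$ has exactly two Jordan cells of size $n$ for eigenvalue $0$, i.e.\ $A-B\simeq C(t^n)\oplus C(t^n)$ in $\Mat_{2n}(\F)$, which is the desired conclusion.

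I foresee no serious obstacle: the lemma reduces cleanly to the already established split case via a standard descent through the regular representation. The only small point requiring attention is the simultaneous treatment of the separable and inseparable situations, which calls on two different split-case theorems; but both apply because $u=C(t^n)$ has only $0$ as an eigenvalue, which makes every intertwining or eigenvalue-multiplicity condition hold trivially.
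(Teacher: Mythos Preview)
Your proof is correct and follows essentially the same approach as the paper: extend scalars to the splitting field $\L$ of $p$, apply the split-case classification (Theorem \ref{theoPandQsplitSoleRoot} or \ref{theoPandQsplitSingleroots}) to the nilpotent Jordan block over $\L$, and descend to $\F$ via the regular representation $\Mat_n(\L)\hookrightarrow\Mat_{2n}(\F)$. The only cosmetic difference is that the paper identifies the $\F$-similarity class of the resulting matrix by an explicit basis permutation on the block form, whereas you do it by a rank computation; both arguments are equally short.
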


Note that if $n$ is even this result is a special case of the Duplication Lemma.

\begin{proof}
We use the same strategy as in the proof of Lemma \ref{samesplitconstructivelemma}.
Since $C(p)$ is annihilated by $p$, which is irreducible with degree $2$, the $\F$-algebra $\L:=\F\bigl[C(p)\bigr]$
is a splitting field of $p$.
Hence, by Theorem \ref{theoPandQsplitSoleRoot} if $p$ is inseparable, and by Theorem \ref{theoPandQsplitSingleroots} otherwise,
we know that the matrix
$$
\begin{bmatrix}
0_\L & 0_\L & \cdots & \cdots & (0) \\
1_\L & 0_\L & \ddots & & \vdots \\
0_\L & \ddots & \ddots & \ddots & \vdots \\
\vdots & & \ddots & 0_\L & 0_\L \\
(0) & \cdots & 0_\L & 1_\L & 0_\L
\end{bmatrix}$$
of $\Mat_n(\L)$ is a $(p,p)$-difference.
Hence, the matrix
$$M_n:=\begin{bmatrix}
0_2 & 0_2 & \cdots & \cdots & (0) \\
I_2 & 0_2 & \ddots & & \vdots \\
0_2 & \ddots & \ddots & \ddots & \vdots \\
\vdots & & \ddots & 0_2 & 0_2 \\
(0) & \cdots & 0_2 & I_2 & 0_2
\end{bmatrix}$$
of $\Mat_{2n}(\F)$ is a $(p,p)$-difference. By permuting the basis vectors, one sees that $M_n$ is similar to $C(t^n) \oplus C(t^n)$, and hence
this last matrix is a $(p,p)$-difference.
\end{proof}

\subsubsection{Conclusion}

We are now ready to conclude the study of the case when $p$ and $q$ are irreducible with the same splitting field.
There are five different subcases to consider:
\begin{itemize}
\item $q$ is not a translation of $p$, and $p$ is separable;
\item $q$ is not a translation of $p$, and $p$ is inseparable;
\item $q$ is a translation of $p$ and $\car(\F) \neq 2$;
\item $q$ is a translation of $p$, $p$ is separable and $\car(\F)=2$;
\item $q$ is a translation of $p$, and $p$ is inseparable.
\end{itemize}

\begin{theo}\label{theoSamesplitNottranslationSeparable}
Let $p$ and $q$ be irreducible monic polynomials with degree $2$ over $\F$, and assume that they have the same splitting field $\L$.
Assume further that $q$ is not a translation of $p$ (over $\F$) and that $p$ is separable, and set $\delta:=\tr p-\tr q$.
Denote by $x_1,x_2$ (respectively, by $y_1,y_2$) the roots of $p$ (respectively, of $q$) in $\L$.
Let $u$ be an endomorphism of a finite-dimensional vector space $V$ over $\F$. Then, the following conditions are equivalent:
\begin{enumerate}[(i)]
\item $u$ is a d-exceptional $(p,q)$-difference.
\item The minimal polynomial of $u$ is the product of a power of $t^2-\delta t+N_{\L/\F}(x_1-y_1)$
with a power of $t^2-\delta t+N_{\L/\F}(x_1-y_2)$. Moreover,
the invariant factors of $u$ read $r_1,r_2,\dots,r_{2k-1},r_{2k},\dots$ where, for every positive integer $k$,
there are integers $a$ and $b$ in $\{0,1\}$ such that
$$r_{2k-1}=r_{2k} \times \bigl(t^2-\delta t+N_{\L/\F}(x_1-y_1)\bigr)^a \bigl(t^2-\delta t+N_{\L/\F}(x_1-y_2)\bigr)^b.$$
\item In some basis of $V$, the endomorphism $u$ is represented by a block-diagonal matrix in which every diagonal block
equals
$$C\bigl((t^2-\delta t+N_{\L/\F}(x-y))^{k+\epsilon}\bigr)\oplus C\bigl((t^2-\delta t+N_{\L/\F}(x-y))^k\bigr)$$
for some root $x$ of $p$, some root $y$ of $q$, and some pair $(k,\epsilon)\in \N \times \{0,1\}$.
\end{enumerate}
\end{theo}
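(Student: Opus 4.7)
The plan is to prove (iii)$\Rightarrow$(i), (ii)$\Leftrightarrow$(iii), and (i)$\Rightarrow$(ii), the last being the heart of the theorem.

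For (iii)$\Rightarrow$(i), each prescribed diagonal block is already known to be a $(p,q)$-difference. When $\epsilon=1$, this is exactly Lemma \ref{samesplitconstructivelemma}, whose hypothesis $x-y\notin\F$ holds because $q$ is not a translation of $p$. When $\epsilon=0$, apply the Duplication Lemma to the monic polynomial $r(s):=(s+N_{\L/\F}(x-y))^k$, observing that $r(t^2-\delta t)=(t^2-\delta t+N_{\L/\F}(x-y))^k$. A direct sum of $(p,q)$-differences remains a $(p,q)$-difference.

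To set up the remaining equivalences, I would first record that the two factors $r_1:=t^2-\delta t+N_{\L/\F}(x_1-y_1)$ and $r_2:=t^2-\delta t+N_{\L/\F}(x_1-y_2)$ of $F_{p,q}$ are both irreducible over $\F$ (since $x_1-y_i\notin\F$, otherwise $q$ would translate $p$) and distinct (as $p$ separable forces $\L/\F$ separable, so $y_1\neq y_2$, and the identity $(x_1-y_1)(x_2-y_2)-(x_1-y_2)(x_2-y_1)=(x_1-x_2)(y_1-y_2)$ is nonzero). The invariant factors of a d-exceptional $u$ thus read $r_1^{\alpha_k}r_2^{\beta_k}$, and condition (ii) amounts to $\alpha_{2k-1}-\alpha_{2k}\in\{0,1\}$ and $\beta_{2k-1}-\beta_{2k}\in\{0,1\}$ for all $k\geq 1$. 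Via the standard dictionary between invariant factors and elementary divisors, this is equivalent to the assertion that, for each $i\in\{1,2\}$, the multiset of $r_i$-elementary divisor exponents can be paired off with each pair differing by at most one, which (after a brief combinatorial rearrangement within each $r_i$-group) yields (iii).

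For (i)$\Rightarrow$(ii), write $u=a-b$ with $p(a)=q(b)=0$. By the Commutation Lemma, $a$ and $b$ commute with $v:=u^2-\delta u$, which is annihilated by a power of $\Lambda_{p,q}=(t+N_{\L/\F}(x_1-y_1))(t+N_{\L/\F}(x_1-y_2))$; its two roots $z_1\neq z_2$ lie in $\F$ by the preceding computation. Hence $V$ splits as $V_1\oplus V_2$ along the generalized eigenspaces of $v$, each $V_i$ being stable under $a$, $b$, and $u$. On $V_i$ the restriction of $v$ has sole eigenvalue $z_i$ and the elementary divisors of $u_{|V_i}$ are all powers of $r_i$, so Corollary \ref{CNsamesplit} applies to the restricted triple $(u_{|V_i},a_{|V_i},b_{|V_i})$.

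To extract the required condition, I would analyze how $v$ sits on a single cyclic $u$-block of dimension $2k$ with minimal polynomial $r_i^k$: there $v-z_i\cdot\id=r_i(u)$ is nilpotent of index $k$ with a $2$-dimensional kernel, hence of Jordan type $(k,k)$. Writing $m_k^{(i)}$ for the number of elementary divisors of $u_{|V_i}$ equal to $r_i^k$, this gives $n_k(v,z_i)=2\sum_{l\geq k}m_l^{(i)}$. Corollary \ref{CNsamesplit} then forces: whenever $m_k^{(i)}=0$, the partial sum $\sum_{l>k}m_l^{(i)}$ is even. If some pair of consecutive $r_i$-exponents satisfied $\alpha_{2j-1}^{(i)}-\alpha_{2j}^{(i)}\geq 2$, picking any integer $k$ with $\alpha_{2j}^{(i)}<k<\alpha_{2j-1}^{(i)}$ would yield $m_k^{(i)}=0$ while $\sum_{l>k}m_l^{(i)}=2j-1$ is odd, contradicting the corollary. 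Thus (ii) holds. The main obstacle is precisely this final combinatorial step, which hinges on the exact Jordan structure of $v$ on cyclic $u$-blocks together with the parity constraint of Corollary \ref{CNsamesplit}.
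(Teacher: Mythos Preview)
Your proposal is correct and follows essentially the same route as the paper's proof: both use Lemma~\ref{samesplitconstructivelemma} together with the Duplication Lemma for (iii)$\Rightarrow$(i), split $V=V_1\oplus V_2$ along the two eigenvalues of $v=u^2-\delta u$, relate $n_k(v_j,z_j)$ to (twice) the count of $r_j$-elementary divisors of exponent $\geq k$, invoke Corollary~\ref{CNsamesplit} for the parity constraint, and finish with the same combinatorial contradiction. Your explicit verification that $r_1\neq r_2$ via $(x_1-y_1)(x_2-y_2)-(x_1-y_2)(x_2-y_1)=(x_1-x_2)(y_1-y_2)$ and your computation of the Jordan type $(k,k)$ of $v-z_i\,\id$ on a cyclic $C(r_i^k)$-block make explicit steps that the paper leaves implicit, but the overall architecture is identical.
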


\begin{proof}
Since $p$ and $q$ are separable, we have $x_1-y_1 \neq x_1-y_2$ and $x_1-y_1 \neq x_2-y_1$.
Hence, with the study from the start of Section \ref{d-exceptionalsectionII}, we know that $r_1:=t^2-\delta t+N_{\L/\F}(x_1-y_1)$ and $r_2:=t^2-\delta t+N_{\L/\F}(x_1-y_2)$
are the distinct irreducible factors of $F_{p,q}$, and hence $u$ is d-exceptional with respect to $(p,q)$
if and only if the irreducible monic factors of its minimal polynomial are among $r_1$ and $r_2$.

Next, it is obvious that condition (ii) implies condition (iii).
Moreover, by Lemma \ref{samesplitconstructivelemma} and the Duplication Lemma, we know that condition (iii) implies condition (i).

In order to conclude, we assume that condition (i) holds and we prove that condition (ii) holds.
For $j \in \{1,2\}$ and $k \in \N^*$, denote by $n_k^{(j)}$ the number of diagonal blocks
equal to $C\bigl((t^2-\delta t+N_{\L/\F}(x_1-y_j))^l\bigr)$ for some $l \geq k$ in the primary canonical form of $u$.
Set $v:=u^2-\delta u$ and choose endomorphisms $a$ and $b$ of $V$ such that $u=a-b$ and $p(a)=q(b)=0$.
Since $N_{\L/\F}(x_1-y_1) \neq N_{\L/\F}(x_1-y_2)$, we have
$V=V_1 \oplus V_2$ where $V_j$ denotes the characteristic subspace of $v$ for the eigenvalue $-N_{\L/\F}(x_1-y_j)$.
As $a$ and $b$ commute with $v$, we find that they stabilize $V_1$ and $V_2$.
Fixing $j \in \{1,2\}$ and denoting by $u_j$ (respectively, by $v_j$) the endomorphism of $V_j$ induced by $u$ (respectively, by $v$), we get that
the minimal polynomial of $u_j$ is a power of $r_j$ and that $u_j$ is a $(p,q)$-difference.
Moreover, for all $k \in \N^*$ and all $j \in \{1,2\}$, we have
$$2 n_k^{(j)}=n_k\bigl(v_j,-N_{\L/\F}(x_1-y_j)\bigr);$$
On the other hand, exactly $n_k^{(j)}$ elementary invariants of $u_j$ equal $r_j^l$ for some $l \geq k$.
Hence, by Corollary \ref{CNsamesplit}, we find that if $n_{k+1}^{(j)}$ is odd then $n_{k}^{(j)}>n_{k+1}^{(j)}$.

Finally, let us consider the invariant factors $s_1,\dots,s_k,\dots,$ of $u$.
Since $u$ is d-exceptional with respect to $(p,q)$ we know that each $s_k$ is the product of a power of $r_1$ with a power of $r_2$.
Let $k \in \N^*$ and write $s_{2k-1}=r_1^{N_1} r_2^{N_2}$ and $s_{2k}=r_1^{N'_1} r_2^{N'_2}$,
so that $N_1 \geq N'_1$ and $N_2 \geq N'_2$.
If $N_1 > N'_1+1$ then $n_{N'_1+1}^{(1)}=n_{N'_1+2}^{(1)}=2k-1$ and we have a contradiction with the above result.
Hence, $N_1 \in \{N'_1+1,N'_1\}$. Likewise, we obtain $N_2 \in \{N'_2+1,N'_2\}$.
This yields condition (ii), which completes the proof.
\end{proof}

\begin{theo}\label{theoSamesplitNottranslationInseparable}
Let $p$ and $q$ be irreducible monic polynomials with degree $2$ over $\F$, and assume that they have the same splitting field $\L$.
Assume further that $q$ is not a translation of $p$ (over $\F$) and that $p$ is inseparable.
Write $p=t^2+\alpha$ and $q=t^2+\beta$ for some $(\alpha,\beta)\in \F^2$.
Let $u$ be an endomorphism of a finite-dimensional vector space over $\F$. Then, the following conditions are equivalent:
\begin{enumerate}[(i)]
\item $u$ is a d-exceptional $(p,q)$-difference.
\item The minimal polynomial of $u$ is a power of $t^2+\alpha+\beta$. Moreover,
the invariant factors of $u$ read $r_1,r_2,\dots,r_{2k-1},r_{2k},\dots$ where, for every positive integer $k$,
we have $r_{2k-1}=r_{2k}$ or $r_{2k-1}=(t^2+\alpha+\beta)r_{2k}$.
\item In some basis of $V$, the endomorphism $u$ is represented by a block-diagonal matrix in which every diagonal block
equals
$$C\bigl((t^2+\alpha+\beta)^{n+\epsilon}\bigr) \oplus C\bigl((t^2+\alpha+\beta)^n\bigr)$$
for some $n \in \N$ and some $\epsilon \in \{0,1\}$.
\end{enumerate}
\end{theo}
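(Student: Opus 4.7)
The plan is to prove (iii) $\Rightarrow$ (i), (ii) $\Rightarrow$ (iii), and (i) $\Rightarrow$ (ii), after settling a few preliminary identities. Since $p$ is irreducible and inseparable, $\car(\F)=2$; hence $p=(t-\sqrt{\alpha})^2$ and $q=(t-\sqrt{\beta})^2$ in $\L[t]$, so $\delta=\tr(p)-\tr(q)=0$ and $v=u^2$. The computation at the start of Section~\ref{d-exceptionalsectionII} then gives $F_{p,q}(t)=(t^2+\alpha+\beta)^2$ and $\Lambda_{p,q}(t)=(t+\alpha+\beta)^2$. Since $q$ is not a translation of $p$, $\alpha+\beta$ is not a square in $\F$; hence $r:=t^2+\alpha+\beta$ is irreducible over $\F$. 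Setting $x:=\sqrt{\alpha}$ and $y:=\sqrt{\beta}$, we have $(x-y)^2=\alpha+\beta$, so $x-y\notin\F$, and $N_{\L/\F}(x-y)=(x-y)^2=\alpha+\beta$; consequently, the unique possible eigenvalue of $v$ in $\F$ is $z:=-N_{\L/\F}(x-y)=\alpha+\beta$.

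For (iii) $\Rightarrow$ (i), any block $C(r^{n+1})\oplus C(r^n)$ is a $(p,q)$-difference by Lemma~\ref{samesplitconstructivelemma} applied to $(x,y)=(\sqrt{\alpha},\sqrt{\beta})$, and any block $C(r^n)\oplus C(r^n)$ is a $(p,q)$-difference by the Duplication Lemma~\ref{dduplicationlemma}; a direct sum of such blocks is thus a $(p,q)$-difference. For (ii) $\Rightarrow$ (iii), since $r$ is the only monic irreducible divisor of the minimal polynomial of $u$, each invariant factor reads $r^{m_k}$ with $m_1\geq m_2\geq\cdots$; the hypothesis $r_{2k-1}\in\{r_{2k},r\cdot r_{2k}\}$ gives $m_{2k-1}-m_{2k}\in\{0,1\}$, so pairing consecutive invariant factors produces exactly the blocks described in (iii).

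The core of the proof is (i) $\Rightarrow$ (ii), and the essential input is Corollary~\ref{CNsamesplit} applied to $v$ and $z$. Writing the invariant factors as $r^{m_k}$ and setting $n_k:=\#\{i : m_i\geq k\}$, the identity $r(u)=u^2+(\alpha+\beta)\,\id_V=v-z\,\id_V$ shows that on a block $C(r^l)$ of $u$, $\dim\Ker(v-z\,\id_V)^k=2\min(k,l)$; summing over blocks and taking differences yields $n_k(v,z)=2n_k$. Assume for contradiction that $m_{2k-1}\geq m_{2k}+2$ for some $k\geq 1$, and set $j:=m_{2k}+1$. One reads off directly that exactly the indices $i\in\{1,\dots,2k-1\}$ satisfy $m_i\geq j$ (using $m_{2k-1}\geq m_{2k}+2=j+1$), and that the same indices are the ones with $m_i\geq j+1$; hence $n_j=n_{j+1}=2k-1$. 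Therefore $n_j(v,z)=n_{j+1}(v,z)=4k-2$, which is not divisible by $4$, contradicting Corollary~\ref{CNsamesplit}. We conclude that $m_{2k-1}\leq m_{2k}+1$ for every $k\geq 1$, which is condition (ii).

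The main obstacle lies entirely in this last implication, and more specifically in the quaternion-algebra input behind Corollary~\ref{CNsamesplit} (via Lemma~\ref{CNsamesplitbasicLemma}); the rest is a mechanical translation between the Jordan structure of $v=u^2$ at the eigenvalue $z$ and the exponents of the invariant factors of $u$ afforded by the identity $r(u)=v-z\,\id_V$.
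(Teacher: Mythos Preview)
Your proof is correct and follows essentially the same approach as the paper. The paper's own proof simply notes that $F_{p,q}=(t^2+\alpha+\beta)^2$, $\Lambda_{p,q}=(t+\alpha+\beta)^2$, and that $t^2+\alpha+\beta$ is irreducible, and then defers to the argument of Theorem~\ref{theoSamesplitNottranslationSeparable} with the sole simplification that $v=u^2$ has the unique eigenvalue $\alpha+\beta$; you have written out that deferred argument explicitly, with the same use of Lemma~\ref{samesplitconstructivelemma}, the Duplication Lemma, and Corollary~\ref{CNsamesplit}.
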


\begin{proof}
Here, $F_{p,q}=(t^2+\alpha+\beta)^2$, $\Lambda_{p,q}=(t+\alpha+\beta)^2$ and $t^2+\alpha+\beta$ is irreducible over $\F$ because $q$ is not a translation of $p$.
From there, the proof is essentially similar to the one of Theorem \ref{theoSamesplitNottranslationSeparable}, the only difference being that, in the proof
that (i) implies (ii), the endomorphism $v:=u^2$ has $\alpha+\beta$ as its sole eigenvalue.
\end{proof}

\begin{theo}
Let $p$ and $q$ be irreducible monic polynomials with degree $2$ over $\F$.
Assume that $q=p(t+d)$ for some $d \in \F$, and that $\car(\F) \neq 2$.
Set $r:=p\bigl(t-d+(\tr p)/2\bigr)$.
Let $u$ be an endomorphism of a finite-dimensional vector space $V$ over $\F$. Then, the following conditions are equivalent:
\begin{enumerate}[(i)]
\item $u$ is a d-exceptional $(p,q)$-difference.
\item The minimal polynomial of $u$ is the product of a power of $t-d$ with a power of $r$.
Moreover,
the invariant factors of $u$ read $s_1,s_2,\dots,s_{2k-1},s_{2k},\dots$ where, for every positive integer $k$,
we have $s_{2k-1}=s_{2k}$ or $s_{2k-1}=r\,s_{2k}$.
\item In some basis of $V$, the endomorphism $u$ is represented by a block-diagonal matrix in which every diagonal block
equals $C(r^{n+\epsilon}) \oplus C(r^n)$,
for some $n \in \N$ and some $\epsilon \in \{0,1\}$, or
$C\bigl((t-d)^n\bigr) \oplus C\bigl((t-d)^n\bigr)$ for some $n \in \N^*$.
\end{enumerate}
\end{theo}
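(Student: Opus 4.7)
The plan is to prove the cycle (iii)$\Rightarrow$(i)$\Rightarrow$(ii)$\Rightarrow$(iii), following the template of Theorem \ref{theoSamesplitNottranslationSeparable} but with one crucial new input: since $\Lambda_{p,q}$ now has a rational root, one of the two characteristic subspaces of $v:=u^2-\delta u$ must be handled separately by reducing to a nilpotent $(p,p)$-difference via a translation trick.

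For (iii)$\Rightarrow$(i), I would exhibit each block as a $(p,q)$-difference. The hypothesis $q=p(t+d)$ yields a translation principle: if $u=a-b$ with $p(a)=0=q(b)$ and one sets $b':=b+d\,\id$, then $p(b')=q(b)=0$ and $u-d\,\id=a-b'$, so $u$ is a $(p,q)$-difference if and only if $u-d\,\id_V$ is a $(p,p)$-difference. This reduces $C((t-d)^n)\oplus C((t-d)^n)$ to $C(t^n)\oplus C(t^n)$, covered by Lemma \ref{CSp=q}. For the block $C(r^{n+\epsilon})\oplus C(r^n)$, I would apply Lemma \ref{samesplitconstructivelemma} with the roots $x:=x_1$ of $p$ and $y:=x_2-d$ of $q$, so that $x-y\notin \F$ and a short norm computation identifies $t^2-\delta t+N_{\L/\F}(x-y)$ with $r$. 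The implication (ii)$\Rightarrow$(iii) is then routine: coprimality of $t-d$ and $r$ decomposes $C(s_k)\simeq C((t-d)^{M_k})\oplus C(r^{N_k})$, and regrouping adjacent pairs $(s_{2j-1},s_{2j})$ yields the required blocks.

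The heart of the argument is (i)$\Rightarrow$(ii). Starting from $u=a-b$ with $p(a)=0=q(b)$ and $v:=u^2-2du$ (note $\delta=\tr p-\tr q=2d$), the Commutation Lemma gives $av=va$ and $bv=vb$. From the computations preceding the statement, $\Lambda_{p,q}(t)=(t+d^2)(t+d^2-\Delta)$ has two distinct roots in $\F$ (using $\car(\F)\neq 2$ and $\Delta\neq 0$ from irreducibility of $p$), so some power of $\Lambda_{p,q}$ annihilates $v$ and $V$ splits as $V_1\oplus V_2$ into the characteristic subspaces of $v$ for the eigenvalues $-d^2$ and $\Delta-d^2$; since $a$ and $b$ preserve this splitting, each $u|_{V_j}$ is itself a $(p,q)$-difference. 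Because $v+d^2\,\id=(u-d\,\id)^2$, the endomorphism $u|_{V_1}-d\,\id_{V_1}$ is a nilpotent $(p,p)$-difference, and Proposition \ref{stablep=qeven} forces every Jordan-cell size to occur with even multiplicity; equivalently, the invariant factors of $u|_{V_1}$ pair up as equal powers of $t-d$. On $V_2$, one computes $r(u)=v-(\Delta-d^2)\,\id$, so on a cyclic summand $C(r^{N_i})$ (of $\F$-dimension $2N_i$) the nilpotent $r(u)$ has exactly two Jordan blocks of size $N_i$; consequently $n_k(v|_{V_2},\Delta-d^2)=2\,m_k$, where $m_k:=|\{i:N_i\geq k\}|$. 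Corollary \ref{CNsamesplit} then gives $m_k=m_{k+1}\Rightarrow m_{k+1}$ even, which is straightforwardly equivalent to $N_{2j-1}-N_{2j}\in\{0,1\}$ for every $j$. Finally, coprimality of $t-d$ and $r$ implies that the $k$-th invariant factor of $u$ is the product of those of $u|_{V_1}$ and $u|_{V_2}$, so the two pairing conditions assemble into condition (ii).

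The main obstacle is the bookkeeping on $V_2$: one has to cleanly convert between the invariant-factor presentation of $u|_{V_2}$ (as powers of the irreducible $r$) and the Jordan structure of the auxiliary nilpotent $r(u)|_{V_2}$, so that Corollary \ref{CNsamesplit} delivers exactly the pairing constraint on the $N_i$'s. Everything else is a careful assembly of the translation principle, the characteristic-subspace decomposition, and the already-established technology on $(p,p)$-differences and on same-splitting-field pairs.
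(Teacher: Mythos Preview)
Your proposal is correct and follows essentially the same route as the paper: split $V$ along the two eigenvalues of $v$ via the Commutation Lemma, handle the $(t-d)$-part with Proposition~\ref{stablep=qeven} (after the translation $b\mapsto b+d\,\id$ reducing to a $(p,p)$-difference), and handle the $r$-part with Corollary~\ref{CNsamesplit}; the paper in fact simplifies by reducing to $d=0$ at the outset, but this is only cosmetic. One small omission: Lemma~\ref{samesplitconstructivelemma} only produces the blocks $C(r^{n+1})\oplus C(r^n)$, so for $\epsilon=0$ you still need the Duplication Lemma (noting $r^n=s^n(t^2-\delta t)$ with $s(t)=t+N_{\L/\F}(x_1-x_2+d)$), exactly as the paper does.
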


\begin{proof}
As we have seen in the beginning of Section \ref{d-exceptionalsectionII},
$F_{p,q}(t)=(t-d)^2 r$ and $r$ is irreducible. Moreover, $r=t^2-\delta t+N_{\L/\F}(x_1-x_2+d)$, where $\L$ denotes the splitting field of
$p$, and $p(t)=(t-x_1)(t-x_2)$.

Moreover, as we can safely replace $u$ with $u-d\,\id_V$, we lose no generality in assuming that $p=q$, in which case $d=0$.
From there, the implication (ii) $\Rightarrow$ (iii) is obvious, and (iii) $\Rightarrow$ (i)
is readily obtained by applying the Duplication Lemma together with Lemmas \ref{samesplitconstructivelemma} and \ref{CSp=q}.

Assume finally that (i) holds. For $k \in \N^*$, denote by $n_k$
the number of blocks of type $C\bigl((t-d)^k\bigr)$ in the primary canonical form of $u$, and by $m_k$
the number of blocks of type $C(r^l)$, for some $l \geq k$, in the primary canonical form of $u$.

Then, by Corollary \ref{CNsamesplit}, we find that $n_k$ is even for all $k \in \N^*$.

Noting that $\Lambda_{p,q}=t\bigl(t+N_{\L/\K}(x_1-x_2)\bigr)$, we
use the same line of reasoning as in the proof of Theorem \ref{theoSamesplitNottranslationSeparable} to gather that
$m_{k}$ is odd whenever $m_{k}>m_{k+1}$. The derivation of (ii) is then done as in the proof of
Theorem \ref{theoSamesplitNottranslationSeparable}.
\end{proof}

\begin{theo}\label{theoSamesplitTranslationSeparable}
Let $p$ and $q$ be irreducible monic polynomials with degree $2$ over $\F$.
Assume further that $q=p(t+d)$ for some $d \in \F$, that $\car(\F)=2$ and that $p$ is separable.
Let $u$ be an endomorphism of a finite-dimensional vector space $V$ over $\F$. Then, the following conditions are equivalent:
\begin{enumerate}[(i)]
\item $u$ is a d-exceptional $(p,q)$-difference.
\item The minimal polynomial of $u$ is the product of a power of $t-d$ with a power of $t-d-\tr(p)$.
Moreover, the invariant factors of $u$ read $s_1,s_2,\dots,s_{2k-1},s_{2k},\dots$ where, for every positive integer $k$,
we have $s_{2k-1}=s_{2k}$.
\item In some basis of $V$, the endomorphism $u$ is represented by a block-diagonal matrix in which every diagonal block
equals $C\bigl((t-d)^n\bigr) \oplus C\bigl((t-d)^n\bigr)$ or $C\bigl((t-d-\tr(p))^n\bigr) \oplus
C\bigl((t-d-\tr(p))^n\bigr)$ for some $n \in \N^*$.
\end{enumerate}
\end{theo}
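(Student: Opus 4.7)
The plan is to first reduce to the case $p = q$ via the translation $u \mapsto u + d\,\id_V$. In characteristic $2$, if $q(b) = 0$ then $b - d\,\id_V$ is annihilated by $q(t+d) = p(t+2d) = p(t)$, and symmetrically the converse holds; hence $u = a - b$ with $p(a) = 0$ and $q(b) = 0$ if and only if $u + d\,\id_V = a - (b - d\,\id_V)$ with $p(a) = p(b - d\,\id_V) = 0$, and all the data in the theorem transforms compatibly under this translation. So I assume $p = q$ and $d = 0$ throughout. Setting $\lambda := \tr(p)$, which is nonzero by separability of $p$, we get $\delta = 0$, $F_{p,p}(t) = t^2(t-\lambda)^2$, and d-exceptionality of $u$ means its minimal polynomial is a product of a power of $t$ with a power of $t - \lambda$.

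The equivalence (ii) $\Leftrightarrow$ (iii) is a routine unpacking: any d-exceptional $u$ has invariant factors $s_k = t^{\alpha_k}(t-\lambda)^{\beta_k}$ with $(\alpha_k)$ and $(\beta_k)$ non-increasing, and the pairing $s_{2k-1} = s_{2k}$ for all $k$ is equivalent to each primary block $C(t^n)$ and $C((t-\lambda)^n)$ appearing with even multiplicity in the primary canonical form. For (iii) $\Rightarrow$ (i), I would apply Lemma \ref{CSp=q} to obtain $a_0, b_0$ with $p(a_0) = p(b_0) = 0$ and $C(t^n) \oplus C(t^n) = a_0 - b_0$. Using the identity $p(t + \lambda) = p(t)$, which holds in characteristic $2$, suitable translates of the pair $(a_0, b_0)$ show that both $C((t-d)^n) \oplus C((t-d)^n)$ and $C((t-d-\lambda)^n) \oplus C((t-d-\lambda)^n)$ are $(p,q)$-differences; taking direct sums of such indecomposable blocks produces any endomorphism satisfying (iii).

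The heart of the proof is (i) $\Rightarrow$ (ii), which I would handle via Proposition \ref{stablep=qeven}. Assume $u = a - b$ with $p(a) = p(b) = 0$ (still under $p = q$), and set $v := u^2$. Since $\delta = 0$, the Commutation Lemma ensures that $a$ and $b$ commute with $v$. By d-exceptionality, $v$ is annihilated by a power of $t(t - \lambda^2)$, so $V = V_0 \oplus V_\lambda$, with $V_0$ and $V_\lambda$ the characteristic subspaces of $v$ for $0$ and $\lambda^2$ respectively, both stable under $a$ and $b$. On $V_0$, the induced endomorphism is a $(p,p)$-difference with sole eigenvalue $0$, so Proposition \ref{stablep=qeven} says $u|_{V_0}$ has an even number of Jordan cells of size $k$ for every $k \geq 1$. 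On $V_\lambda$, the char-$2$ identity $p(t+\lambda) = p(t)$ ensures that $b|_{V_\lambda} + \lambda\,\id_{V_\lambda}$ is still annihilated by $p$, hence $u|_{V_\lambda} - \lambda\,\id_{V_\lambda} = a|_{V_\lambda} - (b|_{V_\lambda} + \lambda\,\id_{V_\lambda})$ is a $(p,p)$-difference with sole eigenvalue $0$, and Proposition \ref{stablep=qeven} gives the same parity statement at eigenvalue $\lambda$ for $u|_{V_\lambda}$. Combining the two pieces, every Jordan cell size at every eigenvalue of $u$ has even multiplicity, which is exactly the pairing condition (ii).

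The main obstacle is arguably identifying the right reduction: the characteristic-$2$ identity $p(t + \lambda) = p(t)$ makes the argument symmetric between the eigenvalues $0$ and $\lambda$, both in constructing blocks at $\lambda$ from ones at $0$ in (iii) $\Rightarrow$ (i) and in reducing the Jordan-cell parity at $\lambda$ to Proposition \ref{stablep=qeven} at $0$ in (i) $\Rightarrow$ (ii). Without this identity the analysis at eigenvalue $\lambda$ would require a separate argument parallel to Proposition \ref{stablep=qeven}; with it, the proof compresses to a single application of that proposition on each characteristic subspace of $v$.
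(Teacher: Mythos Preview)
Your proof is correct and follows essentially the same route as the paper's (which is terse: ``similar to the previous three theorems''). You reduce to $p=q$ by translation, split $V$ via the characteristic subspaces of $v=u^2$, and invoke Proposition~\ref{stablep=qeven} on each piece; the paper's scheme is the same. Your explicit use of the characteristic-$2$ identity $p(t+\lambda)=p(t)$ to transport both the construction (Lemma~\ref{CSp=q}) and the parity statement (Proposition~\ref{stablep=qeven}) from eigenvalue $0$ to eigenvalue $\lambda$ is exactly the point that makes this case work, and it is implicit in the paper's argument as well.

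One minor remark: the splitting $V=V_0\oplus V_\lambda$ is not strictly needed for (i)~$\Rightarrow$~(ii). Proposition~\ref{stablep=qeven} already applies globally to $u=a-b$ (evenness at $0$) and to $u-\lambda\,\id_V=a-(b+\lambda\,\id_V)$ (evenness at $\lambda$), without first restricting to the characteristic subspaces. Your version is of course still correct.
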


\begin{proof}
Here, we see that $F_{p,q}=(t-d)^2\bigl(t-d-\tr p\bigr)^2$ and $\Lambda_{p,q}=(t+d^2)\bigl(t+d^2+(\tr p)^2\bigr)$,
with $d^2 \neq d^2+(\tr p)^2$. From there, the proof is similar to the one of the previous three theorems.
\end{proof}

\begin{theo}
Let $p$ and $q$ be two irreducible monic polynomials with degree $2$ over $\F$.
Assume further that $q=p(t+d)$ for some $d \in \F$, that $\car(\F)=2$ and that $p$ is inseparable.
Let $u$ be an endomorphism of a finite-dimensional vector space $V$ over $\F$. Then, the following conditions are equivalent:
\begin{enumerate}[(i)]
\item $u$ is a d-exceptional $(p,q)$-difference.
\item The minimal polynomial of $u$ is a power of $t-d$.
Moreover, the invariant factors of $u$ read $s_1,s_2,\dots,s_{2k-1},s_{2k},\dots$ where, for every positive integer $k$,
we have $s_{2k-1}=s_{2k}$.
\item In some basis of $V$, the endomorphism $u$ is represented by a block-diagonal matrix in which every diagonal block
equals $C\bigl((t-d)^n\bigr) \oplus C\bigl((t-d)^n\bigr)$ for some $n \in \N^*$.
\end{enumerate}
\end{theo}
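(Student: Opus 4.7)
The plan is to reduce everything to the already-established Proposition \ref{stablep=qeven} and Lemma \ref{CSp=q} by means of a translation, exploiting the simplifications that characteristic $2$ and inseparability bring.

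First I would compute $F_{p,q}$. Since $p$ is inseparable and $\car(\F)=2$, we may write $p(t)=t^2+\alpha$, so $\tr(p)=0$, and then $q(t)=p(t+d)=t^2+\alpha+d^2$, so $\tr(q)=0$ and $\delta=0$. Over $\overline{\F}$, $p$ has the double root $x_0$ with $x_0^2=\alpha$, while $q$ has the double root $y_0=x_0-d$, so that each difference $x_i-y_j$ equals $d$ and $F_{p,q}(t)=(t-d)^4$. Consequently, $u$ is d-exceptional with respect to $(p,q)$ if and only if its minimal polynomial is a power of $t-d$; this is the first half of condition (ii).

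Next I would perform the translation reduction. The map $b \mapsto b':=b-d\,\id_V$ is a bijection between the endomorphisms annihilated by $q$ and those annihilated by $p$, because $q(t)=p(t+d)$. Since $\car(\F)=2$, we have $u+d\,\id_V=a-(b-d\,\id_V)=a-b'$, so $u$ is a $(p,q)$-difference if and only if $u':=u+d\,\id_V$ is a $(p,p)$-difference. Moreover, the minimal polynomial of $u$ is a power of $t-d$ if and only if that of $u'$ is a power of $t$, and the invariant-factor conditions $s_{2k-1}=s_{2k}$ for $u$ and for $u'$ are equivalent (invariant factors of $u'$ are obtained from those of $u$ by $t\mapsto t+d$). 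Hence we have reduced the theorem to the case $d=0$, $q=p$, and $u$ nilpotent.

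For this reduced problem, implication (iii) $\Rightarrow$ (i) is immediate from Lemma \ref{CSp=q}, which says that $C(t^n)\oplus C(t^n)$ is a $(p,p)$-difference. Implication (ii) $\Rightarrow$ (iii) is a trivial grouping of equal consecutive invariant factors into blocks $C(t^n)\oplus C(t^n)$. The one nontrivial direction (i) $\Rightarrow$ (ii) is exactly Proposition \ref{stablep=qeven}: writing $u=a-b$ with $p(a)=p(b)=0$, the number of Jordan cells of any given size $k$ for the eigenvalue $0$ of $u$ is even; equivalently, the multisets of invariant factors (all of them powers of $t$) come in equal pairs.

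There is essentially no obstacle: once the characteristic-$2$ translation trick is observed, the case $q=p$ is already done in the paper (the even-pairing comes from Proposition \ref{stablep=qeven} via the Commutation Lemma, and the realizability from Lemma \ref{CSp=q} via extending scalars to the splitting field of $p$). The only thing worth double-checking is the equivalence of the invariant-factor condition on $u$ and $u'$ under the shift $u\mapsto u+d\,\id_V$, which is routine since translation by a scalar preserves the rational canonical form up to the substitution $t\mapsto t+d$.
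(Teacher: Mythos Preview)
Your proposal is correct and follows essentially the same approach as the paper. The paper's own proof is the single line ``Here, we see that $F_{p,q}=(t-d)^4$ and $\Lambda_{p,q}=(t+d^2)^2$. From there, the proof is similar to the one of the previous four theorems,'' and among those previous four theorems is precisely the $\car(\F)\neq 2$ translation case, which reduces to $p=q$ via $u\mapsto u-d\,\id_V$ and then invokes Proposition~\ref{stablep=qeven} and Lemma~\ref{CSp=q}; your write-up simply makes this explicit for the inseparable situation.
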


\begin{proof}
Here, we see that $F_{p,q}=(t-d)^4$ and $\Lambda_{p,q}=(t+d^2)^2$.
From there, the proof is similar to the one of the previous four theorems.
\end{proof}

Using the above five theorems, it is easy to derive the classification of
d-exceptional $(p,q)$-differences when $p$ and $q$ are both irreducible and have the same splitting field,
as given in Table \ref{dfigure7}.

\subsection{Exceptional $(p,q)$-differences (IV): When $p$ and $q$ are irreducible with distinct splitting fields}\label{d-exceptionalsectionIII}

In this final section, we tackle the case when both polynomials $p$ and $q$ are irreducible but they have distinct
splitting fields in $\overline{\F}$.
The discussion will basically be split into three subcases, whether both $p$ and $q$ are separable,
both are inseparable, or exactly one of them is separable.
In the first two cases, a basic trick will be to extend the field of scalars by using $\Lambda_{p,q}$ and $v:=u^2-\delta u$,
which will allow us to use the results from Section \ref{d-exceptionalsectionII}.

\subsubsection{Case 1. Both $p$ and $q$ are separable}\label{bothPandQseparableDistinctsplittingfields}

In that case, it is known that the splitting field $\L$ of $pq$ in $\overline{\F}$ is a Galois extension of $\F$
with degree $4$. Moreover, the Galois group of $\L$ over $\F$ contains two elements $\sigma$ and $\tau$
such that $\sigma$ exchanges the two roots of $q$ and fixes the ones of $p$,
and $\tau$ exchanges the two roots of $p$ and fixes the ones of $q$. It follows that $\Gal(\L/\F)$
acts transitively on $\Root(p)-\Root(q)$, whence $F_{p,q}$ is a power of some irreducible monic polynomial of
$\F[t]$.

Let us split $p=(t-x_1)(t-x_2)$ and $q=(t-y_1)(t-y_2)$ in $\L[t]$.
If $x_1-y_1,x_1-y_2,x_2-y_1,x_2-y_2$ are pairwise distinct, then $F_{p,q}$
is irreducible over $\F$ and separable.

Since $y_1 \neq y_2$ and $x_1 \neq x_2$, it follows that for two elements among $x_1-y_1,x_1-y_2,x_2-y_1,x_2-y_2$ to be equal,
it is necessary and sufficient that $x_1-y_1=x_2-y_2$ or $x_1-y_2=x_2-y_1$, that is
$(x_1-x_2)^2=(y_1-y_2)^2$, i.e.\ $p$ and $q$ have the same discriminant.
Yet, we have assumed that the splitting fields of $p$ and $q$ are distinct. Hence,
$p$ and $q$ have the same discriminant only if $\car(\F)=2$.
In that case, the respective discriminants of $p$ and $q$ equal $\tr(p)^2$ and $\tr(q)^2$, and hence they are equal if and only if
$\tr(p)=\tr(q)$.

Moreover, if $\car(\F)=2$ and $\tr(p)=\tr(q)$ then as $x_1-y_1 \neq x_1-y_2$ we get that
the sole irreducible monic divisor of $F_{p,q}$ over $\F$ is $p(t+y_1)=\bigl(t-(x_1-y_1)\bigr)\bigl(t-(x_2-y_1)\bigr)=t^2-\tr(p)t+p(0)+q(0)$,
which equals $p(t+y_2)$ by the way.

In any case, the polynomial $\bigl(t-(x_1-y_1)\bigr)\bigl(t-(x_2-y_2)\bigr)=t^2-\delta t+(x_1-y_1)(x_2-y_2)$ does not belong to $\F[t]$
(because $x_1-y_2$ is not one of its roots), which yields that $\Lambda_{p,q}$ is irreducible over $\F$.

\paragraph{Case 1.1. $p$ and $q$ have distinct discriminants}

\begin{theo}\label{theoFpq4roots}
Let $p$ and $q$ be monic polynomials with degree $2$ over $\F$,
and assume that they are both irreducible with distinct splitting fields
and distinct discriminants.
Let $u$ be an endomorphism of a finite-dimensional vector space $V$ over $\F$. Then, the following conditions are equivalent:
\begin{enumerate}[(i)]
\item The endomorphism $u$ is a d-exceptional $(p,q)$-difference.
\item The minimal polynomial of $u$ is a power of $F_{p,q}$ and if we denote by $r_1,\dots,r_k,\dots$
the invariant factors of $u$, then $r_{2k-1}=r_{2k}$ or $r_{2k-1}=r_{2k}\, F_{p,q}$, for all $k \in \N^*$.
\item In some basis of $V$, the endomorphism $u$ is represented by a block-diagonal matrix in which
each diagonal block equals $C\bigl(F_{p,q}^n\bigr) \oplus C\bigl(F_{p,q}^n\bigr)$
or $C\bigl(F_{p,q}^n\bigr) \oplus C\bigl(F_{p,q}^{n-1}\bigr)$ for some positive integer $n$.
\end{enumerate}
\end{theo}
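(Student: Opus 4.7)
The plan is to prove the chain (ii) $\Leftrightarrow$ (iii) $\Rightarrow$ (i) $\Rightarrow$ (ii). The equivalence (ii) $\Leftrightarrow$ (iii) is a routine translation between two formulations of the primary canonical form: since $F_{p,q}$ is the unique monic irreducible divisor of the minimal polynomial, each invariant factor has the form $F_{p,q}^{a_j}$ with $a_1\geq a_2\geq\cdots$, and the condition in (ii) on consecutive pairs $(r_{2k-1},r_{2k})$ says precisely that $a_{2k-1}-a_{2k}\in\{0,1\}$, which is exactly how the block description in (iii) is assembled.

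For (iii) $\Rightarrow$ (i), the balanced block $C(F_{p,q}^n)\oplus C(F_{p,q}^n)$ is obtained directly from the Duplication Lemma (Lemma \ref{dduplicationlemma}) applied to $r:=\Lambda_{p,q}^n$, using the identity $F_{p,q}^n(t)=\Lambda_{p,q}^n(t^2-\delta t)$ established at the start of Section \ref{DiffSection}. For the unbalanced block $C(F_{p,q}^n)\oplus C(F_{p,q}^{n-1})$, I plan to extend scalars to $\K_0:=\F[t]/(\Lambda_{p,q})$, which by the discussion at the start of Section \ref{d-exceptionalsectionIII} is the unique quadratic intermediate field between $\F$ and the splitting field $\L$ of $pq$ fixed by the Galois element that simultaneously swaps both pairs of conjugate roots. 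Over $\K_0$, the polynomials $p$ and $q$ remain irreducible with common splitting field $\L$, $p$ stays separable, and the distinct-discriminant hypothesis rules out $q$ being a translation of $p$ (for if $q(t)=p(t+d)$ with $d\in\K_0$, then $d=x_1-y_1$ would be fixed by the nontrivial element of $\Gal(\L/\K_0)$, forcing $x_1-y_1=x_2-y_2$ and hence $(x_1-x_2)^2=(y_1-y_2)^2$). Applying Lemma \ref{samesplitconstructivelemma} over $\K_0$ with $r:=t^2-\delta t+N_{\L/\K_0}(x_1-y_1)$ yields endomorphisms $A',B'$ over $\K_0$ with $p(A')=q(B')=0$ and $A'-B'\simeq C(r^n)\oplus C(r^{n-1})$. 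Restricting scalars from $\K_0$ to $\F$ via the regular representation $\K_0\hookrightarrow\Mat_2(\F)$, the identities $p(A'_\F)=q(B'_\F)=0$ persist, and Proposition \ref{blockcyclicprop} from the appendix identifies the resulting $\F$-matrix with $C(F_{p,q}^n)\oplus C(F_{p,q}^{n-1})$, exploiting the factorization $F_{p,q}=r\cdot\overline{r}$ in $\K_0[t]$ (where $\overline{r}$ denotes the Galois conjugate of $r$).

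For (i) $\Rightarrow$ (ii), I again extend scalars to $\K_0$: the endomorphism $u^{\K_0}$ is a d-exceptional $(p,q)$-difference over $\K_0$, and the hypotheses of Theorem \ref{theoSamesplitNottranslationSeparable} are satisfied in this setting. Applying that theorem forces each quotient $s_{2k-1}/s_{2k}$ of consecutive invariant factors of $u^{\K_0}$ to lie in $\{1,r,\overline{r},r\overline{r}\}$. But the invariant factors of $u^{\K_0}$ are exactly those of $u$ viewed over $\K_0$, namely $F_{p,q}^{a_k}=(r\overline{r})^{a_k}$, so $s_{2k-1}/s_{2k}=(r\overline{r})^{a_{2k-1}-a_{2k}}$; matching $r$- and $\overline{r}$-valuations then forces $a_{2k-1}-a_{2k}\in\{0,1\}$, which is condition (ii).

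The hardest step will be the descent argument in (iii) $\Rightarrow$ (i): identifying the $\F$-matrix obtained by restricting scalars from the $\K_0$-construction with $C(F_{p,q}^n)\oplus C(F_{p,q}^{n-1})$ requires a careful invocation of the block-cyclic proposition in the appendix, mirroring the corresponding step in the proof of Lemma \ref{samesplitconstructivelemma}.
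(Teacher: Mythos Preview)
Your proposal is correct. The chain (ii) $\Leftrightarrow$ (iii) and the construction (iii) $\Rightarrow$ (i) are essentially the paper's, with the cosmetic difference that you invoke Lemma \ref{samesplitconstructivelemma} directly to produce $C(r^n)\oplus C(r^{n-1})$ over $\K_0$, whereas the paper quotes the full Theorem \ref{theoSamesplitNottranslationSeparable} and uses $C(r^n)\oplus C(s^{n-1})$; either block restricts to $C(F_{p,q}^n)\oplus C(F_{p,q}^{n-1})$ over $\F$ via the two-step application of Proposition \ref{blockcyclicprop} that you correctly flag.

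The genuine divergence is in the converse direction. You prove (i) $\Rightarrow$ (ii) by \emph{extending} scalars: tensor $V$ up to $V^{\K_0}$, observe that $u^{\K_0}$ is still a d-exceptional $(p,q)$-difference, apply Theorem \ref{theoSamesplitNottranslationSeparable} over $\K_0$, and read off the constraint on the invariant factors (which are unchanged under base change). The paper instead proves (i) $\Rightarrow$ (iii) by \emph{descending} to an internal $\K$-structure: from $u=a-b$ it uses the Commutation Lemma to see that $a,b$ commute with $v=u^2-\delta u$, then the Jordan--Chevalley decomposition $v=S+N$ (valid since $\Lambda_{p,q}$ is separable) makes $a,b,u$ into $\F[S]$-linear maps, and $\F[S]\simeq\K$; one then applies Theorem \ref{theoSamesplitNottranslationSeparable} to the $\K$-endomorphism $u$ of $V^S$. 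Your route is shorter and avoids both the Commutation Lemma and Jordan--Chevalley in this step; the paper's route is the template it reuses verbatim in Theorems \ref{theoFpqdoubleroots} and \ref{theolastcase}, where the internal-structure viewpoint is arguably more natural.
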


\begin{proof}
It is easily seen that condition (ii) implies condition (iii).
Conversely, assume that (iii) holds but that (ii) does not.
First of all, it is obvious from condition (iii) that all the invariant factors of $u$ are powers of $F_{p,q}$.
Next, there is a least positive integer $k$ such that $F_{p,q}^2 r_{2k}$ divides $r_{2k-1}$.
Write then $r_{2k-1}=F_{p,q}^{\ell+1}$ for some integer $\ell \geq 0$. Then,
in any decomposition given by condition (iii), there is no diagonal block of the form
$C\bigl(F_{p,q}^m\bigr) \oplus C\bigl(F_{p,q}^{n}\bigr)$ in which one of $m$ and $n$ equals $\ell$.
Hence, each one of those diagonal blocks equals either $C\bigl(F_{p,q}^n\bigr) \oplus C\bigl(F_{p,q}^n\bigr)$
for some integer $n \neq \ell$, or $C\bigl(F_{p,q}^n\bigr) \oplus C\bigl(F_{p,q}^{n-1}\bigr)$
for some positive integer $n$ distinct from $\ell$ and $\ell+1$; in the latter case either both $n$ and $n-1$ are greater than $\ell$, or both
are less than $\ell$. It follows that there is an even number of invariant factors of $u$ that equal $F_{p,q}^n$ for some $n>\ell$, contradicting
the fact that there are $2k-1$ such invariants factors. Hence, condition (iii) implies condition (ii).

It only remains to prove that conditions (i) and (iii) are equivalent.
Some general work is required before we tackle this equivalence.

As we have seen in the beginning of Section \ref{bothPandQseparableDistinctsplittingfields}, our assumptions imply that both
$F_{p,q}$ and $\Lambda_{p,q}$ are irreducible over $\F$.
Moreover, the Galois group of $\L$ over $\F$ is isomorphic to $(\Z/2)^2$, which has three proper subgroups.

Denote by $\K$ the splitting field of $\Lambda_{p,q}$ in $\L$. Without loss of generality, we can consider that
$\K$ is the subalgebra of $\Mat_2(\F)$ generated by the companion matrix of $\Lambda_{p,q}$.
Obviously $\K$ is the Galois subfield of $\L$ associated with the subgroup of $\Gal(\L/\F)$
generated by the Galois automorphism that exchanges the two roots of $p$ and that exchanges the two roots of $q$.
Hence, $p$ and $q$ remain irreducible over $\K$.
Moreover, the assumptions show that $p$ and $q$ do not have the same discriminant.
In particular, over $\K$ the polynomials $p$ and $q$ satisfy the assumptions of Theorem \ref{theoSamesplitNottranslationSeparable}.

Splitting $p(t)=(t-x_1)(t-x_2)$ and $q(t)=(t-y_1)(t-y_2)$ over $\L$, we set
$$r:=\bigl(t-(x_1-y_1)\bigr)\bigl(t-(x_2-y_2)\bigr)\in \K[t] \quad \text{and} \quad s:=\bigl(t-(x_1-y_2)\bigr)\bigl(t-(x_2-y_1)\bigr)\in \K[t].$$
Let $n \in \N^*$.
Throughout the proof, $C(r^n)$ will be interpreted both as a matrix of $\Mat_{2n}(\K)$ and as a matrix of $\Mat_{4n}(\F)$, depending on the context.
Since $r$ is separable, Proposition \ref{blockcyclicprop} shows that
$C(r^n)$ is similar to
$$\begin{bmatrix}
C(r) & 0_2 & \cdots & \cdots & (0) \\
I_2 & C(r) & \ddots & & \vdots \\
0_2 & \ddots & \ddots & \ddots & \vdots \\
\vdots & & \ddots & C(r) & 0_2 \\
(0) & \cdots & 0_2 & I_2 & C(r)
\end{bmatrix}$$
in $\Mat_{2n}(\K)$. This last matrix can be interpreted as the matrix
$$M=\begin{bmatrix}
P & 0_4 & \cdots & \cdots & (0) \\
I_4 & P & \ddots & & \vdots \\
0_4 & \ddots & \ddots & \ddots & \vdots \\
\vdots & & \ddots & P & 0_4 \\
(0) & \cdots & 0_4 & I_4 & P
\end{bmatrix}$$
of $\Mat_{4n}(\F)$ for some $P \in \Mat_4(\F)$ that is annihilated by $F_{p,q}$.
Since $F_{p,q}$ is irreducible and separable, it follows once more from Proposition \ref{blockcyclicprop}
that $M$ is similar to $C(F_{p,q}^n)$ in $\Mat_{4n}(\F)$.
Hence, $C(r^n)$ is similar to $C(F_{p,q}^n)$ in $\Mat_{4n}(\F)$.
Note that this remains (trivially) true if $n=0$, and that this remains true if $r$ is replaced with $s$.

We are now ready to prove the implications (iii) $\Rightarrow$ (i) and (i) $\Rightarrow$ (ii).

In order to prove (iii) $\Rightarrow$ (i), it suffices to prove that, for all $n \in \N^*$, both matrices
$C(F_{p,q}^n) \oplus C(F_{p,q}^n)$ and  $C(F_{p,q}^n) \oplus C(F_{p,q}^{n-1})$ are $(p,q)$-differences.
For the first one it suffices to use the Duplication Lemma, since $F_{p,q}(t)=\Lambda_{p,q}(t^2-\delta t)$.
Next, let $n \in \N^*$. Since $p$ and $q$ satisfy the assumptions of Theorem \ref{theoSamesplitNottranslationSeparable} over $\K$, we
get that the matrix $C(r^n) \oplus C(s^{n-1})$ is a $(p,q)$-difference in $\Mat_{4n-2}(\K)$.
Hence, it is also a $(p,q)$-difference in $\Mat_{8n-4}(\F)$.
Yet, we have just seen that this matrix is similar to $C(F_{p,q}^n) \oplus C(F_{p,q}^{n-1})$ in $\Mat_{8n-4}(\F)$.
It follows that the latter is a $(p,q)$-difference. Hence, (iii) $\Rightarrow$ (i) is proved.

Conversely, assume that $u$ is a d-exceptional $(p,q)$-difference.
We choose endomorphisms $a$ and $b$ of the $\F$-vector space $V$ such that $u=a-b$ and $p(a)=q(b)=0$.
The endomorphism $v:=u^2-\delta u$ is annihilated by some power of the separable polynomial $\Lambda_{p,q}$.
Hence, by the Jordan-Chevalley decomposition, we have a splitting $v=S+N$ in which $S$ is a semi-simple endomorphism of $V$
that is annihilated by $\Lambda_{p,q}$ and that belongs to $\F[v]$, and $N$ is a nilpotent endomorphism of $V$.
Hence, $a$ and $b$ turn out to be endomorphisms of the $\F[S]$-vector space $V$, and so does $u$.
We denote by $V^S$ the $\F[S]$-vector space $V$ to differentiate it from the $\F$-vector space $V$.
Now, $\F[S]$ is isomorphic to the splitting field of $\Lambda_{p,q}$ over $\F$, and hence $\F[S] \simeq \K$.
Applying Theorem \ref{theoSamesplitNottranslationSeparable} to $V^S$,
we get that the endomorphism $u$ of $V^S$ is represented by a block-diagonal matrix in which each diagonal block has
one of the forms $C(r^k) \oplus C(s^{k-1})$, $C(r^k) \oplus C(s^k)$, or $C(s^k) \oplus C(r^{k-1})$ for some positive integer $k$.
It follows from our initial study that condition (iii) is satisfied by $u$, which completes the proof.
\end{proof}

\paragraph{Case 1.2. $p$ and $q$ have the same discriminant}

Here, $\car(\F)=2$, $\tr(p)=\tr(q)$,
$F_{p,q}=p(t+y_1)^2$, and $r:=p(t+y_1)=t^2+(\tr p)\,t+p(0)+q(0)$ belongs to $\F[t]$
and is irreducible over $\F$.
Hence, an endomorphism of a finite-dimensional vector space over $\F$
is d-exceptional with respect to $(p,q)$ if and only if its minimal polynomial is a power of $r$.
Note also that $\Lambda_{p,q}=t^2+(\tr p)^2\, t+(p(0)+q(0))^2$.
Remembering that $\Lambda_{p,q}$ is irreducible over $\F$, and noting that
the square of any root of $r$ is a root of $\Lambda_{p,q}$, we get that
$\Lambda_{p,q}$ and $r$ have the same splitting field in $\L$.

\begin{theo}\label{theoFpqdoubleroots}
Let $p$ and $q$ be monic polynomials with degree $2$ over $\F$,
and assume that they are both irreducible with distinct splitting fields and the same non-zero discriminant.
Set $r:=t^2+(\tr p)\,t+p(0)+q(0)$.
Let $u$ be an endomorphism of a finite-dimensional vector space $V$ over $\F$. Then, the following conditions are equivalent:
\begin{enumerate}[(i)]
\item The endomorphism $u$ is a d-exceptional $(p,q)$-difference.
\item The minimal polynomial of $u$ is a power of $r$, and if we denote by $r_1,\dots,r_k,\dots$
the invariant factors of $u$, then $r_{2k-1}=r_{2k}$ for all $k \in \N^*$.
\item In some basis of $V$, the endomorphism $u$ is represented by a block-diagonal matrix in which
each diagonal block equals $C(r^n) \oplus C(r^n)$ for some $n \in \N^*$.
\end{enumerate}
\end{theo}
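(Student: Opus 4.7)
The plan is to follow the template of the proofs of Theorems~\ref{theoFpq4roots} and \ref{theoSamesplitTranslationSeparable}, reducing the problem over $\F$ to one over the quadratic extension $\K$, which, as recorded just before the statement, is simultaneously the splitting field of $r$ and of $\Lambda_{p,q}$. The equivalence (ii)~$\iff$~(iii) is immediate from the rational canonical form theorem, since both conditions simply express that the invariant factors come in consecutive equal pairs. Throughout, I will use two easy preliminary facts: that $r$ is separable (in characteristic~$2$, the discriminant of $t^2 + \alpha t + \beta$ is $\alpha^2$, so the nonzero-discriminant hypothesis forces $\tr(p)=\tr(q)\neq 0$, whence $r$ is separable); and that, over $\K$, one has $q(t) = p(t+d)$ for any root $d$ of $r$ in $\K$, which is a brief direct computation using $\tr(p)=\tr(q)$ and $\car(\F)=2$. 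These two facts together place the pair $(p,q)$ over $\K$ precisely in the scope of Theorem~\ref{theoSamesplitTranslationSeparable}.

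For (iii)~$\Rightarrow$~(i), it suffices to exhibit each $C(r^n) \oplus C(r^n)$ as a $(p,q)$-difference. By Theorem~\ref{theoSamesplitTranslationSeparable} applied over $\K$, the matrix $C\bigl((t-d)^n\bigr) \oplus C\bigl((t-d)^n\bigr) \in \Mat_{2n}(\K)$ is a $(p,q)$-difference; identifying $\K$ with $\F[C(r)] \subseteq \Mat_2(\F)$, restricting scalars gives a $(p,q)$-difference in $\Mat_{4n}(\F)$, and Proposition~\ref{blockcyclicprop} identifies it up to similarity over $\F$ with $C(r^n) \oplus C(r^n)$, exactly as in the analogous step of the proof of Theorem~\ref{theoFpq4roots}.

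For the main direction (i)~$\Rightarrow$~(iii), I will adapt the Jordan--Chevalley reduction used in the proof of Theorem~\ref{theoFpq4roots}. Writing $u = a - b$ with $p(a) = q(b) = 0$ and noting that $\delta = 0$ here, set $v := u^2$; then $v$ is annihilated by a power of the separable irreducible polynomial $\Lambda_{p,q}$, so its Jordan--Chevalley decomposition yields $v = S + N$ with $S$ semisimple, $N$ nilpotent, and both polynomials in $v$. By the Commutation Lemma, $a$ and $b$ commute with $v$ and hence with $S$, so $V$ inherits a $\K$-vector space structure $V^S$ with $\K \simeq \F[t]/(\Lambda_{p,q})$ on which $u = a - b$ remains a $(p,q)$-difference. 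Theorem~\ref{theoSamesplitTranslationSeparable} then decomposes $u$ over $\K$ into blocks of the form $C\bigl((t-d)^n\bigr) \oplus C\bigl((t-d)^n\bigr)$ or $C\bigl((t-d-\tr p)^n\bigr) \oplus C\bigl((t-d-\tr p)^n\bigr)$, and restricting scalars back to $\F$ turns each such block into $C(r^n) \oplus C(r^n)$ by Proposition~\ref{blockcyclicprop}. The main obstacle will be the bookkeeping in this last restriction-of-scalars step, where one must check that the block-cyclic matrix produced by viewing $C\bigl((t-d)^n\bigr)$ over $\F$ satisfies the separability hypotheses of Proposition~\ref{blockcyclicprop}; this is precisely why the nonzero-discriminant assumption, which ensures that $r$ is separable, is crucial.
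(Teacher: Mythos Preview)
Your proposal is correct and follows essentially the same approach as the paper's proof. The paper likewise establishes (ii)~$\Leftrightarrow$~(iii) directly, passes to the splitting field $\K$ of $\Lambda_{p,q}$ (which coincides with that of $r$), observes that over $\K$ the polynomials $p$ and $q$ become translates of one another so that Theorem~\ref{theoSamesplitTranslationSeparable} applies, and uses the Jordan--Chevalley decomposition of $v=u^2$ together with Proposition~\ref{blockcyclicprop} to transfer the block decomposition between $\K$ and $\F$; the only cosmetic difference is that the paper names the roots of $r$ as $x_1-y_1$ and $x_1-y_2$ rather than $d$ and $d+\tr p$.
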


\begin{proof}
It is obvious that conditions (ii) and (iii) are equivalent.

Next, we prove that conditions (i) and (iii) are equivalent.
As in the proof of Theorem \ref{theoFpq4roots}, some prior work is required.

Remember that $\car(\F)=2$ and $\tr p=\tr q$.
The splitting field $\K$ of $\Lambda_{p,q}$ over $\L$ can be identified with
a subalgebra of $\Mat_2(\F)$. We have seen that $\K$ is also the splitting field of $r$ over $\F$.
Let us split $p(t)=(t-x_1)(t-x_2)$ and $q(t)=(t-y_1)(t-y_2)$ in $\L[t]$.
Note that $x_1-y_1 \in \K$, $x_1-y_2 \in \K$, and $q$ is a translation of $p$ over $\K$.

Next, fix $n \in \N^*$. Set
$$A_n:=\begin{bmatrix}
x_1-y_1 & 0 & \cdots & \cdots & (0) \\
1 & x_1-y_1 & \ddots & & \vdots \\
0 & \ddots & \ddots & \ddots & \vdots \\
\vdots & & \ddots & x_1-y_1 & 0 \\
(0) & \cdots & 0 & 1 & x_1-y_1
\end{bmatrix}$$ and
$$B_n:=\begin{bmatrix}
x_1-y_2 & 0 & \cdots & \cdots & (0) \\
1 & x_1-y_2 & \ddots & & \vdots \\
0 & \ddots & \ddots & \ddots & \vdots \\
\vdots & & \ddots & x_1-y_2 & 0 \\
(0) & \cdots & 0 & 1 & x_1-y_2
\end{bmatrix}$$
in $\Mat_n(\K)$, and set further
$$M_n:=A_n \oplus A_n \quad \text{and} \quad N_n:=B_n \oplus B_n,$$
which we see as matrices of $\Mat_{2n}(\K)$.
Here, $x_1-y_1$ is a root of $r$. Seeing $\K$ as a subalgebra of $\Mat_2(\F)$, we have
$$A_n=\begin{bmatrix}
x_1-y_1 & 0_2 & \cdots & \cdots & (0) \\
I_2 & x_1-y_1 & \ddots & & \vdots \\
0_2 & \ddots & \ddots & \ddots & \vdots \\
\vdots & & \ddots & x_1-y_1 & 0_2 \\
(0) & \cdots & 0_2 & I_2 & x_1-y_1
\end{bmatrix} \in \Mat_{2n}(\F).$$
Since $x_1-y_1$ is annihilated by $r$, which is separable with degree $2$,
Proposition \ref{blockcyclicprop} shows that
$A_n$ is similar to $C(r^n)$ in $\Mat_{2n}(\F)$. Likewise, $B_n$ is similar to $C(r^n)$ in $\Mat_{2n}(\F)$,
and we conclude that both $M_n$ and $N_n$ are similar to $C(r^n) \oplus C(r^n)$ in $\Mat_{4n}(\F)$.

We are now ready to conclude. For all $n \in \N^*$, we know from Theorem \ref{theoSamesplitTranslationSeparable}
that $M_n$ is a $(p,q)$-difference in $\Mat_{2n}(\K)$, and hence it is also a $(p,q)$-difference in $\Mat_{4n}(\F)$.
Hence, condition (iii) implies condition (i).
Conversely, assume that $u$ is a d-exceptional $(p,q)$-difference.
Let $a,b$ be endomorphisms of $V$ such that $u=a-b$ and $p(a)=q(b)=0$.
Setting $v:=u^2$, we see that $v$ is annihilated by some power of $\Lambda_{p,q}$. Since $\Lambda_{p,q}$ is separable, we can use the Jordan-Chevalley
decomposition $v=s+n$ in which $s$ is semi-simple, $n$ is nilpotent and $s$ is a polynomial in $v$.
Note that $\Lambda_{p,q}(s)=0$.
By the Commutation Lemma, both $a$ and $b$ commute with $s$, and hence $a$ and $b$ are endomorphisms of the $\F[s]$-vector space
$V$, which we denote by $V^s$. Hence, $u$ is a d-exceptional $(p,q)$-difference in the algebra of all endomorphisms of $V^s$.
Yet, $\F[s] \simeq \K$, and hence, by Theorem \ref{theoSamesplitTranslationSeparable}, in some basis of $V^s$ the endomorphism
$u$ is represented by a block-diagonal matrix in which every diagonal block equals either $C\Bigl(\bigl(t-(x_1-y_1)\bigr)^k\Bigr) \oplus
C\Bigl(\bigl(t-(x_1-y_1)\bigr)^k\Bigr)$ or $C\Bigl(\bigl(t-(x_1-y_2)\bigr)^k\Bigr) \oplus
C\Bigl(\bigl(t-(x_1-y_2)\bigr)^k\Bigr)$ for some $k \in \N^*$.
Hence, there is a basis of the $\F$-vector space $V$ in which $u$ is represented by a block-diagonal
matrix in which every diagonal block equals $M_k$ or $N_k$ for some $k \in \N^*$. Using the preliminary work on block matrices, we conclude that condition (iii) holds.
\end{proof}

\subsubsection{Case 2. Both $p$ and $q$ are inseparable}

Here, we will need another lemma:

\begin{lemma}\label{LemmaPandQinseparable}
Assume that $\car(\F)=2$. Let $p$ and $q$ be monic polynomials with degree $2$ of $\F[t]$, both irreducible and inseparable,
and assume that $p$ and $q$ have distinct splitting fields in $\overline{\F}$.
Let $a$ and $b$ be endomorphisms of a finite-dimensional vector space $V$ such that $p(a)=q(b)=0$.
Set $w:=ab+ba$. Then, for all $k \in \N^*$, the integer $n_k(w,0)$ is a multiple of $4$.
\end{lemma}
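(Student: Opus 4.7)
The plan is to exploit the very special form of $p$ and $q$ in characteristic $2$ and to descend to certain quotients of $\Ker w^k$, where the structure becomes much richer. Since $p$ and $q$ are irreducible and inseparable in characteristic $2$, we have $\tr(p)=\tr(q)=0$, so $p(t)=t^2+\alpha$ and $q(t)=t^2+\beta$ with $\alpha,\beta\in \F\smallsetminus\F^2$. The $p$-conjugate of $a$ is then $a$ itself, and similarly $b^\star=b$, so $w=ab+ba=ab^\star+ba^\star$ is precisely the element from the Basic Commutation Lemma. That lemma yields that $a$ and $b$ commute with $w$, and hence with every power of $w$.

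Fix $k\in\N^*$ and set $E_k:=\Ker w^k$. Since $a$ and $b$ commute with $w^k$, both $E_k$ and $E_{k-1}$ are stable under $a$ and $b$, and thus $a$ and $b$ induce endomorphisms $\bar a$ and $\bar b$ of the quotient $F_k:=E_k/E_{k-1}$. As $w(E_k)\subseteq E_{k-1}$, the element $w$ descends to $0$ on $F_k$, so the relation $ab+ba=w$ pushes down to $\bar a\bar b+\bar b\bar a=0$. In characteristic $2$ this says exactly that $\bar a$ and $\bar b$ \emph{commute}. Combined with $\bar a^2=\alpha\,\id_{F_k}$ and $\bar b^2=\beta\,\id_{F_k}$, this equips $F_k$ with the structure of a module over the commutative $\F$-algebra
$$T:=\F[X,Y]/(X^2+\alpha,Y^2+\beta)\;\cong\;\F_p\otimes_\F\F_q,$$
where $\F_p:=\F(\sqrt\alpha)$ and $\F_q:=\F(\sqrt\beta)$ are the splitting fields of $p$ and $q$.

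The key step is to observe that $T$ is a field of degree $4$ over $\F$. Writing $T\cong\F_p[Y]/(Y^2+\beta)$, irreducibility of $Y^2+\beta$ over $\F_p$ amounts to $\sqrt\beta\notin\F_p$; but $\sqrt\beta\in\F_p$ would force $\F_q\subseteq\F_p$ and hence $\F_p=\F_q$ (both having degree $2$ over $\F$), contradicting the hypothesis that $p$ and $q$ have distinct splitting fields. Hence $T$ is a degree-$4$ field, the structure morphism $T\to\End_\F(F_k)$ is either zero or injective, and in either case $F_k$ is naturally a $T$-vector space. Therefore $n_k(w,0)=\dim_\F F_k$ is a multiple of $[T:\F]=4$, as required.

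The only real obstacle is the conceptual step of passing to the quotient $F_k$ and recognising that the characteristic-$2$ identity $\bar a\bar b+\bar b\bar a=0$ is a \emph{commutation} rather than an anti-commutation relation; this allows the quaternion-style algebra generated by $a,b$ to collapse, on $F_k$, to the commutative tensor product $\F_p\otimes_\F\F_q$, which is a field precisely thanks to the hypothesis that the splitting fields differ. No Hensel-type lifting or delicate block computation is needed, in contrast with Lemma~\ref{CNsamesplitbasicLemma}.
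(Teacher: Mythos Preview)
Your proof is correct and follows essentially the same approach as the paper: pass to the quotient $\Ker w^k/\Ker w^{k-1}$, observe that $\bar a$ and $\bar b$ commute there (since $\bar a\bar b+\bar b\bar a=0$ in characteristic $2$), and use that the commutative algebra they generate is a field of degree $4$ over $\F$ because $p$ and $q$ have distinct splitting fields. Your tensor-product description of this algebra and the argument that $\sqrt\beta\notin\F_p$ make the degree-$4$ claim slightly more explicit than the paper's one-line assertion that $\F[\bar a,\bar b]$ is isomorphic to the splitting field of $pq$.
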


\begin{proof}
Denoting by $a^\star$ the $p$-conjugate of $a$ and by $b^\star$ the $q$-conjugate of $b$, we have $a^\star=a$ and $b^\star=b$, whence
$w=ab^\star+b a^\star$, and the Basic Commutation Lemma yields that both $a$ and $b$ commute with $w$.
Let $k \in \N^*$. Then, we know that $a$ and $b$ induce endomorphisms $a'$ and $b'$ of the
quotient vector space
$$E:=\Ker w^k/\Ker w^{k-1}$$
such that $p(a')=0=q(b')$, and $a'b'+b'a'=0$. Here, $\F$ has characteristic $2$, whence
$a'$ and $b'$ commute. Using $p(a')=0$ and $q(b')=0$, it follows that the subalgebra $\L:=\F[a',b']$ generated by $a'$ and $b'$ is actually isomorphic to the splitting field of $pq$ in $\overline{\F}$, and in particular it is a field with degree $4$ over $\F$. Hence, $E$ can be seen as an $\L$-vector space, leading to
$\dim_\F E=4 \dim_\L E$. We conclude that $n_k(w,0)=\dim_\F E$ is a multiple of $4$, as claimed.
\end{proof}

\begin{theo}\label{theoPandQinseparable}
Assume that $\car(\F)=2$.
Let $\alpha$ and $\beta$ be elements of $\F$, set $p(t):=t^2-\alpha$ and $q(t):=t^2-\beta$
and assume that both $p$ and $q$ are irreducible over $\F$ and that they have distinct splitting fields in $\overline{\F}$.
Let $u$ be an endomorphism of a finite-dimensional vector space $V$. Then, the following conditions are equivalent:
\begin{enumerate}[(i)]
\item $u$ is a d-exceptional $(p,q)$-difference.
\item Every invariant factor of $u$ is a power of $t^2-\alpha-\beta$, and if we denote by $r_1,\dots,r_k,\dots$
those invariant factors we have $r_{2k-1}=r_{2k}$ for all $k \in \N^*$.
\item In some basis of $V$, the endomorphism $u$ is represented by a block-diagonal matrix in which every diagonal block
equals $C\bigl((t^2-\alpha-\beta)^n\bigr) \oplus C\bigl((t^2-\alpha-\beta)^n\bigr)$ for some $n \in \N^*$.
\end{enumerate}
\end{theo}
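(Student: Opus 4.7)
The plan is to establish the easy equivalence $(ii)\Leftrightarrow(iii)$ as an immediate reformulation in terms of the rational canonical form, to derive $(i)\Rightarrow(ii)$ by combining Lemma \ref{LemmaPandQinseparable} with a Jordan-structure computation for the operator $r(u)$, and to close with an explicit construction for $(iii)\Rightarrow(i)$. The equivalence $(ii)\Leftrightarrow(iii)$ is immediate: since every invariant factor is forced to be a power of the irreducible polynomial $r(t):=t^2-\alpha-\beta$, the condition $r_{2k-1}=r_{2k}$ is precisely what is needed to pair them up into blocks $C(r^n)\oplus C(r^n)$.

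For $(i)\Rightarrow(ii)$, write $u=A-B$ with $A^2=\alpha\,\id_V$ and $B^2=\beta\,\id_V$. Since $\tr(p)=\tr(q)=0$, we have $\delta=0$, $A^\star=A$ and $B^\star=B$, and in characteristic $2$ Remark \ref{dconjugateremark} reduces to $u^2=(\alpha+\beta)\,\id_V+(AB+BA)$; thus $W:=AB+BA$ equals $r(u)$. Since $F_{p,q}=r^2$ annihilates a power of $u$, the minimal polynomial of $u$ is a power of $r$, and the invariant factors read $r^{a_1},\dots,r^{a_k}$ with $a_1\geq\cdots\geq a_k$. On each cyclic subspace associated with the block $C(r^{a_i})$, the restriction of $r(u)$ is a $2a_i\times 2a_i$ nilpotent matrix of index $a_i$ with $2$-dimensional kernel, hence similar to $C(t^{a_i})\oplus C(t^{a_i})$. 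Globally, $W$ therefore has Jordan type $(a_1,a_1,a_2,a_2,\dots,a_k,a_k)$, so $n_j(W,0)=2\,\#\{i:a_i\geq j\}$ for every $j\geq 1$. Lemma \ref{LemmaPandQinseparable} forces $n_j(W,0)$ to be a multiple of $4$, whence $\#\{i:a_i\geq j\}$ is even for each $j$; equivalently, each value taken by the $a_i$'s occurs an even number of times, which is condition (ii).

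For $(iii)\Rightarrow(i)$, it suffices to exhibit, for each $n\geq 1$, an explicit splitting of $C(r^n)\oplus C(r^n)$ as a $(p,q)$-difference. I plan to work on $V:=\K\otimes_\F\F^n$, where $\K:=\F(\sqrt{\alpha},\sqrt{\beta})$ is the common splitting field of $pq$, and to set $\theta:=\sqrt{\alpha}+\sqrt{\beta}$ (so that $\theta^2=\alpha+\beta$). Let $\mu_x:\K\to\K$ denote $\F$-linear multiplication by $x\in\K$, and let $P:\K\to\K$ be the $\F(\sqrt{\beta})$-linear projection defined by $P(a+b\sqrt{\alpha})=b$ for all $a,b\in\F(\sqrt{\beta})$; then $P^2=0$ and $P$ commutes with $\mu_{\sqrt{\beta}}$, and a direct $2\times 2$ computation in the basis $(1,\sqrt{\alpha})$ gives $\mu_\theta P+P\mu_\theta=\id_\K$ (the off-diagonal $\sqrt{\beta}$-contributions cancel in characteristic $2$). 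Setting
$$A:=\mu_{\sqrt{\alpha}}\otimes I_n \quad\text{and}\quad B:=\mu_{\sqrt{\beta}}\otimes I_n+P\otimes S_n,$$
where $S_n$ is the nilpotent shift on $\F^n$, the identities $A^2=\alpha\,\id_V$ and $B^2=\beta\,\id_V$ follow immediately from $P^2=0$ and the anticommutation $\mu_{\sqrt{\beta}}P+P\mu_{\sqrt{\beta}}=0$. Then
$$r(A-B)=(\mu_\theta P+P\mu_\theta)\otimes S_n=\id_\K\otimes S_n,$$
which has Jordan type $(n,n,n,n)$ on the $4n$-dimensional space $V$; consequently $A-B$ has minimal polynomial $r^n$ and exactly two invariant factors, both equal to $r^n$, so $A-B\simeq C(r^n)\oplus C(r^n)$.

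The main obstacle is designing this last construction. More natural attempts---inflating a Jordan block of $\theta$ viewed over $\L_0:=\F(\theta)$ or over $\K$ up to a matrix over $\F$---systematically produce four copies (or an unwanted two-and-two mixture) of $C(r^k)$ rather than two copies of $C(r^n)$, because each Jordan cell over the extension splits into several cells over $\F$. The key idea above is to use a square-zero $\F$-linear operator $P$ which is \emph{not} $\K$-linear and whose anticommutator with $\mu_\theta$ is the identity of $\K$; tensoring with the nilpotent shift then realizes the required Jordan type in one stroke.
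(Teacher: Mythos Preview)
Your proof is correct. The implication $(i)\Rightarrow(ii)$ is essentially the paper's argument: both compute $W:=ab+ba=r(u)$ and invoke Lemma~\ref{LemmaPandQinseparable} to force the multiplicities to be even (the paper phrases this via $v=u^2$ and the counts $N_n$, but it is the same computation). Your sentence ``$F_{p,q}=r^2$ annihilates a power of $u$'' is garbled; you mean that some power of $F_{p,q}$ annihilates $u$, which is the definition of d-exceptional.

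Where you genuinely diverge from the paper is in $(iii)\Rightarrow(i)$. The paper dispatches this in one line by citing the Duplication Lemma (Lemma~\ref{dduplicationlemma}), which already shows that $C\bigl(s(t^2)\bigr)\oplus C\bigl(s(t^2)\bigr)$ is a $(p,q)$-difference for any monic $s$; taking $s=(t-\alpha-\beta)^n$ gives exactly the block needed. Your explicit tensor construction on $\K\otimes_\F\F^n$, with the square-zero $\F(\sqrt\beta)$-linear projector $P$ satisfying $\mu_\theta P+P\mu_\theta=\id_\K$, is a nice self-contained alternative that avoids the $\calW(p,q,x)$ machinery underlying the Duplication Lemma. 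The trade-off is length versus generality: the paper's route is immediate once the Duplication Lemma is in hand (and that lemma is reused throughout the article), whereas your construction is tailored to this specific inseparable case but makes the decomposition completely concrete.
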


\begin{proof}
The equivalence between conditions (ii) and (iii) is obvious.

Before we prove that conditions (i) and (ii) are equivalent, some preliminary work is required.
First of all, here we have $F_{p,q}=(t^2-\alpha-\beta)^2$. Set $r:=t^2-\alpha-\beta$.
Since $p$ and $q$ have distinct splitting fields, the roots $\sqrt{\alpha}$ and $\sqrt{\beta}$ in $\L$
are linearly independent over $\F$, and hence $r$ is irreducible over $\F$.
It follows that $u$ is d-exceptional with respect to $(p,q)$ if and only if it is annihilated by some power of $r$.

From there, the Duplication Lemma yields that condition (iii) implies condition (i).
In order to conclude, we prove that condition (i) implies condition (ii).
Assume that condition (i) holds. First of all, we know that each invariant factor of $u$
is a power of $r$. Let $a$ and $b$ be endomorphisms of $V$ such that $p(a)=0=q(b)$ and $u=a-b$.
Set $v:=u^2$ and note that $ab+ba=v-(\alpha+\beta)\id_V$.
Let $n \in \N^*$ and denote by $N_n$ the number of invariant factors of $u$ that equal $(t^2-\alpha-\beta)^k$ for some $k \geq n$.
Then, $2N_n=\dim \Ker\bigl(v-(\alpha+\beta)\,\id_V\bigr)^n-\dim \Ker\bigl(v-(\alpha+\beta)\,\id_V\bigr)^{n-1}$ which, by Lemma \ref{LemmaPandQinseparable},
is a multiple of $4$. Hence, $N_n$ is even. Finally, for all $n \in \N^*$, the number of invariant factors of $u$ that equal $r^n$
is $N_n-N_{n+1}$, and hence it is even. It follows that condition (ii) holds.
\end{proof}

Combining Theorems \ref{theoFpqdoubleroots} and \ref{theoPandQinseparable}, we
deduce the classification of indecomposable d-exceptional $(p,q)$-differences
in the case when $p$ and $q$ have distinct splitting fields and the same discriminant,
as given in Table \ref{dfigure9}.

\subsubsection{Case 3. $p$ is separable and $q$ is not}

This is the last remaining case, and by far the most difficult one.

Here, $\car(\F)=2$. The splitting field $\L$ of $pq$ is not a Galois extension of $\F$.
Yet, it is not a radicial extension either because $p$ is irreducible and separable.
Hence, we have a decomposition $\F - R - \L$ where $R$ is a radicial quadratic extension of $\F$
and $\L$ is a separable extension of $R$. Explicitly, $R$ is the set of all $x \in \L$ such that $x^2 \in \F$.
Moreover, $\Gal(\L/\F)=\Gal(\L/R)$ has cardinality $2$.

Since $q$ is inseparable, $R$ is its splitting field in $\L$.

Let us split $p(t)=(t-x_1)(t-x_2)$ and $q(t)=(t-y)^2$ in $\L[t]$.
Let $u$ be an endomorphism of a finite-dimensional vector space $V$, and
set $v:=u^2-\tr(p) u=u^2-\delta u$.
Here,
$$F_{p,q}(t)=\bigl(t^2-\delta t+(x_1-y)(x_2-y)\bigr)^2 \quad \text{and} \quad \Lambda_{p,q}(t)=\bigl(t-p(y)\bigr)^2.$$
Yet, $p(y)=y^2-(\tr p)y+p(0)$ belongs to $R \setminus \F$ since $\tr p \neq 0$ and $y \in R \setminus \F$.
It follows that $\Lambda_{p,q}$ is irreducible over $\F$.
Next, $F_{p,q}$ is also irreducible over $\F$: indeed, it is split over $\L$, the Galois group of $\L$ over $\F$ acts transitively on the set $\{x_1-y,x_2-y\}$ of its roots in $\L$, and hence the only possible monic irreducible proper divisor of $F_{p,q}$ would be $(t-(x_1-y))(t-(x_2-y))=t^2-\delta t+p(y)$.
Yet, this last polynomial does not belong to $\F[t]$ since $p(y)\not\in \F$.

Hence, given an endomorphism $u$ of a finite-dimensional vector space over $\F$,
the minimal polynomial of $u$ is a power of $F_{p,q}$ if and only if $u$ is d-exceptional with respect to $(p,q)$.

Before we can obtain the classification of d-exceptional $(p,q)$-differences, we need a technical result
that is quite similar to Lemma \ref{CNsamesplitbasicLemma}:

\begin{prop}\label{lastprop}
Let $p$ and $q$ be irreducible monic polynomials with degree $2$ over $\F$.
Assume that $\tr p=1$ and that $q$ is inseparable.
Let $a,b$ be endomorphisms of a finite-dimensional vector space $V$ such that $p(a)=q(b)=0$, and denote by $a^\star$ the $p$-conjugate of $a$
and by $b^\star$ the $q$-conjugate of $b$. Set $w:=ab^\star+ba^\star$.
For $k \in \N^*$, set $n_k(w):=\dim \Ker (q(w)^{k})-\dim \Ker (q(w)^{k-1})$.

Let $k \in \N^*$. If $n_k(w)=n_{k+1}(w)$ then $n_k(w)$ is a multiple of $8$.
\end{prop}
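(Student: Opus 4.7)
The strategy mirrors that of Lemma \ref{CNsamesplitbasicLemma}; the key new point is that the natural field of semilinearity here has degree $4$ over $\F$, not $2$, which accounts for the extra factor of $2$ to be gained. Set $s := n_k(w) = n_{k+1}(w)$, and assume $s > 0$ (the case $s = 0$ is trivial). Define
\[
E := \Ker q(w)^{k+1}/\Ker q(w)^{k-1} \quad \text{and} \quad F := \Ker q(w)^{k}/\Ker q(w)^{k-1},
\]
so that $\dim E = 2s$, $\dim F = s$. The induced endomorphism $w'$ of $E$ satisfies $q(w')^2 = 0$ and $F = \Ker q(w') = \im q(w')$. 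By the Basic Commutation Lemma, $a$ and $b$ commute with $w$ and hence with $q(w)$; they therefore induce endomorphisms $a',b'$ of $E$ satisfying $p(a')=q(b')=0$ and $w' = a'(b')^\star+b'(a')^\star$. In characteristic $2$ with $\tr p = 1$ and $\tr q = 0$, $(a')^\star = a'+\id_E$ and $(b')^\star = b'$, so the last relation reads $w' = a'b' + b'a' + b'$. Since $p$ is separable and irreducible, $a'$ is semi-simple, and it stabilizes $F$ (it commutes with $q(w')$); thus $F$ admits an $a'$-stable complement $G$ in $E$.

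Pick a basis of $G$ and let its image under $q(w')$ be a basis of $F$. Writing $q(t) = t^2+\beta$, in the resulting basis of $E$ adapted to $F \oplus G$ one has
\[
q(w')=\begin{bmatrix} 0 & I_s \\ 0 & 0\end{bmatrix},\quad a' = \begin{bmatrix} A_1 & 0 \\ 0 & A_3\end{bmatrix},\quad w' = \begin{bmatrix} W_1 & W_2 \\ 0 & W_3\end{bmatrix},\quad b' = \begin{bmatrix} B_1 & B_2 \\ 0 & B_3\end{bmatrix},
\]
and one reads off the block identities $p(A_1)=p(A_3)=0$, $W_1^2=W_3^2=B_1^2=B_3^2=\beta I_s$, $W_1W_2+W_2W_3 = I_s$, $B_1B_2+B_2B_3=0$, $W_1 B_2 + B_2 W_3 = B_1 W_2 + W_2 B_3$ (from $[w',b']=0$), and $W_1 = A_1 B_1 + B_1 A_1 + B_1$ (from the upper-left block of $w' = a'b'+b'a'+b'$), together with analogous relations in the other blocks. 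Set $N_1 := B_1+W_1$, $N_3 := B_3+W_3$, and $M := B_2+W_2$. Using $B_i^2=W_i^2=\beta I_s$ and $[B_i,W_i]=0$ (from the Basic Commutation Lemma applied inside $F$ and to the induced action on the complement), one obtains $N_1^2=N_3^2=0$; the upper-left block relation rewrites as $A_1 N_1 + N_1 A_1 = N_1$, equivalently $N_1 A_1 = A_1^\star N_1$ where $A_1^\star = A_1 + I_s$. Expanding $N_1 M + M N_3$ and regrouping terms using the four block identities above yields the crucial Bezout-type identity
\[
N_1 M + M N_3 = I_s.
\]

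Multiplying this identity on the right by $N_3$ and on the left by $N_1$ (and using $N_1^2=N_3^2=0$) gives $N_3 = N_1 M N_3 = N_1$, so $N_1 = N_3$ as $s \times s$ matrices and the identity becomes $N_1 M + M N_1 = I_s$. From $N_1^2 = 0$, $\rk N_1 \leq s/2$; from $N_1 M + M N_1 = I_s$, $F = \im N_1 + M(\im N_1)$, hence $s \leq 2 \rk N_1$. Therefore $s = 2\rk N_1$. Since $p$ and $q$ are irreducible and $F \neq 0$, neither $A_1$ nor $W_1$ is a scalar matrix, so $\F[A_1] \simeq L := \F[t]/(p)$ and $\F[W_1] \simeq R := \F[t]/(q)$. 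Here $L/\F$ is separable quadratic and $R/\F$ is purely inseparable quadratic, so they are linearly disjoint over $\F$ and $K := L \otimes_\F R$ is a field of degree $4$ over $\F$; the commuting actions of $A_1$ and $W_1$ turn $F$ into a $K$-vector space. Now $N_1$ is $R$-linear (as $[N_1,W_1] = 0$), and the relation $N_1 A_1 = A_1^\star N_1$ translates into $\tau$-semilinearity over $L$, where $\tau$ is the non-trivial element of $\Gal(L/\F)$. Combining, $N_1$ is $\tilde\tau$-semilinear over $K$, with $\tilde\tau := \tau \otimes \id_R$ a non-trivial automorphism of $K$; hence $\im N_1$ is a $K$-subspace of $F$, giving $\rk N_1 = 4 \dim_K \im N_1$. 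Therefore $s = 2\rk N_1 = 8 \dim_K \im N_1$ is a multiple of $8$, as desired. The hardest part will be the verification of the Bezout-type identity $N_1 M + M N_3 = I_s$, which requires combining all four families of block relations with some care; once in hand, the jump from ``multiple of $4$'' in Lemma \ref{CNsamesplitbasicLemma} to ``multiple of $8$'' here comes for free from $[K:\F] = 4$.
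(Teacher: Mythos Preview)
Your argument is correct and, in its core computation, actually cleaner than the paper's. Both proofs pass to the quotient $E=\Ker q(w)^{k+1}/\Ker q(w)^{k-1}$, pick an $a'$-stable complement $G$ of $F=\Ker q(w')$, introduce $N_i=B_i+W_i$, and finish with the same semilinearity argument over the degree-$4$ field $\F[A_1,W_1]\simeq L\otimes_\F R$. The difference lies in how the rank equality $\rk N_1=s/2$ is obtained. The paper does not tie the basis of $F$ to that of $G$ via $q(w')$; as a result the off-diagonal block of $q(w')$ is an unknown invertible matrix, and extracting $\rk N_1=s/2$ requires Roth's theorem (to split the off-diagonal block $C$ of $b'$ as $B_1X-XB_2$) followed by a fairly long manipulation of the auxiliary map $\varphi(M)=Q_1M+MQ_2$. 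By contrast, your choice of basis forces $q(w')=\begin{bmatrix}0&I_s\\0&0\end{bmatrix}$, so that $W_1W_2+W_2W_3=I_s$; combined with $B_1B_2+B_2B_3=0$ and the off-diagonal commutation $W_1B_2+W_2B_3=B_1W_2+B_2W_3$, the identity $N_1M+MN_3=I_s$ drops out in one line, and the slick consequence $N_1=N_1MN_3=N_3$ immediately gives $s=2\rk N_1$. Your route avoids Roth's theorem entirely and makes the passage from ``multiple of $4$'' to ``multiple of $8$'' transparent; the paper's route, while longer, has the minor advantage of keeping closer structural parallelism with the proof of Lemma~\ref{CNsamesplitbasicLemma}.
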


The proof is split into two consecutive lemmas, in which we keep the same notation and assumptions as in Proposition \ref{lastprop} with regards to the polynomials $p$ and $q$.

\begin{lemma}\label{lastlemma1}
Let $a$ and $b$ be elements of an $\F$-algebra $\calA$, and assume that $p(a)=q(b)=0$.
Set $w:=ab^\star+ba^\star$ (where $a^\star$ denotes the $p$-conjugate of $a$, and $b^\star$ the $q$-conjugate of $b$).
Assume that $q(w)=0$. Then, the element $n:=b-w$ of $\calA$ satisfies the following conditions:
\begin{enumerate}[(i)]
\item $n^2=0$;
\item $n$ commutes with $w$;
\item $na=a^\star n$ and $na^\star=an$.
\end{enumerate}
\end{lemma}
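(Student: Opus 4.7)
The plan is to exploit the special structure forced by the hypotheses. Since $q$ is irreducible of degree $2$ and inseparable, we must have $\car(\F)=2$ and $q(t)=t^2-q(0)$, so that $\tr q = 0$. Consequently $b^\star = (\tr q)\,1_\calA - b = b$ and $b^2 = q(0)\,1_\calA$. Moreover, the assumption $q(w)=0$ rewrites as $w^2+q(0)=0$, i.e.\ $w^2 = q(0)\,1_\calA$ as well. Alongside these, I will use the standard identities for $a$: $a+a^\star = \tr p = 1$, $aa^\star = a^\star a = p(0)$, so that $a^2 = a-p(0)$, and both expressions $w = ab^\star+ba^\star = a^\star b + b^\star a$ recalled after the Basic Commutation Lemma.

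For (i), expand $n^2 = (b-w)^2 = b^2 - bw - wb + w^2$. The first and last terms are both $q(0)$, so they cancel in characteristic $2$. By the Basic Commutation Lemma, $b$ commutes with $w$, hence $bw+wb = 2bw = 0$, which gives $n^2=0$.

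For (ii), the commutation $nw = wn$ is immediate from $bw=wb$, since $n = b-w$ and $w$ commutes with itself.

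For (iii), which is the only part requiring genuine bookkeeping, I will compute $na - a^\star n$ directly. Expand $wa$ using $w = a^\star b + b^\star a$, obtaining $wa = a^\star ba + b^\star(a-p(0))$; expand $a^\star w$ using the other expression $w = ab^\star + ba^\star$, obtaining $a^\star w = p(0)b^\star + a^\star b a^\star$. Substituting $b^\star = b$ and $a - a^\star = 1$ (char.\ $2$), one finds that $wa - a^\star w$ reduces to $a^\star b + b^\star a = w$. On the other side, a direct computation using $a = 1 + a^\star$ shows $ba - a^\star b = w$ as well. Subtracting gives $na = a^\star n$. The companion identity $na^\star = an$ follows at once: $na^\star = n(1+a) = n + na = n + a^\star n = (1+a^\star)n = an$. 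The only real obstacle is the care required to invoke the two different representations of $w$ (one for each factor $wa$ and $a^\star w$) and to track the characteristic $2$ cancellations; once this choice is made, the calculation is essentially mechanical.
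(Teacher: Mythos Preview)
Your proof is correct and follows essentially the same path as the paper: both exploit $\car(\F)=2$, $\tr q=0$ (so $b^\star=b$), $\tr p=1$, and the Basic Commutation Lemma to handle (i) and (ii) identically. For (iii), the paper takes a slightly cleaner route by first observing that $n=ab+ba$ (immediate from $w=ab+b+ba$), then using the commutation of $a$ with $w$ to get $an+na=ab+ba=n$, from which $na=a^\star n$ follows at once via $a^\star=1+a$; your direct expansion of $wa-a^\star w$ and $ba-a^\star b$ reaches the same conclusion by the same underlying identities, just with a bit more bookkeeping.
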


\begin{proof}[Proof of Lemma \ref{lastlemma1}]
By the Basic Commutation Lemma, $a$, $b$ and $n$ commute with $w$.
Note that $\car(\F)=2$, due to the assumptions on $q$. Since $b$ commutes with $w$,
we deduce that $n^2=b^2+w^2=-q(0)1_\calA-q(0)1_\calA=0$. Finally, since $\tr q=0$ and $\tr p=1$,
$$b-n=w=ab^\star+ba^\star=-ab+b-ba,$$
whence $n=ab+ba$. Since $w$ commutes with $a$, this yields $n=a(b-w)+(b-w)a=an+na$, and condition (iii) follows.
\end{proof}

\begin{lemma}\label{lastlemma2}
With the assumptions and notation from Proposition \ref{lastprop},
assume that $q(w)^2=0$ and $\dim V=2\dim \Ker q(w)$. Then, the dimension of $\Ker q(w)$ is a multiple of $8$.
\end{lemma}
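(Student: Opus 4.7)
The plan is to endow $F := \ker q(w)$ with a structure of vector space over the compositum $\L$ of the splitting fields of $p$ and $q$ (which already forces $\dim_\F F$ to be a multiple of $4$), and then extract an additional factor of $2$ from a globally defined semi-linear operator on $F$.

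First I would set up the compositum. Let $K := \F[t]/(p)$ and $R := \F[t]/(q)$, both quadratic over $\F$, and $\L := K \otimes_\F R$; then $\L$ is a field of degree $4$ over $\F$, because $p$ remains irreducible over $R$ ($K/\F$ separable and $R/\F$ inseparable prevent $R$ from containing a root of $p$). By the Basic Commutation Lemma, $a$ and $b$ commute with $w$, so all three stabilize $F$; on $F$ one then has $p(a|_F) = 0$ and $q(w|_F) = 0$, and assuming $F \neq 0$, this gives $\F[a|_F] \cong K$ and $\F[w|_F] \cong R$. These commute inside $\End_\F(F)$ and generate a faithful $\L$-action on $F$; setting $\alpha := \dim_\L F$, one has $\dim_\F F = 4\alpha$, and the target reduces to showing that $\alpha$ is even.

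Next I would redo the computation of Lemma \ref{lastlemma1} globally, without the hypothesis $q(w) = 0$. Setting $n := b + w$ (equal to $b - w$ in characteristic $2$) and using $\tr p = 1$, $\tr q = 0$, the same manipulations give globally $n^2 = w^2 + b^2 = q(w) = m$, together with $nw = wn$ and $na = a^\star n$. Hence $n$ stabilizes $F$ (it is $\F[w]$-linear), $n|_F^2 = m|_F = 0$, and $n|_F$ is $\tilde\sigma$-semi-linear on the $\L$-vector space $F$, where $\tilde\sigma$ is the Galois automorphism of $\L/R$ sending $a \mapsto a^\star$. Moreover $n^4 = m^2 = 0$ and, by the hypothesis $\dim V = 2\dim F$, $\ker m = \im m = F$.

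The step I expect to be the main obstacle is the Jordan-chain count that pins $\dim_K \ker n$ exactly at $\alpha$. With $d_i := \dim_K \ker n^i$ and $c_i := d_i - d_{i-1}$, I would obtain $d_2 = 2\alpha$ and $d_4 = 4\alpha$ directly, and $d_3 = 2\alpha + d_1$ by using $\im n^3 = n(\im m) = n|_F(F) \subseteq \ker n|_F = \ker n$ (the last equality from $\ker n \subseteq \ker n^2 = F$). Then $c_1 = c_3 = d_1$ and $c_2 = c_4 = 2\alpha - d_1$, and the standard concavity $c_i \geq c_{i+1}$ — still valid in the semi-linear setting because $n$ induces an $\F$-linear injection $\ker n^{i+1}/\ker n^i \hookrightarrow \ker n^i/\ker n^{i-1}$ — forces $d_1 = \alpha$. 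Since $\ker n|_F = \ker n$ is an $\L$-subspace of $F$ (kernels of $\tilde\sigma$-semi-linear maps are subspaces over the base field), $\dim_K \ker n|_F = 2\dim_\L \ker n|_F$, whence $\alpha = 2\dim_\L \ker n|_F$ is even and $\dim_\F F = 4\alpha$ is a multiple of $8$. The conceptual difficulty is that the inseparability of $q$ blocks a direct adaptation of the matrix argument in Lemma \ref{CNsamesplitbasicLemma}, so one must instead work globally with the single semi-linear operator $n$, made accessible by the identity $n^2 = q(w)$ being valid without the restrictive hypothesis $q(w) = 0$.
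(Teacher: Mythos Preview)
Your proof is correct and takes a genuinely different route from the paper's.

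Both arguments ultimately hinge on the same semi-linear operator: your $n|_F$ coincides with the paper's $N_1$ (the matrix of $(b-w)$ restricted to $V_1=\Ker q(w)$), and both conclude by observing that its image is an $\L$-subspace, forcing an extra factor of $2$. The divergence is entirely in how the rank of this operator is pinned down. The paper chooses an $a$-stable complement $V_2$ of $V_1$, writes everything in $2\times 2$ block form, invokes Roth's theorem to solve a Sylvester equation $C=B_1X-XB_2$, and then carries out a fairly intricate manipulation with the auxiliary map $\varphi(M)=Q_1M+MQ_2$ to squeeze out $\rk N_1=2k$ from the invertibility of a certain off-diagonal block. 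Your approach sidesteps all of this by noticing that the identity $n^2=q(w)$ from Lemma~\ref{lastlemma1} holds \emph{globally} on $V$ (not just on $\Ker q(w)$), so that $n$ is a nilpotent operator of index at most $4$ on all of $V$; the standard concavity of the sequence $c_i=\dim_K\Ker n^i-\dim_K\Ker n^{i-1}$, together with the exact values $d_2=2\alpha$, $d_4=4\alpha$, $d_3=2\alpha+d_1$ that you extract from $\im m=\Ker m=F$, forces $d_1=\alpha$ without any matrix work. What the paper's approach buys is explicitness and a direct connection to the block structure used elsewhere in the section; what yours buys is a shorter, coordinate-free argument that avoids Roth's theorem entirely and makes transparent why the half-rank condition falls out of the nilpotency degree of $n$.
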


\begin{proof}[Proof of Lemma \ref{lastlemma2}]
Set $V_1:=\Ker q(w)$ and $u:=a-b$.
By the Basic Commutation Lemma, $V_1$ is stable under both $a$ and $b$, and hence under $u$.
Here
\begin{align*}
F_{p,q}(u) & =(u^2-u+p(y)\,\id_V)^2 \\
& =\bigl(w-(p(0)+q(0)+p(y))\,\id_V\bigr)^2 \\
& =\bigl(w-(y^2+y+q(0))\,\id_V\bigr)^2=(w-y)^2=q(w).
\end{align*}
Hence, the irreducible polynomial $F_{p,q}$ (with degree $4$) annihilates $u_{|V_1}$, and it follows that
$\dim V_1=4k$ for some non-negative integer $k$. We shall prove that $k$ is even.

Since $V_1$ is stable under $a$, which is annihilated by the irreducible polynomial $p$,
there exists a direct factor $V_2$ of $V_1$ in the $\F$-vector space $V$ that is stable under $a$.
Note that $\dim V_2=\dim (V/V_1)=\dim V_1$.
Hence, in some basis $\bfB$ of the vector space $V$ that is adapted to the decomposition $V=V_1\oplus V_2$,
we have, for some matrices $A$, $B_1$, $B_2$, $Q_1$, $Q_2$ and $C$ of $\Mat_{4k}(\F)$,
$$\Mat_\bfB(a)=\begin{bmatrix}
A & 0 \\
0 & A
\end{bmatrix}, \; \Mat_\bfB(b)=\begin{bmatrix}
B_1 & C  \\
0 & B_2
\end{bmatrix} \quad \text{and} \quad \Mat_\bfB(w)=\begin{bmatrix}
Q_1 & ?  \\
0 & Q_2
\end{bmatrix}.$$
More precisely, $A$ can be taken as the direct sum of $2k$ copies of the companion matrix $C(p)$.

Using $p(a)=q(b)=0$ and $q(w)=0$, we get that $p(A)=q(B_1)=q(B_2)=q(Q_1)=q(Q_2)=0$.
Denoting by $A^\star,B_1^\star,B_2^\star$ the $p$-conjugate of $A$, the $q$-conjugate of $B_1$, and the $q$-conjugate of $B_2$, respectively,
we find that $Q_i=AB_i^\star+B_iA^\star$ for all $i \in \{1,2\}$. Hence, Lemma \ref{lastlemma1} yields that the matrices
$$N_1:=B_1-Q_1 \quad \text{and} \quad N_2:=B_2-Q_2$$
have square zero and satisfy the following equalities:
\begin{equation}\label{commuteequations}
\forall i \in \{1,2\}, \quad N_iA=A^\star N_i, \quad AN_i=N_i A^\star\quad \text{and} \quad N_iQ_i=Q_iN_i.
\end{equation}
In the next key step, we shall prove that $N_1$ has rank $2k$.

Note first, by the definition of $w$, that
$$\Mat_\bfB(w)=\begin{bmatrix}
Q_1 & AC+CA^\star  \\
0 & Q_2
\end{bmatrix},$$
and it follows, since $\tr q=0$, that
\begin{align*}
\Mat_\bfB(q(w)) & =\begin{bmatrix}
q(Q_1) & Q_1(AC+CA^\star)+(AC+CA^\star)Q_2  \\
0 & q(Q_2)
\end{bmatrix} \\
& =
\begin{bmatrix}
0 & Q_1(AC+CA^\star)+(AC+CA^\star)Q_2  \\
0 & 0
\end{bmatrix}.
\end{align*}
Using the definition of $V_1$, we conclude that
$$Q_1(AC+CA^\star)+(AC+CA^\star)Q_2 \quad \text{is invertible.}$$
We shall see that this leads to the equality $\rk N_1=2k$.

First of all, since $q$ is irreducible and both matrices $\begin{bmatrix}
B_1 & 0 \\
0 & B_2
\end{bmatrix}$ and $\begin{bmatrix}
B_1 & C \\
0 & B_2
\end{bmatrix}$ are annihilated by $q$, they are similar (as both are similar to the direct sum of $4k$ copies of $C(q)$).
By Roth's theorem \cite{Roth}, this yields a matrix $X \in \Mat_{4k}(\F)$
such that
$$C=B_1X-X B_2.$$

Next, consider the endomorphism
$$\varphi : M \in \Mat_{4k}(\F) \mapsto Q_1 M+MQ_2 \in \Mat_{4k}(\F).$$
Since $Q_1^2$ and $Q_2^2$ are equal scalar multiples of $I_{4k}$, we find that
$$\forall M \in \Mat_{4k}(\F), \quad Q_1 \varphi(M)=\varphi(M) Q_2,$$
i.e.\
\begin{equation}\label{lastidentity}
\forall M \in \Mat_{4k}(\F), \quad
(B_1-N_1) \varphi(M)=\varphi(M)(B_2-N_2).
\end{equation}

On the other hand, by the Basic Commutation Lemma, we see that $A$ and $A^\star$ commute with $Q_1$ and $Q_2$, whence
$$\forall M \in \Mat_{4k}(\F), \; \varphi(AM)=A\varphi(M) \; \text{and} \; \varphi(MA^\star)=\varphi(M)A^\star,$$
whereas $B_i$ commutes with $Q_i$ for all $i \in \{1,2\}$, whence
$$\forall M \in \Mat_{4k}(\F), \; \varphi(B_1M)=B_1 \varphi(M) \; \text{and} \; \varphi(MB_2)=\varphi(M)B_2.$$
Hence,
\begin{align*}
\varphi(AC+CA^\star) & =A \varphi(C)+\varphi(C)A^\star \\
& =A(B_1 \varphi(X)-\varphi(X) B_2)+(B_1 \varphi(X)-\varphi(X) B_2) A^\star \\
& =A(N_1 \varphi(X)-\varphi(X) N_2)+(N_1 \varphi(X)-\varphi(X) N_2) A^\star \quad \text{(by \eqref{lastidentity})} \\
& =N_1 A^\star \varphi(X)-A \varphi(X) N_2+N_1 \varphi(X) A^\star-\varphi(X) A N_2 \quad \text{(by \eqref{commuteequations})} \\
& =N_1\bigl(A^\star \varphi(X)+\varphi(X)A^\star\bigr)-\bigl(A\varphi(X)+\varphi(X)A\bigr)N_2.
\end{align*}
It follows that
$$\rk \bigl(\varphi(AC+CA^\star)\bigr) \leq \rk N_1+\rk N_2.$$
Yet, $N_1$ and $N_2$ are square-zero matrices of $\Mat_{4k}(\F)$ and hence $\rk N_1 \leq 2k$ and $\rk N_2 \leq 2k$.
On the other hand $\varphi(AC+CA^\star)$ is invertible. Hence $\rk N_1=2k$.

We are now ready to conclude. Remembering that $A$ and $Q_1$ commute with one another and are respectively
annihilated by $p$ and $q$, we find that the subalgebra $\F[A,Q_1]$ of $\Mat_{4k}(\F)$ is
isomorphic to $\L$, and in particular it is a field.
Without loss of generality, we can simply assume that $\mathbb{L}=\F[A,Q_1]$. In that identification,
the non-identity automorphism $\sigma$ in $\Gal(\L/\F)$ leaves $Q_1$ invariant and maps $A$ to $A^\star$.
The equalities $N_1Q_1=Q_1N_1$ and $N_1A=A^\star N_1$ then yield that the endomorphism $m : X \mapsto N_1 X$
of the $\F$-vector space $\F^{4k}$ satisfies
$$\forall \lambda \in \L, \; \forall x \in \F^{4k}, \; m(\lambda x)=\sigma(\lambda)\,m(x).$$
In other words, $m$ is semilinear with respect to $\sigma$.
It follows that the range of $m$ is an $\L$-linear subspace of $\F^{4k}$, and hence its dimension over $\F$ is a multiple of $4$.
Hence, $2k$ is a multiple of $4$, and we finally obtain the desired conclusion that $k$ is even.
This completes the proof.
\end{proof}

Now, we are ready to conclude the proof of Proposition \ref{lastprop}:

\begin{proof}[Proof of Proposition \ref{lastprop}]
Let $k \in \N^*$. Assume that $n_k(w)=n_{k+1}(w)$. For all $i \in \N^*$, we know from the Basic Commutation Lemma that
$a$ and $b$ stabilize $\Ker q(w)^i$. Setting $E:=\Ker q(w)^{k+1}/\Ker q(w)^{k-1}$, we deduce that
$a$ and $b$ induce endomorphisms $a'$ and $b'$ of $E$ such that $p(a')=q(b')=0$.
Moreover, the endomorphism $w'$ of $E$ induced by $w$ satisfies $q(w')^2=0$, and
$\Ker q(w')=\Ker q(w)^{k}/\Ker q(w)^{k-1}$ has dimension $n_k(w)$, while $\dim E=n_k(w)+n_{k+1}(w)=2n_k(w)$.
Finally, $a'(b')^\star+b'(a')^\star=w'$, where $(a')^\star$ is the $p$-conjugate of $a'$ and $(b')^\star$ is the $q$-conjugate of $b'$.
Thus, Lemma \ref{lastlemma2} applies to the pair $(a',b')$, whence $n_k(w)$ is a multiple of $8$.
\end{proof}

Having done that preliminary work, we are now ready to classify the d-exceptional $(p,q)$-differences.

\begin{theo}\label{theolastcase}
Let $p$ and $q$ be irreducible monic polynomials with degree $2$ over $\F$.
Assume that $p$ is separable and that $q$ is inseparable.
Let $u$ be an endomorphism of a finite-dimensional vector space $V$ over $\F$.
Then, the following conditions are equivalent:
\begin{enumerate}[(i)]
\item $u$ is a d-exceptional $(p,q)$-difference.
\item The minimal polynomial of $u$ is a power of $F_{p,q}$ and
the invariant factors of $u$ read $r_1,\dots,r_k,\dots$, where, for each positive integer $k$,
either $r_{2k}=r_{2k-1}$ or $r_{2k-1}=r_{2k}\,F_{p,q}$.
\item In some basis of $V$, the endomorphism $u$ is represented by a block-diagonal matrix
in which every diagonal block equals $C(F_{p,q}^{n+\epsilon}) \oplus C(F_{p,q}^n)$ for some non-negative integer
$n$ and some $\epsilon \in \{0,1\}$.
\end{enumerate}
\end{theo}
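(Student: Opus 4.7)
The plan is to establish the cycle (ii) $\Leftrightarrow$ (iii) $\Rightarrow$ (i) $\Rightarrow$ (ii). Since $F_{p,q}$ was shown to be irreducible of degree $4$ over $\F$ at the start of Section \ref{d-exceptionalsectionIII}, Case 3, d-exceptionality of $u$ means its invariant factors have the form $F_{p,q}^{\alpha_1}, F_{p,q}^{\alpha_2}, \dots$ with $\alpha_1 \geq \alpha_2 \geq \dots$; condition (ii) then says precisely that the consecutive pairs $(\alpha_{2k-1}, \alpha_{2k})$ are each of shape $(n,n)$ or $(n+1,n)$, which is a direct rephrasing of the block structure in (iii). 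The equivalence (ii) $\Leftrightarrow$ (iii) is thus immediate.

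For (iii) $\Rightarrow$ (i), I must realize both generic blocks as $(p,q)$-differences. The balanced block $C(F_{p,q}^n) \oplus C(F_{p,q}^n)$ follows directly from the Duplication Lemma applied to the polynomial $\Lambda_{p,q}^n$, using the identity $F_{p,q}(t) = \Lambda_{p,q}(t^2 - \delta t)$. For the unbalanced block $C(F_{p,q}^{n+1}) \oplus C(F_{p,q}^n)$, I extend scalars to the purely inseparable quadratic extension $R$ containing the double root $y$ of $q$. Over $R$, the polynomial $p$ remains irreducible (as $p$ is separable over $\F$ and $R/\F$ is purely inseparable), while $q = (t-y)^2$ is split with a double root; so Theorem \ref{theoQsplitswithdouble} applied over $R$ supplies, for $r := p(t+y) \in R[t]$, the $(p,q)$-difference $C(r^{2n+1}) \in \Mat_{4n+2}(R)$. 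Via the embedding $R \hookrightarrow \Mat_2(\F)$, this matrix becomes a $(p,q)$-difference in $\Mat_{8n+4}(\F)$. Two applications of Proposition \ref{blockcyclicprop} then identify its $\F$-similarity class: first, since $r$ is separable over $R$, $C(r^{2n+1})$ is rewritten over $R$ as a block-cyclic matrix of length $2n+1$ with diagonal block $C(r) \in \Mat_2(R)$ and subdiagonal $I_2$; reinterpreting each $R$-entry as a $2 \times 2$ $\F$-matrix produces an $\F$-block-cyclic matrix whose diagonal block is a $4 \times 4$ $\F$-matrix annihilated by the irreducible polynomial $F_{p,q} = p(t+y)^2$, each of whose roots in $\overline{\F}$ has multiplicity $2$; the second application of Proposition \ref{blockcyclicprop} (in the ``multiplicity $2$'' regime used in Case 2 of the proof of Lemma \ref{samesplitconstructivelemma}) then yields the claimed similarity to $C(F_{p,q}^{n+1}) \oplus C(F_{p,q}^n)$. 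I expect this double application of Proposition \ref{blockcyclicprop}, which must traverse from the separable regime over $R$ to the inseparable regime over $\F$, to be the main obstacle.

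For the direction (i) $\Rightarrow$ (ii), I first reduce to the normalization required by Proposition \ref{lastprop}: replacing $(u, p, q)$ by $((\tr p)^{-1} u, H_{\tr p}(p), H_{\tr p}(q))$, which is permitted since $\tr p \neq 0$ by separability of $p$ in characteristic $2$, gives an equivalent $(p,q)$-difference problem with $\tr p = 1$, and condition (ii) is preserved under this homothety. Choose endomorphisms $a, b$ of $V$ with $u = a - b$ and $p(a) = q(b) = 0$, and set $w := a b^\star + b a^\star$. A direct computation combining Remark \ref{dconjugateremark} with $\tr p = 1$, $\tr q = 0$, $y^2 = q(0)$, and the formula $\Lambda_{p,q}(t) = t^2 + p(y)^2$ established in Section \ref{d-exceptionalsectionIII}, Case 3, yields the key relation $q(w) = F_{p,q}(u)$. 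Writing the invariant factors of $u$ as $F_{p,q}^{\alpha_1}, F_{p,q}^{\alpha_2}, \dots$ with $\alpha_1 \geq \alpha_2 \geq \dots$, a dimension count gives $n_k(w) = 4\,N_k$, where $N_k := \#\{i : \alpha_i \geq k\}$; Proposition \ref{lastprop} then forces, whenever $N_k = N_{k+1}$, that $N_k$ be even. If condition (ii) were to fail, some $j$ would satisfy $\alpha_{2j-1} \geq \alpha_{2j} + 2$; setting $\ell := \alpha_{2j} + 1$, no invariant factor would have exponent $\ell$, so $N_\ell = N_{\ell+1} = 2j-1$ would be odd, contradicting the consequence of Proposition \ref{lastprop}.
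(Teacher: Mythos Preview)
Your proof is correct and follows essentially the same route as the paper's: the equivalence (ii) $\Leftrightarrow$ (iii) is handled as in Theorem \ref{theoFpq4roots}, the implication (iii) $\Rightarrow$ (i) is obtained by extending scalars to the radicial extension $R$, invoking Theorem \ref{theoQsplitswithdouble}, and applying Proposition \ref{blockcyclicprop} twice, and (i) $\Rightarrow$ (ii) rests on Proposition \ref{lastprop} via the identity $q(w)=F_{p,q}(u)$. The only cosmetic difference is the normalization: you rescale the entire triple $(u,p,q)$ by the homothety $H_{\tr p}$, whereas the paper keeps $u$, $b$, and $q$ fixed and only replaces $a$ by $a'=(\tr p)^{-1}a$ (annihilated by $p_1$ with $\tr p_1=1$), obtaining $F_{p,q}(u)=(\tr p)^2 q(w)$ directly without altering the ambient problem.
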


\begin{proof}
Let us write $q=(t-y)^2=t^2-y^2$.

The equivalence between conditions (ii) and (iii) is obtained in exactly the same way as in the proof of Theorem \ref{theoFpq4roots}.

Next, we prove that condition (iii) implies condition (i).
Obviously, it suffices to fix a positive integer $n$ and to prove that both matrices
$C(F_{p,q}^{n-1}) \oplus C(F_{p,q}^n)$ and $C(F_{p,q}^{n}) \oplus C(F_{p,q}^n)$ are $(p,q)$-differences
(for the second one, this can be directly obtained as a consequence of the Duplication Lemma).
First, fix a positive integer $k$. Without loss of generality, we can assume that $R=\F[C(q)]$.
Over $R$, the polynomial $q$ splits with a double root, whereas $p$ remains irreducible and separable.
Hence, by Theorem \ref{theoQsplitswithdouble} the matrix $C\bigl(p(t+y)^k\bigr)$ of $\Mat_{2k}(R)$ is a $(p,q)$-difference.
Since $p(t+y)$ is separable over $R$, Proposition \ref{blockcyclicprop} yields that the
matrix
$$M_k=\begin{bmatrix}
C\bigl(p(t+y)\bigr) & 0_2 & \cdots & \cdots & (0) \\
I_2 & C\bigl(p(t+y)\bigr) & \ddots & & \vdots \\
0_2 & \ddots & \ddots & \ddots & \vdots \\
\vdots & & \ddots & C\bigl(p(t+y)\bigr) & 0_2 \\
(0) & \cdots & 0_2 & I_2 & C\bigl(p(t+y)\bigr)
\end{bmatrix}$$
of $\Mat_{2k}(R)$ is similar to $C(p(t+y)^k)$, and is therefore a $(p,q)$-difference.

Viewing $C\bigl(p(t+y)\bigr)$ as a matrix $P$ of $\Mat_4(\F)$, we deduce that $F_{p,q}$ annihilates $P$ and that the matrix
$$\begin{bmatrix}
P & 0_4 & \cdots & \cdots & (0) \\
I_4 & P & \ddots & & \vdots \\
0_4 & \ddots & \ddots & \ddots & \vdots \\
\vdots & & \ddots & P & 0_4 \\
(0) & \cdots & 0_4 & I_4 & P
\end{bmatrix}$$
of $\Mat_{4k}(\F)$ is a $(p,q)$-difference.
Here $F_{p,q}$ is irreducible with double roots. Therefore, Proposition \ref{blockcyclicprop}
yields that the above matrix of $\Mat_{4k}(\F)$ is similar to $C(F_{p,q}^{s+\epsilon}) \oplus C(F_{p,q}^s)$,
where $s$ and $\epsilon$ respectively denote the quotient and the remainder of $k$ mod $2$.
Varying $k$ then yields the claimed result, and we conclude that condition (iii) implies condition (i).

It remains to prove that condition (i) implies condition (ii).
Assume therefore that $u$ is a d-exceptional $(p,q)$-difference. Let $a$ and $b$ be endomorphisms of $V$ such that $u=a-b$ and $p(a)=q(b)=0$.
Since $F_{p,q}$ is monic and irreducible, the minimal polynomial of $u$ must be a power of it.
From there, we set $v:=u^2-\delta u$, with $\delta:=\tr p-\tr q$, and we proceed as in the proof of Theorem
\ref{theoSamesplitNottranslationSeparable}.
It suffices to prove the following statement:
for all $k \in \N$, if $\dim \bigl(\Ker F_{p,q}(u)^{k+1}/\Ker F_{p,q}(u)^k\bigr)=\dim \bigl(\Ker F_{p,q}(u)^{k+2}/\Ker F_{p,q}(u)^{k+1}\bigr)$,
then $\dim \bigl(\Ker F_{p,q}(u)^{k+1}/\Ker F_{p,q}(u)^k\bigr)$ is a multiple of $8$.
Set $a':=(\tr p)^{-1}\,a$ and note that $a'$ is annihilated by $p_1:=t^2+t+\frac{p(0)}{(\tr p)^2}\cdot$
Denoting by $(a')^\star$ the $p_1$-conjugate of $a$, one sees that $a b^\star+b a^\star=(\tr p) (a' b^\star+b (a')^\star)$,
and hence by setting $w:=a' b^\star+b (a')^\star$, we have
$$v=a b^\star+b a^\star-(p(0)+q(0))\id_V=(\tr p)\Bigl(w-\frac{p(0)+q(0)}{\tr p}\,\id_V\Bigr).$$
Since
$$p(0)+q(0)+p(y)=p(0)+y^2+p(y)=(\tr p)\, y,$$
we find $(p(0)+q(0))^2+p(y)^2=(\tr p)^2\, y^2$ and it follows that
$$F_{p,q}(u)=\Lambda_{p,q}(v)=v^2-p(y)^2\,\id_V=(\tr p)^2(w^2-y^2\,\id_V)=(\tr p)^2 q(w).$$
Hence, for all $k \in \N$, we find $\Ker F_{p,q}(u)^k=\Ker q(w)^k$.
The claimed result then follows directly from Proposition \ref{lastprop} applied to the pair $(a',b)$ and to the polynomials $p_1$ and $q$.
\end{proof}

Combining Theorem \ref{theoFpq4roots} with Theorem \ref{theolastcase}, we obtain the classification of indecomposable d-exceptional $(p,q)$-differences when
$p$ and $q$ are irreducible with distinct splitting fields and distinct discriminants, as given in Table \ref{dfigure8}.

\newpage

\section{The quotient of two invertible quadratic matrices}\label{QuotSection}

\subsection{The basic splitting}

Let $u$ be an automorphism of a finite-dimensional vector space $V$ over $\F$.
Let $p$ and $q$ be monic polynomials with degree $2$ over $\F$, with $p(0)q(0) \neq 0$.

The \textbf{q-fundamental polynomial} of the pair $(p,q)$ is defined as the resultant
$$G_{p,q}(t):=\res\bigl(p(x),q(0)^{-1} t^2q(x/t)\bigr) \in \F[t],$$
which is a monic polynomial of degree $4$.
More explicitly, if we split $p(t)=(t-x_1)(t-x_2)$ and $q(t)=(t-y_1)(t-y_2)$ in $\overline{\F}[t]$,
then
$$G_{p,q}(t)=\prod_{1 \leq i,j \leq 2}\bigl(t-x_iy_j^{-1}\bigr)=q(0)^{-2} p(y_1t)\,p(y_2t)=q(0)^{-2}\,t^4\, q(x_1 t^{-1})\, q(x_2 t^{-1}).$$
We set
$$E'_{p,q}(u):=\underset{n \in \N}{\bigcup} \Ker G_{p,q}(u)^n \quad \text{and} \quad R'_{p,q}(u):=\underset{n \in \N}{\bigcap} \im G_{p,q}(u)^n.$$
Hence, $V=E'_{p,q}(u) \oplus R'_{p,q}(u)$, and the endomorphism $u$ stabilizes both linear subspaces
$E'_{p,q}(u)$ and $R'_{p,q}(u)$.
The endomorphism $u$ is called \textbf{q-exceptional} with respect to $(p,q)$ (respectively, \textbf{q-regular} with respect to $(p,q)$)
whenever $E'_{p,q}(u)=V$ (respectively, $R'_{p,q}(u)=V$).
In other words, $u$ is q-exceptional (respectively, q-regular) with respect to $(p,q)$
if and only if all the eigenvalues of $u$ in $\overline{\F}$ belong to $\Root(p)\Root(q)^{-1}$
(respectively, no eigenvalue of $u$ in $\overline{\F}$ belongs to $\Root(p)\Root(q)^{-1}$).

The endomorphism of $E'_{p,q}(u)$ (respectively, of $R'_{p,q}(u)$)
induced by $u$ is always q-exceptional (respectively, always regular) with respect to $(p,q)$
and we call it the \textbf{q-exceptional part} (respectively, the \textbf{q-regular part}) of $u$ with respect to $(p,q)$.

Finally, we set
\begin{align*}
\Theta_{p,q}& :=t^2-(\tr p)(\tr q)t+\bigl(q(0)(\tr p)^2+p(0)(\tr q)^2-4\,p(0)q(0)\bigr) \\
& =\bigl(t-(x_1y_1+x_2y_2)\bigr)\bigl(t-(x_1y_2+x_2y_1)\bigr).
\end{align*}
One sees that, for
$$v:=q(0)\, u+p(0)\, u^{-1},$$
we have
$$\bigl(u-x_1y_1^{-1}\id_V\bigr)\bigl(u-x_2y_2^{-1}\id_V\bigr)=q(0)^{-1} u\,\bigl(v-(x_1y_2+x_2y_1)\,\id_V\bigr)$$
and likewise
$$\bigl(u-x_1y_2^{-1}\id_V\bigr)\bigl(u-x_2y_1^{-1}\id_V\bigr)=q(0)^{-1} u\,\bigl(v-(x_1y_1+x_2y_2)\,\id_V\bigr).$$
Hence,
$$G_{p,q}(u)=q(0)^{-2} u^2 \Theta_{p,q}(v).$$

\begin{Rem}\label{qconjugateremark}
Let $\calA$ be an $\F$-algebra, and let $a,b$ be elements of $\calA$ such that $p(a)=q(b)=0$.
Denote by $a^\star$ the $p$-conjugate of $a$ and by $b^\star$ the $q$-conjugate of $b$. Then, $b^\star=q(0)\,b^{-1}$ and $a^\star=p(0)\,a^{-1}$, whence
$$q(0)\,ab^{-1}+p(0)\,(ab^{-1})^{-1}=
q(0)\,ab^{-1}+p(0)\,ba^{-1}=ab^\star+ba^\star.$$
\end{Rem}

Our first basic result follows:

\begin{prop}\label{qseparEetR}
The endomorphism $u$ is a $(p,q)$-quotient if and only if
both its q-exceptional part and its q-regular part are $(p,q)$-quotients.
\end{prop}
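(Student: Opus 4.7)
The plan is to mirror the proof of Proposition \ref{dseparEetR}, replacing the role of $v = u^2 - \delta u$ by $v = q(0)\,u + p(0)\,u^{-1}$, the role of $F_{p,q}$ by $G_{p,q}$, and the role of $\Lambda_{p,q}$ by $\Theta_{p,q}$. The ``if'' direction is immediate from the fact that a direct sum of $(p,q)$-quotients is itself a $(p,q)$-quotient. So assume $u = ab^{-1}$ for some endomorphisms $a,b$ of $V$ with $p(a) = q(b) = 0$; note that $b$ is in fact invertible since $q(0) \neq 0$ (the relation $bb^\star = q(0)\,\id_V$ gives an explicit inverse).

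The key observation is that $v := q(0)\,u + p(0)\,u^{-1}$ can be rewritten using Remark \ref{qconjugateremark} as
\[
v = q(0)\,ab^{-1} + p(0)\,ba^{-1} = ab^\star + ba^\star.
\]
By the Basic Commutation Lemma, both $a$ and $b$ commute with $v$, and therefore with $\Theta_{p,q}(v)$. Since by construction $v$ is a polynomial in $u$ (and $u^{-1}$), the automorphism $u$ itself commutes with $\Theta_{p,q}(v)$, so the identity
\[
G_{p,q}(u) = q(0)^{-2}\, u^2\, \Theta_{p,q}(v)
\]
may be iterated to give $G_{p,q}(u)^n = q(0)^{-2n}\, u^{2n}\, \Theta_{p,q}(v)^n$ for every $n \in \N$.

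Because $u$ is an automorphism, $u^{2n}$ is invertible, and the previous factorisation yields
\[
\Ker G_{p,q}(u)^n = \Ker \Theta_{p,q}(v)^n \quad \text{and} \quad \im G_{p,q}(u)^n = \im \Theta_{p,q}(v)^n
\]
for every $n$. Consequently $E'_{p,q}(u) = \bigcup_n \Ker \Theta_{p,q}(v)^n$ and $R'_{p,q}(u) = \bigcap_n \im \Theta_{p,q}(v)^n$; since $a$ and $b$ commute with $\Theta_{p,q}(v)$, both stabilise each of these linear subspaces.

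It remains only to note that the restrictions $a_{|E'_{p,q}(u)}$ and $a_{|R'_{p,q}(u)}$ are annihilated by $p$, while the restrictions $b_{|E'_{p,q}(u)}$ and $b_{|R'_{p,q}(u)}$ are annihilated by $q$ and, being restrictions of the automorphism $b$ to finite-dimensional stable subspaces, are themselves automorphisms of those subspaces. Writing the q-exceptional and q-regular parts of $u$ respectively as $a_{|E'_{p,q}(u)}\,(b_{|E'_{p,q}(u)})^{-1}$ and $a_{|R'_{p,q}(u)}\,(b_{|R'_{p,q}(u)})^{-1}$ exhibits each as a $(p,q)$-quotient. The only real point requiring care, and the natural candidate for the ``main obstacle,'' is the passage from $G_{p,q}(u)$ to $\Theta_{p,q}(v)$ via the invertibility of $u^2$, but this is straightforward once the factorisation and the fact that $u$ commutes with $v$ are in hand.
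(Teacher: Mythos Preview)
Your proof is correct and follows essentially the same approach as the paper's own proof: both use the factorisation $G_{p,q}(u)^n = q(0)^{-2n} u^{2n} \Theta_{p,q}(v)^n$ together with the invertibility of $u$ to identify $\Ker G_{p,q}(u)^n$ and $\im G_{p,q}(u)^n$ with the corresponding kernel and image of $\Theta_{p,q}(v)^n$, then invoke the commutation of $a$ and $b$ with $v$ to conclude that they stabilise $E'_{p,q}(u)$ and $R'_{p,q}(u)$. The only cosmetic difference is that you invoke the Basic Commutation Lemma directly via the rewriting $v = ab^\star + ba^\star$, whereas the paper packages this step as the Commutation Lemma (Lemma~\ref{qcommutelemma}).
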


The proof of this result will use the following basic lemma, which is a straightforward corollary to the 
Basic Commutation Lemma:

\begin{lemma}[Commutation Lemma]\label{qcommutelemma}
Let $p$ and $q$ be monic polynomials of $\F[t]$ with degree $2$ such $p(0)q(0) \neq 0$, and let $a$ and $b$ be endomorphisms of a vector space $V$
such that $p(a)=q(b)=0$. Then, both $a$ and $b$ commute with $q(0)\,(ab^{-1})+p(0)\,(ab^{-1})^{-1}$.
\end{lemma}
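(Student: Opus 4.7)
The plan is to reduce the statement immediately to the Basic Commutation Lemma (Lemma \ref{basiccommutelemma}) by invoking the identity recorded in Remark \ref{qconjugateremark}.

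First I would note that since $p(0) \neq 0$ and $p(a) = 0$, the element $a$ is invertible, with $a^{-1} = -p(0)^{-1}(a - (\tr p)\,\id_V)$; likewise $b$ is invertible because $q(0) \neq 0$. Thus the expression $ab^{-1}$ and its inverse $ba^{-1}$ are well-defined. Letting $a^\star := (\tr p)\,\id_V - a$ be the $p$-conjugate of $a$ and $b^\star := (\tr q)\,\id_V - b$ the $q$-conjugate of $b$, Remark \ref{qconjugateremark} gives
$$a^\star = p(0)\, a^{-1} \quad \text{and} \quad b^\star = q(0)\, b^{-1},$$
so that
$$q(0)\,(ab^{-1}) + p(0)\,(ab^{-1})^{-1} = ab^\star + ba^\star.$$

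The Basic Commutation Lemma, applied in the $\F$-algebra $\End(V)$ to $a$ and $b$ (which satisfy $p(a) = q(b) = 0$), asserts precisely that both $a$ and $b$ commute with $ab^\star + ba^\star$. Combining this with the identity above yields the desired conclusion. No step is expected to be an obstacle: once Remark \ref{qconjugateremark} is invoked, the result is an immediate consequence of Lemma \ref{basiccommutelemma}.
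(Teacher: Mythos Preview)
Your proposal is correct and follows exactly the paper's approach: the paper states that the lemma is a ``straightforward corollary to the Basic Commutation Lemma'' and does not spell out a proof, but Remark~\ref{qconjugateremark} provides precisely the identity you use to make the reduction.
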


Now, we are ready to prove Proposition \ref{qseparEetR}.

\begin{proof}[Proof of Proposition \ref{qseparEetR}]
The ``if" part is obvious. Conversely, assume that $u$ is a $(p,q)$-quotient,
and split $u=ab^{-1}$ where $a$ and $b$ are automorphisms of $V$ such that $p(a)= 0$ and $q(b)=0$.
By the Commutation Lemma, both $a$ and $b$ commute with $v:=q(0) u+p(0) u^{-1}$.
Hence, $a$ and $b$ commute with $\Theta_{p,q}(v)$.
Since $u$ commutes with $v$ and is an automorphism, we see that
$G_{p,q}(u)^n=q(0)^{-2n} u^{2n} \Theta_{p,q}(v)^n=q(0)^{-2n} \Theta_{p,q}(v)^n u^{2n}$ for every positive integer $n$,
and it follows that $\Ker G_{p,q}(u)^n=\Ker \Theta_{p,q}(v)^n$ and $\im G_{p,q}(u)^n=\im \Theta_{p,q}(v)^n$ for every such integer $n$.
Hence, as $a$ and $b$ commute with $v$ we deduce that both stabilize $E_{p,q}(u)$ and $R_{p,q}(u)$
(and of course they induce automorphisms of those vector spaces).
Denote by $a'$ and $b'$ (respectively, by $a''$ and $b''$) the automorphisms of $E_{p,q}(u)$ (respectively, of $R_{p,q}(u)$)
induced by $a$ and $b$. Then, the q-exceptional part of $u$ is $a'(b')^{-1}$, and the q-regular part of $u$ is $a''(b'')^{-1}$.
As $p$ annihilates $a'$ and $a''$, and $q$ annihilates $b'$ and $b''$, both the q-exceptional and the q-regular part of $u$ are $(p,q)$-quotients.
\end{proof}

From there, it is clear that classifying $(p,q)$-quotients amounts to classifying the q-exceptional ones and the
q-regular ones. The easier classification is the latter: as we shall see, it involves little discussion on the specific polynomials
$p$ and $q$ under consideration (whether they are split or not over $\F$, separable or not over $\F$, etc).
In contrast, the classification of q-exceptional $(p,q)$-quotients involves a tedious case-by-case study.

\subsection{The $R_\delta$ transformation}\label{RdeltaSection}

\begin{Not}
Let $r$ be a monic polynomial with degree $d$, and let $\delta$ be a nonzero scalar.
We set $$R_\delta(r):=t^d r(t+\delta t^{-1}),$$
which is a monic polynomial with degree $2d$ and valuation $0$. \\
\end{Not}

Some basic facts will be useful on the $R_\delta$ transformation:

\begin{itemize}
\item For all monic polynomials $r$ and $s$, we have $R_\delta(r)R_\delta(s)=R_\delta(rs)$.
\item Followingly, if $r$ and $s$ are monic polynomials such that $r$ divides $s$, then $R_\delta(r)$ divides $R_\delta(s)$.
\item Let $r$ and $s$ be coprime monic polynomials. Then, $R_\delta(r)$ and $R_\delta(s)$ are coprime: indeed
if in some algebraic (field) extension of $\F$ those polynomials had a common root $z$ (necessarily nonzero)
then $z+\delta z^{-1}$ would be a common root of $r$ and $s$.
\end{itemize}

\subsection{Statement of the results}\label{resultssection}

We are now ready to state our results.
We shall frame them in terms of direct-sum decomposability.

Let $u$ be an endomorphism of a nonzero finite-dimensional vector space $V$.
Assume that $V$ splits into $V_1 \oplus V_2$, and that each linear subspace $V_1$ and $V_2$ is stable under $u$ and nonzero,
and both induced endomorphisms $u_{|V_1}$ and $u_{|V_2}$ are $(p,q)$-quotients.
Then, $u$ is obviously a $(p,q)$-quotient.
In the event when such a decomposition exists we shall say that $u$ is
a \textbf{decomposable} $(p,q)$-quotient, otherwise and if $u$ is a $(p,q)$-quotient, we shall say that $u$ is
an \textbf{indecomposable} $(p,q)$-quotient. Obviously, every $(p,q)$-quotient in $\End(V)$
is the direct sum of indecomposable ones. Hence, it suffices to describe the indecomposable $(p,q)$-quotients.

Moreover, if a $(p,q)$-quotient is indecomposable then it is either q-regular or q-exceptional, owing to Proposition \ref{qseparEetR}.

In each one of the following tables, we give a set of matrices. Each matrix represents an indecomposable $(p,q)$-quotient,
and every indecomposable $(p,q)$-quotient in $\End(V)$ is represented by one of those matrices, in some basis.
It is convenient to set
$$\delta:=p(0)q(0)^{-1}.$$

We start with q-regular $(p,q)$-quotients. In that situation the classification is rather simple:

\begin{table}[H]
\begin{center}
\caption{The classification of indecomposable q-regular $(p,q)$-quotients.}
\label{qfigure1}
\begin{tabular}{| c | c |}
\hline
Representing matrix & Associated data  \\
\hline
\hline
 & $n\in \N^*$, $r \in \F[t]$ irreducible and monic, \\
$C\bigl(R_\delta(r)^n\bigr)$ &  $R_\delta(r)$ has no root in $\Root(p)\Root(q)^{-1}$ \\
& $N_{\calW(p,q,q(0)y)_\L}$ is isotropic over $\L:=\F[t]/(r)$ \\
& for some root $y$ of $r$ in $\L$ \\
\hline
$C\bigl(R_\delta(r)^n\bigr)$ & $n\in \N^*$, $r \in \F[t]$ irreducible and monic, \\
$\oplus$ & $R_\delta(r)$ has no root in $\Root(p)\Root(q)^{-1}$ \\
$C\bigl(R_\delta(r)^n\bigr)$ & $N_{\calW(p,q,q(0)y)_\L}$ is non-isotropic over $\L:=\F[t]/(r)$ \\
& for some root $y$ of $r$ in $\L$ \\
\hline
\end{tabular}
\end{center}
\end{table}

Remember (see Remark \ref{isotropicnormremark}), that the norm of $\calW(p,q,x)$
is isotropic whenever one of $p$ and $q$ splits in $\F[t]$.

\vskip 3mm
Next, we tackle the q-exceptional indecomposable $(p,q)$-quotients. Here, there are many cases to consider.
We start with the one when both $p$ and $q$ are split.

\begin{table}[H]
\begin{center}
\caption{The classification of indecomposable q-exceptional $(p,q)$-quotients: When both $p$ and $q$ are split with a double root.}
\label{qfigure2}
\begin{tabular}{| c | c |}
\hline
Representing matrix & Associated data  \\
\hline
\hline
$C\bigl((t-x)^{n}\bigr)$ & $x \in \Root(p)\Root(q)^{-1}$, $n \in \N^*$ \\
\hline
\end{tabular}
\end{center}
\end{table}

\begin{table}[H]
\begin{center}
\caption{The classification of indecomposable q-exceptional $(p,q)$-quotients: When both $p$ and $q$ are split with simple roots.}
\label{qfigure3}
\begin{tabular}{| c | c |}
\hline
Representing matrix & Associated data  \\
\hline
\hline
 & $n \in \N^*$, \\
$C\bigl((t-x)^n\bigr) \oplus C\bigl((t-\delta x^{-1})^n\bigr)$ & $x \in \Root(p)\Root(q)^{-1}$  \\
& such that $x \neq \delta x^{-1}$ \\
\hline
 & $n \in \N$, \\
$C\bigl((t-x)^{n+1}\bigr) \oplus C\bigl((t-\delta x^{-1})^{n}\bigr)$ & $x \in \Root(p)\Root(q)^{-1}$  \\
& such that $x \neq \delta x^{-1}$ \\
\hline
 & $n\in \N^*$,  \\
$C\bigl((t-x)^{n}\bigr)$ & $x \in \Root(p)\Root(q)^{-1}$ \\
& such that $x=\delta x^{-1}$ \\
\hline
\end{tabular}
\end{center}
\end{table}

\begin{table}[H]
\begin{center}
\caption{The classification of indecomposable q-exceptional $(p,q)$-quotients: When both $p$ and $q$ are split, $p$ has simple roots
 and $q$ has a double root.}
 \label{qfigure4}
\begin{tabular}{| c | c |}
\hline
Representing matrix & Associated data  \\
\hline
\hline
$C\bigl((t-x)^n\bigr) \oplus C\bigl((t-\delta x^{-1})^n\bigr)$ & $x \in \Root(p)\Root(q)^{-1}$, $n \in \N^*$ \\
\hline
$C\bigl((t-x)^{n+1}\bigr) \oplus C\bigl((t-\delta x^{-1})^{n}\bigr)$ & $x \in \Root(p)\Root(q)^{-1}$, $n \in \N$ \\
\hline
$C\bigl((t-x)^{n+2}\bigr) \oplus C\bigl((t-\delta x^{-1})^{n}\bigr)$ & $x \in \Root(p)\Root(q)^{-1}$, $n \in \N$ \\
\hline
\end{tabular}
\end{center}
\end{table}

We now turn to the case when $p$ is irreducible but $q$ splits.

There are two subcases to consider, whether the two polynomials deduced from $p$ by using the homotheties
with ratio among the roots of $q$ are equal or not.

\begin{table}[H]
\begin{center}
\caption{The classification of indecomposable q-exceptional $(p,q)$-quotients: When $p$ is irreducible, $q=(t-y_1)(t-y_2)$
for some $y_1,y_2$ in $\F$, and $H_{y_1}(p)=H_{y_2}(p)$.}
\label{qfigure5}
\begin{tabular}{| c | c |}
\hline
Representing matrix & Associated data  \\
\hline
\hline
$C\bigl(H_y(p)^n\bigr)$ &  $n\in \N^*$, $y \in \Root(q)$  \\
\hline
\end{tabular}
 \end{center}
\end{table}

\begin{table}[H]
\begin{center}
\caption{The classification of indecomposable q-exceptional $(p,q)$-quotients: When $p$ is irreducible, $q=(t-y_1)(t-y_2)$
for some $y_1,y_2$ in $\F$, and $H_{y_1}(p)\neq H_{y_2}(p)$.}
\label{qfigure6}
\begin{tabular}{| c | c |}
\hline
Representing matrix & Associated data  \\
\hline
\hline
$C\bigl(H_{y_1}(p)^n\bigr) \oplus C\bigl(H_{y_2}(p)^n\bigr)$ & $n\in \N^*$ \\
\hline
$C\bigl(H_{y_1}(p)^{n+1}\bigr) \oplus C\bigl(H_{y_2}(p)^{n}\bigr)$ & $n\in \N$ \\
\hline
$C\bigl(H_{y_1}(p)^{n+2}\bigr) \oplus C\bigl(H_{y_2}(p)^{n}\bigr)$ & $n\in \N$ \\
\hline
\end{tabular}
 \end{center}
\end{table}

Next, we consider the situation where both $p$ and $q$ are irreducible in $\F[t]$, with the same
splitting field.

\begin{table}[H]
\begin{center}
\caption{The classification of indecomposable q-exceptional $(p,q)$-quotients: When $p$ and $q$ are irreducible with the same splitting field $\L$.}
\label{qfigure7}
\begin{tabular}{| c | c |}
\hline
Representing matrix & Associated data  \\
\hline
\hline
$C\Bigl(\bigl(t^2-\Tr_{\L/\F}(xy^{-1})t+\delta\bigr)^{n}\Bigr)$
&  $n\in \N^*$,  \\
$\oplus$ &  $x \in \Root(p)$, $y \in \Root(q)$ \\
$C\Bigl(\bigl(t^2-\Tr_{\L/\F}(xy^{-1})t+\delta\bigr)^{n}\Bigr)$ & with $xy^{-1} \not\in \F$  \\
\hline
$C\Bigl(\bigl(t^2-\Tr_{\L/\F}(xy^{-1})t+\delta\bigr)^{n+1}\Bigr)$
&  $n\in \N$,  \\
$\oplus$ &  $x \in \Root(p)$, $y \in \Root(q)$ \\
$C\Bigl(\bigl(t^2-\Tr_{\L/\F}(xy^{-1})t+\delta\bigr)^{n}\Bigr)$ & with $xy^{-1} \not\in \F$  \\
\hline
 & $n \in \N^*$, \\
$C\bigl((t-xy^{-1})^n\bigr) \oplus C\bigl((t-xy^{-1})^n\bigr)$ &  $x \in \Root(p)$, $y \in \Root(q)$ \\
& with $xy^{-1} \in \F$ \\
\hline
\end{tabular}
 \end{center}
\end{table}

We finish with the case when $p$ and $q$ are both irreducible, with distinct splitting fields.
There are two subcases to consider, whether both $\tr p$ and $\tr q$ equal zero or not.

\begin{table}[H]
\begin{center}
\caption{The classification of indecomposable q-exceptional $(p,q)$-quotients: When $p$ and $q$ are irreducible with distinct splitting fields and
$(\tr p,\tr q) \neq (0,0)$.}
\label{qfigure8}
\begin{tabular}{| c | c |}
\hline
Representing matrix & Associated data  \\
\hline
\hline
$C(G_{p,q}^n) \oplus C(G_{p,q}^n)$ & $n \in \N^*$ \\
\hline
$C(G_{p,q}^{n+1}) \oplus C(G_{p,q}^{n})$ & $n \in \N$ \\
\hline
\end{tabular}
 \end{center}
\end{table}

\begin{table}[H]
\begin{center}
\caption{The classification of indecomposable q-exceptional $(p,q)$-quotients: When $p$ and $q$ are irreducible with distinct splitting fields and
$\tr p=\tr q=0$.}
\label{qfigure9}
\begin{tabular}{| c | c |}
\hline
Representing matrix & Associated data  \\
\hline
\hline
$C\bigl((t^2-\delta)^n\bigr)$ & \\
$\oplus$ & $n \in \N^*$ \\
$C\bigl((t^2-\delta)^n\bigr)$ &  \\
\hline
\end{tabular}
 \end{center}
\end{table}

\subsection{An example}

Here, we consider the case when $\F$ is the field $\R$ of real numbers and $p=q=t^2+1$.
In other words, we determine the automorphisms of a finite-dimensional real vector space $V$ that split
into $ab^{-1}$ for some automorphisms $a$ and $b$ such that $a^2=b^2=-\id_V$
(note that these automorphisms are also the $(t^2+1,t^2+1)$-products).
Here, $\Root(p)\Root(q)^{-1}=\{1,-1\}$ and $\delta:=p(0)q(0)^{-1}$ equals $1$.

Let us investigate the indecomposable $(p,q)$-quotients.
Let $r \in \R[t]$ be an irreducible monic polynomial. The fraction $r(t+\delta t^{-1})=r(t+t^{-1})$ has no root in $\Root(p)\Root(q)^{-1}$
if and only if $r(2) \neq 0$ and $r(-2) \neq 0$.

From now on, we assume that $r \neq t-2$ and $r \neq t+2$.
We set $\L:=\R[t]/(r)$ and we denote by $\overline{t}$ the class of $t$ in it.
If $r$ has degree $2$, then $\L$ is isomorphic to $\C$, which is algebraically closed, and it follows that
the norm of $\calW(p,q,\overline{t})_{\L}$ is isotropic.
Note that, for all $(\alpha,\beta)\in \R^2$ such that $\alpha^2<4\beta$,
we have
$$R_\delta(t^2+\alpha t+\beta)=t^2\bigl((t+t^{-1})^2+\alpha (t+t^{-1})+\beta\bigr)=t^4+\alpha t^3+(\beta+2)t^2+\alpha t+1.$$

Assume now that $r$ has degree $1$, and denote by $x$ its root (so that $x \neq \pm 2$).
The norm of $\calW(p,q,x)_{\R}$ reads
$$a I_4+b A+c B+dC \mapsto a^2+b^2+c^2+d^2+x bc-x ad$$
which is equivalent to the orthogonal direct sum of two copies of the quadratic form
$$Q : (a,b) \mapsto a^2+x ab+b^2.$$
We have $Q(1,0)>0$, and the discriminant of $Q$ equals $\frac{x^2-4}{4}$. Hence,
either $|x|<2$ and $Q$ is positive definite,
or $|x|>2$ and $Q$ is isotropic.
It follows that if $x \in (-2,2)$, then the norm of $\calW(p,q,x)_{\R}$ is non-isotropic, otherwise it is isotropic.

The following table thus gives a complete list of indecomposable $(t^2+1,t^2+1)$-quotients, where
the q-exceptional ones -- given in the last two rows -- are obtained thanks to Table \ref{qfigure7}:

\begin{table}[ht]
\caption{The classification of indecomposable $(t^2+1,t^2+1)$-quotients over $\R$.}
\begin{center}
\begin{tabular}{| c | c |}
\hline
Representing matrix & Associated data  \\
\hline
\hline
$C\bigl((t^2-xt+1)^n\bigr) \oplus C\bigl((t^2-xt+1)^n\bigr)$ & $n\in \N^*$, $x \in \left(-2,2\right)$ \\
\hline
$C\bigl((t^2-xt+1)^n\bigr)$ & $n\in \N^*$, $x \in \left(-\infty,-2\right) \cup \left(2,+\infty\right)$ \\
\hline
$C\bigl((t^4+\alpha t^3+(\beta+2)t^2+\alpha t+1)^n\bigr)$ & $n \in \N^*$, $(\alpha,\beta)\in \R^2$ with $\alpha^2<4\beta$ \\
\hline
$C((t-1)^n) \oplus C((t-1)^n)$ & $n \in \N^*$ \\
\hline
$C((t+1)^n) \oplus C((t+1)^n)$ & $n \in \N^*$  \\
\hline
\end{tabular}
 \end{center}
\end{table}

\subsection{Strategy, and structure of the remainder of the section}

As in the previous section, the study has two very distinct parts, just like in the study of $(p,q)$-differences. 
The structure of the algebra $\calW(p,q,x)_R$ is used to obtain the classification of regular $(p,q)$-quotients
(see the next section).   The study of q-exceptional $(p,q)$-quotients is carried out in the last four sections:
\begin{itemize}
\item In Section \ref{q-exceptionalsectionI}, we consider the case when $p$ and $q$ are both split
(contrary to the case of $(p,q)$-sums, this problem was still partly open to this day, even over algebraically closed fields).
\item In Section \ref{q-exceptionalsectionII}, we tackle the case when $p$ is irreducible and $q$ is split.
\item In Section \ref{q-exceptionalsectionIII}, we tackle the case when both $p$ and $q$ are irreducible with the same splitting field
in $\overline{\F}$.
\item Finally, in Section  \ref{q-exceptionalsectionIV}, we complete the study by considering the case when $p$ and $q$ are both irreducible
with distinct splitting fields in $\overline{\F}$.
\end{itemize}
In all those sections, several subcases need to be considered. 
In several instances, we will take advantage of some technical results that have already been proved in the
study of $(p,q)$-differences.

\subsection{Regular $(p,q)$-quotients}\label{q-regularsection}

\subsubsection{The initial reduction}

\begin{prop}\label{basicq-regularreduction}
Let $p$ and $q$ be monic polynomials with degree $2$ over $\F$ such that $p(0)q(0) \neq 0$, and set $\delta:=p(0)q(0)^{-1}$.
Let $u$ be an endomorphism of a finite-dimensional vector space $V$ and assume that $u$ is a q-regular $(p,q)$-quotient
Then:
\begin{enumerate}[(a)]
\item Each invariant factor of $u$ has the form $R_\delta(r)$ for some monic polynomial~$r$.
\item In some basis of $V$, the endomorphism $u$ is represented by
a block-diagonal matrix in which every diagonal block has the form $R_\delta(r)^n$ for some irreducible monic polynomial~$r$
and some positive integer $n$. We call such a matrix a \textbf{$(p,q)$-reduced canonical form} of $u$.
\end{enumerate}
\end{prop}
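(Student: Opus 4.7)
The proof will follow the same two-stage structure as that of Proposition~\ref{basicd-regularreduction}. First I establish the split analog of (a): if both $p$ and $q$ split over $\F$ and $u$ is a q-regular $(p,q)$-quotient, then each invariant factor of $u$ has the form $R_\delta(r)$ for some monic $r\in\F[t]$. Writing $u=ab^{-1}$ with $p(a)=q(b)=0$, I fix eigenvalues $x$ of $a$ and $y$ of $b$ of maximal geometric multiplicity (together with the other roots $x'$ of $p$ and $y'$ of $q$). The inequalities $\dim\Ker(a-x\,\id_V)\geq n/2$ and $\dim\Ker(b-y\,\id_V)\geq n/2$ established in the proof of Proposition~\ref{basicd-regularinvariants} carry over verbatim, and q-regularity forces $\Ker(a-x\,\id_V)\cap\Ker(b-y\,\id_V)=\{0\}$: any vector of the intersection is an eigenvector of $u=ab^{-1}$ for the eigenvalue $xy^{-1}\in\Root(p)\Root(q)^{-1}$, hence vanishes. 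Constructing the adapted basis $\bfB$ of the cited proof, I obtain
\[
\Mat_\bfB(a)=\begin{bmatrix}xI_s&0\\I_s&x'I_s\end{bmatrix}\quad\text{and}\quad \Mat_\bfB(b)=\begin{bmatrix}yI_s&N\\0&y'I_s\end{bmatrix}
\]
for some $N\in\Mat_s(\F)$, and inverting $b$ block-triangularly yields
\[
\Mat_\bfB(u)=\begin{bmatrix}xy^{-1}I_s & -x(yy')^{-1}N\\ y^{-1}I_s & x'(y')^{-1}I_s-(yy')^{-1}N\end{bmatrix}.
\]

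The main obstacle is a quotient counterpart of Lemma~\ref{dblockmatrixlemma} and Corollary~\ref{dcorblockinvariants}: the similarity class of the above block matrix depends only on that of $N$, and when $N=C(\rho)$ for a monic $\rho\in\F[t]$ of degree $d$, the matrix is similar to $C\bigl(R_\delta(\tilde\rho)\bigr)$ for a translate $\tilde\rho$ of $\rho$. The scalar case $s=1$ is already instructive: a direct $2\times 2$ determinant computation shows that the characteristic polynomial equals $t^2-ct+\delta$ with $c:=xy^{-1}+x'(y')^{-1}-(yy')^{-1}\nu$, which is precisely $R_\delta(t-c)$. The general case is handled by reducing $N$ to rational canonical form as in Corollary~\ref{dcorblockinvariants} and an explicit column-operation argument identifying the reduced block with a companion matrix of $R_\delta(\tilde\rho)$; this settles the split case.

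For the general case of (a), I extend scalars to $\overline{\F}$: the invariant factors of $u$ and its q-regular status are unchanged, and the split case over $\overline{\F}$ supplies monic polynomials $r\in\overline{\F}[t]$ with each invariant factor equal to $R_\delta(r)$. To see that $r\in\F[t]$, I perform downward induction on the coefficients of $r=t^N-\sum_{i=0}^{N-1}\alpha_i\,t^i$ using the identity
\[
R_\delta(r)(t)=(t^2+\delta)^N-\sum_{i=0}^{N-1}\alpha_i\,t^{N-i}(t^2+\delta)^i,
\]
whose coefficient of $t^{N+k}$ equals $-\alpha_k$ plus an integer-coefficient polynomial expression in $\delta$ and the later $\alpha_i$; this recovers $\alpha_{N-1}$ from the coefficient of $t^{2N-1}$ and propagates downwards, forcing every $\alpha_k\in\F$. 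Part~(b) is then immediate from the three bulleted facts about $R_\delta$ in Section~\ref{RdeltaSection}: factoring $r=r_1^{n_1}\cdots r_k^{n_k}$ into powers of distinct irreducibles gives $R_\delta(r)=\prod_i R_\delta(r_i^{n_i})$ with pairwise coprime factors, whence $C\bigl(R_\delta(r)\bigr)\simeq\bigoplus_i C\bigl(R_\delta(r_i^{n_i})\bigr)$.
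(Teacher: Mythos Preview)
Your overall architecture matches the paper's: first settle the split case (the analog of Proposition~\ref{basicq-regularinvariants}), then descend from $\overline{\F}$ to $\F$ by downward induction on the coefficients, and finally factor each $R_\delta(r)$ according to the irreducible factorization of $r$. The descent step and the derivation of (b) are fine and essentially identical to the paper's.

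The genuine gap is in the split case. You set up the unscaled basis from the proof of Proposition~\ref{basicd-regularinvariants}, correctly compute
\[
\Mat_\bfB(u)=\begin{bmatrix} xy^{-1}I_s & -x(yy')^{-1}N \\ y^{-1}I_s & x'(y')^{-1}I_s-(yy')^{-1}N \end{bmatrix},
\]
and then assert that when $N=C(\rho)$ this is similar to $C\bigl(R_\delta(\tilde\rho)\bigr)$ by ``an explicit column-operation argument'' that you do not supply. That assertion is the heart of the matter: your characteristic-polynomial computation in the $s=1$ case does not by itself show cyclicity in general, and without cyclicity you cannot conclude that the invariant factors are $R_\delta(r_i)$ rather than some finer list.

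The paper closes this gap differently and more cleanly. It rescales the second half of the basis by $y^{-1}$, i.e.\ sets $e_{s+i}:=y^{-1}(a-x\,\id_V)(e_i)$, so that the lower-left block of $\Mat_\bfB(u)$ is already $I_s$; it then writes $\Mat_\bfB(b^{-1})$ (rather than $\Mat_\bfB(b)$) directly in this basis. A single conjugation by
\[
P=\begin{bmatrix} I_s & -xy^{-1}I_s \\ 0 & I_s \end{bmatrix}
\]
then kills the upper-left block and turns the upper-right block into $-\delta I_s$, yielding the normal form
\[
\begin{bmatrix} 0 & -\delta I_s \\ I_s & N' \end{bmatrix}
\]
for some $N'\in\Mat_s(\F)$. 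Lemma~\ref{qblockmatrixlemma} (proved via the theory of $(2m,\delta)$-palindromials) shows that this matrix is similar to $C\bigl(R_\delta(r)\bigr)$ when $N'=C(r)$, and Corollary~\ref{qcorblockinvariants} extends this to arbitrary $N'$. Your block matrix can also be brought to this same normal form: first conjugate by $\operatorname{diag}(\gamma^{-1}I_s,I_s)$ with $\gamma=y^{-1}$ to make the lower-left block $I_s$, then by the same $P$; one checks that the off-diagonal coefficient $\gamma\beta-\alpha\zeta$ in your upper-right block vanishes identically, leaving exactly $-\delta I_s$. So your route is salvageable, but the missing ingredient is precisely the reduction to $\begin{bmatrix} 0 & -\delta I \\ I & N' \end{bmatrix}$ together with Lemma~\ref{qblockmatrixlemma}, not an ad hoc column argument.
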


It is easily seen that a $(p,q)$-reduced canonical form is unique up to a permutation of the diagonal blocks.

Before we prove Proposition \ref{basicq-regularreduction}, we need the corresponding special case when both polynomials $p$ and $q$ are split over $\F$:
this result will be obtained by following a similar method as for the study of $(p,q)$-differences.

\begin{prop}\label{basicq-regularinvariants}
Let $p$ and $q$ be split monic polynomials with degree $2$ over $\F$ such that $p(0)q(0) \neq 0$, and set $\delta:=p(0)q(0)^{-1}$.
Let $u$ be an endomorphism of a finite-dimensional vector space $V$ and assume that $u$ is a q-regular $(p,q)$-quotient.
Then, each invariant factor of $u$ has the form $R_\delta(r)$ for some monic polynomial~$r$.
\end{prop}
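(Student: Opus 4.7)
The plan is to adapt the strategy of the proof of Proposition \ref{basicd-regularinvariants} to the multiplicative setting, producing a block-matrix form of $u$ on a well-chosen basis and then reading off the invariant factors via a multiplicative analog of Corollary \ref{dcorblockinvariants}. Since $p(0)q(0) \neq 0$, any $a,b$ with $p(a)=q(b)=0$ are automorphisms of $V$, and we can write $u=ab^{-1}$. Split $p(t)=(t-x)(t-x')$ and $q(t)=(t-y)(t-y')$ in $\F[t]$, and pick $x$ (resp.\ $y$) as an eigenvalue of $a$ (resp.\ $b$) of maximal geometric multiplicity. The dimensional argument from the additive proof carries over verbatim, yielding $\dim\Ker(a-x\id_V) \geq n/2$ and $\dim\Ker(b-y\id_V) \geq n/2$ from the inclusions $\im(a-x'\id_V)\subseteq \Ker(a-x\id_V)$ and the symmetric one for $b$. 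The role of d-regularity at the intersection step is played by q-regularity: a nonzero $v \in \Ker(a-x\id_V)\cap \Ker(b-y\id_V)$ would give $uv=xy^{-1}v$, so $xy^{-1}\in \Root(p)\Root(q)^{-1}$ would be an eigenvalue of $u$, contradiction. The analogous argument gives $\Ker(b-y\id_V)\cap \Ker(a-x'\id_V)=\{0\}$, and a dimension count then yields $V=\Ker(b-y\id_V)\oplus \Ker(a-x'\id_V)$ with each summand of dimension $s:=n/2$.

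Copying the basis construction from the additive proof, I would take a basis $(e_1,\dots,e_s)$ of $\Ker(b-y\id_V)$, set $e_{s+i}:=(a-x\id_V)(e_i)\in \Ker(a-x'\id_V)$, and form $\bfB:=(e_1,\dots,e_{2s})$. The same computation as in the additive case shows
$$\Mat_\bfB(a)=\begin{bmatrix} xI_s & 0 \\ I_s & x'I_s \end{bmatrix}, \qquad \Mat_\bfB(b)=\begin{bmatrix} yI_s & N \\ 0 & y'I_s \end{bmatrix}$$
for some $N \in \Mat_s(\F)$. A block-matrix inversion of $\Mat_\bfB(b)$ followed by multiplication gives $\Mat_\bfB(u)$; after rescaling each $e_{s+i}$ by $y^{-1}$ to clear stray factors, the matrix of $u$ in the new basis becomes
$$T(M):=\begin{bmatrix} \alpha I_s & -\alpha M \\ I_s & \beta I_s - M \end{bmatrix},$$
where $\alpha:=xy^{-1}$, $\beta:=x'y'^{-1}$ (so $\alpha\beta=\delta$), and $M:=q(0)^{-1}N \in \Mat_s(\F)$.

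The heart of the proof is then a multiplicative analog of Corollary \ref{dcorblockinvariants}: every invariant factor of $T(M)$ has the form $R_\delta(r)$. The similarity class of $T(M)$ depends only on that of $M$ and $T(\cdot)$ respects direct sums, so via the rational canonical form of $M$ it suffices to treat $M=C(r)$; the general case then follows because $R_\delta$ preserves divisibility (see Section \ref{RdeltaSection}). The key computation, which I expect to be the main technical hurdle, is that an explicit block-matrix inversion (using the factorization $T(M)=\bigl[\begin{smallmatrix} I_s & M \\ 0 & I_s\end{smallmatrix}\bigr]\bigl[\begin{smallmatrix} \alpha I_s & 0 \\ I_s & \beta I_s\end{smallmatrix}\bigr]\bigl[\begin{smallmatrix} I_s & -M \\ 0 & I_s\end{smallmatrix}\bigr]$) produces the clean identity
$$u+\delta u^{-1} = \bigl((\alpha+\beta)I_s - M\bigr) \oplus \bigl((\alpha+\beta)I_s - M\bigr).$$
When $M=C(r)$, the endomorphism $w:=u+\delta u^{-1}$ therefore acts cyclically on the top block $\Vect(e_1,\dots,e_s)$, so $\F[w]\cdot e_1$ equals this top block; combining with $ue_i-\alpha e_i=e_{s+i}$ yields $\F[u]\cdot e_1 \supseteq \F[w]e_1 + u\,\F[w]e_1 = V$, showing $u$ is cyclic. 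A short determinant computation (via the Schur-complement identity $\det(tI-T(M))=\det((t-\alpha)(t-\beta)I_s+tM)$, valid since all blocks commute) then identifies the characteristic polynomial of $T(C(r))$ as $R_\delta(\widetilde r)$ for some monic $\widetilde r$ of degree $s$, which is also the sole invariant factor. The remaining difficulty is bookkeeping the divisibility claim when passing from general $M$ back to $T(M)$, after which Proposition \ref{basicq-regularinvariants} follows immediately.
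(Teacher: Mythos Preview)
Your overall strategy matches the paper's: the eigenspace decomposition and basis construction are identical, and the key mechanism in both arguments is the identity for $u+\delta u^{-1}$. One small slip: your three-factor factorization of $T(M)$ is incorrect (multiply it out and you get $\alpha I_s+M$, not $\alpha I_s$, in the upper-left block); the correct decomposition is simply
\[
T(M)=\begin{bmatrix}\alpha I_s&0\\ I_s&\beta I_s\end{bmatrix}\begin{bmatrix}I_s&-M\\ 0&I_s\end{bmatrix}.
\]
This does not harm the argument, since the identity $T(M)+\delta\,T(M)^{-1}=\bigl((\alpha+\beta)I_s-M\bigr)\oplus\bigl((\alpha+\beta)I_s-M\bigr)$ is correct and can be checked directly (or via this two-factor form). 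The rest of your outline (cyclicity from $\F[w]e_1$ equalling the top block, the commuting-block determinant giving $\det\bigl((t-\alpha)(t-\beta)I_s+tM\bigr)=R_\delta(\widetilde r)$ with $\widetilde r(z)=(-1)^s r\bigl((\alpha+\beta)-z\bigr)$, and the divisibility bookkeeping) goes through.

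Where you and the paper diverge is only in the endgame. The paper performs one further conjugation, by $P=\bigl[\begin{smallmatrix}I_s&-xy^{-1}I_s\\0&I_s\end{smallmatrix}\bigr]$, to bring the matrix to the form $\bigl[\begin{smallmatrix}0&-\delta I_s\\I_s&N'\end{smallmatrix}\bigr]$, and then invokes Corollary~\ref{qcorblockinvariants} (built on Lemma~\ref{qblockmatrixlemma}, whose proof pins down the minimal polynomial via the palindromial decomposition of $\F_{2s}[t]$). Your route stays with $T(M)$ and establishes cyclicity plus the characteristic polynomial directly; this sidesteps the palindromial machinery entirely, at the cost of a slightly longer inline argument. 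Both are valid: the paper's extra conjugation yields a cleaner canonical target that is reused elsewhere, while your cyclicity-plus-determinant argument is more self-contained.
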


The proof requires the following basic lemma, which will be reused later in this article:

\begin{lemma}\label{qblockmatrixlemma}
Let $r\in \F[t]$ be a monic polynomial with degree $n>0$, and $\delta$ be a nonzero scalar.
Then,
$$\begin{bmatrix}
0_n & -\delta I_n \\
I_n & C(r)
\end{bmatrix}\simeq C\bigl(R_\delta(r)\bigr).$$
\end{lemma}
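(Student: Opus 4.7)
The plan is to prove two properties of $M := \begin{bmatrix} 0_n & -\delta I_n \\ I_n & C(r) \end{bmatrix}$: that its characteristic polynomial equals $R_\delta(r)$ and that it is cyclic (its minimal polynomial coincides with its characteristic polynomial). Together these force, by the rational canonical form theorem, the similarity $M \simeq C(R_\delta(r))$.

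First I would compute $\chi_M(t) = \det(tI_{2n}-M)$ using the classical formula for the determinant of a $2\times 2$ block matrix with pairwise commuting blocks: the four blocks of $tI_{2n}-M$ are either scalar matrices or polynomials in $C(r)$, and hence they commute. Writing
$$\chi_M(t) = \det\bigl((tI_n)(tI_n-C(r)) - (\delta I_n)(-I_n)\bigr) = \det\bigl((t^2+\delta)I_n - tC(r)\bigr),$$
and using the identity $\det(aI_n - bC(r)) = b^n\,r(a/b)$ (valid for $b \neq 0$, as a direct consequence of $\det(xI_n-C(r))=r(x)$), the specialization $a = t^2+\delta$ and $b = t$ yields $\chi_M(t) = t^n r(t + \delta t^{-1}) = R_\delta(r)(t)$, an equality of polynomials since both sides agree as rational fractions in $t$.

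For cyclicity I would use the criterion that $M$ is cyclic if and only if $\dim \Ker(M-\lambda I_{2n}) \leq 1$ for every $\lambda$ in an algebraic closure of $\F$. When $\lambda=0$, this is immediate from $\det M = R_\delta(r)(0) = \delta^n \neq 0$. When $\lambda \neq 0$, the top-left block $-\lambda I_n$ of $M-\lambda I_{2n}$ is invertible, and block row elimination transforms $M-\lambda I_{2n}$ into the block upper-triangular matrix
$$\begin{bmatrix} -\lambda I_n & -\delta I_n \\ 0_n & C(r) - (\lambda+\delta\lambda^{-1})I_n\end{bmatrix},$$
whose rank equals $n + \rk\bigl(C(r)-(\lambda+\delta\lambda^{-1})I_n\bigr)$. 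Since $C(r)$ is itself cyclic, the kernel of $C(r)-\mu I_n$ has dimension at most $1$ for every scalar $\mu$; hence $\rk(M-\lambda I_{2n}) \geq 2n-1$ and the desired kernel bound holds.

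I do not expect any serious obstacle: the computation is essentially mechanical, and the two points that require a little care are the justification of the block determinant formula (which applies here precisely because all four blocks commute) and the passage to the algebraic closure when applying the cyclicity criterion via the rank of $M-\lambda I_{2n}$.
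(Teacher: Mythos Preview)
Your proof is correct, and it takes a genuinely different route from the paper's.

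The paper does not compute the characteristic polynomial at all. Instead, it first checks that $N$ is invertible and establishes the structural identity $N+\delta N^{-1}=C(r)\oplus C(r)$. From this it reads off that $R_\delta(r)(N)=N^n r(N+\delta N^{-1})=0$, and then, to show that no polynomial of smaller degree annihilates $N$, it develops a short theory of $(2m,\delta)$-palindromials: every $u\in\F_{2n-1}[t]$ decomposes uniquely as $t^nP(t+\delta t^{-1})+t^{n-1}Q(t+\delta t^{-1})$ with $\deg P,\deg Q<n$, and evaluating at $N$ forces $P(C(r))=Q(C(r))=0$, hence $P=Q=0$. Thus the minimal polynomial of $N$ is $R_\delta(r)$, of degree $2n$, and cyclicity follows.

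Your argument is more direct and more elementary: the block determinant formula (applicable since the four blocks commute) gives $\chi_M$ in one line, and the eigenspace criterion over $\overline{\F}$ reduces cyclicity to the cyclicity of $C(r)$ after a single block row elimination. You avoid the palindromial machinery entirely. What the paper's approach buys in exchange is that it stays over $\F$ throughout and it surfaces the identity $N+\delta N^{-1}=C(r)\oplus C(r)$, which is the conceptual explanation of why the $R_\delta$ construction interacts so cleanly with this block matrix. Either proof is perfectly adequate here.
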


Before we give the proof, we recall some known results on palindromials. Let $\delta \in \F \setminus \{0\}$.
Given a non-negative integer $m$, a $(2m,\delta)$-\textbf{palindromial} is a polynomial
$R(t)=\underset{k=0}{\overset{2m}{\sum}} a_k t^k$ in $\F_{2m}[t]$ such that $R(t)=t^{2m} \delta^{-m} R(\delta/t)$ or, in other words,
$a_{2m-k}=\delta^{k-m} a_k$ for all $k \in \lcro 0,2m\rcro$.
The $(2m,\delta)$-palindromials obviously constitute a linear subspace $P_{2m,\delta}(\F)$ of $\F_{2m}[t]$
with dimension $m+1$, and the mapping
$$U \mapsto t^m U(t+\delta t^{-1})$$
is a linear injection from $\F_m[t]$ into $P_{2m,\delta}(\F)$. Hence, because of the dimension of the source and target spaces we get
that this map is a linear isomorphism.

Finally, given a positive integer $m$, every polynomial $R \in \F_{2m}[t]$ splits (uniquely)
into $U+V$ for some $(2m,\delta)$-palindromial $U$ and some $(2m-2,\delta)$-palindromial $V$.
Indeed:
\begin{itemize}
\item We see that $\dim P_{2m,\delta}(\F)+\dim P_{2m-2,\delta}(\F)=(m+1)+m=\dim \F_{2m}[t]$.
\item On the other hand we have $P_{2m,\delta}(\F) \cap P_{2m-2,\delta}(\F)=\{0\}$: indeed,
if $\underset{k=0}{\overset{2m}{\sum}} a_k t^k$ is both a $(2m,\delta)$-palindromial and a $(2m-2,\delta)$-palindromial, then,
with the convention that $a_k=0$ for every integer $k \in \Z \setminus \{0,\dots,2m\}$ we see that
$a_k=\delta^{m-k} a_{2m-k}$ and $a_k=\delta^{m-1-k} a_{2(m-1)-k}$ for all $k \in \Z$, which shows that
$a_k=\delta^{-1}a_{k-2}$ for all $k \in \Z$. Since $(a_k)_{k \in \Z}$ ultimately vanishes, we deduce that  $a_k=0$ for all $k \in \Z$.
\end{itemize}

It follows that every polynomial of $\F_{2m}[t]$ has a (unique) splitting into
$$t^m P(t+\delta t^{-1})+t^{m-1} Q(t+\delta t^{-1})$$
for some polynomials $P$ and $Q$ with $\deg P \leq m$ and $\deg Q \leq m-1$.

With this result in mind, we can now prove the above lemma.

\begin{proof}[Proof of Lemma \ref{qblockmatrixlemma}]
Set $$N:=\begin{bmatrix}
0_n & -\delta I_n \\
I_n & C(r)
\end{bmatrix}.$$
By a straightforward computation, one checks that
$N$ is invertible and that
$$\delta N^{-1}=\begin{bmatrix}
C(r) & \delta I_n \\
-I_n & 0_n
\end{bmatrix},$$
whence
$$N+\delta N^{-1}=\begin{bmatrix}
C(r) & 0_n \\
0_n & C(r)
\end{bmatrix}.$$
Hence, for every polynomial $P \in \F[t]$, we see that
$$P(N+\delta N^{-1})=\begin{bmatrix}
P\bigl(C(r)\bigr) & 0_n \\
0_n & P\bigl(C(r)\bigr)
\end{bmatrix}$$
and
$$N P(N+\delta N^{-1})=\begin{bmatrix}
0_n & -\delta P\bigl(C(r)\bigr) \\
P\bigl(C(r)\bigr) & C(r)P\bigl(C(r)\bigr)
\end{bmatrix}.$$
In particular, $R_\delta(r)$
annihilates $N$; note that this polynomial is monic with degree $2n$.

Let $u(t) \in \F[t]$ annihilate $N$ with $\deg u(t) <2n$. Then, as we have seen before the start of the proof,
$u(t)=t^n P(t+\delta t^{-1})+t^{n-1}Q(t+\delta t^{-1})$ for some pair $(P,Q)$ of polynomials such that $\deg P \leq n$ and $\deg Q \leq n-1$.
Since $\deg u(t)<2n$ we must have $\deg P<n$.
Since $N$ is invertible, we have
$$N P(N+\delta N^{-1})+Q(N+\delta N^{-1})=0.$$
By looking at the upper-left and lower-left $n$-by-$n$ blocks in this identity, we get $P\bigl(C(r)\bigr)=0$ and
$Q\bigl(C(r)\bigr)=0$, and hence $r$ divides $P$ and $Q$. Since $\deg P<n$ and $\deg Q<n$ we obtain $P=0=Q$, whence $u=0$.

We conclude that $R_\delta(r)$ is the minimal polynomial of $N$, and since this polynomial is of degree $2n$ we conclude that
$N$ is similar to the companion matrix of $R_\delta(r)$.
\end{proof}

\begin{cor}\label{qcorblockinvariants}
Let $N$ be an arbitrary matrix of $\Mat_n(\F)$, and $\delta$ be a nonzero scalar.
Denote by $r_1,\dots,r_a$ the invariant factors of $N$.
Then, the invariant factors of
$$K(N):=\begin{bmatrix}
0_n & -\delta I_n \\
I_n & N
\end{bmatrix}$$
are $R_\delta(r_1),\dots,R_\delta(r_a)$.
\end{cor}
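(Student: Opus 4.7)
The plan is to mimic exactly the proof of Corollary \ref{dcorblockinvariants}, relying on Lemma \ref{qblockmatrixlemma} in place of Lemma \ref{dblockmatrixlemma} and using the basic properties of $R_\delta$ recorded in Section \ref{RdeltaSection}.

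First I would check that the similarity class of $K(N)$ depends only on that of $N$. Indeed, for every $P \in \GL_n(\F)$, the matrix $Q := P \oplus P \in \GL_{2n}(\F)$ satisfies $Q K(N) Q^{-1} = K(PNP^{-1})$ by a direct block computation (using that $P$ commutes with $-\delta I_n$ and $I_n$). Next, if $N$ decomposes as $N = N_1 \oplus \cdots \oplus N_a$ for square matrices $N_i$, then by permuting the basis vectors suitably (gathering the two copies corresponding to each $N_i$ together), one obtains $K(N) \simeq K(N_1) \oplus \cdots \oplus K(N_a)$.

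Applying this to the rational canonical decomposition $N \simeq C(r_1) \oplus \cdots \oplus C(r_a)$, and then invoking Lemma \ref{qblockmatrixlemma} on each block, one gets
$$K(N) \simeq K\bigl(C(r_1)\bigr) \oplus \cdots \oplus K\bigl(C(r_a)\bigr) \simeq C\bigl(R_\delta(r_1)\bigr) \oplus \cdots \oplus C\bigl(R_\delta(r_a)\bigr).$$
To conclude that the sequence $\bigl(R_\delta(r_1), \dots, R_\delta(r_a)\bigr)$ is \emph{the} sequence of invariant factors of $K(N)$, it only remains to verify the two defining properties: each $R_\delta(r_i)$ is monic and nonconstant (clear from the definition $R_\delta(r_i)(t) = t^{\deg r_i} r_i(t + \delta t^{-1})$ since $r_i$ is monic and nonconstant), and the divisibility relations $R_\delta(r_{i+1}) \mid R_\delta(r_i)$ hold for all $i \in \lcro 1, a-1 \rcro$. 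The latter follows at once from the second bullet of Section \ref{RdeltaSection}, which states that $R_\delta$ preserves divisibility of monic polynomials.

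There is no real obstacle here; this is a routine transcription of the argument already made for $K$ in the additive setting, with the role of $C\bigl(r((t-x)(t-y))\bigr)$ replaced by $C\bigl(R_\delta(r)\bigr)$. The only point worth double-checking is that Lemma \ref{qblockmatrixlemma} applies to every $C(r_i)$ (which is automatic since each $r_i$ is monic of positive degree) and that the resulting list is indeed ordered by divisibility, which is immediate from the multiplicativity of $R_\delta$.
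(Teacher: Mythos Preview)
Your proof is correct and follows essentially the same approach as the paper's own proof: reduce to the rational canonical form via the conjugation $Q = P \oplus P$ and the block-permutation trick, apply Lemma \ref{qblockmatrixlemma} to each companion block, and then invoke the divisibility-preserving property of $R_\delta$ from Section \ref{RdeltaSection} to conclude that the resulting list is indeed the sequence of invariant factors.
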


\begin{proof}
We note that the similarity class of $K(N)$ depends only on that of $N$: Indeed, for all
$P \in \GL_n(\K)$, the invertible matrix $Q:=P \oplus P$ satisfies
$Q K(N)Q^{-1}=K(PNP^{-1})$.
Hence,
$$K(N) \simeq K\bigl(C(r_1) \oplus \cdots \oplus C(r_a)\bigr).$$
By permuting the basis vectors, we find
$$K\bigl(C(r_1) \oplus \cdots \oplus C(r_a)\bigr) \simeq K\bigl(C(r_1)\bigr) \oplus \cdots \oplus K\bigl(C(r_a)\bigr).$$
Hence, by Lemma \ref{qblockmatrixlemma}, we conclude that
$$K(N) \simeq C\bigl(R_\delta(r_1)\bigr) \oplus \cdots \oplus C\bigl(R_\delta(r_a)\bigr).$$

Finally, by the results of Section \ref{RdeltaSection}, we see that $R_\delta(r_{i+1})$ divides $R_\delta(r_i)$ for all $i \in \lcro 1,a-1\rcro$.
Therefore, the monic polynomials $R_\delta(r_1),\dots,R_\delta(r_a)$ are the invariant factors of $K(N)$.
\end{proof}

\begin{proof}[Proof of Proposition \ref{basicq-regularinvariants}]
Let $a$ and $b$ be automorphisms of $V$ such that $p(a)=q(b)=0$ and $u=ab^{-1}$.
Denote by $x$ (respectively, by $y$) an eigenvalue of $a$ (respectively, of $b$)
with maximal geometric multiplicity, and split $p(t)=(t-x)(t-x')$ and $q(t)=(t-y)(t-y')$.

We claim that
$$\dim \Ker(a-x\,\id_V) \geq \frac{n}{2}\cdot$$
Indeed, since $p(a)=0$ we have $\im(a- x'\, \id_V) \subset \Ker(a-x\,\id_V)$, which yields
$\dim \Ker(a-x\,\id_V)+\dim \Ker(a-x'\,\id_V) \geq n$. Since $\dim \Ker(a-x\,\id_V) \geq \dim \Ker(a-x'\,\id_V)$,
the claimed inequality follows. Likewise, $\dim \Ker(b-y\,\id_V) \geq \frac{n}{2}\cdot$

Since $u$ is $(p,q)$-q-regular, any eigenspace of $a$ is linearly disjoint from any eigenspace of $b$: indeed
if we had a common eigenvector of $a$ and $b$, with corresponding eigenvalues $x_i$ and $y_j$, then this vector would be an eigenvector of $u$ with corresponding eigenvalue $x_i y_j^{-1}$, thereby contradicting the assumption that
$u$ has no eigenvalue in $\Root(p)\Root(q)^{-1}$.
In particular, $\Ker(a-x \id_V) \cap \Ker(b-y\,\id_V)=\{0\}$.
It follows that $\dim \Ker(a-x\,\id_V)=\frac{n}{2}=\dim \Ker(b-y\,\id_V)$, that $n$ is even and that
$V=\Ker(a-x \id_V) \oplus \Ker(b-y\,\id_V)$.
Next, we deduce that $\frac{n}{2}=\dim \im (a-x \id_V)$ and $\dim \Ker(a-x' \id_V) \leq \frac{n}{2}$ by choice of $x$.
However, $\im(a-x\id_V) \subset \Ker(a-x' \id_V)$, and hence it follows that
$\im(a-x\id_V) = \Ker(a-x' \id_V)$. Likewise, $\im(b-y\id_V) = \Ker(b-y' \id_V)$,
and it follows that $x'$ has geometric multiplicity $\frac{n}{2}$ with respect to $a$,
and ditto for $y'$ with respect to $b$.
In turn, this shows that $\im(a-x'\id_V)=\Ker(a-x\id_V)$ and $\im(b-y'\id_V)=\Ker(b-y\id_V)$,
and any eigenspace of $a$ is a complementary subspace of any eigenspace of $b$.

Let us write $s:=\frac{n}{2}$ and choose a basis $(e_1,\dots,e_s)$ of
$\Ker(b-y \id_V)$. Then, we have $V=\Ker(b-y \id_V) \oplus \Ker(a-x \id_V)$, whence
$(e_{s+1},\dots,e_n):=\bigl(y^{-1}(a-x\id_V)(e_1),\dots,y^{-1}(a-x\id_V)(e_s)\bigr)$ is a basis of
$\im(a-x \id_V)=\Ker (a-x' \id_V)$.
Since $\Ker(b-y \id_V) \oplus \Ker (a-x' \id_V)=V$, we deduce that
$\bfB:=(e_1,\dots,e_n)$ is a basis of $V$. Obviously
$$\Mat_\bfB(a)=\begin{bmatrix}
x I_s & 0 \\
y I_s & x' I_s
\end{bmatrix}.$$
On the other hand, since $\Ker(b-y \id_V)=\im(b-y' \id_V)$, we find
$$\Mat_\bfB(b^{-1})=\begin{bmatrix}
y^{-1} I_s & N \\
0 & (y')^{-1} I_s
\end{bmatrix} \quad \text{for some matrix $N \in \Mat_s(\F)$.}$$
Hence,
$$\Mat_\bfB(u)=\begin{bmatrix}
xy^{-1} I_s & xN  \\
I_s & y N+x' (y')^{-1} I_s
\end{bmatrix}.$$
Setting
$$P:=\begin{bmatrix}
I_s & -xy^{-1} I_s \\
0 & I_s
\end{bmatrix},$$
we obtain
$$P \Mat_\bfB(u) P^{-1}=
\begin{bmatrix}
0 & -(xx')(yy')^{-1} I_s  \\
I_s & N'
\end{bmatrix}$$
for some matrix $N' \in \Mat_s(\F)$. Since $\delta=p(0)q(0)^{-1}=(xx')(yy')^{-1}$, the claimed result is then
readily deduced from Corollary \ref{qcorblockinvariants}.
\end{proof}

\begin{proof}[Proof of Proposition \ref{basicq-regularreduction}]
We start with point (a). Let us extend the scalar field to $\overline{\F}$.
The corresponding extension $\overline{u}$ of $u$ is still a $(p,q)$-quotient.
Hence, by Corollary \ref{qcorblockinvariants} its invariant factors are $R_\delta(p_1),\dots,
R_\delta(p_r)$ for some monic polynomials $p_1,\dots,p_r$ of $\overline{\F}[t]$ such that $p_{i+1}$ divides $p_i$ for all $i \in \lcro 1,r-1\rcro$.

Yet, the invariant factors of $\overline{u}$ are known to be the ones of $u$.
Finally, given a monic polynomial $h \in \overline{\F}[t]$ (with degree $N$) such that $R_\delta(h) \in \F[t]$,
we obtain by downward induction that all the coefficients of $h$ belong to $\F$:
Indeed, if we write $h(t)=t^N-\underset{i=0}{\overset{N-1}{\sum}} \alpha_i\,t^i$ and we know that $\alpha_{N-1},\dots,\alpha_{k+1}$ all belong to $\F$ for some $k \in \lcro 0,N-1\rcro$, then $\underset{i=0}{\overset{k}{\sum}} \alpha_i\,t^N (t+\delta t^{-1})^i=t^N(t+\delta t^{-1})^N -\underset{i=k+1}{\overset{N-1}{\sum}} \alpha_i\,t^N (t+\delta t^{-1})^i$
belongs to $\F[t]$, and by considering the coefficient on $t^{N+k}$, we gather that $\alpha_k \in \F$.
It follows that $p_1,\dots,p_r$ all belong to $\F[t]$, which completes the proof of statement (a).

From point (a), we easily derive point (b): indeed, consider an invariant factor
$R_\delta(r)$ of $u$ with some monic polynomial $r \in \F[t]$. Then, we split $r=r_1^{n_1}\cdots r_k^{n_k}$ where $r_1,\dots,r_k$ are pairwise distinct
irreducible monic polynomials of $\F[t]$, and $n_1,\dots,n_k$ are positive integers.
By a previous remark (see Section \ref{RdeltaSection}), the monic polynomials $R_\delta(r_1^{n_1}),\dots,R_\delta(r_k^{n_k})$
are pairwise coprime and their product equals $R_\delta(r)$, whence
$$C\bigl(R_\delta(r)\bigr) \simeq C\bigl(R_\delta(r_1^{n_1})\bigr) \oplus \cdots \oplus C\bigl(R_\delta(r_k^{n_k})\bigr).$$
Using point (a), we deduce that statement (b) holds true.
\end{proof}

\subsubsection{Application of the structural results on $\calW(p,q,x)$ to the characterization of q-regular $(p,q)$-quotients}

An additional definition will be useful:

\begin{Def}
Let $p$ and $q$ be monic polynomials with degree $2$ in $\F[t]$ such that $p(0)q(0) \neq 0$.
Set $\delta:=p(0)q(0)^{-1}$.
Let $r$ be an irreducible monic polynomial of $\F[t]$, and set
$\L:=\F[t]/(r)$. Denote by $x$ the class of $t$ in $\L$.
We say that $r$ has:
\begin{itemize}
\item \textbf{Type $1$ with respect to $(p,q)$} if $R_\delta(r)$ has no root in $\Root(p)\Root(q)^{-1}$ and
the norm of $\calW\bigl(p,q,q(0)x\bigr)_{\L}$ is isotropic.
\item \textbf{Type $2$ with respect to $(p,q)$} if $R_\delta(r)$ has no root in $\Root(p)\Root(q)^{-1}$ and
the norm of $\calW\bigl(p,q,q(0)x\bigr)_{\L}$ is non-isotropic.
\end{itemize}
\end{Def}

First of all, we use the structural results on $\calW(p,q,x)_R$ to obtain various $(p,q)$-quotients.
Our first result is actually not restricted to q-regular $(p,q)$-quotients and will be used later in our study.

\begin{lemma}[Duplication Lemma]\label{qduplicationlemma}
Let $p$ and $q$ be monic polynomials of $\F[t]$ with degree $2$ such that $p(0)q(0) \neq 0$, and set $\delta:=p(0)q(0)^{-1}$.
Let $r$ be a nonconstant monic polynomial of $\F[t]$.
Then, $C\bigl(R_\delta(r)\bigr) \oplus C\bigl(R_\delta(r)\bigr)$ is a $(p,q)$-quotient.
\end{lemma}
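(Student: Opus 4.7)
The plan is to adapt the proof of the Duplication Lemma for $(p,q)$-differences (Lemma \ref{dduplicationlemma}) to the quotient setting. Work with the commutative $\F$-algebra $R := \F[C(r)]$, isomorphic to $\F[t]/(r)$, and with a suitable element $x \in R$ to be chosen shortly. Inside the $R$-algebra $\calW(p,q,x)_R$, the defining relations $p(A)=q(B)=0$ together with $AB+BA = \mu A + \lambda B - x I_4$ and $B^{-1} = q(0)^{-1}(\mu I_4 - B)$ yield, by a direct computation, that $u := AB^{-1}$ satisfies $q(0)\, u^2 = x\,u - p(0)\, I_4$.

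This prompts the choice $x := q(0)\, C(r) \in R$, which turns the identity into $u^2 - C(r)\, u + \delta I_4 = 0$, i.e.\ $u + \delta u^{-1} = C(r)$ in the unit group. Applying $r$ then gives $r(u+\delta u^{-1}) = r(C(r))=0$ in $R$, so $u^{\deg r}\,r(u+\delta u^{-1}) = R_\delta(r)(u) = 0$. Hence $R_\delta(r)$ annihilates $u$ as a polynomial identity in $\calW(p,q,x)_R$.

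Next, select the $R$-basis $\bfB := (I_4, u, B, A)$ of the free rank-$4$ $R$-module $\calW(p,q,x)_R$: using $u = q(0)^{-1}(\mu A - AB)$ and $A = uB$, one verifies that the change of basis matrix from $(I_4,A,B,AB)$ to $\bfB$ has determinant $q(0)^{-1} \in \F^\times \subseteq R^\times$, so $\bfB$ is indeed an $R$-basis. The action of left-multiplication by $u$ on $\bfB$ is then computed directly: $u\cdot I_4 = u$, $u\cdot u = C(r)\,u - \delta I_4$, $u\cdot B = uB = A$, and $u\cdot A = u^2 B = C(r)\,A - \delta B$. Its matrix in $\bfB$ is therefore block-diagonal over $R$, consisting of two identical $2\times 2$ blocks equal to $\begin{bmatrix} 0 & -\delta \\ 1 & C(r) \end{bmatrix}$.

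Finally, viewing $R$ as the subalgebra $\F[C(r)] \subseteq \Mat_d(\F)$ with $d := \deg r$, each of the two $R$-blocks expands into the $2d \times 2d$ $\F$-matrix $\begin{bmatrix} 0_d & -\delta I_d \\ I_d & C(r) \end{bmatrix}$, which by Lemma \ref{qblockmatrixlemma} is similar to $C(R_\delta(r))$. The endomorphisms $a : X \mapsto AX$ and $b : X \mapsto BX$ of the $4d$-dimensional $\F$-vector space $\calW(p,q,x)_R$ satisfy $p(a) = 0$ and $q(b) = 0$, and $b$ is invertible because $q(0)\neq 0$ already makes $B$ a unit in $\Mat_4(R)$. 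Hence $ab^{-1}$, which is left-multiplication by $u$, is similar to $C(R_\delta(r)) \oplus C(R_\delta(r))$, exhibiting the latter as a $(p,q)$-quotient. The one delicate point is producing the correct quadratic identity for $u = AB^{-1}$ and then normalizing $x$ so that $C(r)$ appears as the linear coefficient; once this is in place, everything reduces to the block-matrix identity of Lemma \ref{qblockmatrixlemma}, directly parallel to the difference case.
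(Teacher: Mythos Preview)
Your proof is correct and follows essentially the same approach as the paper's. Both arguments work in $\calW(p,q,q(0)C(r))_R$ with $R=\F[C(r)]$, compute the matrix of left-multiplication by $AB^{-1}$ in an $R$-basis containing $I_4$, $AB^{-1}$, $B$, $A$ (the paper orders them as $(B,A,I_4,AB^{-1})$, you as $(I_4,AB^{-1},B,A)$), obtain two identical $2\times 2$ diagonal blocks $\begin{bmatrix}0 & -\delta\\ 1 & C(r)\end{bmatrix}$, and finish via Lemma~\ref{qblockmatrixlemma}; your extra remarks (the explicit change-of-basis determinant and the annihilation of $u$ by $R_\delta(r)$) are correct but not needed.
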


\begin{proof}
Denote by $d$ the degree of $r$.
We work with the commutative $\F$-algebra $R:=\F[C(r)]$, which is isomorphic to the quotient ring $\F[t]/(r)$,
and with the element $x:=C(r)$.
Then, we consider the endomorphisms $a : X \mapsto AX$ and $b:X \mapsto BX$ of $\calW(p,q,q(0)x)_R$.
Since $p(A)=0$ and $q(B)=0$, we get $p(a)=0$ and $q(b)=0$.
Moreover, since $B^{-1}=(\tr q) I_4-q(0) B$, we have
$AB^{-1}=(\tr q)A-q(0)\,AB$ and as $q(0) \neq 0$ we deduce that
$(B,A,I_4,AB^{-1})$ is a basis $\mathbf{B}$ of the free $R$-module $\calW(p,q,q(0)x)_R$.

Denote by $A'$ and $B'$ the respective matrices of $a$ and $b$ in $\bfB$.
Using identity \eqref{basicrel2}, we find
$(AB^{-1})^2=-\delta I_4+x \,(AB^{-1})$.
It easily follows that
$$A'(B')^{-1}=\begin{bmatrix}
0 & -\delta.1_R & 0 & 0  \\
1_R & x & 0 & 0 \\
0 & 0 & 0 & -\delta.1_R \\
0 & 0 & 1_R & x
\end{bmatrix}.$$
Therefore, $A'(B')^{-1}$, seen as a matrix of $\Mat_{4d}(\F)$,
is similar to $C\bigl(R_\delta(r)\bigr) \oplus C\bigl(R_\delta(r)\bigr)$ by Lemma \ref{qblockmatrixlemma}.
Since $p(A')=0$ and $q(B')=0$, the conclusion follows.
\end{proof}

Our next result deals with certain companion matrices that are associated with irreducible polynomials with Type 1 with respect to $(p,q)$.

\begin{lemma}\label{qType1blocklemma}
Let $p$ and $q$ be monic polynomials of $\F[t]$ with degree $2$ such that $p(0)q(0)\neq 0$, and set $\delta:=p(0)q(0)^{-1}$.
Let $r$ be an irreducible monic polynomial of $\F[t]$ of Type 1 with respect to $(p,q)$.
Then, for all $n \in \N^*$, the companion matrix $C\bigl(R_\delta(r^n)\bigr)$ is a $(p,q)$-quotient.
\end{lemma}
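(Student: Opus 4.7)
The plan is to follow the same strategy as in the proof of Lemma \ref{dType1blocklemma}, replacing the difference $a-b$ with the quotient $ab^{-1}$. Set $d := \deg r$, $R := \F[t]/(r^n)$, $\L := \F[t]/(r)$, and identify $R$ with the subalgebra $\F[C(r^n)]$ of $\Mat_{nd}(\F)$. Let $y \in R$ denote the class of $t$ (so that $y$ corresponds to $C(r^n)$), and write $\bar y \in \L$ for its reduction, which is a root of $r$ in $\L$.

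The first step, and the principal obstacle, is to verify that the norm of $\calW(p,q,q(0)\bar y)_\L$ is non-degenerate: the Type 1 hypothesis supplies isotropy, but Proposition \ref{localstructureprop} also requires non-degeneracy. By Proposition \ref{nondegcharac}, degeneracy would yield a splitting $p(t) = (t-x_1)(t-x_2)$, $q(t) = (t-y_1)(t-y_2)$ over $\overline{\F}$ together with $q(0)\bar y = x_1y_1 + x_2y_2$; dividing by $q(0) = y_1 y_2$, this reads $\bar y = z + \delta z^{-1}$ for $z := x_1/y_2 \in \Root(p)\Root(q)^{-1}$, since $z \cdot (x_2/y_1) = p(0)q(0)^{-1} = \delta$. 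But then $R_\delta(r)(z) = z^d\,r(z + \delta z^{-1}) = z^d\,r(\bar y) = 0$, forcing $z$ to be a root of $R_\delta(r)$ lying in $\Root(p)\Root(q)^{-1}$, contrary to the Type 1 hypothesis.

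With non-degeneracy in hand, Proposition \ref{localstructureprop} produces an isomorphism $\varphi : \calW(p,q,q(0)y)_R \overset{\simeq}{\longrightarrow} \Mat_2(R)$ of $R$-algebras. Setting $a := \varphi(A)$ and $b := \varphi(B)$, one has $p(a) = q(b) = 0$ in $\Mat_2(R)$, and $b$ is invertible because $q(0) \neq 0$. Using $A^\star = p(0)A^{-1}$ and $B^\star = q(0)B^{-1}$, the identity $AB^\star + BA^\star = q(0)\,y\,I_4$ from Section \ref{keyalgebrasection}, divided by $q(0)$, transfers into $c + \delta c^{-1} = y\,I_2$ for $c := ab^{-1}$, equivalently $c^2 = y\,c - \delta\,I_2$.

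To conclude, note that the matrix $\bar c \in \Mat_2(\L)$ obtained by reducing $c$ modulo the class $\epsilon$ of $r$ in $R$ cannot be a scalar multiple of $I_2$: otherwise $\bar a = \bar c\,\bar b$ would be a scalar multiple of $\bar b$, in contradiction with the fact that $(I_2,\bar a,\bar b,\bar a\bar b)$ is a basis of $\Mat_2(\L)$. Hence some $e \in \L^2$ yields a basis $(e,\bar c\,e)$ of $\L^2$; by Nakayama's lemma, any lift $E \in R^2$ produces a basis $(E,cE)$ of the free $R$-module $R^2$. After composing $\varphi$ with the corresponding interior automorphism of $\Mat_2(R)$, no generality is lost in assuming
\[
c = \begin{bmatrix} 0 & -\delta\,1_R \\ 1_R & y \end{bmatrix}.
\]
Expanding each $R$-entry into an $nd \times nd$ block over $\F$ (with $1_R \mapsto I_{nd}$ and $y \mapsto C(r^n)$), the matrix $c$ becomes $\begin{bmatrix} 0_{nd} & -\delta\,I_{nd} \\ I_{nd} & C(r^n) \end{bmatrix}$ in $\Mat_{2nd}(\F)$, which by Lemma \ref{qblockmatrixlemma} is similar to $C(R_\delta(r^n))$. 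Since $a$ and $b$ lie in $\Mat_2(R) \subset \Mat_{2nd}(\F)$ and still satisfy $p(a) = q(b) = 0$ there, this exhibits $C(R_\delta(r^n))$ as a $(p,q)$-quotient.
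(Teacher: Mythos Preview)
Your proof is correct and follows essentially the same route as the paper's: pass to $\calW(p,q,q(0)y)_R$, invoke Proposition \ref{localstructureprop} to identify it with $\Mat_2(R)$, show that $c=ab^{-1}$ satisfies $c^2=yc-\delta I_2$, normalize $c$ to companion form via a cyclic vector, and finish with Lemma \ref{qblockmatrixlemma}. The one substantive difference is that you explicitly verify the non-degeneracy hypothesis of Proposition \ref{localstructureprop}, which the paper silently assumes: your observation that degeneracy would force $\bar y=z+\delta z^{-1}$ for some $z\in\Root(p)\Root(q)^{-1}$, hence $R_\delta(r)(z)=0$, is exactly the link between the ``no root in $\Root(p)\Root(q)^{-1}$'' clause of the Type~1 definition and non-degeneracy. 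This is a genuine improvement in rigor over the paper's presentation.
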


\begin{proof}
We naturally identify $\L$ with the subalgebra $\F[C(r)]$ of $\Mat_d(\F)$, where $d$ denotes the degree of $r$.
Let $n \in \N^*$. Set $R:=\F[C(r^n)]$, seen as a subalgebra of $\Mat_{nd}(\F)$, and set $x:=C(r^n)$.
The $\F$-algebra $R$ is isomorphic to $\F[t]/(r^n)$.
By Proposition \ref{localstructureprop}, it follows that $\calW(p,q,q(0)x)_R$ is isomorphic to $\Mat_2(R)$.
We choose an isomorphism $\varphi : \calW(p,q,q(0)x)_R \overset{\simeq}{\longrightarrow} \Mat_2(R)$,
and we set $a:=\varphi(A)$ and $b:=\varphi(B)$. Note that $p(a)=q(b)=0$,
whereas $d:=ab^{-1}$ satisfies $q(0) d+p(0) d^{-1}=q(0)x\,I_2$, whence
$d^2=x d-\delta I_2$. Denote by $\L$ the residue class field of $R$.
The endomorphism $X \mapsto dX$ of $R^2$ induces an endomorphism
$\overline{d}$ of the $\L$-vector space $\L^2$. Since $(I_2,a,b,ab^{-1})$ is a basis of the $R$-module $\Mat_2(R)$,
the endomorphism $\overline{d}$ is not a scalar multiple of the identity of $\L^2$.
This yields a vector $e$ of $\L^2$ such that $\bigl(e,\overline{d}(e)\bigr)$ is a basis of $\L^2$.
Lifting $e$ to a vector $E$ of $R^2$, we deduce that $(E,dE)$ is a basis of the $R$-module $R^2$.
Hence, composing $\varphi$ with an additional interior automorphism of the $R$-algebra $\Mat_2(R)$,
we see that no generality is lost in assuming that the first column of $d$
reads $\begin{bmatrix}
0_R \\
1_R
\end{bmatrix}$. Then, the equality $d^2=x d-\delta I_2$ yields
$$d=\begin{bmatrix}
0 & -\delta 1_R \\
1_R & x
\end{bmatrix}.$$
It follows that the matrix
$\begin{bmatrix}
0 & -\delta I_{nd} \\
I_{nd} & C(r^n)
\end{bmatrix}$ of $\Mat_{2nd}(\F)$ is a $(p,q)$-quotient. 
By Lemma \ref{qblockmatrixlemma}, this matrix is similar to
$C\bigl(R_\delta(r^n)\bigr)$, which completes the proof.
\end{proof}

Combining Lemma \ref{qduplicationlemma} with Lemma \ref{qType1blocklemma}, we conclude that the implication (iii) $\Rightarrow$ (i)
in the following theorem holds true.

\begin{theo}[Classification of q-regular $(p,q)$-quotients]\label{q-regulartheo}
Let $p$ and $q$ be monic polynomials of degree $2$ in $\F[t]$ such that $p(0)q(0) \neq 0$.
Let $u$ be an endomorphism of a finite-dimensional vector space $V$ over
$\F$. Assume that $u$ is q-regular with respect to $(p,q)$ and set $\delta:=p(0)q(0)^{-1}$.
The following conditions are equivalent:
\begin{enumerate}[(i)]
\item The endomorphism $u$ is a $(p,q)$-quotient.
\item The invariant factors of $u$ read $R_\delta(p_1), \dots, R_\delta(p_{2n-1}),R_\delta(p_{2n}),\dots$
where, for every irreducible monic polynomial $r \in \F[t]$
that has Type $2$ with respect to $(p,q)$ and every positive integer $n$, the polynomials $p_{2n-1}$ and $p_{2n}$ have the
same valuation with respect to $r$.
\item There is a basis of $V$ in which $u$ is represented by a block-diagonal matrix where every diagonal block
equals either $C\bigl(R_\delta(r^n)\bigr)$ for some irreducible monic polynomial $r \in \F[t]$ of Type $1$ with respect to $(p,q)$
and some $n \in \N^*$,
or $C\bigl(R_\delta(r^n)\bigr) \oplus C\bigl(R_\delta(r^n)\bigr)$ for some irreducible monic polynomial $r \in \F[t]$ and some $n \in \N^*$.
\end{enumerate}
\end{theo}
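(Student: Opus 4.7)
The implication (iii) $\Rightarrow$ (i) has been established just before the statement, by combining the Duplication Lemma \ref{qduplicationlemma} with Lemma \ref{qType1blocklemma}. The equivalence (ii) $\Leftrightarrow$ (iii) I would derive from Proposition \ref{basicq-regularreduction} together with the multiplicativity and coprimality-preservation of $R_\delta$ recorded in Section \ref{RdeltaSection}: each invariant factor $R_\delta(p_i)$ decomposes as a product of pairwise coprime factors $R_\delta(r^{v_r(p_i)})$ indexed by the irreducible divisors $r$ of $p_i$, yielding the corresponding primary block decomposition of the rational canonical form; the Type $2$ pairing condition in (ii) then translates directly into the pairing of blocks in (iii), as in the last part of the proof of Proposition \ref{basicd-regularreduction}.

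For the remaining direction (i) $\Rightarrow$ (ii), the plan is to parallel the proof of Theorem \ref{d-regulartheo}, replacing the quantity $v=u^2-\delta u$ used there by $w:=u+\delta u^{-1}$ (where now $\delta=p(0)q(0)^{-1}$). Fix an irreducible monic $r\in\F[t]$ of Type $2$ with respect to $(p,q)$ and a positive integer $n$; the goal is to prove that the number of invariant factors $p_i$ with $v_r(p_i)\geq n$ is even. Write $u=ab^{-1}$ with $p(a)=q(b)=0$. By the Commutation Lemma \ref{qcommutelemma}, both $a$ and $b$ commute with $q(0)w$, hence with $w$. Since $u$ is invertible, $r^j(w)=u^{-j\deg r}\,R_\delta(r)^j(u)$ gives $\Ker(r^j(w))=\Ker(R_\delta(r^j)(u))$ for every $j$, so $a$ and $b$ stabilize $\Ker(r^{n-1}(w))\subset\Ker(r^n(w))$ and induce endomorphisms $\bar a,\bar b$ of
$$E:=\Ker(r^n(w))/\Ker(r^{n-1}(w))$$
with $p(\bar a)=q(\bar b)=0$, $\bar u:=\bar a\bar b^{-1}$ an automorphism of $E$, and $\bar w:=\bar u+\delta\bar u^{-1}$ annihilated by $r$. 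Since $r$ is irreducible, $\L:=\F[\bar w]\simeq\F[t]/(r)$ is a field, turning $E$ into an $\L$-vector space; writing $y$ for the image of $\bar w$ in $\L$, Remark \ref{qconjugateremark} yields
$$\bar a\bar b^\star+\bar b\bar a^\star\;=\;q(0)\bar u+p(0)\bar u^{-1}\;=\;q(0)y\,\id_E,$$
so $(\bar a,\bar b)$ defines a representation of the $\L$-algebra $\calW(p,q,q(0)y)_\L$ on $E$. By the Type $2$ hypothesis together with Corollary \ref{structureofWcor}, this algebra is a four-dimensional skew field over $\L$, whence $E$ is free over it and $\dim_\L E$ is a multiple of $4$.

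It remains to connect $\dim_\L E$ to the block count, which is the step I expect to demand the most care. Because $R_\delta(r)$ and $R_\delta(s)$ are coprime for distinct irreducibles $r,s$, only blocks of the form $C(R_\delta(r^k))$ can contribute to $E$. On such a block $V_0$, viewed as a cyclic $\F[u]$-module with annihilator $R_\delta(r)^k$, multiplication by $R_\delta(r)$ has kernel of $\F$-dimension $\deg R_\delta(r)=2\deg r$; combined with $r(w)=u^{-\deg r}R_\delta(r)(u)$ and the invertibility of $u$, this gives $\dim_\F\Ker(r(w))=2\deg r$ on $V_0$, and since the minimal polynomial of $w$ on $V_0$ equals $r^k$ and $\dim_\F V_0=2k\deg r$, the classification of modules over $\F[w]$ forces $V_0\simeq(\F[w]/(r^k))^2$. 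Hence each block contributes $2$ to $\dim_\L E$ if $k\geq n$ and $0$ if $k<n$, so $\dim_\L E=2N_n$ where $N_n$ counts the blocks $C(R_\delta(r^k))$ with $k\geq n$. Divisibility by $4$ forces $N_n$ to be even for every $n$, which is precisely condition (ii).
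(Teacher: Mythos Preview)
Your proof is correct and follows essentially the same route as the paper: the paper also proves (i) $\Rightarrow$ (ii) by passing to the quotient $E=\Ker(r^n(v))/\Ker(r^{n-1}(v))$ for $v=u+\delta u^{-1}$, turning $E$ into an $\L$-vector space via $\F[\bar v]\simeq\L$, obtaining a representation of $\calW(p,q,q(0)y)_\L$ on $E$, and invoking Corollary \ref{structureofWcor} to conclude $4\mid\dim_\L E$. Your final paragraph, which works out explicitly that each block $C(R_\delta(r^k))$ is isomorphic to $(\F[w]/(r^k))^2$ as an $\F[w]$-module and hence contributes exactly $2$ to $\dim_\L E$ when $k\geq n$, is more detailed than the paper (which simply asserts the dimension count); your formulation in terms of $N_n$ is the cleaner one.
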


Note that this result, combined with the observation that
$C\bigl(R_\delta(r^n)\bigr)$ is q-regular with respect to $(p,q)$ for every monic polynomial $r \in \F[t]$
such that $R_\delta(r)$ has no root in $\Root(p)\Root(q)^{-1}$, yields the classification of indecomposable q-regular $(p,q)$-quotients as given
in Table \ref{qfigure1}.
Moreover, by using the same method as in the last part of the proof of Proposition \ref{basicq-regularreduction},
it is easily seen that condition (ii) is equivalent to condition (iii).

In order to conclude on Theorem \ref{q-regulartheo}, it only remains to prove that condition (i) implies condition (ii),
which we shall now do thanks to the structural results on $\calW(p,q,x)_R$.

\begin{proof}[Proof of the implication (i) $\Rightarrow$ (ii)]
Let us assume that $u$ is a $(p,q)$-quotient.
Let $r$ be an irreducible monic polynomial of $\F[t]$ with Type 2 with respect to $(p,q)$, and let $n \in \N^*$.
All we need is to prove that, in the canonical form of $u$ from Proposition \ref{basicq-regularreduction},
the number $m$ of diagonal cells that equal $C\bigl(R_\delta(r)^n\bigr)$ is even.

Let us choose automorphisms $a$ and $b$ of $V$ such that $u=ab^{-1}$ and $p(a)=q(b)=0$.
By the Commutation Lemma (i.e.\ Lemma \ref{qcommutelemma}), we know that $a$ and $b$ commute with $v:=u+\delta u^{-1}$, and hence
all three endomorphisms $a,b,u$ yield endomorphisms $\overline{a}$, $\overline{b}$ and $\overline{u}$
of the vector space
$$E:=\Ker \bigl(r^n(v)\bigr)/\Ker \bigl(r^{n-1}(v)\bigr)
=\Ker(R_\delta(r^n)(u))/\Ker(R_\delta(r^{n-1})(u))$$
such that $\overline{u}=\overline{a}\overline{b}^{-1}$, and
$r$ annihilates $\overline{v}:=\overline{u}+\delta \overline{u}^{-1}$.
Again, $\overline{a}$ and $\overline{b}$ commute with $\overline{v}$, and hence they
are endomorphisms of the $\F[\overline{v}]$-module $E$.
Since $r$ is irreducible, we have $\F[\overline{v}] \simeq \F[t]/(r)$, and
$\L:=\F[\overline{v}]$ is a field. We shall write $y:=\overline{v}$, which we see as an element of $\L$.
Using the structure of $\L$-vector space, we can write $\overline{u}+\delta \overline{u}^{-1}=y\,\id_E$;
by combining this with $p(\overline{a})=q(\overline{b})=0$, we deduce that
$\overline{a}$ and $\overline{b}$ yield a representation of the $\L$-algebra $\calW\bigl(p,q,q(0)y\bigr)_{\L}$
on the $\L$-vector space $E$.

On the other hand, we see that $2m\deg(r)$ is the dimension of the $\F$-vector space $E$, and hence
$2m$ is the dimension of the $\L$-vector space $E$.

By Proposition \ref{structureofWprop}, the algebra
$\calW\bigl(p,q,q(0)y\bigr)_{\L}$ is a $4$-dimensional skew-field over $\L$, whence the $\L$-vector space
$E$ is isomorphic to a power of $\calW\bigl(p,q,q(0)y\bigr)_{\L}$ and it follows that its dimension is
a multiple of $4$. Therefore, $m$ is a multiple of $2$, which completes the proof.
\end{proof}

Therefore, we have completed the classification of q-regular $(p,q)$-quotients.

\subsection{Exceptional $(p,q)$-quotients (I): When both $p$ and $q$ are split}\label{q-exceptionalsectionI}

Here, we determine the q-exceptional $(p,q)$-quotients in the
case when both polynomials $p$ and $q$ are split. There are several scattered results in the literature but they
do not encompass all cases. Hence, we will start from scratch.

The following remark will be useful in what follows:
split $p(t)=(t-x_1)(t-x_2)$ and $q(t)=(t-y_1)(t-y_2)$ in $\F[t]$, and set
$z:=x_1y_1^{-1}$ and $z':=x_2 y_2^{-1}$.
Let $a,b$ be automorphisms of an $\F$-vector space $V$ such that $p(a)=q(b)=0$, and set $u:=ab^{-1}$.
Then, $zz'=p(0)q(0)^{-1}$, and hence
$$u+\delta u^{-1}=(u-z \id_V)(\id_V-z' u^{-1})+(z+z')\,\id_V.$$
Therefore, the Commutation Lemma shows that both $a$ and $b$ commute with $(u-z \id_V)(\id_V-z' u^{-1})$.

\subsubsection{Stating the results}

Now, we can restate what we wish to prove:

\begin{theo}\label{qtheoPandQsplitSoleRoot}
Let $p$ and $q$ be split monic polynomials with degree $2$ in $\F[t]$, each with a sole root, and with $p(0)q(0) \neq 0$.
Write $p(t)=(t-x)^2$ and $q(t)=(t-y)^2$.
Then, an endomorphism $u$ of a finite-dimensional vector space over $\F$  is a q-exceptional $(p,q)$-quotient if and only if
the characteristic polynomial of $u$ is a power of $t-xy^{-1}$.
\end{theo}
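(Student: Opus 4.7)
The forward implication is immediate: if $u$ is a q-exceptional $(p,q)$-quotient, then every eigenvalue of $u$ in $\overline{\F}$ lies in $\Root(p)\Root(q)^{-1}=\{xy^{-1}\}$, so the characteristic polynomial of $u$ must be a power of $t-xy^{-1}$.

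For the converse, set $z:=xy^{-1}$ and assume the characteristic polynomial of $u$ is $(t-z)^n$, so that $u$ is automatically q-exceptional. What must be shown is that $u$ actually is a $(p,q)$-quotient. The plan is to carry out two classical reductions. First, by the rational canonical form, $u$ is similar to a direct sum of cyclic blocks $C\bigl((t-z)^{n_i}\bigr)$; since a direct sum of $(p,q)$-quotients is again a $(p,q)$-quotient, it suffices to treat a single block $C\bigl((t-z)^m\bigr)$. Second, replacing $a$ by $x^{-1}a$ and $b$ by $y^{-1}b$ shows that a matrix $M$ is a $((t-x)^2,(t-y)^2)$-quotient if and only if $z^{-1}M$ is a $((t-1)^2,(t-1)^2)$-quotient; since $z^{-1}C\bigl((t-z)^m\bigr)$ is similar to $C\bigl((t-1)^m\bigr)$, the problem reduces to producing, for each $m\ge 1$, matrices $A,B\in \GL_m(\F)$ with $(A-I_m)^2=(B-I_m)^2=0$ and $AB^{-1}\simeq C\bigl((t-1)^m\bigr)$.

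For this last step I propose the explicit construction
\[
A := I_m + \sum_{k=1}^{\lfloor m/2\rfloor} E_{2k,\,2k-1},\qquad
B := I_m + \sum_{k=1}^{\lfloor (m-1)/2\rfloor} E_{2k+1,\,2k},
\]
where $E_{i,j}$ is the matrix unit with a single $1$ in position $(i,j)$. Both $A-I_m$ and $B-I_m$ are sums of matrix units with pairwise disjoint row and column indices, so both squares vanish; in particular $B^{-1}=I_m-(B-I_m)$. Expanding $AB^{-1}-I_m$ via the rule $E_{i,j}E_{k,l}=\delta_{j,k}E_{i,l}$, I expect column $2k-1$ to be $e_{2k}$ and column $2k$ to be $-e_{2k+1}-e_{2k+2}$ (with indices exceeding $m$ suppressed), and the last column to vanish. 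These $m-1$ nonzero columns are linearly independent by inspection, so $AB^{-1}-I_m$ is a nilpotent matrix of rank $m-1$ and $AB^{-1}$ is therefore similar to the single cyclic block $C\bigl((t-1)^m\bigr)$, as required.

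The principal technical obstacle lies in the rank computation of the last paragraph: one must verify that the interaction of the two matrix-unit sums produces the precise pattern of $m-1$ independent columns, so that the resulting nilpotent $AB^{-1}-I_m$ is a single Jordan cell rather than splitting into shorter cyclic pieces. Once this is done, combining it with the two reductions yields the desired equivalence.
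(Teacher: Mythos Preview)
Your proof is correct and essentially coincides with the paper's. The paper also reduces to single cyclic blocks and then invokes Lemma~\ref{splitcompanionlemma1}, whose proof builds $A$ and $B$ as direct sums of the $2\times 2$ blocks $\begin{bmatrix} x & 0 \\ 1 & x'\end{bmatrix}$ and $\begin{bmatrix} y' & 0 \\ 1 & y\end{bmatrix}$, shifted by one position relative to each other; specialized to $x=x'=y=y'=1$ this is exactly your $A$ and $B$. The only cosmetic difference is that you first apply the homothety reduction to $p=q=(t-1)^2$ (which the paper mentions as an alternative route via Botha's result), whereas the paper keeps the general $x,y$ throughout Lemma~\ref{splitcompanionlemma1}.

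Regarding the point you flag as the principal obstacle: the verification is immediate once you observe that $AB^{-1}-I_m=(A-I_m)-(B-I_m)-(A-I_m)(B-I_m)$ is strictly lower-triangular with \emph{every} subdiagonal entry equal to $\pm 1$ (alternating $+1,-1,+1,\dots$). Any strictly lower-triangular $m\times m$ matrix with nowhere-vanishing subdiagonal has $(m,1)$-entry of its $(m-1)$-th power equal to the product of the subdiagonal entries, hence nonzero; so its minimal polynomial is $t^m$ and it is similar to a single nilpotent Jordan block. No separate column-independence argument is needed.
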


This result can be easily deduced from Botha's characterization of products of two unipotent endomorphisms of index $2$
\cite{Bothaunipotent} by using the homothety trick, but we shall give a very quick proof of it that takes advantage of some general results that
are featured in this section.

Next, the result when both $p$ and $q$ are split with simple roots can be formulated as follows:

\begin{theo}\label{qtheoPandQsplitSingleroots}
Let $p$ and $q$ be split monic polynomials with degree $2$ in $\F[t]$, both
with simple roots, such that $p(0)q(0) \neq 0$. Set $\delta:=p(0)q(0)^{-1}$.
Let $u$ be an endomorphism of a finite-dimensional vector space $V$ over $\F$.
Then, the following conditions are equivalent:
\begin{enumerate}[(i)]
\item $u$ is a q-exceptional $(p,q)$-quotient.
\item $u$ is triangularizable with all its eigenvalues in $\Root(p)\Root(q)^{-1}$ and,
for every $z \in \Root(p)\Root(q)^{-1}$ such that $z \neq \delta z^{-1}$, the sequences
$\bigl(n_k(u,z)\bigr)_{k \geq 1}$ and $\bigl(n_k(u,\delta z^{-1})\bigr)_{k \geq 1}$ are $1$-intertwined.
\item Every invariant factor $r$ of $u$ is split with all its roots in $\Root(p)\Root(q)^{-1}$
and, for every $z \in \Root(p)\Root(q)^{-1}$ such that $z \neq \delta z^{-1}$, the respective multiplicities $m$ and $n$ of $z$ and $\delta z^{-1}$ as roots of
$r$ satisfy $|m-n| \leq 1$.
\item In some basis of $V$, the endomorphism $u$ is represented by a block-diagonal matrix in which every diagonal block
equals either $C\bigl((t-z)^n\bigr)$ for some $n \in \N^*$ and some $z \in \Root(p)\Root(q)^{-1}$ such that $z =\delta z^{-1}$, or
$C\bigl((t-z)^{n+\epsilon}\bigr)\oplus C\bigl((t-\delta z^{-1})^n\bigr)$ for some $n \in \N$, some $\epsilon \in \{0,1\}$ and some $z \in \Root(p)\Root(q)^{-1}$ such that $z \neq \delta z^{-1}$.
\end{enumerate}
\end{theo}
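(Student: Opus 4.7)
The plan is to prove the four conditions equivalent in two stages: first the combinatorial equivalences (ii) $\Leftrightarrow$ (iii) $\Leftrightarrow$ (iv), and then the two substantive implications (iv) $\Rightarrow$ (i) and (i) $\Rightarrow$ (ii). The equivalences (ii) $\Leftrightarrow$ (iii) and (iii) $\Leftrightarrow$ (iv) are essentially formal: for $z \in \Root(p)\Root(q)^{-1}$ with $z \neq \delta z^{-1}$, the polynomials $t-z$ and $t-\delta z^{-1}$ are coprime, so an invariant factor of the form $(t-z)^m(t-\delta z^{-1})^n$ decomposes as the coprime direct sum $C((t-z)^m)\oplus C((t-\delta z^{-1})^n)$, and the intertwining condition $|m-n|\le 1$ is exactly the condition that lets us group pairs of invariant factors into the blocks listed in (iv); the standard dictionary between invariant factor multiplicities and Jordan cell counts $n_k(u,\lambda)$ gives (ii) $\Leftrightarrow$ (iii).

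For the implication (iv) $\Rightarrow$ (i), I would show that each listed block matrix is a $(p,q)$-quotient. Write $p(t)=(t-x_1)(t-x_2)$ and $q(t)=(t-y_1)(t-y_2)$. For a block $C((t-z)^{n+\epsilon})\oplus C((t-\delta z^{-1})^n)$ with $z=x_1y_1^{-1}\ne \delta z^{-1}=x_2y_2^{-1}$, I would write down an explicit pair $(a,b)$ in block form, with $a$ having diagonal scalar blocks $x_1 I$ and $x_2 I$ and $b$ having diagonal scalar blocks $y_1 I$ and $y_2 I$ linked by suitable off-diagonal entries, so that after a change of basis $ab^{-1}$ takes the block form of Lemma \ref{qblockmatrixlemma} — in fact an adaptation of the construction used in Lemma \ref{commonQsplitsnotP}. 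For the blocks $C((t-z)^n)$ with $z^2=\delta$, the condition $x_1y_2=x_2y_1$ lets us set $\mu:=x_2/x_1=y_2/y_1$ and normalize $a':=x_1^{-1}a$, $b':=y_1^{-1}b$: then both $a'$ and $b'$ are annihilated by the same polynomial $(t-1)(t-\mu)$, so the problem reduces to constructing $u/z=a'(b')^{-1}$ as a unipotent $(p_0,p_0)$-quotient for $p_0=(t-1)(t-\mu)$, which is either handled by a direct cyclic block construction or follows from Botha's classification \cite{Bothaunipotent} cited in the introduction.

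For the main direction (i) $\Rightarrow$ (ii), let $a,b$ be automorphisms of $V$ with $u=ab^{-1}$, $p(a)=q(b)=0$. By the Commutation Lemma, $a$ and $b$ commute with $v:=u+\delta u^{-1}$. Since $u$ is q-exceptional, its eigenvalues in $\overline{\F}$ lie in $\Root(p)\Root(q)^{-1}$, and the eigenvalues of $v$ are of the form $z+\delta z^{-1}$; decomposing $V$ into the characteristic subspaces of $v$, which are stable under both $a$ and $b$, reduces the problem to the case where $v$ has a single eigenvalue $\alpha=z+\delta z^{-1}$ for a fixed $z$. In that reduced setting, the only possible eigenvalues of $u$ are $z$ and $\delta z^{-1}$; moreover the identity $(u-z\,\id_V)(\id_V-\delta z^{-1}u^{-1})=v-\alpha\,\id_V$ (together with the commutation of $a,b$ with $v$) shows that $a$ and $b$ stabilize every subspace $\Ker F(u)^k$, where $F(t):=(t-z)(t-\delta z^{-1})$. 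Passing to the successive subquotients $\Ker F(u)^{k+1}/\Ker F(u)^k$ and running on each of them the dimension-count on eigenspaces from the proof of Proposition \ref{basicq-regularinvariants} (applied to the induced endomorphisms of $a$ and $b$, both annihilated by $p$ and $q$ respectively) forces the $1$-intertwining inequalities between $n_k(u,z)$ and $n_k(u,\delta z^{-1})$ claimed in (ii).

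The main technical obstacle is the constructive step in (iv) $\Rightarrow$ (i) for the exceptional case $z^2=\delta$, which requires either a careful cyclic block construction or a reduction to a known case of quotients of matrices annihilated by the same quadratic polynomial. The dimension-count argument in (i) $\Rightarrow$ (ii) also requires care: one must verify that the rank/multiplicity inequalities pass correctly from the ambient space to each subquotient, which is where the distinction between the subcases $z=\delta z^{-1}$ and $z\ne \delta z^{-1}$ enters.
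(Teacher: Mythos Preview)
Your combinatorial equivalences (ii) $\Leftrightarrow$ (iii) $\Leftrightarrow$ (iv) are fine, and your plan for (iv) $\Rightarrow$ (i) is essentially the paper's: the explicit construction is carried out in Lemma~\ref{splitcompanionlemma1}, which handles all blocks uniformly, including the case $z=\delta z^{-1}$. (Your fallback citation of Botha for that case is off --- \cite{Bothaunipotent} treats $(t-1)^2$, not a split polynomial with simple roots --- but your ``direct cyclic block construction'' option is exactly what works.)

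The real problem is your argument for (i) $\Rightarrow$ (ii). Passing to the single-level subquotients $E_k:=\Ker F(u)^{k+1}/\Ker F(u)^k$ and running the eigenspace dimension count from Proposition~\ref{basicq-regularinvariants} cannot yield the $1$-intertwining inequality. First, that dimension count hinges on q-regularity --- specifically on $\Ker(a-x\,\id)\cap\Ker(b-y\,\id)=\{0\}$ for all eigenvalues $x,y$ --- and on $E_k$ the induced endomorphism $u_k$ is annihilated by $F$, so it is as far from q-regular as possible. Second, and more decisively, the information contained in a single $E_k$ is simply insufficient: take $u=J_2(z)$ on $\F^2$ with $z\neq\delta z^{-1}$. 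Then $E_1$ and $E_2$ are each one-dimensional with $u_k$ acting by $z$, and each is trivially realized as $a_k b_k^{-1}$ with $a_k=x_1$, $b_k=y_1$. Nothing goes wrong on either subquotient, yet $u$ violates $1$-intertwining (since $n_2(u,z)=1>0=n_1(u,\delta z^{-1})$) and is therefore \emph{not} a $(p,q)$-quotient. The obstruction is invisible at the level of individual $E_k$'s.

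The paper's approach is genuinely different: it works on the \emph{two-level} subquotient $\Ker w^{k+2}/\Ker w^k$, where $w=(u-z\,\id)(\id-z'u^{-1})$, and there proves the inequality $n_{k+2}(u,z)\leq n_{k+1}(u,z')$ via a block-matrix computation (Lemma~\ref{1intertwinedSpecialLemma}) that ultimately rests on Wang's Lemma~\ref{WangLemma}. The crux is to exploit $q(B)=0$ together with the commutation of $B$ with $w$ to express a nilpotent block as a product $B_3 B_2'$ and then bound ranks. You will need either this machinery or a substitute of comparable strength; the single-layer eigenspace count does not suffice.
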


We finish with the case when $p$ is split with simple roots and $q$ is split with a double root:

\begin{theo}\label{qtheoPandQsplitSoleRootforQ}
Let $p$ and $q$ be split polynomials with degree $2$ in $\F[t]$, with $p(0)q(0) \neq 0$.
Assume that $p$ has two simple roots $x_1$ and $x_2$ and that $q$ has a double root $y$.
Let $u$ be an endomorphism of a finite-dimensional vector space $V$ over $\F$.
Then, the following conditions are equivalent:
\begin{enumerate}[(i)]
\item $u$ is a q-exceptional $(p,q)$-quotient.
\item $u$ is triangularizable with eigenvalues in $\{x_1y^{-1},x_2y^{-1}\}$, and the sequences
$\bigl(n_k(u,x_1y^{-1})\bigr)_{k \geq 1}$ and $\bigl(n_k(u,x_2y^{-1})\bigr)_{k \geq 1}$ are $2$-intertwined.
\item Every invariant factor $r$ of $u$ is split with its roots in $\{x_1y^{-1},x_2y^{-1}\}$, and
the respective multiplicities $m$ and $n$ of $x_1 y^{-1}$ and $x_2y^{-1}$ as roots of $r$ satisfy $|m-n| \leq 2$.
\item In some basis of $V$, the endomorphism $u$ is represented by a block-diagonal matrix in which every diagonal block
equals $C\bigl((t-z)^{n+\epsilon}\bigr)\oplus C\bigl((t-\delta z^{-1})^n\bigr)$ for some $n \in \N$, some $\epsilon \in \{0,1,2\}$ and some $z \in \Root(p)\Root(q)^{-1}$.
\end{enumerate}
\end{theo}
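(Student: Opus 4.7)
The plan is to first establish the routine equivalences (ii) $\iff$ (iii) $\iff$ (iv) via standard Jordan-normal-form and invariant-factor manipulations: the 2-intertwining of Jordan block size sequences in (ii) is equivalent to the bound $|m - n| \leq 2$ on the exponents of consecutive invariant factors (iii), which in turn corresponds to the direct-sum decomposition into indecomposable cyclic blocks of the form $C((t-z)^{n+\epsilon}) \oplus C((t-\delta z^{-1})^n)$ with $\epsilon \in \{0, 1, 2\}$ as in (iv); here $z \neq \delta z^{-1}$ always holds because $\alpha := x_1 y^{-1} \neq x_2 y^{-1} =: \beta$ (since $p$ has distinct roots). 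The essential content is then the equivalence (i) $\iff$ (iv), which I would prove in two directions.

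For (iv) $\Rightarrow$ (i), I would exhibit each building block explicitly. The $1$-cell $C(t-z)$ with $z = x_i y^{-1}$ is $AB^{-1}$ with $A = x_i$, $B = y$; the $2$-cell $C((t-z)^2)$ is realized by $A = x_i I_2$ and $B = yI_2 + E_{1,2}$. For $n \geq 1$ and $\epsilon = 0$, the block $C((t-z)^n) \oplus C((t-\delta z^{-1})^n)$ equals $C(R_\delta((t-s)^n))$ with $s := \alpha + \beta$, hence by Lemma \ref{qblockmatrixlemma} it is similar to $M := \begin{bmatrix} 0_n & -\delta I_n \\ I_n & C_n \end{bmatrix}$ where $C_n := C((t-s)^n)$; setting $B := yI_{2n} + \begin{bmatrix} 0 & y(sI_n - C_n) \\ 0 & 0 \end{bmatrix}$ makes $(B - yI_{2n})^2 = 0$ because $(sI_n - C_n)^2 = 0$, and computing $A := MB = \begin{bmatrix} 0 & -\delta y I_n \\ yI_n & ys I_n \end{bmatrix}$ one checks directly that $A^2 - (\tr p)\,A + p(0)\,I_{2n} = 0$. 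For $\epsilon \in \{1, 2\}$ with $n \geq 1$, I would extend this block-matrix construction by adjoining $\epsilon$ extra basis vectors so as to lengthen the $z$-Jordan chain of $u := AB^{-1}$, modifying $A$ and $B$ by compatible entries so that both $p(A) = 0$ and $(B - yI)^2 = 0$ continue to hold.

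For (i) $\Rightarrow$ (ii), the key tool is the Commutation Lemma (Lemma \ref{qcommutelemma}): $a$ and $b$ commute with $v := u + \delta u^{-1}$, hence with $w := v - (\alpha + \beta)\,\id_V = u^{-1}(u - \alpha\,\id_V)(u - \beta\,\id_V)$, which is nilpotent on $V$ by q-exceptionality. The filtration $V_k := \Ker w^k = \Ker(u - \alpha\,\id_V)^k \oplus \Ker(u - \beta\,\id_V)^k$ is preserved by $a$, $b$, and $u$; on each graded quotient $E_k := V_k/V_{k-1}$, the induced endomorphism $u$ satisfies $(u - \alpha)(u - \beta) = 0$, yielding $E_k = E_k^\alpha \oplus E_k^\beta$ with $\dim E_k^\alpha = n_k(u, \alpha)$ and $\dim E_k^\beta = n_k(u, \beta)$. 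Writing $a$ and $\tilde b := b - y\,\id_V$ in block form with respect to $E_k^\alpha \oplus E_k^\beta$ and expanding the equations $(a - x_1)(a - x_2) = 0$, $\tilde b^2 = 0$, and $a = u(y\,\id_V + \tilde b)$ produces a nonlinear system whose resolution, exploiting the distinctness of $x_1$ and $x_2$ together with the square-zero structure of $\tilde b$, forces the diagonal components of $\tilde b$ (those mapping $E_k^\alpha \to E_k^\alpha$ and $E_k^\beta \to E_k^\beta$) to vanish, so that $\tilde b$ merely interchanges $E_k^\alpha$ and $E_k^\beta$ on each $E_k$. Lifting back to $V$, the diagonal component of $\tilde b$ shifts the filtrations of $V_\infty^{(\alpha)} := \bigcup_k \Ker(u - \alpha)^k$ and $V_\infty^{(\beta)}$ by $1$ while the off-diagonal component preserves them; combined with $\tilde b^2 = 0$, this yields injections $E_{k+2}^\alpha \hookrightarrow E_k^\beta$ and $E_{k+2}^\beta \hookrightarrow E_k^\alpha$ given by suitable compositions of $\tilde b$ with $(u - \alpha)^2$ or $(u - \beta)^2$, establishing the 2-intertwining. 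The main obstacle will be the diagonal-vanishing computation on each $E_k$ and the subsequent verification that the induced maps between graded pieces are injective, both requiring a delicate bookkeeping of the interplay between the filtration, the quadratic relations on $a$ and $b$, and the decomposition $E_k = E_k^\alpha \oplus E_k^\beta$, in the spirit of but more intricate than Lemma \ref{CNsamesplitbasicLemma}.
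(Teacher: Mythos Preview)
Your handling of the routine equivalences (ii)$\iff$(iii)$\iff$(iv) matches the paper. The two substantive directions, however, differ markedly from the paper's treatment, and one of them is incomplete.

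For (iv)$\Rightarrow$(i), the paper uses two short explicit constructions (Lemmas~\ref{splitcompanionlemma1} and~\ref{splitcompanionlemma2}): lower-Hessenberg products $AB^{-1}$ built from $2\times 2$ blocks, which uniformly cover all $\epsilon\in\{0,1,2\}$. Your $R_\delta$-based construction for $\epsilon=0$ is correct, but your justification ``$(sI_n-C_n)^2=0$'' is false for $n\ge 3$; the real reason $(B-yI_{2n})^2=0$ is simply the strict block-upper-triangular shape. More importantly, you leave $\epsilon\in\{1,2\}$ as a vague ``adjoin extra vectors'' sketch, whereas the paper's Lemma~\ref{splitcompanionlemma2} handles $\epsilon=2$ in one line.

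For (i)$\Rightarrow$(ii), the approaches diverge more seriously. The paper works on the \emph{three}-layer quotient $\Ker w^{k+3}/\Ker w^k$ (Proposition~\ref{2intertwinedGeneralProp}), writes $B$ in block form, exploits the commutation with $(M-zI)(I-z'M^{-1})$, and after an explicit computation shows $q(B_1)$ is a scalar multiple of $N^2$; Wang's Lemma (Lemma~\ref{WangLemma}) then gives $n_3(N,0)\le n_1(N',0)$ directly. Your plan instead works on single-layer quotients $E_k=V_k/V_{k-1}$. Your diagonal-vanishing claim there is in fact correct (one checks from $p(a)=0$ and $\tilde b^2=0$ that the induced $P$ satisfies $P(yI+P)=0$, and then $RP=0$, $QR=-P^2$ force $P=0$). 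But the decisive step---producing injections $E_{k+2}^\alpha\hookrightarrow E_k^\beta$ by ``suitable compositions of $\tilde b$ with $(u-\alpha)^2$''---is not substantiated: knowing that $P,S$ shift the filtration by one and that $Q,R$ preserve it does not by itself yield such injections, and you give no candidate map nor injectivity argument. This is precisely the gap that the paper's three-layer computation plus Wang's rank inequality is designed to bridge.
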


Note, in all three theorems above, that $G_{p,q}$ is split and that its roots are exactly the elements of $\Root(p)\Root(q)^{-1}$.
Hence, an endomorphism $u$ of a finite-dimensional vector space is q-exceptional with respect to $(p,q)$ if and only if its characteristic polynomial
is split with all roots in $\Root(p)\Root(q)^{-1}$, i.e.\ if and only if $u$ is triangularizable with all its eigenvalues in $\Root(p)\Root(q)^{-1}$.

\subsubsection{Key lemmas for sufficiency}

\begin{lemma}\label{splitcompanionlemma1}
Let $x,x',y,y'$ be nonzero scalars, and set $p(t):=(t-x)(t-x')$ and $q(t):=(t-y)(t-y')$.
Then, for all $s \in \N^*$, the matrices $C\bigl((t-xy^{-1})^s(t-x'(y')^{-1})^s\bigr)$ and
$C\bigl((t-xy^{-1})^s(t-x'(y')^{-1})^{s-1}\bigr)$ are $(p,q)$-quotients.
\end{lemma}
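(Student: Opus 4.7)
The plan is to construct, for each integer $n \geq 1$, matrices $A_n, B_n \in \Mat_n(\F)$ satisfying $p(A_n) = q(B_n) = 0$ whose quotient $M_n := A_n B_n^{-1}$ is similar to $C\bigl((t-z)^{\lceil n/2 \rceil}(t-z')^{\lfloor n/2\rfloor}\bigr)$, where $z := xy^{-1}$ and $z' := x'(y')^{-1}$. Taking $n = 2s$ and $n = 2s - 1$ will then yield, respectively, the two cyclic matrices appearing in the statement.

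Following the strategy of Lemma \ref{commonQsplitsnotP}, I extend $(x, x')$ and $(y, y')$ to $2$-periodic sequences $(x_k)_{k \geq 1}$ and $(y_k)_{k \geq 1}$, and take $A_n$ to be lower bidiagonal with diagonal $(x_1, \dots, x_n)$ and subdiagonal entries equal to $y$ at odd positions and $0$ at even positions; likewise $B_n$ is lower bidiagonal with diagonal $(y_1, \dots, y_n)$ and subdiagonal entries $0$ at odd positions and $1$ at even positions. Equivalently, $A_n$ is block-diagonal with $2 \times 2$ blocks $\begin{bmatrix} x & 0 \\ y & x' \end{bmatrix}$ (annihilated by $p$), terminated by $[x]$ when $n$ is odd; and $B_n$ is block-diagonal with a leading $[y]$ followed by $2 \times 2$ blocks $\begin{bmatrix} y' & 0 \\ 1 & y \end{bmatrix}$ (annihilated by $q$), terminated by $[y']$ when $n$ is even. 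The conditions $p(A_n) = q(B_n) = 0$ then hold block by block. The crucial feature is that the two decompositions are \emph{offset by one position}: although $A_n$ and $B_n$ are each block-diagonal, the product $M_n$ is not. Computing $B_n^{-1}$ block by block and multiplying, I find that $M_n$ is lower tridiagonal, with diagonal alternating between $z$ and $z'$, subdiagonal everywhere nonzero (values alternating between $1$ and $-x/(yy')$), and sub-subdiagonal with entry $-1/y'$ at positions $(i+2,i)$ for even $i$ and $0$ otherwise; the characteristic polynomial of $M_n$ is $(t-z)^{\lceil n/2\rceil}(t-z')^{\lfloor n/2\rfloor}$, read off the diagonal.

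The main obstacle is to establish that $M_n$ is cyclic, i.e., $\dim \Ker(M_n - \lambda I) \leq 1$ for each eigenvalue $\lambda \in \{z, z'\}$. A row-by-row analysis of $(M_n - \lambda I) v = 0$ yields for each intermediate coordinate two competing expressions coming from consecutive rows: one from a row whose diagonal entry vanishes (relating a coordinate to its predecessor through the subdiagonal alone), and one from the next row, combining subdiagonal and sub-subdiagonal. These formulas are incompatible, the consistency requirement reducing to the forbidden equality $x' = 0$, which is ruled out by $p(0) = xx' \neq 0$. Consequently, every intermediate coordinate must vanish, leaving at most one free parameter (the last coordinate) in the kernel; the coincident case $z = z'$ is handled by an even simpler direct argument, since $M_n - zI$ then has zero diagonal and nonzero subdiagonal. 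Thus the minimal polynomial of $M_n$ equals its characteristic polynomial, and $M_n \simeq C\bigl((t-z)^{\lceil n/2 \rceil}(t-z')^{\lfloor n/2\rfloor}\bigr)$.
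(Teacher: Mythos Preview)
Your construction is essentially the same as the paper's: both build $A_n$ and $B_n$ as block-diagonal matrices with $2\times 2$ blocks annihilated by $p$ and $q$ respectively, with the two block decompositions offset by one position so that $M_n=A_nB_n^{-1}$ is lower triangular with the correct alternating diagonal $z,z',z,z',\dots$ (the paper uses $\begin{bmatrix} x & 0 \\ 1 & x'\end{bmatrix}$ where you use $\begin{bmatrix} x & 0 \\ y & x'\end{bmatrix}$, an inconsequential difference).

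The only substantive divergence is in the cyclicity argument. You already observe that $M_n$ is lower triangular with \emph{subdiagonal everywhere nonzero}; this alone suffices, and is exactly what the paper invokes. Indeed, for such a matrix the first standard basis vector $e_1$ is cyclic: one checks inductively that $\prod_{i=1}^{k}(M_n-m_{ii}I)\,e_1$ has its first nonzero entry in position $k+1$, so $(e_1,M_ne_1,\dots,M_n^{n-1}e_1)$ is a triangular basis. Your row-by-row kernel analysis is correct but unnecessary, and the phrasing ``the consistency requirement reducing to the forbidden equality $x'=0$'' is slightly imprecise (for $\lambda=z$ the vanishing of the relevant coordinates follows directly from a single row using $x\neq 0$, with no consistency check needed; for $\lambda=z'$ the reduction is indeed to $z'=x'(y')^{-1}\neq 0$). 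Once you have noted the nonvanishing subdiagonal, you can skip straight to the conclusion.
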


\begin{proof}
Let $n \in \N^*$ and set $s:=\lfloor \frac{n}{2}\rfloor$ and $\epsilon:=n-2s$.
Set
$$K:=\begin{bmatrix}
x & 0 \\
1 & x'
\end{bmatrix} \quad \text{and} \quad
L:=\begin{bmatrix}
y' & 0 \\
1 & y
\end{bmatrix}.$$
Note that $p(K)=0$ and $q(L)=0$.
If $n$ is even, set $A:=K \oplus \cdots \oplus K$ (with $s$ copies of $K$) and
$B:=yI_1 \oplus L \oplus \cdots \oplus L \oplus y'I_1$ (with $s-1$ copies of $L$), otherwise
set $A:=K \oplus \cdots \oplus K \oplus x I_1$ (with $s$ copies of $K$) and
$B:=y I_1 \oplus L \oplus \cdots \oplus L$ (with $s$ copies of $L$).
In any case $A$ and $B$ are matrices of $\Mat_n(\F)$, we have $p(A)=0=q(B)$, and
$M=AB^{-1}$ is a lower-triangular Hessenberg matrix whose diagonal entries
are $x y^{-1}, x' (y')^{-1},\dots$ and whose entries in the first subdiagonal are all nonzero.
It follows that $M$ is a cyclic matrix with characteristic polynomial $(t-xy^{-1})^s (t-x'(y')^{-1})^s$ if $n$ is even, and
$(t-xy^{-1})^{s+1} (t-x'(y')^{-1})^s$ otherwise.
In any case, $M$ is similar to $C\bigl((t-xy^{-1})^{s+\epsilon} (t-x'(y')^{-1})^s\bigr)$.
Varying $n$ then yields the claimed statement.
\end{proof}

\begin{lemma}\label{splitcompanionlemma2}
Let $x,x',y$ be nonzero scalars, and set $p(t):=(t-x)(t-x')$ and $q(t):=(t-y)^2$.
Then, for all $n \in \N$, the matrix $C\bigl((t-xy^{-1})^{n+2}(t-x'y^{-1})^n\bigr)$
is a $(p,q)$-quotient.
\end{lemma}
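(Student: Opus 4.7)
The plan is to exhibit explicit matrices $A,B \in \Mat_{2n+2}(\F)$ with $p(A)=0$ and $q(B)=0$ whose quotient $AB^{-1}$ is similar to the claimed companion matrix. Following the template of Lemma \ref{splitcompanionlemma1}, set
$$K := \begin{bmatrix} x & 0 \\ 1 & x' \end{bmatrix} \quad \text{and} \quad L := \begin{bmatrix} y & 0 \\ 1 & y \end{bmatrix},$$
so that $p(K)=0$ and $q(L)=0$, and define
$$A := [x] \oplus \underbrace{K \oplus \cdots \oplus K}_{n \text{ copies}} \oplus [x] \quad \text{and} \quad B := \underbrace{L \oplus \cdots \oplus L}_{n+1 \text{ copies}}.$$
Both annihilation conditions are immediate from the block-diagonal form. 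The crucial feature is that the blocks of $A$, lying at positions $\{1\}, \{2,3\}, \{4,5\}, \ldots, \{2n,2n+1\}, \{2n+2\}$, are staggered by one unit with respect to those of $B$, which occupy $\{1,2\}, \{3,4\}, \ldots, \{2n+1,2n+2\}$.

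A direct computation then shows that $M := AB^{-1}$ is lower triangular with diagonal
$$(\alpha, \alpha, \alpha', \alpha, \alpha', \ldots, \alpha, \alpha', \alpha), \quad \text{where } \alpha := xy^{-1}, \ \alpha' := x'y^{-1},$$
containing exactly $n+2$ copies of $\alpha$ and $n$ copies of $\alpha'$. The remaining nonzero entries of $M$ lie on the first subdiagonal (alternating between $-xy^{-2}$ at even rows and $y^{-1}$ at odd rows) and at the second-subdiagonal positions $(2k+1, 2k-1)$ for $k = 1, \ldots, n$, each equal to $-y^{-2}$. In particular the characteristic polynomial of $M$ is $(t-\alpha)^{n+2}(t-\alpha')^n$, as required.

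It then remains to show that $M$ is cyclic, i.e., that each distinct eigenvalue has geometric multiplicity one, so that $M \simeq C\bigl((t-\alpha)^{n+2}(t-\alpha')^n\bigr)$. For $\lambda = \alpha$: the last column of $M - \alpha I$ vanishes, and among the first $2n+1$ columns I would peel off linear combinations by observing that the coefficient of $e_{2k}$ (for $k = 1, \ldots, n+1$) receives a contribution only from column $2k-1$, of value $-xy^{-2} \neq 0$, forcing all odd-indexed coefficients to vanish; the $e_{2k+1}$-equations then kill the even-indexed ones. For $\lambda = \alpha'$ the reasoning is analogous but couples two consecutive unknowns at each stage: the key algebraic identity $(\alpha - \alpha') y - x = -x'$, nonzero by the hypothesis on $x'$, ensures that each such $2 \times 2$ subsystem has only the trivial solution. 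The degenerate case $x = x'$ collapses the two arguments into one and still yields a single Jordan block, now of size $2n+2$, consistent with the claim.

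The main obstacle is this cyclicity verification. Proposition \ref{blockcyclicprop} does not apply directly, because the $K$- and $L$-blocks of $A$ and $B$ are both misaligned and of different total sizes (the $[x]$ caps making the block pattern of $A$ irregular). The staircase pattern of nonzero entries of $M - \lambda I$ nevertheless makes the explicit rank computation tractable, and this appears to be the cleanest route.
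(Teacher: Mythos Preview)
Your construction is exactly the paper's: the same staggered block matrices $A=[x]\oplus K\oplus\cdots\oplus K\oplus[x]$ and $B=L\oplus\cdots\oplus L$ (up to the harmless swap of $x$ and $x'$ inside $K$), and the same conclusion that $AB^{-1}$ is lower-triangular with the correct diagonal. The one place where you work harder than necessary is the cyclicity check: since all first-subdiagonal entries of $M=AB^{-1}$ are nonzero (as you computed), the vector $e_1$ is automatically cyclic for any lower-triangular matrix---indeed $(M^k e_1)_{k+1}=\prod_{j=1}^{k}M_{j+1,j}\neq 0$ while $(M^k e_1)_j=0$ for $j>k+1$---so the eigenspace analysis, the $2\times 2$ subsystems, and the identity $(\alpha-\alpha')y-x=-x'$ are all unnecessary, and the second-subdiagonal entries play no role.
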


\begin{proof}
Let $n \in \N$. Set
$$K:=\begin{bmatrix}
x' & 0 \\
1 & x
\end{bmatrix}, \quad
L:=\begin{bmatrix}
y & 0 \\
1 & y'
\end{bmatrix},$$
$$A:=xI_1 \oplus K \oplus \cdots \oplus K \oplus x I_1 \quad \text{(with $n$ copies of $K$)}$$
and
$$B:=L \oplus \cdots \oplus L \quad \text{(with $n+1$ copies of $L$).}$$
Then, $A$ and $B$ are matrices of $\Mat_{2n+2}(\F)$ and, just like in the proof of the preceding lemma,
we have $p(A)=q(B)=0$ and $AB^{-1}$ is a cyclic matrix with characteristic polynomial $(t-xy^{-1})^{n+2}(t-x'y^{-1})^n$.
The claimed result follows.
\end{proof}

\subsubsection{Key lemmas for necessity}

Here, the strategy will be similar to the one used in \cite{dSPsumoftwotriang} to
analyze the exceptional $(p,q)$-sums when both polynomials $p$ and $q$ are split.
The proofs use the following general result, which is a minor variation of a result of Wang \cite{Wang1} (Lemma 2.3 there):

\begin{lemma}[Wang]\label{WangLemma}
Let $M$ and $N$ be matrices of $\Mat_r(\F)$ and $\Mat_s(\F)$, respectively, and $k$ be a positive integer.
Assume that there are matrices $X \in \Mat_{r,s}(\F)$
and $Y \in \Mat_{s,r}(\F)$ such that
$$M^k=XY \quad \text{and} \quad \Ker(MX)=\Ker(XN).$$
Then, $n_{k+1}(M,0) \leq n_1(N,0)$.
\end{lemma}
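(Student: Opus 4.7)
The plan is to reduce the inequality $n_{k+1}(M,0) \leq n_1(N,0)$ to a chain of two subspace inclusions by exploiting the two hypotheses $M^k = XY$ and $\Ker(MX) = \Ker(XN)$ in turn.

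First I would use the factorisation $M^k = XY$ to manufacture a candidate map. Observe that for $v \in \Ker M^{k+1}$ one has $MXYv = M\cdot M^k v = 0$, so $Yv \in \Ker(MX)$; by the second hypothesis, $Yv \in \Ker(XN)$. If moreover $v \in \Ker M^k$, then $XYv = M^k v = 0$, so $Yv \in \Ker X$. Hence $Y$ induces a well-defined linear map
$$\overline{Y} : \Ker M^{k+1}/\Ker M^k \longrightarrow \Ker(XN)/\Ker X.$$
The map $\overline{Y}$ is injective: if $Yv \in \Ker X$ then $M^k v = XYv = 0$, so $v \in \Ker M^k$. Consequently
$$n_{k+1}(M,0) = \dim\bigl(\Ker M^{k+1}/\Ker M^k\bigr) \leq \dim\bigl(\Ker(XN)/\Ker X\bigr).$$

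Next I would bound the right-hand side by $\dim \Ker N = n_1(N,0)$. The key observation is that $N$ sends $\Ker(XN)$ into $\Ker X$ (indeed $w \in \Ker(XN)$ means $XNw = 0$, i.e.\ $Nw \in \Ker X$), so it restricts to a linear map
$$N_{|\Ker(XN)} : \Ker(XN) \longrightarrow \Ker X$$
whose kernel is $\Ker(XN) \cap \Ker N = \Ker N$ (since $\Ker N \subset \Ker(XN)$ trivially). The rank-nullity theorem applied to this restriction gives
$$\dim \Ker(XN) - \dim \Ker N = \dim N(\Ker(XN)) \leq \dim \Ker X,$$
which rearranges to $\dim \Ker(XN) - \dim \Ker X \leq \dim \Ker N$, i.e.\ $\dim\bigl(\Ker(XN)/\Ker X\bigr) \leq \dim \Ker N$.

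Chaining the two inequalities yields $n_{k+1}(M,0) \leq \dim \Ker N = n_1(N,0)$, which is the claim. There is no genuine obstacle here; the only subtlety is noticing that the two hypotheses combine exactly so that $Y$ moves the filtration by powers of $M$ on the left into the comparison between $\Ker(XN)$ and $\Ker X$ on the right, after which $N$ itself provides the last bound.
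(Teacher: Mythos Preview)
Your proof is correct and follows essentially the same approach as the paper's: both construct the injection $\Ker M^{k+1}/\Ker M^k \hookrightarrow \Ker(XN)/\Ker X$ via $Y$ and then bound the target dimension by $\dim \Ker N$. The only cosmetic difference is that the paper cites ``the classical Frobenius inequality'' for the second step, whereas you prove it directly by restricting $N$ to $\Ker(XN)$ and applying rank--nullity; your argument is in fact a self-contained proof of that inequality in this instance.
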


\begin{proof}
Let $Z \in \Ker M^{k+1}$. Then, $M (XY)Z=0$,  and hence $YZ \in \Ker (MX)$.
Moreover, $YZ \in \Ker X$ if and only if $XYZ=0$, that is $Z \in \Ker M^k$.
Hence, the mapping $Z \mapsto YZ$ yields a linear injection from $\Ker M^{k+1}/\Ker M^k$
into $\Ker (MX)/\Ker X$, leading to
$$n_{k+1}(M,0) \leq \dim \Ker(MX)-\dim \Ker X=\dim \Ker(XN)-\dim \Ker X.$$
By the classical Frobenius inequality, we have $\dim \Ker(XN) \leq \dim \Ker X+\dim \Ker N$, and hence
$$n_{k+1}(M,0) \leq \dim \Ker N=n_1(N,0).$$
\end{proof}

\begin{lemma}\label{1intertwinedSpecialLemma}
Let $p$ and $q$ be split monic polynomials of $\F[t]$ with degree $2$ such that $p(0)q(0) \neq 0$, write $p=(t-x_1)(t-x_2)$ and
$q=(t-y_1)(t-y_2)$ and assume that $x_1 \neq x_2$ and $y_1 \neq y_2$. Set $z:=x_1 y_1^{-1}$ and $z':=x_2 y_2^{-1}$ and assume that $z \neq z'$.
Let $u$ be an endomorphism of a finite-dimensional vector space, and assume that $u$ is annihilated by $(t-z)^2(t-z')^2$ and that
$u$ is a $(p,q)$-quotient.
Then,
$$n_2(u,z) \leq n_1(u,z').$$
\end{lemma}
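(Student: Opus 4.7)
My plan is to apply Wang's Lemma (Lemma \ref{WangLemma}). Writing $u=ab^{-1}$ with $p(a)=0$ and $q(b)=0$, I set $M:=u-z\,\id_V$ and $N:=u-z'\,\id_V$, so that $n_{k+1}(M,0)=n_2(u,z)$ and $n_1(N,0)=n_1(u,z')$. Thus, with $k=1$, Wang's Lemma yields precisely the desired inequality provided I can exhibit a factorization $M=XY$ satisfying the kernel hypothesis.

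The natural factorization comes from the identity
\[
u-z\,\id_V=ab^{-1}-zb b^{-1}=(a-zb)\,b^{-1},
\]
which suggests $X:=a-zb$ and $Y:=b^{-1}$. A useful reformulation is the identity $a-zb=(u-z\,\id_V)b$, valid because $ub=a$ as operators (since $u=ab^{-1}$), i.e.\ $X=Mb$. Consequently
\[
MX=M^2b,\qquad \Ker(MX)=b^{-1}\Ker M^2=b^{-1}V_z, \qquad \Ker X=b^{-1}W.
\]
So the ``left'' part of Wang's hypothesis is cleanly expressible in terms of the generalized $z$-eigenspace.

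The decisive ingredient will be the Commutation Lemma: $a$ and $b$ commute with $C_0:=(u-z\,\id_V)(\id_V-z'u^{-1})=q(0)^{-1}\bigl(q(0)u+p(0)u^{-1}\bigr)-(z+z')\,\id_V$. Because $(t-z)^2(t-z')^2$ annihilates $u$ and $z\neq z'$, a direct computation on the Fitting decomposition $V=V_z\oplus V_{z'}$ shows that $C_0$ is square-zero: on $V_z$, $C_0$ is a nonzero scalar multiple of the square-zero nilpotent part of $u-z$, and symmetrically on $V_{z'}$. This algebraic constraint linking $a$, $b$, and the eigenspace filtration of $u$ is what connects $\Ker(MX)$ with $\Ker(XN)$.

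The hardest step is verifying the kernel identity $\Ker(MX)=\Ker(XN)$ of Wang's Lemma. I would attempt to establish directly that $v\in b^{-1}V_z$ implies $(a-zb)(u-z')v=0$, by reducing this to a statement about the conjugate operator $b(u-z')b^{-1}$ acting on $V_z$ and using the square-zero commuting element $C_0$ to control its image modulo $W=\Ker(u-z)$. Should the literal equality fail for this choice of $X$, only the inclusion $\Ker(MX)\subseteq\Ker(XN)$ is needed, since Wang's proof deduces the bound via $\dim\Ker(MX)-\dim\Ker X\leq\dim\Ker(XN)-\dim\Ker X\leq\dim\Ker N$ by the Frobenius inequality; if even that fails, I would modify $X$ by an element of $\Ker X\cdot \End(V)$ (so as not to disturb the factorization $M=XY$), chosen so that the commutation of $a,b$ with $C_0$ forces $(a-zb)(u-z')v\in\Ker X$ whenever $bv\in V_z$.
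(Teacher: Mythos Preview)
Your plan has a real gap at the kernel step, and the commutation with $C_0$ is not enough to close it. Work in a basis adapted to $V=V_z\oplus V_{z'}$, so that $u=\mathrm{diag}(zI_r-N,\ z'I_s+N')$ with $N^2=(N')^2=0$, and write $b=\begin{bmatrix}B_1&B_3\\B_2&B_4\end{bmatrix}$. The commutation of $b$ with $C_0$ gives exactly $B_1N=NB_1$, $B_4N'=N'B_4$, and $NB_3=z(z')^{-1}B_3N'$, $N'B_2=z'z^{-1}B_2N$. With your $X=(u-z)b$ one then computes $\Ker(MX)=\{(v_1,v_2):B_2v_1+B_4v_2=0\}$, while $\Ker(XN)=\{(v_1,v_2):NB_1v_1=0\ \text{and}\ (z{-}z')B_2v_1-B_2Nv_1+B_4N'v_2=0\}$. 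Already for $v_1=0$ the first condition reads $B_4v_2=0$ whereas the second is vacuous, so $\Ker(XN)\not\subset\Ker(MX)$; and taking $v_1\notin\Ker N$ with $v_2$ chosen so that $B_4v_2=-B_2v_1$ shows $\Ker(MX)\not\subset\Ker(XN)$. Thus neither your primary route nor your ``inclusion-only'' fallback goes through. The final fallback of perturbing $X$ while keeping $M=XY$ is empty here, since $Y=b^{-1}$ is invertible.

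The paper also uses Wang's Lemma, but applies it to the pair $(N,N')$ of nilpotent parts, with $X=B_3$. The kernel hypothesis $\Ker(NB_3)=\Ker(B_3N')$ then comes for free from $NB_3=z(z')^{-1}B_3N'$. What you are missing is the factorization $N=XY$, and this is where $p(a)=0$ enters beyond the Commutation Lemma: expanding $p(ub)=0$ in block form and using $[B_1,N]=0$ yields $B_1-y_1I=\dfrac{x_1-x_2}{z(z-z')}\,N$, hence $q(B_1)$ is a nonzero scalar multiple of $N$; reading the upper-left block of $q(B)=0$ as $q(B_1)+B_3B_2=0$ then gives $N=B_3\cdot(\text{scalar}\cdot B_2)$. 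Your outline never invokes $p(a)=0$ except through $C_0$, and that relation is precisely what supplies the needed factorization.
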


\begin{proof}
Let us write $p=t^2-\lambda t+\alpha$ and $q=t^2-\mu t+\beta$.
Denote by $V$ the domain of $u$, and
by $r$ and $s$ the respective dimensions of $\Ker(u-z\, \id_V)^2$ and $\Ker(u-z'\, \id_V)^2$.
In some basis of $V$, the endomorphism $u$ is represented by the matrix
$$M=\begin{bmatrix}
z I_r-N & 0_{r \times s} \\
0_{s \times r} & z'I_s+N'
\end{bmatrix}$$
for some matrices $N \in \Mat_r(\F)$ and $N' \in \Mat_s(\F)$ such that $N^2=0$ and $(N')^2=0$.
Note that $n_2(u,z)=n_2(N,0)$ and $n_1(u,z')=n_1(N',0)$. We shall prove that Lemma \ref{WangLemma} applies to the pair
$(N,N')$ with $k=1$.

Since $u$ is a $(p,q)$-quotient, there are matrices $A$ and $B$ of $\GL_{r+s}(\F)$ such that $p(A)=0$, $q(B)=0$ and $M=AB^{-1}$.
Put differently, we have $q(B)=0$ and $p(MB)=0$. We start by analyzing $B$. First of all, we write $B$ as a block-matrix along the same pattern as $M$:
$$B=\begin{bmatrix}
B_1 & B_3 \\
B_2 & B_4
\end{bmatrix}.$$
By the Commutation Lemma (see the remark at the start of Section \ref{q-exceptionalsectionI}), the matrix $B$ commutes with the matrix
$$(M-z I)(I-z'M^{-1})=\begin{bmatrix}
(z'-z)z^{-1} N & 0_{r \times s} \\
0_{s \times r} & (z'-z) (z')^{-1} N'
\end{bmatrix}.$$
Since $z \neq z'$, it follows that $B_1$ commutes with $N$ and
$z^{-1}NB_3=(z')^{-1} B_3 N'$, the latter of which leads to $\Ker(NB_3)=\Ker(B_3N')$.

Next, we know that $p(MB)=0$, which reads $(MB)^2-\lambda MB+\alpha I=0$.
Multiplying by $(MB)^{-1}$, this yields $MB-\lambda I+\alpha B^{-1}M^{-1}=0$.
Yet, $B^{-1}=-\beta^{-1} B+\mu \beta^{-1} I$, whence
$$MB-\alpha \beta^{-1} BM^{-1}=\lambda I-\mu \alpha \beta^{-1} M^{-1}.$$
Extracting the upper-left block in this identity and using the fact that $B_1$ commutes with $N$, we obtain
$$\bigl((z I-N)-\alpha \beta^{-1} (zI-N)^{-1}\bigr)B_1=\lambda I-\mu \alpha \beta^{-1}(z I-N)^{-1}$$
and hence
$$\bigl((z I-N)-\alpha \beta^{-1} (zI-N)^{-1}\bigr)(B_1-y_1I)=-y_1 (zI-N)+\lambda I-\alpha \beta^{-1} y_2 (zI-N)^{-1}.$$
Multiplying by $zI-N$, we obtain
$$\bigl((z I-N)^2-\alpha \beta^{-1}I\bigr)(B_1-y_1I)=-y_1 (zI-N)^2+\lambda (zI-N)-\alpha (y_1)^{-1} I,$$
that is
$$\bigl(z(z-z') I-2zN+N^2\bigr)(B_1-y_1I)=-(y_1)^{-1}\bigl(y_1(zI-N)-x_1 I\bigr)\bigl(y_1(zI-N)-x_2 I\bigr),$$
and finally
$$\bigl(z(z-z') I-2zN+N^2\bigr)(B_1-y_1I)=N\bigl((x_1-x_2)I -y_1 N\bigr).$$
Here, $N^2=0$, and since $z \neq z'$ and $z \neq 0$, we deduce that
$$B_1-y_1 I=(x_1-x_2)N\bigl(z(z-z') I-2zN\bigr)^{-1}=\frac{x_1-x_2}{z(z-z')} N.$$
Hence,
$$B_1-y_2I=(y_1-y_2)I+\frac{x_1-x_2}{z(z-z')} N$$
and finally
$$q(B_1)=(B_1-y_1I)(B_1-y_2I)=\frac{(x_1-x_2)(y_1-y_2)}{z(z-z')} N.$$
However, the upper-left block of $q(B)$ equals $B_3B_2+q(B_1)$, and it follows that
$$N=\frac{z(z'-z)}{(x_1-x_2)(y_1-y_2)}\,B_3B_2=B_3 \bigl(z(z'-z)(x_1-x_2)^{-1} B_2\bigr).$$
Remembering that $\Ker (NB_3)=\Ker (B_3N')$, we deduce from Lemma \ref{WangLemma} that $n_2(N,0) \leq n_1(N',0)$,
i.e.\ $n_2(M,z) \leq n_1(M,z')$.
\end{proof}

\begin{lemma}\label{2intertwinedSpecialLemma}
Let $p$ and $q$ be split monic polynomials of $\F[t]$ with degree $2$ such that $p(0)q(0) \neq 0$, and assume that $q$ has a double root and that $p$ has simple roots. Write $p=(t-x_1)(t-x_2)$ and $q=(t-y)^2$ and set $z:=x_1 y^{-1}$ and $z':=x_2 y^{-1}$.
Let $u$ be an endomorphism of a finite-dimensional vector space, and assume that $u$ is annihilated by $(t-z)^3(t-z')^3$
and that it is a $(p,q)$-quotient.
Then,
$$n_3(u,z) \leq n_1(u,z').$$
\end{lemma}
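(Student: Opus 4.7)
The plan closely parallels that of Lemma~\ref{1intertwinedSpecialLemma}, adapted to two new features: the annihilator of $u$ now involves cubes of $(t-z)$ and $(t-z')$, and $q=(t-y)^2$ has a double root. The goal is to apply Wang's Lemma (Lemma~\ref{WangLemma}) at level $k=2$ to obtain $n_3(u,z)\leq n_1(u,z')$.

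First, I would decompose $V=\Ker(u-z\,\id_V)^3\oplus\Ker(u-z'\,\id_V)^3$ and choose a basis in which $u$ is represented by
$$M=\begin{bmatrix} zI_r-N & 0 \\ 0 & z'I_s+N' \end{bmatrix},\qquad N^3=0,\ (N')^3=0,$$
so that $n_3(u,z)=n_3(N,0)$ and $n_1(u,z')=n_1(N',0)$. Writing $u=AB^{-1}$ with $p(A)=q(B)=0$ and decomposing $B=\bigl[\begin{smallmatrix} B_1 & B_3 \\ B_2 & B_4 \end{smallmatrix}\bigr]$ accordingly, the Commutation Lemma gives that $B$ commutes with $C:=(M-zI)(I-z'M^{-1})$. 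A direct expansion (using $N^3=0$ and $(N')^3=0$) shows that $C$ is block-diagonal, with factorizations $C_{11}=NR$ and $C_{22}=N'R'$, where $R:=z^{-1}(z'-z)I+z'z^{-2}N$ and $R':=(z'-z)(z')^{-1}I+z(z')^{-2}N'$ are invertible (each is a nonzero scalar multiple of the identity plus a nilpotent). The diagonal identity $B_1C_{11}=C_{11}B_1$, combined with a short argument, then forces $B_1$ to commute with $N$.

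Next, I would exploit $p(MB)=0$, which, using $B^{-1}=2y^{-1}I-y^{-2}B$ (from $(B-yI)^2=0$), rewrites as
$$MB-zz'\,BM^{-1}=(x_1+x_2)I-2yzz'\,M^{-1}.$$
Extracting the upper-left block and using $B_1N=NB_1$ yields $PB_1=z(x_1-x_2)I-(x_1+x_2)N$, where $P:=(zI-N)^2-zz'I=z(z-z')I-2zN+N^2\in\F[N]$ is invertible since $P(0)=z(z-z')\neq 0$. Computing $P(B_1-yI)=PB_1-yP$ and simplifying via $yz=x_1$ and $y(z-z')=x_1-x_2$ yields $B_1-yI=P^{-1}N\bigl((x_1-x_2)I-yN\bigr)$, and squaring (with $N^3=0$) gives $(B_1-yI)^2=(x_1-x_2)^2P^{-2}N^2$. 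Since the upper-left block of $(B-yI)^2=0$ reads $(B_1-yI)^2+B_3B_2=0$, I obtain the factorization
$$N^2=XY,\quad X:=-(x_1-x_2)^{-2}P^2B_3,\quad Y:=B_2,$$
which is exactly the input required for Wang's Lemma with $k=2$.

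The main obstacle is verifying Wang's kernel hypothesis, which is more delicate here than in Lemma~\ref{1intertwinedSpecialLemma}. The off-diagonal commutation $C_{11}B_3=B_3C_{22}$ rewrites as $NRB_3=B_3N'R'$, yielding only a bijection between $\Ker(NB_3)$ and $\Ker(B_3N')$ (via the invertible factor $R'$), not the set equality $\Ker(NX)=\Ker(XN')$. However, inspecting the proof of Wang's Lemma shows that only the dimension bound $\dim\Ker(NX)\leq\dim\Ker X+\dim\Ker N'$ is actually needed. This bound follows at once from the chain
$$\dim\Ker(NX)=\dim\Ker(NB_3)=\dim\Ker(B_3N')\leq\dim\Ker B_3+\dim\Ker N'=\dim\Ker X+\dim\Ker N',$$
where the middle inequality is the standard Frobenius-type bound used in Wang's argument itself. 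This will yield $n_3(u,z)\leq n_1(u,z')$, completing the proof.
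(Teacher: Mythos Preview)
Your proof is correct and follows the same overall strategy as the paper: the same block decomposition, the same use of the Commutation Lemma to show $B_1$ commutes with $N$, the same computation leading to $P(B_1-yI)=N\bigl((x_1-x_2)I-yN\bigr)$ and then $q(B_1)=(x_1-x_2)^2P^{-2}N^2$, and the same appeal to Wang's Lemma with $k=2$.

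The one substantive difference lies precisely in the step you flag as the main obstacle. The paper does \emph{not} weaken Wang's hypothesis; instead it proves the set equality $\Ker(NB_3)=\Ker(B_3N')$ outright. From the off-diagonal commutation one has the relation between $(z'-z)z^{-1}N+z'z^{-2}N^2$ and $(z'-z)(z')^{-1}N'+z(z')^{-2}(N')^2$; the paper then inverts the polynomial $\alpha t+\beta t^2$ modulo $t^3$ (taking $R(t)=\alpha^{-1}t-\beta\alpha^{-3}t^2$) to produce an identity of the form $(\gamma I+\delta N)\,NB_3=B_3N'$ with $\gamma\neq 0$, from which the set equality follows immediately. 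Your route---keeping only the dimension equality $\dim\Ker(NB_3)=\dim\Ker(B_3N')$ and reopening Wang's proof to observe that the bound $\dim\Ker(MX)\leq\dim\Ker X+\dim\Ker N'$ is all that is used---is equally valid and arguably lighter, at the cost of not treating Wang's Lemma as a black box. Either way the conclusion $n_3(u,z)\leq n_1(u,z')$ follows.
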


\begin{proof}
Denote by $V$ the domain of $u$, and by $r$ and $s$ the respective dimensions of $\Ker(u-z\, \id_V)^3$ and $\Ker(u-z'\, \id_V)^3$.
In some basis of $V$, the endomorphism $u$ is represented by the matrix
$$M=\begin{bmatrix}
z I_r-N & 0_{r \times s} \\
0_{s \times r} & z' I_s+N'
\end{bmatrix}$$
for some matrices $N \in \Mat_r(\F)$ and $N' \in \Mat_s(\F)$ such that $N^3=0$ and $(N')^3=0$.
Note that $n_3(u,z)=n_3(N,0)$ and $n_1(u,z')=n_1(N',0)$. We shall prove that Lemma
\ref{WangLemma} applies to the pair $(N,N')$ and to $k:=2$.

Since $u$ is a $(p,q)$-quotient, there are matrices $A$ and $B$ of $\GL_{r+s}(\F)$ such that $p(A)=0$, $q(B)=0$ and $M=AB^{-1}$.
Put differently, we have $q(B)=0$ and $p(MB)=0$. Next, we analyze $B$. First of all, we write $B$ as a block-matrix along the same pattern as $M$:
$$B=\begin{bmatrix}
B_1 & B_3 \\
B_2 & B_4
\end{bmatrix}.$$
By the Commutation Lemma, $B$ commutes with
$$(M-z I)(I-z'M^{-1})=\begin{bmatrix}
(z'-z)z^{-1} N+z' z^{-2} N^2 & 0_{r \times s} \\
0_{s \times r} & (z'-z) (z')^{-1} N'+z (z')^{-2} (N')^2
\end{bmatrix}.$$
In particular, $B_1$ commutes with $(z'-z)z^{-1} N+z' z^{-2} N^2$. Yet, since
$(z'-z)z^{-1} \neq 0$ and $N$ is nilpotent, the matrix $N$ is a polynomial in $(z'-z)z^{-1} N+z' z^{-2} N^2$,
and we deduce that $B_1$ commutes with $N$.

Next, we also obtain from the above commutation that
$$\bigl((z'-z) z^{-1}N+z' z^{-2} N^2\bigr) B_3=B_3\bigl((z'-z) (z')^{-1}N'+z (z')^{-2} (N')^2\bigr),$$
and we shall prove that this leads to $\Ker(NB_3)=\Ker(B_3N')$.

Set $\alpha:=(z'-z) (z')^{-1}$ and $\beta:=z (z')^{-2}$.
Then, as $(N')^3=0$ the polynomial $R:=\alpha^{-1} t- \beta \alpha^{-3} t^2$ satisfies
$R(\alpha N'+\beta (N')^2)=N'$. On the other hand, we also have
$$R\bigl((z'-z) z^{-1}N+z' z^{-2} N^2\bigr)=\gamma N+\delta N^2$$
for some $\gamma \in \F \setminus \{0\}$ and some $\delta \in \F$.
It follows that
$$(\gamma I+\delta N) NB_3=B_3 N'.$$
Finally, since $\gamma \neq 0$ and $N$ is nilpotent, the matrix $\gamma I+\delta N$ is invertible, and we conclude that
$$\Ker(NB_3)=\Ker(B_3N').$$

Next, we express $q(B_1)$ as a function of $N$.
With exactly the same computation as in the proof of Lemma \ref{1intertwinedSpecialLemma}, we arrive at
$$\bigl(z(z-z') I-2zN+N^2\bigr)(B_1-yI)=N\bigl((x_1-x_2)I -y N\bigr).$$
Taking squares on both sides and using the fact that $B_1$ commutes with $N$, together with $N^3=0$, we obtain
$$\bigl(z(z-z') I-2zN+N^2\bigr)^2q(B_1)=N^2\bigl((x_1-x_2)I -y N\bigr)^2=(x_1-x_2)^2\,N^2.$$
Since $z(z-z') \neq 0$, this yields
$$q(B_1)=(x_1-x_2)^2\,N^2\bigl(z(z-z') I-2zN+N^2\bigr)^{-2}
=\frac{1}{z^2} \frac{(x_1-x_2)^2}{(z-z')^2} N^2=y^4 (x_1)^{-2} N^2.$$
Since the upper-left block of $q(B)$ equals $q(B_1)+B_3B_2$, we deduce that
$$N^2=-x_1^2 y^{-4} B_3B_2=B_3\bigl(-x_1^2y^{-4} B_2\bigr).$$
Remembering that $B_3N'$ and $NB_3$ have the same kernel, we deduce from Lemma \ref{WangLemma} that
$n_3(N,0) \leq n_1(N',0)$, that is $n_3(M,z) \leq n_1(M,z')$.
\end{proof}

\begin{prop}\label{1intertwinedGeneralProp}
Let $p$ and $q$ be split monic polynomials with degree $2$ over $\F$ such that $p(0)q(0) \neq 0$, both with simple roots.
Write $p(t)=(t-x_1)(t-x_2)$ and $q(t)=(t-y_1)(t-y_2)$.
Assume that $x_1 y_1^{-1} \neq x_2 y_2^{-1}$. Let $u$ be an endomorphism of a finite-dimensional vector space $V$, and assume that it is a $(p,q)$-quotient. Then,
the sequences $\bigl(n_k(u,x_1y_1^{-1})\bigr)_{k \geq 1}$ and $\bigl(n_k(u,x_2y_2^{-1})\bigr)_{k \geq 1}$
are $1$-intertwined.
\end{prop}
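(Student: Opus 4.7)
The plan is to reduce to the base case handled by Lemma \ref{1intertwinedSpecialLemma} through an invariant-subspace construction that isolates the Jordan structure of $u$ at a prescribed depth. Throughout, I write $a,b$ for automorphisms of $V$ with $p(a)=q(b)=0$ and $u=ab^{-1}$, and set $z:=x_1y_1^{-1}$, $z':=x_2y_2^{-1}$, $\delta:=p(0)q(0)^{-1}$, and $v:=u+\delta u^{-1}$.

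The key algebraic identity behind the reduction is that $zz'=(x_1x_2)(y_1y_2)^{-1}=\delta$, so a direct computation yields
$$(u-z\,\id_V)(u-z'\,\id_V)=u\bigl(v-(z+z')\,\id_V\bigr).$$
Set $X:=v-(z+z')\,\id_V$. By the Commutation Lemma (Lemma \ref{qcommutelemma}), both $a$ and $b$ commute with $v$, hence with every power of $X$. Since $u$ is invertible, $\Ker((u-z)^{k+1}(u-z')^{k+1})=\Ker(X^{k+1})$ for every $k\geq 0$, so all such subspaces are stable under $a$ and $b$.

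Fix $k\geq 1$ and set $V_k:=\Ker(X^{k+1})$ and $V_{k-1}:=\Ker(X^k)$. Both are finite-dimensional and stable under the injective maps $a$ and $b$, so $a$ and $b$ restrict to automorphisms of each $V_j$, and descend to automorphisms $\bar a,\bar b$ of $\bar V:=V_k/V_{k-1}$ with $p(\bar a)=0=q(\bar b)$. Thus $\bar u:=\bar a\bar b^{-1}$ is a $(p,q)$-quotient on $\bar V$; moreover $\bar u$ is annihilated by $(t-z)^2(t-z')^2$ because $((u-z)(u-z'))^2V_k=u^2X^2V_k\subseteq V_{k-1}$. Lemma \ref{1intertwinedSpecialLemma} then yields
$$n_2(\bar u,z)\leq n_1(\bar u,z').$$
To translate this back to $u$, decompose $V$ into generalized eigenspaces of $u$. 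On a generalized eigenspace attached to an eigenvalue outside $\{z,z'\}$, both $u-z$ and $u-z'$ are invertible, so $V_k$ meets it trivially; only the components in $V_z\oplus V_{z'}$ contribute. Inside the $z$-component, a Jordan block of size $m$ of $u|_{V_z}$ contributes to $V_k/V_{k-1}$ a quotient of dimension $\min(k+1,m)-\min(k-1,m)$, which is $0$ if $m\leq k-1$, a single size-$1$ block of $\bar u-z$ if $m=k$, and a single size-$2$ block if $m\geq k+1$. Summing gives $n_1(\bar u,z)=n_k(u,z)$ and $n_2(\bar u,z)=n_{k+1}(u,z)$, and symmetrically for $z'$. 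The above inequality therefore reads $n_{k+1}(u,z)\leq n_k(u,z')$. Interchanging the labels $(x_1,y_1)\leftrightarrow(x_2,y_2)$ (which leaves $p,q$ unchanged but swaps $z$ and $z'$) and repeating the argument yields $n_{k+1}(u,z')\leq n_k(u,z)$, so $(n_k(u,z))_{k\geq 1}$ and $(n_k(u,z'))_{k\geq 1}$ are $1$-intertwined.

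The main obstacle is identifying a pair of subspaces stable under both $a$ and $b$ that, in the quotient, produces exactly the Jordan data appearing in Lemma \ref{1intertwinedSpecialLemma}. Because $a$ and $b$ do not commute with $u$ but do commute with $v$, the factorization $(u-z)(u-z')=uX$ is what makes the Commutation Lemma usable here; the remaining Jordan-block bookkeeping is routine provided one keeps careful track of the indices $\min(k\pm 1,m)$.
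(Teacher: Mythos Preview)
Your approach is the same as the paper's: pass to a quotient of two kernels of powers of $X:=v-(z+z')\id_V$ (which equals the paper's $w=(u-z\id_V)(\id_V-z'u^{-1})$), observe that $a$ and $b$ descend, and apply Lemma \ref{1intertwinedSpecialLemma} on the quotient.

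There is, however, an indexing slip that breaks the argument as written. You set $V_k=\Ker(X^{k+1})$ and $V_{k-1}=\Ker(X^{k})$, so $\bar V=\Ker(X^{k+1})/\Ker(X^{k})$. On this \emph{one-layer} quotient, $X$ already acts as zero (since $X\Ker X^{k+1}\subseteq\Ker X^{k}$), hence in fact $(\bar u-z)(\bar u-z')=0$ on $\bar V$; on the $z$-component this forces $\bar u-z=0$, so $n_2(\bar u,z)=0$ identically and the inequality from Lemma \ref{1intertwinedSpecialLemma} becomes vacuous. Your subsequent dimension count $\min(k+1,m)-\min(k-1,m)$ and the case analysis (``size-$2$ block if $m\geq k+1$'') are the computations for the \emph{two-layer} quotient $\Ker(X^{k+1})/\Ker(X^{k-1})$, not for the quotient you actually defined.

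The fix is simply to take $\bar V:=\Ker(X^{k+1})/\Ker(X^{k-1})$. Then $(\bar u-z)^2(\bar u-z')^2=0$ genuinely holds while $(\bar u-z)(\bar u-z')$ need not vanish, your Jordan bookkeeping correctly gives $n_2(\bar u,z)=n_{k+1}(u,z)$ and $n_1(\bar u,z')=n_k(u,z')$, and Lemma \ref{1intertwinedSpecialLemma} yields $n_{k+1}(u,z)\leq n_k(u,z')$. This is exactly the paper's construction: the paper takes $E:=\Ker w^{k+2}/\Ker w^{k}$ for $k\geq 0$, which is your corrected quotient up to an index shift.
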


\begin{proof}
Let $a,b$ be automorphisms of $V$ such that $p(a)=0$, $q(b)=0$ and $u=ab^{-1}$.
Set $z:=x_1y_1^{-1}$ and $z':=x_2y_2^{-1}$. Note that $zz'=p(0)q(0)^{-1}$.
By the Commutation Lemma, we deduce that both $a$ and $b$ commute with $w:=(u-z\id_V)(\id_V-z' u^{-1})$.

Let $k \in \N$. We have $w^k=u^{-k}(u-z\id_V)^k (u-z'\id_V)^k$, and since $u$ is invertible it follows that
$$\Ker w^k=\Ker\bigl((u-z\id_V)^k (u-z'\id_V)^k\bigr).$$
Since $z \neq z'$, we deduce that
$$\Ker w^k=\Ker(u-z \id_V)^k\oplus \Ker(u-z' \id_V)^k.$$
Moreover, as $a$, $b$ and $u$ all stabilize $\Ker w^k$ and $\Ker w^{k+2}$, they induce respective endomorphisms
$a_k,b_k,u_k$ of the quotient space $E:=\Ker w^{k+2}/\Ker w^k$. Noting that $u_k=a_k (b_k)^{-1}$, we find that $u_k$ is a $(p,q)$-quotient.
However, $\Ker (u_k-z\id_E)$ is naturally isomorphic to $\Ker (u-z\id_V)^{k+1}/\Ker (u-z \id_V)^k$ and
likewise $\Ker (u_k-z\id_E)^2$ is naturally isomorphic to $\Ker (u-z\id_V)^{k+2}/\Ker (u-z \id_V)^k$; ditto with $z'$ instead of $z$.

It follows that $n_2(u_k,z)=n_{k+2}(u,z)$ and $n_1(u_k,z')=n_{k+1}(u,z')$.
Lemma \ref{1intertwinedSpecialLemma} yields that $n_2(u_k,z) \leq n_1(u_k,z')$, and hence $n_{k+2}(u,z) \leq n_{k+1}(u,z')$.
Likewise, we obtain $n_{k+2}(u,z') \leq n_{k+1}(u,z)$, which completes the proof.
\end{proof}

\begin{prop}\label{2intertwinedGeneralProp}
Let $p$ and $q$ be split monic polynomials with degree $2$ over $\F$ such that $p(0)q(0) \neq 0$.
Assume that $p$ has simple roots and that $q$ has a double root, and write
$p(t)=(t-x_1)(t-x_2)$ and $q(t)=(t-y)^2$.
Let $u$ be an endomorphism of a finite-dimensional vector space $V$, and assume that it is a $(p,q)$-quotient.
Then, the sequences $\bigl(n_k(u,x_1y^{-1})\bigr)_{k \geq 1}$ and $\bigl(n_k(u,x_2y^{-1})\bigr)_{k \geq 1}$
are $2$-intertwined.
\end{prop}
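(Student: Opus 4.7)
The plan is to mirror the reduction used in Proposition \ref{1intertwinedGeneralProp}, but apply Lemma \ref{2intertwinedSpecialLemma} instead of Lemma \ref{1intertwinedSpecialLemma} to get the stronger conclusion of $2$-intertwining. First I would pick automorphisms $a,b$ of $V$ with $p(a)=q(b)=0$ and $u=ab^{-1}$, set $z:=x_1y^{-1}$ and $z':=x_2y^{-1}$ (so $z\neq z'$ because $x_1\neq x_2$), and invoke the Commutation Lemma to see that both $a$ and $b$ commute with $w:=(u-z\,\id_V)(\id_V-z' u^{-1})$. As in the proof of Proposition \ref{1intertwinedGeneralProp}, since $u$ is an automorphism, $w^k=u^{-k}(u-z\,\id_V)^k(u-z'\,\id_V)^k$, so $\Ker w^k=\Ker(u-z\,\id_V)^k\oplus\Ker(u-z'\,\id_V)^k$ for every $k\in\N$.

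Next, for each $k\in\N$, since $a$ and $b$ stabilize $\Ker w^k$ and $\Ker w^{k+3}$, they induce automorphisms $a_k$ and $b_k$ of the quotient space $E_k:=\Ker w^{k+3}/\Ker w^k$, and the endomorphism $u_k:=a_k b_k^{-1}$ of $E_k$ is a $(p,q)$-quotient with $p(a_k)=q(b_k)=0$. The direct-sum decomposition of $\Ker w^{k+3}$ descends to $E_k$, and since $z\neq z'$ the restrictions of $u_k-z\,\id_{E_k}$ and $u_k-z'\,\id_{E_k}$ are each nilpotent of order at most $3$ on the respective summands, so $u_k$ is annihilated by $(t-z)^3(t-z')^3$. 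A direct check on the first summand of the decomposition then gives
\[
n_j(u_k,z)=n_{k+j}(u,z)\quad\text{and}\quad n_j(u_k,z')=n_{k+j}(u,z')\qquad(j=1,2,3).
\]

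Now I would apply Lemma \ref{2intertwinedSpecialLemma} to $u_k$ to obtain $n_3(u_k,z)\leq n_1(u_k,z')$, which translates into $n_{k+3}(u,z)\leq n_{k+1}(u,z')$. Since the roles of $x_1$ and $x_2$ are symmetric in the hypothesis, the same lemma applied with $(z,z')$ swapped yields $n_{k+3}(u,z')\leq n_{k+1}(u,z)$. Setting $m:=k+1$ and letting $m$ range over $\N^*$, these inequalities are exactly the $2$-intertwining conditions
\[
n_{m+2}(u,z)\leq n_m(u,z')\quad\text{and}\quad n_{m+2}(u,z')\leq n_m(u,z),
\]
which completes the argument. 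The only nontrivial input is Lemma \ref{2intertwinedSpecialLemma} (already proved), so there is no real obstacle here; the main thing to be careful about is the bookkeeping in step identifying $n_j(u_k,\cdot)$ with shifted values of $n_{\cdot}(u,\cdot)$, which relies crucially on the splitting $\Ker w^m=\Ker(u-z\,\id_V)^m\oplus\Ker(u-z'\,\id_V)^m$ afforded by $z\neq z'$.
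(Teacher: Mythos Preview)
Your proposal is correct and follows essentially the same approach as the paper: pass to the quotient $\Ker w^{k+3}/\Ker w^k$ using the Commutation Lemma, verify that the induced endomorphism is a $(p,q)$-quotient annihilated by $(t-z)^3(t-z')^3$, and apply Lemma \ref{2intertwinedSpecialLemma} twice (once for each ordering of $z,z'$). You are slightly more explicit than the paper in checking the annihilation hypothesis and the bookkeeping identities $n_j(u_k,\cdot)=n_{k+j}(u,\cdot)$, but the argument is the same.
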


\begin{proof}
Let $a,b$ be automorphisms of $V$ such that $p(a)=0$, $q(b)=0$ and $u=ab^{-1}$.
Set $z:=x_1y^{-1}$ and $z':=x_2y^{-1}$, and note that $zz'=p(0)q(0)^{-1}$.
By the Commutation Lemma, we deduce that both $a$ and $b$ commute with $w:=(u-z\id_V)(\id_V-z'u^{-1})$.

Let $k \in \N$. Then, $a$, $b$ and $u$ all stabilize $\Ker w^k$ and $\Ker w^{k+3}$, and hence they induce endomorphisms
$a_k$, $b_k$ and $u_k$ of the quotient space $E:=\Ker w^{k+3}/\Ker w^k$. Noting that $u_k=a_k (b_k)^{-1}$, we find that $u_k$ is a $(p,q)$-quotient.
As in the proof of Proposition \ref{1intertwinedGeneralProp}, it is easily checked that
$n_3(u_k,z)=n_{k+3}(u,z)$ and $n_1(u_k,z')=n_{k+1}(u,z')$, and hence by Lemma \ref{2intertwinedSpecialLemma}
we obtain $n_{k+3}(u,z) \leq n_{k+1}(u,z')$. Symmetrically, we find $n_{k+3}(u,z') \leq n_{k+1}(u,z)$, which completes the proof.
\end{proof}

\subsubsection{Proof of Theorem \ref{qtheoPandQsplitSoleRoot}}

Here, we assume that $p=(t-x)^2$ and $q=(t-y)^2$, where $x$ and $y$ are nonzero scalars in $\F$.
Hence, $G_{p,q}(t)=(t-xy^{-1})^4$.
Let $u$ be an endomorphism of a finite-dimensional vector space $V$ over $\F$.
If $u$ is a q-exceptional $(p,q)$-quotient, then we know that the characteristic polynomial of $u$
is a power of $t-xy^{-1}$.
Conversely, assume that the characteristic polynomial of $u$ is a power of $t-xy^{-1}$.
Then, all the invariant factors of $u$ are powers of $t-xy^{-1}$, and hence in some basis of $V$
the endomorphism $u$ is represented by a block-diagonal matrix in which every diagonal block equals
$C\bigl((t-xy^{-1})^n\bigr)$ for some positive integer $n$. Yet, for every $n \in \N^*$ we know from Lemma \ref{splitcompanionlemma1}
that $C\bigl((t-xy^{-1})^n\bigr)$ is a $(p,q)$-quotient, and hence so is $u$.

\subsubsection{Proof of Theorem \ref{qtheoPandQsplitSingleroots}}

Let $p$ and $q$ be split monic polynomials with degree $2$ in $\F[t]$, both
with simple roots only, such that $p(0)q(0) \neq 0$. Set $\delta:=p(0)q(0)^{-1}$.
Let $u$ be an endomorphism of a finite-dimensional vector space over $\F$.
Note that
$$G_{p,q}=\prod_{(x,y)\in \Root(p) \times \Root(q)}(t-xy^{-1}).$$

Assume that condition (i) holds for $u$. Then, we know that $u$ is triangularizable and all its eigenvalues belong to $\Root(p)\Root(q)^{-1}$.
Moreover, for all $z \in \Root(p)\Root(q)^{-1}$ such that $z \neq \delta z^{-1}$,
Proposition \ref{1intertwinedGeneralProp} yields that the sequences
$\bigl(n_k(u,z)\bigr)_{k \geq 1}$ and $\bigl(n_k(u,z')\bigr)_{k \geq 1}$ are $1$-intertwined. Hence, condition (ii) holds.

Assume now that condition (ii) holds. Then, every invariant factor of $u$ is split with all its roots in $\Root(p)\Root(q)^{-1}$.
Denote now by $r_1,\dots,r_k,\dots$ the invariant factors of $u$ and, for $z \in \Root(p)\Root(q)^{-1}$ and $k \in \N^*$, denote by $m_k(z)$
the multiplicity of $z$ as a root of $r_k$.
Let $z \in \Root(p)\Root(q)^{-1}$ be such that $z \neq \delta z^{-1}$, and assume that there exists a positive integer
$k$ such that $m_k(z)>m_k(\delta z^{-1})+1$. Then, by setting $l:=m_k(\delta z^{-1})+2$, we see that
$n_{l-1}(u,\delta z^{-1})<k \leq n_l(u,z)$, which contradicts condition (ii). Hence, $m_k(z) \leq m_k(\delta z^{-1})+1$.
Symmetrically, we find $m_k(\delta z^{-1}) \leq m_k(z)+1$, and hence $\big|m_k(z)-m_k(\delta z^{-1})\big| \leq 1$.
Hence, condition (iii) holds.

When we have pairwise coprime monic polynomials $s_1,\dots,s_k$, it is folklore that $C(s_1\cdots s_k) \simeq C(s_1) \oplus \cdots \oplus C(s_k)$,
and hence condition (iii) implies condition (iv).
With the help of the same remark, one deduces from Lemma \ref{splitcompanionlemma1} that condition (iv) implies condition (i).

\subsubsection{Proof of Theorem \ref{qtheoPandQsplitSoleRootforQ}}

The proof is very similar to the one of Theorem \ref{qtheoPandQsplitSingleroots}.
Let $p$ and $q$ be split monic polynomials with degree $2$ in $\F[t]$, with $p(0)q(0) \neq 0$.
Assume that $p$ has simple roots $x_1$ and $x_2$ and that $q$ has a double root $y$.

Let $u$ be an endomorphism of a finite-dimensional vector space over $\F$.
Note that $G_{p,q}=(t-x_1y^{-1})^2 (t-x_2 y^{-1})^2$.

Assume that condition (i) holds for $u$. Then, we know that $u$ is triangularizable and all its eigenvalues belong to $\{x_1y^{-1},x_2 y^{-1}\}$.
Moreover, Proposition \ref{2intertwinedGeneralProp} yields that the sequences
$\bigl(n_k(u,x_1y^{-1})\bigr)_{k \geq 1}$ and $\bigl(n_k(u,x_2y^{-1})\bigr)_{k \geq 1}$ are $2$-intertwined. Hence, condition (ii) holds.

Assume now that condition (ii) holds. Then, every invariant factor of $u$ is split with all its roots in $\{x_1y^{-1},x_2 y^{-1}\}$.
Denote now by $r_1,\dots,r_k,\dots$ the invariant factors of $u$ and, for $k \in \N^*$, write $r_k(t)=(t-x_1y^{-1})^{a_k} (t-x_2y^{-1})^{b_k}$
for some non-negative integers $a_k$ and $b_k$. If there existed a positive integer
$k$ such that $a_k>b_k+2$ then $l:=b_k+3$ would satisfy
$n_{l-2}(u,x_2y^{-1})<k \leq n_l(u,x_1y^{-1})$, which would contradict condition (ii). Hence, $a_k \leq b_k+2$, and likewise $b_k \leq a_k+2$.
Hence, condition (iii) holds.

Condition (iii) obviously implies condition (iv). Finally, for all $n \in \N$ and all $\epsilon \in \{0,1,2\}$, both matrices $C\bigl((t-x_1y^{-1})^{n+\epsilon}\bigr) \oplus C\bigl((t-x_2y^{-1})^{n}\bigr)$ and $C\bigl((t-x_2y^{-1})^{n+\epsilon}\bigr) \oplus C\bigl((t-x_1y^{-1})^{n}\bigr)$ are $(p,q)$-quotients
(by Lemma \ref{splitcompanionlemma1} if $\epsilon\neq 2$, and by Lemma \ref{splitcompanionlemma2} otherwise).
Hence, condition (iv) implies condition (i).

\subsection{Exceptional $(p,q)$-quotients (II): When $p$ is irreducible but $q$ is not}\label{q-exceptionalsectionII}

In this section and the following ones, we fix two monic polynomials $p$ and $q$ of degree $2$ in $\F[t]$, such that $p(0)q(0) \neq 0$.
We set $\delta:=p(0)q(0)^{-1}$.

Here, we assume that $p$ is irreducible but not $q$.
We split $q(t)=(t-y_1)(t-y_2)$ with $y_1$ and $y_2$ in $\F$.
Then, 
$$G_{p,q}(t)=H_{y_1}(p) H_{y_2}(p),$$
the polynomials $H_{y_1}(p)$ and $H_{y_2}(p)$ are monic with degree $2$ in $\F[t]$ and they are irreducible.

Hence:
\begin{itemize}
\item Either $H_{y_1}(p) \neq H_{y_2}(p)$, in which case an endomorphism of
a finite-dimensional vector space over $\F$ is exceptional with respect to $(p,q)$ if and only if
the irreducible monic divisors of its minimal polynomial belong to $\bigl\{H_{y_1}(p),H_{y_2}(p)\bigr\}$;
\item Or $H_{y_1}(p) = H_{y_2}(p)$, in which case an endomorphism of
a finite-dimensional vector space over $\F$ is q-exceptional with respect to $(p,q)$ if and only if
its minimal polynomial is a power of $H_{y_1}(p)$.
\end{itemize}

Note that, given $\lambda \in \F \setminus\{0\}$, one has $H_\lambda(p)=p$ if and only if
either $\lambda=1$, or $\lambda=-1$ and $\tr(p)=0$.
It follows that $H_{y_1}(p)=H_{y_2}(p)$ if and only if one of the following two conditions holds:
\begin{enumerate}[(i)]
\item $y_1=y_2$;
\item $\tr(p)=0$ and $y_2=-y_1$, i.e. $\tr(p)=\tr(q)=0$.
\end{enumerate}
Note that, in the second case, $z=\delta z^{-1}$ for every $z$ in $\Root(p)\Root(q)^{-1}$.

\subsubsection{A common lemma}

\begin{lemma}\label{qcommonQsplitsnotP}
Let $p$ be an irreducible monic polynomial with degree $2$ over $\F$, and let $y_1$ and $y_2$ be nonzero scalars in $\F$.
Set $q:=(t-y_1)(t-y_2)$. \\
Then, for all $n \in \N$, the companion matrices $C\bigl(H_{y_1}(p)^{n}\,H_{y_2}(p)^n\bigr)$ and
$C\bigl(H_{y_1}(p)^{n+1}\,H_{y_2}(p)^n\bigr)$ are $(p,q)$-quotients.
\end{lemma}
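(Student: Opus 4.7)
The plan mirrors the construction used in the sum analogue, Lemma~\ref{commonQsplitsnotP}. Extend $(y_1,y_2)$ into a $2$-periodic sequence $(y_k)_{k\geq 1}$, set $L:=\begin{bmatrix} 0 & 1 \\ 0 & 0 \end{bmatrix}\in\Mat_2(\F)$ (so $L^2=0$), and for each positive integer $s$ define
$$A_s:=\bigoplus_{k=1}^{s} y_k\,C\bigl(H_{y_k}(p)\bigr)\in\Mat_{2s}(\F),$$
together with the block lower-bidiagonal matrix $B_s\in\Mat_{2s}(\F)$ having $y_k I_2$ on the diagonal and $L$ on the first subdiagonal.

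First I would verify that $p(A_s)=0$ and $q(B_s)=0$. The former holds block by block, because $p\bigl(y_k\,C(H_{y_k}(p))\bigr)=y_k^2\,H_{y_k}(p)\bigl(C(H_{y_k}(p))\bigr)=0$. For the latter, the diagonal blocks of $(B_s-y_1 I)(B_s-y_2 I)$ vanish since $y_k\in\{y_1,y_2\}$, the first subdiagonal blocks vanish by the $2$-periodicity identity $(y_k-y_1)+(y_{k+1}-y_2)=0$, and the second subdiagonal blocks vanish because $L^2=0$. Using again $L^2=0$, writing $B_s=D+L_{\mathrm{sub}}$ with $D$ block diagonal yields $B_s^{-1}=D^{-1}-D^{-1}L_{\mathrm{sub}}D^{-1}$, so a direct computation shows that $M_s:=A_s B_s^{-1}$ is block lower bidiagonal, with $k$-th diagonal block $C(H_{y_k}(p))$ and $k$-th subdiagonal block $-y_{k-1}^{-1}\,C(H_{y_k}(p))\,L=-y_{k-1}^{-1}\begin{bmatrix}0 & 0 \\ 0 & 1\end{bmatrix}$. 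Its characteristic polynomial is $\prod_{k=1}^s H_{y_k}(p)$, which equals $H_{y_1}(p)^n H_{y_2}(p)^n$ when $s=2n$ and $H_{y_1}(p)^{n+1} H_{y_2}(p)^n$ when $s=2n+1$.

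The hard part will be to prove that $M_s$ is cyclic, so that $M_s\simeq C\bigl(\prod_k H_{y_k}(p)\bigr)$. The very specific shape of the subdiagonal blocks---rank one, sending the second basis vector of block $k{-}1$ to the second basis vector of block $k$ while killing the first---yields the identities $M_s e_{2k-1}=e_{2k}$ and $M_s e_{2k}\equiv -y_k^{-1}e_{2k+2}\pmod{\mathrm{span}(e_{2k-1},e_{2k})}$. Iterating these relations drives the Krylov orbit of $e_1$ to pick up the even-indexed basis vectors $e_2,e_4,\dots,e_{2s}$ one by one, while subsequent applications of $M_s$, combined with the intra-block action of each $C(H_{y_k}(p))$ on its second basis vector, recover the odd-indexed basis vectors. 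I expect this Krylov bookkeeping to be the technical crux of the argument; it can also be packaged through Proposition~\ref{blockcyclicprop} from the appendix once suitably interpreted for block lower-bidiagonal matrices with distinct companion-matrix diagonal blocks. Once cyclicity of $M_s$ is established, letting $s$ range over $\N^*$ delivers both families $C(H_{y_1}(p)^n H_{y_2}(p)^n)$ and $C(H_{y_1}(p)^{n+1}H_{y_2}(p)^n)$ as $(p,q)$-quotients, which is exactly the content of the lemma.
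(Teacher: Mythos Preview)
Your construction of $A_s$, $B_s$ and $M_s=A_sB_s^{-1}$ is essentially the paper's (the paper defines $B_s$ as the inverse of a bidiagonal matrix, you define it directly; the resulting $M_s$ differ only by harmless nonzero scalar factors on the subdiagonal blocks). The identities $M_s e_{2k-1}=e_{2k}$ and $M_s e_{2k}\equiv -y_k^{-1}e_{2k+2}\pmod{\mathrm{span}(e_{2k-1},e_{2k})}$ are correct, and they do show that $e_1,M_se_1,\dots,M_s^{\,s}e_1$ are linearly independent (leading terms $e_1,e_2,e_4,\dots,e_{2s}$ with respect to the block filtration).

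The gap is the second half of the cyclicity argument. After step $s$ the block filtration is exhausted, and your claim that ``subsequent applications of $M_s$ recover the odd-indexed basis vectors'' is not justified: at step $s+1$ the new Krylov vector involves a nontrivial linear combination of several $e_{2k-1}$'s and of $e_{2s}$ simultaneously (through $C(H_{y_k}(p))\,e_{2k}=-p(0)y_k^{-2}e_{2k-1}+\cdots$), and it is not clear from your sketch why the remaining $s-1$ iterates stay independent. Your appeal to Proposition~\ref{blockcyclicprop} is also off target: that proposition requires all diagonal blocks equal and identity subdiagonals, neither of which holds here.

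The paper avoids the Krylov bookkeeping entirely and instead proves cyclicity by computing $\dim\Ker r(M_s)$ for each monic irreducible factor $r$ of the characteristic polynomial. A direct block computation gives that the first block subdiagonal of $r(M_s)$ consists of invertible $2\times 2$ blocks (when $H_{y_1}(p)=H_{y_2}(p)$ they are scalar multiples of $C(r)$; otherwise one checks invertibility of certain $4\times4$ blocks), whence $\dim\Ker r(M_s)=2$ and there is exactly one elementary invariant per irreducible factor. This requires a case split on whether $H_{y_1}(p)=H_{y_2}(p)$, and in the unequal case a further split on the parity of $s$, but each case is a finite rank computation. If you want to salvage the Krylov route you will need a genuinely new filtration or weight argument for the last $s-1$ steps; otherwise, the rank computation is the path of least resistance.
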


\begin{proof}
Let $s$ be a non-negative integer.
We extend $(y_1,y_2)$ into a $2$-periodical sequence $(y_k)_{k \geq 1}$.
Set $K:=\begin{bmatrix}
0 & 1 \\
0 & 0
\end{bmatrix} \in \Mat_2(\F)$,
$$A_s:=\bigl(y_1\,C(H_{y_1}(p))\bigr) \oplus \cdots \oplus \bigl(y_s\,C(H_{y_s}(p))\bigr) \in \Mat_{2s}(\F)$$
and
$$B_s:=\begin{bmatrix}
y_1^{-1} I_2 & 0_2 & \cdots & \cdots & (0) \\
K & y_2^{-1} I_2 & \ddots & & \vdots \\
0_2 & \ddots & \ddots & \ddots & \vdots \\
\vdots & & \ddots & y_{s-1}^{-1} I_2 & 0_2 \\
(0) & \cdots & 0_2 & K & y_s^{-1} I_2
\end{bmatrix}^{-1} \in \Mat_{2s}(\F).$$
For all $i \in \N^*$, we see that $p$ annihilates $y_i\,C(H_{y_i}(p))$, and hence $p(A_s)=0$.
On the other hand, one checks that $(B_s^{-1}-y_1^{-1} I_{2s})(B_s^{-1}-y_2^{-1} I_{2s})=0$,
which, by multiplying by $y_1y_2 B_s^2$, yields $q(B_s)=0$.

Set $L:=\begin{bmatrix}
0 & 0 \\
0 & 1
\end{bmatrix} \in \Mat_2(\F)$. Then, we see that
$$A_s B_s^{-1}=\begin{bmatrix}
C\bigl(H_{y_1}(p)\bigr) & 0_2 & \cdots & \cdots & (0) \\
y_2 L & C\bigl(H_{y_2}(p)\bigr) & \ddots & & \vdots \\
0_2 & \ddots & \ddots & \ddots & \vdots \\
\vdots & & \ddots & C\bigl(H_{y_{s-1}}(p)\bigr) & 0_2 \\
(0) & \cdots & 0_2 & y_s L &  C\bigl(H_{y_{s}}(p)\bigr)
\end{bmatrix}.$$
The matrix $A_s B_s^{-1}$ is a $(p,q)$-quotient and
its characteristic polynomial equals $\underset{k=1}{\overset{s}{\prod}} H_{y_k}(p)$.
If $s$ is even, this characteristic polynomial equals $H_{y_1}(p)^{s/2} H_{y_2}(p)^{s/2}$;
otherwise it equals $H_{y_1}(p)^{(s+1)/2} H_{y_2}(p)^{(s-1)/2}$.

In order to conclude, it suffices to proves that $A_s B_s^{-1}$ is cyclic.
To do so, we distinguish between two cases, whether $H_{y_1}(p)$ equals $H_{y_2}(p)$ or not.

\noindent \textbf{Case 1:  $H_{y_1}(p)=H_{y_2}(p)$.} \\
Set then $r:=H_{y_1}(p)$ and write $r=t^2-\gamma t-\eta$.
For all $k \in \lcro 2,s\rcro$, we see that
$$-\gamma L+L C(r)+C(r)L=C(r).$$
Hence,
$$r(A_s B_s^{-1})=\begin{bmatrix}
0_2 & 0_2 & \cdots & \cdots & (0) \\
y_2 C(r) & 0_2 & \ddots & & \vdots \\
? & \ddots & \ddots & \ddots & \vdots \\
\vdots & & \ddots & 0_2 & 0_2 \\
(?) & \cdots & ? & y_s C(r) &  0_2
\end{bmatrix}.$$
Since $C(r)$ is invertible and all the scalars $y_k$ are nonzero, we find that $r(A_s B_s^{-1})$ has rank $2s-2$, and hence the kernel of $r(A_s B_s^{-1})$ has dimension $2$.
Yet, every invariant factor of $A_s B_s^{-1}$ is a power of $r$, and if we denote by $c$ the number of those invariant factors, we have
$2c=\dim \Ker\bigl(r(A_s B_s^{-1})\bigr)$. Hence, $c=1$, which yields that $A_s B_s^{-1}$ is cyclic, as claimed.

\vskip 2mm
\noindent \textbf{Case 2: $H_{y_1}(p)\neq H_{y_2}(p)$.} \\
Set $r_1:=H_{y_1}(p)$ and $r_2:=H_{y_2}(p)$.
If $s \leq 2$ then it is obvious that $A_s B_s^{-1}$ has its minimal polynomial equal to its characteristic polynomial, and hence $A_s B_s^{-1}$
is cyclic. In the remainder of the proof, we assume that $s \geq 3$.
Write $r_1=t^2-\gamma_1 t-\delta_1$ and $r_2=t^2-\gamma_2 t-\delta_2$. Then,
$-\gamma_1 L+ LC(r_1)+C(r_2)L=C(r_2)$ and $-\gamma_1 L+ LC(r_2)+C(r_1)L=U$, where $U:=\begin{bmatrix}
0 & \delta_1 \\
1 & \gamma_2
\end{bmatrix}$. Hence,
$$r_1(A_s B_s^{-1})=\begin{bmatrix}
0_2 & 0_2 & \cdots & \cdots & \cdots & 0_2 \\
y_2 C(r_2) & r_1(C(r_2)) & 0_2 & \cdots & \cdots & 0_2 \\
y_2y_3 L & y_3 U & 0_2 & \cdots & \cdots & 0_2 \\
0_2 & y_3y_4 L & y_4 C(r_2) & r_1(C(r_2)) & 0_2 & \vdots \\
0_2 & 0_2 & y_4y_5 L & y_5 U & 0_2 & & \\
\vdots & \vdots & & \ddots & \ddots & \ddots
\end{bmatrix}.$$
Note that $r_1(C(r_2))=r_2(C(r_2))+(\gamma_2-\gamma_1) C(r_2)+(\delta_2-\delta_1) I_2=(\gamma_2-\gamma_1) C(r_2)+(\delta_2-\delta_1) I_2$,
and in particular the upper-left entry of $r_1(C(r_2))$ equals $\delta_2-\delta_1$.

For all $i\in \N^*$ such that $2i+1 \leq s$, set
$$H_i:=\begin{bmatrix}
y_{2i} C(r_2) & r_1(C(r_2)) \\
y_{2i}\,y_{2i+1} L & y_{2i+1} U
\end{bmatrix} \in \Mat_4(\F),$$
and note that $H_i$ is invertible:
Indeed,
$$\det H_i=y_{2i}^2 \,y_{2i+1}^2 \begin{vmatrix}
C(r_2) & r_1(C(r_2)) \\
L & U
\end{vmatrix};$$
Then, by successively developing along the first column and then the second row of the resulting matrix, we find
$$\begin{vmatrix}
C(r_2) & r_1(C(r_2)) \\
L & U
\end{vmatrix}=\delta_1 \begin{vmatrix}
\delta_2 & \delta_2-\delta_1 \\
1 & 1
\end{vmatrix}=\delta_1^2 \neq 0.$$
Hence, $H_i$ is invertible for all $i \geq 1$ such that $2i+1 \leq s$.

Now:
\begin{itemize}
\item If $s$ is odd then the rank of $r_1(A_s B_s^{-1})$ equals that of the matrix obtained from it by removing
the first two rows and the last two columns; Yet this matrix of $\Mat_{2s-2}(\F)$ is block-lower-triangular with diagonal
blocks $H_1,\dots,H_{(s-1)/2}$, and hence it is invertible. It follows that
the kernel of $r_1(A_s B_s^{-1})$ has dimension $2$.
\item If $s$ is even then the rank of $r_1(A_s B_s^{-1})$ is at most $2s-2$ (since the first two rows are zero),
and is at least the one of the matrix obtained from it by removing the first two rows and the last two columns; Yet this matrix
of $\Mat_{2s-2}(\F)$ is block-lower-triangular with diagonal blocks $H_1,\dots,H_{(s-2)/2},y_s C(r_2)$, all of them invertible.
Hence the kernel of $r_1(A_s B_s^{-1})$ has dimension $2$.
\end{itemize}
With a similar method, one checks that the kernel of $r_2(A_s B_s^{-1})$ has dimension $2$
(again, one needs to discuss whether $s$ is odd or even).
Finally, every elementary invariant of $A_s B_s^{-1}$ is a power of $r_1$ or $r_2$.
Denoting by $a$ (respectively, by $b$), the number of such invariants that are powers of $r_1$ (respectively, of $r_2$), we have
$\dim \Ker \bigl(r_1(A_s B_s^{-1})\bigr)=2a$ and $\dim \Ker \bigl(r_2(A_s B_s^{-1})\bigr)=2b$. Hence, $a=b=1$.
Since $r_1$ and $r_2$ are coprime, we conclude that $A_s B_s^{-1}$ is cyclic, as claimed.
This completes the proof.
\end{proof}

\subsubsection{The case when $H_{y_1}(p)=H_{y_2}(p)$}

The following theorem is an obvious consequence of Lemma \ref{qcommonQsplitsnotP} and of the considerations of the start of Section
\ref{q-exceptionalsectionII}:

\begin{theo}\label{qtheoQsplitswithdouble}
Let $p$ and $q$ be monic polynomials with degree $2$ of $\F[t]$ such that $p(0)q(0) \neq 0$.
Assume that $p$ is irreducible over $\F$ and that $q$ splits over $\F$.
Assume furthermore that $q$ has a double root or that $\tr(q)=\tr(p)=0$.
Choose a root $y$ of $q$ in $\F$.
Then, given an endomorphism $u$ of a finite-dimensional vector space $V$ over $\F$, the following conditions are equivalent:
\begin{enumerate}[(i)]
\item $u$ is a q-exceptional $(p,q)$-quotient.
\item The minimal polynomial of $u$ is a power of $H_y(p)$.
\item In some basis of $V$, the endomorphism $u$ is represented by a block-diagonal matrix in which every diagonal block
equals $C\bigl(H_y(p)^n\bigr)$ for some positive integer $n$.
\end{enumerate}
\end{theo}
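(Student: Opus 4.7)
The plan is to leverage the analysis carried out at the opening of Section \ref{q-exceptionalsectionII}, together with Lemma \ref{qcommonQsplitsnotP}, to obtain all three equivalences with only routine bookkeeping.

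First, I would note that the standing hypothesis ($q$ has a double root, or $\tr(p)=\tr(q)=0$) forces $H_{y_1}(p)=H_{y_2}(p)=H_y(p)$, according to the classification recalled just after the remarks on $H_\lambda(p)$. Writing $r:=H_y(p)$, this is an irreducible monic polynomial of degree $2$ over $\F$, and $G_{p,q}=r^2$. By the dichotomy stated there, $u$ is q-exceptional with respect to $(p,q)$ if and only if the minimal polynomial of $u$ is a power of $r$. This already gives the equivalence (i) $\Leftrightarrow$ (ii).

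Next, the equivalence (ii) $\Leftrightarrow$ (iii) is a routine application of the primary canonical form theorem: if the minimal polynomial of $u$ is a power of the irreducible polynomial $r$, then every elementary invariant of $u$ is of the form $r^n$ for some $n \in \N^*$, so $u$ is represented in some basis by a block-diagonal matrix whose blocks are companion matrices $C(r^{n_i})$; the converse implication is immediate.

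Finally, for the direction (iii) $\Rightarrow$ (i), I would invoke Lemma \ref{qcommonQsplitsnotP} with $y_1,y_2$ being the roots of $q$ in $\F$ (equal if $q$ has a double root; otherwise distinct with $y_2=-y_1$). Since $H_{y_1}(p)=H_{y_2}(p)=r$, the lemma tells us that for every $n \in \N$ both $C(r^{2n})$ and $C(r^{2n+1})$ are $(p,q)$-quotients, hence $C(r^n)$ is a $(p,q)$-quotient for every positive integer $n$. A direct sum of $(p,q)$-quotients being visibly a $(p,q)$-quotient (take the direct sums of the witnesses $a$ and $b$ respectively), the implication follows.

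There is no serious obstacle in this argument; the only nontrivial content has already been packaged into Lemma \ref{qcommonQsplitsnotP} and the preliminary analysis determining when $H_{y_1}(p)=H_{y_2}(p)$. The proof is essentially a direct transcription of these two inputs.
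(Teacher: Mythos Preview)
Your proof is correct and follows exactly the paper's approach (which simply records that the theorem is an immediate consequence of Lemma \ref{qcommonQsplitsnotP} and the analysis at the start of Section \ref{q-exceptionalsectionII}). One small expository slip: the dichotomy you invoke only shows that ``q-exceptional'' is equivalent to (ii), so at that point you have (i) $\Rightarrow$ (ii) but not yet the converse, since condition (i) also requires $u$ to be a $(p,q)$-quotient; the missing direction (ii) $\Rightarrow$ (i) is precisely what your final paragraph supplies via (ii) $\Rightarrow$ (iii) $\Rightarrow$ (i).
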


\subsubsection{The case when $H_{y_1}(p)\neq H_{y_2}(p)$}

\begin{theo}
Let $p$ and $q$ be monic polynomials with degree $2$ of $\F[t]$ such that $p(0)q(0)\neq 0$.
Assume that $p$ is irreducible over $\F$ and that $q$ splits over $\F$ with distinct roots $y_1$ and $y_2$.
Assume furthermore that $(\tr p,\tr q) \neq (0,0)$.
Let $u$ be an endomorphism of  a finite-dimensional vector space $V$ over $\F$. The following conditions are then equivalent:
\begin{enumerate}[(i)]
\item $u$ is a q-exceptional $(p,q)$-quotient.
\item The irreducible monic divisors of the minimal polynomial of $u$ belong to $\bigl\{H_{y_1}(p),H_{y_2}(p)\bigr\}$.
Moreover, in writing the invariant factors of $u$ as $r_1=H_{y_1}(p)^{\alpha_1}H_{y_2}(p)^{\beta_1},\dots,r_k=H_{y_1}(p)^{\alpha_k}H_{y_2}(p)^{\beta_k},\dots$, we have $|\alpha_k-\beta_k| \leq 1$ for all $k \in \N^*$.
\item In some basis of $V$, the endomorphism $u$ is represented by a block-diagonal matrix in which every diagonal block
equals $C\bigl(H_{y_1}(p)^n\bigr) \oplus C\bigl(H_{y_2}(p)^n\bigr)$,
$C\bigl(H_{y_1}(p)^n\bigr) \oplus C\bigl(H_{y_2}(p)^{n-1}\bigr)$
or $C\bigl(H_{y_1}(p)^{n-1}\bigr) \oplus C\bigl(H_{y_2}(p)^n\bigr)$ for some positive integer $n$.
\end{enumerate}
\end{theo}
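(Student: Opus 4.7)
The plan is to mirror the structure of the earlier analogous result for $(p,q)$-differences (Section \ref{d-exceptionalsectionI}). The implication (iii) $\Rightarrow$ (i) should follow directly from Lemma \ref{qcommonQsplitsnotP}, which produces both types of blocks $C\bigl(H_{y_1}(p)^n H_{y_2}(p)^n\bigr)$ and $C\bigl(H_{y_1}(p)^{n+1} H_{y_2}(p)^n\bigr)$ as explicit $(p,q)$-quotients. For (ii) $\Rightarrow$ (iii), I would first observe that the hypothesis $(\tr p, \tr q) \neq (0,0)$ forces the irreducible polynomials $H_{y_1}(p)$ and $H_{y_2}(p)$ to be distinct, hence coprime, so that the classical splitting $C(st) \simeq C(s) \oplus C(t)$ for coprime monic $s,t$ decomposes each invariant factor $H_{y_1}(p)^\alpha H_{y_2}(p)^\beta$ (with $|\alpha-\beta| \leq 1$) into one of the three companion sums from~(iii).

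The core of the proof is the implication (i) $\Rightarrow$ (ii). I would start by noting that, since $G_{p,q} = H_{y_1}(p) H_{y_2}(p)$ with both factors irreducible, q-exceptionality of $u$ already restricts the irreducible divisors of its minimal polynomial to $\{H_{y_1}(p), H_{y_2}(p)\}$. For $k \in \N^*$ and $j \in \{1,2\}$, I would denote by $a_k^{(j)}$ the number of elementary invariants of $u$ of the form $H_{y_j}(p)^l$ with $l \geq k$. It then suffices to show that $(a_k^{(1)})_{k \geq 1}$ and $(a_k^{(2)})_{k \geq 1}$ are $1$-intertwined: indeed, if some invariant factor $r_k = H_{y_1}(p)^{\alpha_k} H_{y_2}(p)^{\beta_k}$ satisfied $\alpha_k > \beta_k + 1$, then setting $l := \beta_k + 2$ would give $a_l^{(1)} \geq k > a_{l-1}^{(2)}$, contradicting the $1$-intertwining.

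To establish that intertwining, I would extend scalars to the splitting field $\L$ of $p$; the endomorphism $u^\L$ of $V^\L$ remains a $(p,q)$-quotient over $\L$, and now $p$ is split. When $p$ is separable with roots $x_1, x_2 \in \L$, a direct computation using $(\tr p, \tr q) \neq (0,0)$ together with $x_1, x_2 \notin \F$ shows that the four scalars $x_i y_j^{-1}$ are pairwise distinct and that the involution $z \mapsto \delta z^{-1}$ (with $\delta := p(0)q(0)^{-1}$) exchanges $x_1 y_1^{-1}$ with $x_2 y_2^{-1}$. Since $H_{y_j}(p)$ factors over $\L$ as $(t - x_1 y_j^{-1})(t - x_2 y_j^{-1})$, one reads off $n_k(u^\L, x_i y_j^{-1}) = a_k^{(j)}$ for all $i$. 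Applying Theorem \ref{qtheoPandQsplitSingleroots} to $u^\L$ then yields the $1$-intertwining of $(n_k(u^\L, x_1 y_1^{-1}))$ and $(n_k(u^\L, x_2 y_2^{-1}))$, which is precisely the desired intertwining of $(a_k^{(1)})$ and $(a_k^{(2)})$.

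When $p$ is inseparable, necessarily $\car(\F) = 2$ and $\tr p = 0$, hence $\tr q \neq 0$, and $p(t) = (t-x)^2$ over $\L$. Here the plan is to pass to $u^{-1}$, which is a $(q,p)$-quotient; over $\L$, the polynomial $q$ now has simple roots $y_1, y_2$ while $p$ has the double root $x$. Theorem \ref{qtheoPandQsplitSoleRootforQ} applied to $(u^\L)^{-1}$ supplies the $2$-intertwining of $\bigl(n_k((u^\L)^{-1}, y_1 x^{-1})\bigr)$ and $\bigl(n_k((u^\L)^{-1}, y_2 x^{-1})\bigr)$; using that $u$ and $u^{-1}$ share the same Jordan structure up to inversion of eigenvalues transfers this to a $2$-intertwining of $\bigl(n_k(u^\L, x y_1^{-1})\bigr)$ and $\bigl(n_k(u^\L, x y_2^{-1})\bigr)$. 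Since $H_{y_j}(p) = (t - x y_j^{-1})^2$ over $\L$, each elementary invariant $H_{y_j}(p)^l$ of $u$ contributes a single Jordan block of $u^\L$ of size $2l$ at eigenvalue $x y_j^{-1}$, so that $n_{2m-1}(u^\L, x y_j^{-1}) = n_{2m}(u^\L, x y_j^{-1}) = a_m^{(j)}$, and the $2$-intertwining collapses to the required $1$-intertwining. The main obstacle is exactly this bookkeeping in the inseparable subcase, where the doubling of multiplicities at each eigenvalue must be reconciled with the halving of the intertwining parameter inherited from Theorem \ref{qtheoPandQsplitSoleRootforQ}.
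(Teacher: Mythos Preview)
Your proposal is correct and follows essentially the same route as the paper's proof: Lemma \ref{qcommonQsplitsnotP} for (iii) $\Rightarrow$ (i), the coprimeness of $H_{y_1}(p)$ and $H_{y_2}(p)$ for (ii) $\Rightarrow$ (iii), and for (i) $\Rightarrow$ (ii) the same extension-of-scalars argument splitting into the separable case (via Theorem \ref{qtheoPandQsplitSingleroots}) and the inseparable case (passing to $(u^\L)^{-1}$ as a $(q,p)$-quotient and invoking Theorem \ref{qtheoPandQsplitSoleRootforQ}). Your bookkeeping in the inseparable subcase, matching the $2$-intertwining at the Jordan level with the $1$-intertwining of the $a_m^{(j)}$ via $n_{2m-1}(u^\L,xy_j^{-1})=n_{2m}(u^\L,xy_j^{-1})=a_m^{(j)}$, is exactly what the paper leaves implicit.
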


\begin{proof}
Since $H_{y_1}(p)$ and $H_{y_2}(p)$ are coprime,
Lemma \ref{qcommonQsplitsnotP} shows that condition (iii) implies condition (i).
Moreover, it is also clear that condition (ii) implies condition (iii).

Now, we assume that condition (i) holds, and we aim at proving that condition (ii) also does.

For $k \in \N^*$, denote by $a_k$ (respectively, by $b_k$)
the number of elementary invariants of $u$ of the form $H_{y_1}(p)^l$ (respectively, of the form $H_{y_2}(p)^l$) for some integer $l \geq k$.
Let us temporarily assume that $(a_k)_{k \geq 1}$ and $(b_k)_{k \geq 1}$ are $1$-intertwined.
Then, we claim that condition (ii) holds.
Indeed, since $u$ is q-exceptional with respect to $(p,q)$ we already know that the monic irreducible divisors of its minimal polynomial
are among $H_{y_1}(p)$ and $H_{y_2}(p)$.
Next, denote by $r_1,\dots,r_k,\dots$ the invariant factors of $u$, and write further
$r_k=H_{y_1}(p)^{\alpha_k}H_{y_2}(p)^{\beta_k}$ for some non-negative integers $\alpha_k$ and $\beta_k$.
Assume that, for some positive integer $k$, we have $\alpha_k>\beta_k+1$. Setting $s:=\beta_k+2$, we get that
$$a_s \geq k \quad \text{and} \quad b_{s-1}<k,$$
which contradicts the assumption that $(a_i)_{i \geq 1}$ and $(b_i)_{i \geq 1}$ be $1$-intertwined. Hence, $\alpha_k \leq \beta_k+1$, and likewise
$\beta_k \leq \alpha_k+1$. Hence condition (ii) holds.

It remains to prove that $(a_k)_{k \geq 1}$ and $(b_k)_{k \geq 1}$ are $1$-intertwined.
To do so, we need to distinguish between two cases.
In both, we denote by $\L$ the splitting field of $p$ (as defined by $\L:=\F[t]/(p)$) and
we shall consider the $\L$-vector space $V^{\L}:=V \otimes_{\F} \L$ and the endomorphism
$u^{\L}$ of $V^{\L}$ deduced from $u$ by extending the field of scalars. Note that $u_\L$ is a $(p,q)$-quotient.

\begin{itemize}
\item Assume first that $p$ has two distinct roots $x_1$ and $x_2$ in $\L$. \\
First of all, we prove that $x_1 y_1^{-1} \neq x_2 y_2^{-1}$. Assume indeed that the contrary holds.
Then, $x_1+x_2 \in \F$ and $x_1 y_2-x_2 y_1 \in \F$: since $(x_1,x_2)\not\in \F^2$ this yields
$\begin{vmatrix}
1 & 1 \\
y_2 & -y_1
\end{vmatrix}=0$, whence $y_1+y_2=0$, and, by using $x_1 y_1^{-1} = x_2 y_2^{-1}$, this further leads to $x_1+x_2=0$, contradicting the assumption that $(\tr p,\tr q)\neq (0,0)$.
Hence,  $x_1 y_1^{-1} \neq x_2 y_2^{-1}$. Likewise, $x_2 y_2^{-1} \neq x_2 y_1^{-1}$, and it follows that
$x_1y_1^{-1}, x_2 y_1^{-1}, x_1 y_2^{-1},x_2 y_2^{-1}$ are pairwise distinct.
Therefore, for all $k \in \N^*$,
$$n_k\bigl(u^\L,x_1y_1^{-1}\bigr)=a_k \quad \text{and} \quad n_k\bigl(u^\L,x_2y_2^{-1}\bigr)=b_k,$$
and since $x_1y_1^{-1} \neq x_2 y_2^{-1}$ Theorem \ref{theoPandQsplitSingleroots} yields that
the sequences $\Bigl(n_k\bigl(u^\L,x_1y_1^{-1}\bigr)\Bigr)_{k \geq 1}$ and $\Bigl(n_k\bigl(u^\L,x_2y_2^{-1}\bigr)\Bigr)_{k \geq 1}$
are $1$-intertwined.

\item Assume now that $p$ has a sole root $x$ in its splitting field $\L$ (note that this can happen only
if $\F$ has characteristic $2$). \\
Then, as $xy_1^{-1} \neq xy_2^{-1}$, one sees that, for all $k \in \N^*$,
$$n_{2k}\bigl(u^\L,xy_1^{-1}\bigr)=a_k \quad \text{and} \quad
n_{2k}\bigl(u^\L,xy_2^{-1}\bigr)=b_k.$$
Here, Theorem \ref{theoPandQsplitSoleRootforQ} applies to $(u^{\L})^{-1}$ and to the pair $(q,p)$, and hence the sequences
$\Bigl(n_k\bigl(u^\L,x y_1^{-1}\bigr)\Bigr)_{k \geq 1}$ and
$\Bigl(n_k\bigl(u^\L,x y_2^{-1}\bigr)\Bigr)_{k \geq 1}$ are $2$-intertwined.
\end{itemize}
Hence, in any case we see that $(a_k)_{k \geq 1}$ and $(b_k)_{k \geq 1}$ are $1$-intertwined, which completes the proof that condition (i)
implies condition (ii).
\end{proof}

\subsection{Exceptional $(p,q)$-quotients (III): When $p$ and $q$ are irreducible with the same splitting field}\label{q-exceptionalsectionIII}

Here, we assume that $p$ and $q$ are both irreducible, with the same splitting field $\L \subset \overline{\F}$.
Note then that $p(0)q(0) \neq 0$.
Denote by $\sigma$ the non-identity automorphism of $\L$ over $\F$ if $\L$ is separable over $\F$,
otherwise set $\sigma:=\id_\L$. In any case, splitting $p(t)=(t-x_1)(t-x_2)$ and $q(t)=(t-y_1)(t-y_2)$
in $\L[t]$, we find that $\sigma$ exchanges $x_1$ and $x_2$, and that it exchanges $y_1$ and $y_2$.
Hence, $x_1y_1^{-1}+x_2y_2^{-1}=\Tr_{\L/\F}(x_1y_1^{-1})$ and $x_1y_2^{-1}+x_2y_1^{-1}=\Tr_{\L/\F}(x_1y_2^{-1})$.
Hence, with $\delta:=p(0)q(0)^{-1}$, we have
$$G_{p,q}(t)=\bigl(t^2-\Tr_{\L/\F}(x_1y_1^{-1})\,t+\delta\bigr)\bigl(t^2-\Tr_{\L/\F}(x_1y_2^{-1})\,t+\delta\bigr)$$
and
$$\Theta_{p,q}(t)=\bigl(t-\Tr_{\L/\F}(x_1y_2)\bigr)\bigl(t-\Tr_{\L/\F}(x_1y_1)\bigr).$$

Next, assume that $G_{p,q}$ has a root in $\F$. Then,
we have respective roots $x$ and $y$ of $p$ and $q$, together with some nonzero scalar $d\in \F$ such that $x=dy$.
Hence, $\sigma(x)=d\sigma(y)$ and it follows that $q=H_d(p)$.

Conversely, assume that $q=H_d(p)$ for some $d \in \F \setminus \{0\}$.
Then, we see that
$$G_{p,q}(t)=(t-d)^2 \Bigl(t-\frac{dy_1}{y_2}\Bigr)\Bigl(t-\frac{dy_2}{y_1}\Bigr).$$
Note then that $\tr p=d\,\tr q$ and $d^2=\frac{p(0)}{q(0)}$, so that
$$\frac{dy_1}{y_2}+\frac{d y_2}{y_1}=
d \,\frac{y_1^2+y_2^2}{y_1y_2}=d\,\frac{(\tr q)^2-2\,q(0)}{y_1y_2}=\frac{\tr p \tr q-2dq(0)}{q(0)}\cdot$$
Hence,
$$\Bigl(t-\frac{dy_1}{y_2}\Bigr)\Bigl(t-\frac{dy_2}{y_1}\Bigr)=t^2-\frac{\tr p \tr q-2d q(0)}{q(0)}\,t+\frac{p(0)}{q(0)}$$
and
$$G_{p,q}(t)=(t-d)^2\Bigl(t^2-\frac{\tr p \tr q-2d q(0)}{q(0)}\,t+\frac{p(0)}{q(0)}\Bigr).$$
Assume that $\frac{dy_1}{y_2}$ is an element of $\F$, which we denote by $e$.
Then, as $y_1+y_2 \in \F$ and $dy_1-e y_2=0 \in \F$, whereas $y_1 \not\in \F$, the only possibility is that
$\begin{vmatrix}
1 & 1 \\
d &  -e
\end{vmatrix}=0$, whence $e=-d$ and $y_1=-y_2$. It would follow that we also have $x_1=dy_1=-dy_2=-x_2$. Hence, $\tr p=\tr q=0$.
Therefore:
\begin{itemize}
\item Either $(\tr p,\tr q) \neq (0,0)$, in which case $t^2-\frac{\tr p \tr q-2d q(0)}{q(0)}\,t+\frac{p(0)}{q(0)}$
is irreducible over $\F$.

\item Or $(\tr p,\tr q)=(0,0)$, in which case
$$G_{p,q}(t)=(t-d)^2(t+d)^2.$$
\end{itemize}

Conversely, if $\tr p=\tr q=0$ and $\F$ does not have characteristic $2$, then, for every $(x,y)\in \Root(p) \times \Root(q)$,
we have $\sigma(xy^{-1})=\sigma(x)\sigma(y)^{-1}=(-x)(-y)^{-1}=xy^{-1}$, whence $xy^{-1} \in \F$.
Hence, in that case $G_{p,q}$ splits over $\F$, and from the above we see that $G_{p,q}(t)=(t-d)^2(t+d)^2$ for some $d \in \F \setminus \{0\}$.

We note that $t^2-\Tr_{\L/\F}(x_1y_1^{-1})\,t+\delta$ and $t^2-\Tr_{\L/\F}(x_1y_2^{-1})\,t+\delta$
have respective roots $x_1y_1^{-1},x_2y_2^{-1}$ and $x_1y_2^{-1},x_2 y_1^{-1}$, and hence they are equal only if
they have a common root, which implies that $x_1=x_2$ or $y_1=y_2$. In any of those cases, one of $p$ and $q$ is inseparable, and since they have
the same splitting field both are inseparable. Hence,
$t^2-\Tr_{\L/\F}(x_1y_1^{-1})t+\delta \neq t^2-\Tr_{\L/\F}(x_1y_2^{-1})t+\delta$ unless $\F$ has characteristic $2$ and $p$ and $q$ are inseparable.

\vskip 3mm
Finally, let $u$ be an endomorphism of a finite-dimensional vector space $V$ over $\F$ and assume that $u$ is q-exceptional with respect to $(p,q)$.
Then, $v:=q(0)u+p(0)u^{-1}$ is annihilated by some power of $\Theta_{p,q}$, which splits over $\F$;
hence, $v$ is triangularizable. In the next section, we study the case when $v$ has a sole eigenvalue.

\subsubsection{When $v$ has a sole eigenvalue}

We keep the assumptions and notation from the last paragraph of the preceding section and now we assume that $v$
has a sole eigenvalue. By renaming the roots of $p$ and $q$ in $\L$, we can assume that this eigenvalue is
$$z:=\Tr_{\L/\F}(x_1y_2)=x_1y_2+x_2y_1.$$

\begin{lemma}\label{qsamesplitconstructivelemma}
Let $p$ and $q$ be monic irreducible polynomials with degree $2$ over $\F$.
Assume that $p$ and $q$ have the same splitting field $\L$ in $\overline{\F}$. Let $x$ and $y$ be respective roots of $p$ and $q$ in $\L$ such that
$xy^{-1} \not\in \F$. Set $z:=\Tr_{\L/\F}(x y^{-1})$ and $\delta:=p(0)q(0)^{-1}$.
Then, for all $n \in \N$, the matrix $C\bigl((t^2-z t+\delta)^{n+1}\bigr) \oplus C\bigl((t^2-zt+\delta)^n\bigr)$
is a $(p,q)$-quotient (and a q-exceptional one).
\end{lemma}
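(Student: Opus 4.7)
The plan is to mirror the strategy used for Lemma \ref{samesplitconstructivelemma} in the $(p,q)$-differences setting. Set $r := t^2 - zt + \delta$. Over $\L$ the roots of $r$ are $xy^{-1}$ and $\sigma(xy^{-1})$ (where $\sigma$ is the non-identity $\F$-automorphism of $\L$ in the separable case, or $\id_\L$ in the inseparable case), since the sum of these two elements equals $z$ and their product equals $\delta$. As $xy^{-1} \not\in \F$, the polynomial $r$ is irreducible over $\F$ with splitting field $\L$, and I may therefore identify $\L$ with the subalgebra $\F[C(r)]$ of $\Mat_2(\F)$.

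First I would treat the case where $\L/\F$ is separable. Then the roots of $r$ are distinct, and since $\delta(xy^{-1})^{-1}=\sigma(xy^{-1})\ne xy^{-1}$, Theorem \ref{qtheoPandQsplitSingleroots} applied to $(p,q)$ over $\L$ (where both polynomials now split with simple roots) provides that the matrix
$$\bigl(C(t^{n+1})+xy^{-1}\,I_{n+1}\bigr) \oplus \bigl(C(t^{n})+\sigma(xy^{-1})\,I_{n}\bigr) \in \Mat_{2n+1}(\L)$$
is a $(p,q)$-quotient. Reading each entry as a $2$-by-$2$ matrix over $\F$ through the identification $\L=\F[C(r)]$ converts this into a direct sum, in $\Mat_{4n+2}(\F)$, of two block-bidiagonal matrices whose diagonal blocks are elements of $\Mat_2(\F)$ annihilated by $r$ (repeated $n+1$ times in the first summand and $n$ times in the second) and whose subdiagonal blocks are copies of $I_2$. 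Because $r$ is separable, Proposition \ref{blockcyclicprop} identifies the two summands with $C(r^{n+1})$ and $C(r^n)$ respectively, so $C(r^{n+1})\oplus C(r^n)$ is a $(p,q)$-quotient.

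Next I would handle the inseparable case. Then $\car(\F)=2$, $\tr(p)=\tr(q)=0$, whence $z=\Tr_{\L/\F}(xy^{-1})=0$ and $\delta=(xy^{-1})^2$, so that $r=(t-xy^{-1})^2$ over $\L$. By Theorem \ref{qtheoPandQsplitSoleRoot} applied to $(p,q)$ over $\L$ (where both polynomials have a sole double root), the single Jordan cell $C(t^{2n+1})+xy^{-1}\,I_{2n+1}\in\Mat_{2n+1}(\L)$ is a $(p,q)$-quotient. Reinterpreting its entries over $\F$ as above yields one block-bidiagonal matrix of $\Mat_{4n+2}(\F)$ with $2n+1$ diagonal blocks annihilated by $r$ and $2n$ subdiagonal blocks equal to $I_2$; as the root of $r$ now has multiplicity $2$, Proposition \ref{blockcyclicprop} shows that this matrix is similar to $C(r^{n+1})\oplus C(r^n)$, which is therefore a $(p,q)$-quotient.

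The main non-routine step in both cases is the invocation of Proposition \ref{blockcyclicprop} in its two variants (separable versus multiplicity $2$ root), which transforms a block-Jordan form over $\L$, read over $\F$, into the desired direct sum of companion matrices of powers of $r$. Everything else reduces to identifying $r$ and its roots correctly in each regime and invoking Theorem \ref{qtheoPandQsplitSingleroots} or Theorem \ref{qtheoPandQsplitSoleRoot} over $\L$.
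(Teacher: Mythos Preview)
Your proposal is correct and follows essentially the same approach as the paper's own proof: identify $r=t^2-zt+\delta$ with the minimal polynomial of $xy^{-1}$ over $\F$, work over $\L$ via Theorem \ref{qtheoPandQsplitSingleroots} (separable case) or Theorem \ref{qtheoPandQsplitSoleRoot} (inseparable case) to obtain a $(p,q)$-quotient in $\Mat_{2n+1}(\L)$, then reinterpret the entries over $\F$ and apply Proposition \ref{blockcyclicprop} to recover $C(r^{n+1})\oplus C(r^n)$. Your write-up is in fact slightly cleaner than the paper's in one respect: you cite the quotient-side theorems \ref{qtheoPandQsplitSingleroots} and \ref{qtheoPandQsplitSoleRoot}, whereas the paper's proof (by an apparent typo) cites the difference-side analogues \ref{theoPandQsplitSingleroots} and \ref{theoPandQsplitSoleRoot}.
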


\begin{proof}
Set $r:=t^2-zt+\delta$ and note that $r=(t-xy^{-1})(t-\delta x^{-1}y)$, whence $r$ is irreducible over $\F$
and its splitting field is $\L$.
We can therefore assume that $\L$ is the subalgebra of $\Mat_2(\F)$ generated by $C(r)$.

Let $n \in \N$. We distinguish between two cases.

\vskip 2mm
\noindent \textbf{Case 1. $\L$ is separable over $\F$.} \\
By Theorem \ref{theoPandQsplitSingleroots}, the direct sum
$\bigl(xy^{-1} I_{n+1}+C(t^{n+1})\bigr) \oplus \bigl(\delta yx^{-1} I_{n}+C(t^{n})\bigr)$ in $\Mat_{2n+1}(\L)$
is a $(p,q)$-quotient. Viewing every entry of this matrix as a $2$-by-$2$ matrix with entries in $\F$,
we gather that, for some elements $R_1$ and $R_2$ of $\Mat_2(\F)$ that are annihilated by $r$,
the matrix
$$S:=\underbrace{
\begin{bmatrix}
R_1 & 0_2 & \cdots & \cdots & (0) \\
I_2 & R_1 & \ddots & & \vdots \\
0_2 & \ddots & \ddots & \ddots & \vdots \\
\vdots & & \ddots & R_1 & 0_2 \\
(0) & \cdots & 0_2 & I_2 & R_1
\end{bmatrix}}_{\in \Mat_{2n+2}(\F)} \oplus
\underbrace{
\begin{bmatrix}
R_2 & 0_2 & \cdots & \cdots & (0) \\
I_2 & R_2 & \ddots & & \vdots \\
0_2 & \ddots & \ddots & \ddots & \vdots \\
\vdots & & \ddots & R_2 & 0_2 \\
(0) & \cdots & 0_2 & I_2 & R_2
\end{bmatrix}}_{\in \Mat_{2n}(\F)}$$
is a $(p,q)$-quotient.
The polynomial $r$ is irreducible over $\F$ and splits over $\L$, whence $r$ is separable over $\F$. It then follows from
Proposition \ref{blockcyclicprop} that $S$ is similar to $C(r^{n+1}) \oplus C(r^n)$, and hence this last matrix is a $(p,q)$-quotient.

\vskip 2mm
\noindent \textbf{Case 2. $\L$ is inseparable over $\F$.} \\
Then, $\tr(p)=\tr(q)=0$.
Over $\L$, the polynomials $p$ and $q$ are split with sole respective roots $x$ and $y$. By Theorem \ref{theoPandQsplitSoleRoot}, the matrix
$xy^{-1} I_{2n+1}+C(t^{2n+1})$ of $\Mat_{2n+1}(\L)$ is a $(p,q)$-quotient.
Viewing $xy^{-1}$ as a matrix $R\in \Mat_2(\F)$ that is annihilated by $r$, we gather that the matrix
$$T:=\begin{bmatrix}
R & 0_2 & \cdots & \cdots & (0) \\
I_2 & R & \ddots & & \vdots \\
0_2 & \ddots & \ddots & \ddots & \vdots \\
\vdots & & \ddots & R & 0_2 \\
(0) & \cdots & 0_2 & I_2 & R
\end{bmatrix}$$
of $\Mat_{4n+2}(\F)$ is a $(p,q)$-quotient.
As the root of $r$ has multiplicity $2$, we deduce from Proposition \ref{blockcyclicprop} that
$T \simeq C(r^{n+1}) \oplus C(r^n)$, which proves that the latter matrix is a $(p,q)$-quotient.
\end{proof}

\subsubsection{On the case when $p=q$}

\begin{lemma}\label{qstablep=qlemma}
Let $p$ be a monic polynomial with degree $2$ such that $p(0) \neq 0$, and let $a$ and $b$ be
endomorphisms of a finite-dimensional vector space $V$ such that $p(a)=p(b)=0$. Then, $\Ker(ab^{-1}-\id_V)$ is stable under $a$ and $b$.
\end{lemma}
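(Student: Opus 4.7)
The plan is to reduce the statement to the already-established Lemma~\ref{stablep=qlemma} by showing that $\Ker(ab^{-1}-\id_V)$ coincides with $\Ker(a-b)$. The key elementary identity is
$$(ab^{-1}-\id_V)\, b = a-b,$$
which immediately yields the inclusion $b\bigl(\Ker(a-b)\bigr) \subset \Ker(ab^{-1}-\id_V)$. Conversely, given $x \in \Ker(ab^{-1}-\id_V)$, setting $y := b^{-1}(x)$ (where $b^{-1}$ exists because $p(0)\neq 0$ and $p(b)=0$ yield $b \cdot p(0)^{-1}\bigl((\tr p)\,\id_V-b\bigr)=\id_V$) gives $(a-b)(y)=(ab^{-1}-\id_V)(x)=0$, whence $x=b(y) \in b\bigl(\Ker(a-b)\bigr)$. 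Thus
$$\Ker(ab^{-1}-\id_V) = b\bigl(\Ker(a-b)\bigr).$$

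Next I would invoke Lemma~\ref{stablep=qlemma}: since $p(a)=p(b)=0$, the subspace $\Ker(a-b)$ is stable under $b$. Because $b$ is an automorphism of the finite-dimensional space $V$, the restriction of $b$ to $\Ker(a-b)$ is injective and its image is contained in $\Ker(a-b)$; by dimension count the restriction is then an automorphism of $\Ker(a-b)$, so $b\bigl(\Ker(a-b)\bigr) = \Ker(a-b)$. Combining with the identity above yields
$$\Ker(ab^{-1}-\id_V) = \Ker(a-b),$$
and a second application of Lemma~\ref{stablep=qlemma} tells us the latter is stable under both $a$ and $b$, completing the proof.

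The argument has no real obstacle: everything hinges on spotting the identity $(ab^{-1}-\id_V)b=a-b$ and using the invertibility of $b$ (guaranteed by $p(0)\neq 0$) together with finite-dimensionality. The only conceptual point worth isolating is that one must not confuse $\Ker(ab^{-1}-\id_V)$ with $\Ker(a-b)$ a priori; it is precisely the invertibility of $b$ and the stability result of Lemma~\ref{stablep=qlemma} that force these two subspaces to coincide.
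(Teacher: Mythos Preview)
Your proof is correct and takes a genuinely different route from the paper's. The paper proceeds by direct computation: it rewrites $\Ker(ab^{-1}-\id_V)$ as $\Ker(b^{-1}-a^{-1})$ and then mimics the proof of Lemma~\ref{stablep=qlemma} with $a^{-1},b^{-1}$ in place of $a,b$, using that $a^{-1}$ and $b^{-1}$ satisfy a common monic quadratic relation derived from $p$. Your approach instead establishes the identity $\Ker(ab^{-1}-\id_V)=\Ker(a-b)$ outright (via the factorization $(ab^{-1}-\id_V)b=a-b$, the invertibility of $b$, and the stability of $\Ker(a-b)$ under $b$ in finite dimension), and then simply quotes Lemma~\ref{stablep=qlemma}. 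This is arguably cleaner: it avoids repeating an analogous calculation and makes transparent that the multiplicative and additive kernels coincide. The paper's direct argument, on the other hand, does not need to appeal to the earlier lemma and would survive verbatim in contexts where Lemma~\ref{stablep=qlemma} is not already available.
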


\begin{proof}
Since $V$ is finite-dimensional and $a$ and $b$ are automorphisms of $V$, $a$ is a polynomial in $a^{-1}$, and $b$ is a polynomial in $b^{-1}$. Hence, it suffices to prove that $\Ker(ab^{-1}-\id_V)$ is stable under $a^{-1}$ and $b^{-1}$.

Note that $\Ker(ab^{-1}-\id_V)=\Ker(b^{-1}-a^{-1})$.
Let us write $p(t)=t^2-\lambda t+\alpha$. Let $x \in \Ker(b^{-1}-a^{-1})$ and set $y:=a^{-1}(x)=b^{-1}(x)$.
Then,
$$a^{-1}(y)=a^{-2}(x)=\alpha^{-1} (\lambda a^{-1}(x)-x)=\alpha^{-1} (\lambda b^{-1}(x)-x)=b^{-2}(x)=b^{-1}(y).$$
Hence, $y \in \Ker (b^{-1}-a^{-1})$.
This proves the claimed statement.
\end{proof}

\begin{prop}\label{qstablep=qeven}
Let $p$ be an irreducible monic polynomial with degree $2$, and let $a$ and $b$ be
endomorphisms of a finite-dimensional vector space $V$ such that $p(a)=p(b)=0$.
Then, for every integer $k \geq 1$, the endomorphism $ab^{-1}-\id_V$ has an even number of Jordan cells of size $k$
for the eigenvalue $0$.
\end{prop}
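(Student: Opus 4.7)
The plan is to mimic very closely the proof of Proposition \ref{stablep=qeven} (the analogous statement for $(p,q)$-differences), substituting the role of $a-b$ with $u-\mathrm{id}_V$, where $u:=ab^{-1}$. Note that $p$ irreducible of degree $2$ forces $p(0) \neq 0$, so $a$ and $b$ are automorphisms.

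First I would reduce the statement to proving that $\dim \Ker\bigl((u-\mathrm{id}_V)^n\bigr)$ is even for every $n \in \N$; indeed, the number of Jordan cells of size exactly $k$ equals $2 \dim \Ker (u-\mathrm{id}_V)^k - \dim \Ker (u-\mathrm{id}_V)^{k-1} - \dim \Ker (u-\mathrm{id}_V)^{k+1}$, which is even as soon as all the $\dim \Ker (u-\mathrm{id}_V)^j$ are even. Next I would invoke the Commutation Lemma (Lemma \ref{qcommutelemma}): with $p=q$, both $a$ and $b$ commute with $v:=p(0)(u+u^{-1})$. The identity
$$v - 2p(0)\,\mathrm{id}_V = p(0)\,u^{-1}(u-\mathrm{id}_V)^2$$
together with the invertibility of $u$ yields $\Ker\bigl(u^{-1}(u-\mathrm{id}_V)^2\bigr)^n = \Ker(u-\mathrm{id}_V)^{2n}$ for every $n$, and hence each space $\Ker(u-\mathrm{id}_V)^{2n}$ is stable under both $a$ and $b$.

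Fix $n \in \N$. Since the endomorphism of $\Ker(u-\mathrm{id}_V)^{2n}$ induced by $a$ is annihilated by $p$, which is irreducible of degree $2$, the dimension of $\Ker(u-\mathrm{id}_V)^{2n}$ is a multiple of $2$. For the odd exponents, I would consider the quotient space
$$E_n := \Ker(u-\mathrm{id}_V)^{2n+2}/\Ker(u-\mathrm{id}_V)^{2n},$$
on which $a$, $b$ and $u$ induce endomorphisms $a'$, $b'$ and $u'$ satisfying $u'=a'(b')^{-1}$ and $p(a')=p(b')=0$. A direct check shows that
$$\Ker(u'-\mathrm{id}_{E_n}) = \Ker(u-\mathrm{id}_V)^{2n+1}/\Ker(u-\mathrm{id}_V)^{2n}.$$
By Lemma \ref{qstablep=qlemma} the subspace $\Ker(u'-\mathrm{id}_{E_n})$ is stable under $a'$; since $p(a')=0$ with $p$ irreducible of degree $2$, its dimension is even, and therefore $\dim \Ker(u-\mathrm{id}_V)^{2n+1}$ is also even. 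An induction on $n$ then gives the desired parity for all $n$, completing the proof.

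The only real subtlety lies in verifying the equality $\Ker\bigl((u^{-1}(u-\mathrm{id}_V)^2)^n\bigr) = \Ker(u-\mathrm{id}_V)^{2n}$ (which rests on $u$ being invertible and commuting with $u-\mathrm{id}_V$) and in identifying $\Ker(u'-\mathrm{id}_{E_n})$ with the expected quotient; both are routine but must be written out carefully. Apart from those bookkeeping points, the argument is a straightforward transposition of the additive proof to the multiplicative setting.
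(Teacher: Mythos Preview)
Your proof is correct and follows essentially the same approach as the paper's own proof: the paper also reduces to showing that $\dim \Ker\bigl((ab^{-1}-\id_V)^n\bigr)$ is even for all $n$, uses the identity $\Ker\bigl((ab^{-1}-\id_V)^{2n}\bigr)=\Ker\bigl((ab^{-1}+(ab^{-1})^{-1}-2\,\id_V)^n\bigr)$ together with the Commutation Lemma to handle even exponents, and then passes to the quotient and invokes Lemma~\ref{qstablep=qlemma} for odd exponents. The only cosmetic difference is that you carry the scalar $p(0)$ in $v$ explicitly, whereas the paper works directly with $ab^{-1}+(ab^{-1})^{-1}$; also, no induction is actually needed at the end, as the argument treats each $n$ independently.
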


\begin{proof}
Classically, this amounts to proving that $\Ker\bigl((ab^{-1}-\id_V)^n\bigr)$ is even-dimensional for all $n \in \N$.

Fix $n \in \N$. We note that
$$\Ker\bigl((ab^{-1}-\id_V)^{2n}\bigr)=\Ker\bigl((ab^{-1})^{-n}(ab^{-1}-\id_V)^{2n}\bigr)=\Ker\bigl((ab^{-1}+(ab^{-1})^{-1}-2\,\id_V)^n\bigr).$$
Yet, by the Commutation Lemma, we know that $a$ and $b$ commute with $ab^{-1}+(ab^{-1})^{-1}$, and hence with $ab^{-1}+(ab^{-1})^{-1}-2\,\id_V$.
It follows that $\Ker \bigl((ab^{-1}-\id_V)^{2n}\bigr)$ is stable under both $a$ and $b$.
The endomorphism of $\Ker \bigl((ab^{-1}-\id_V)^{2n}\bigr)$
induced by $a$ is annihilated by $p$, which is irreducible with degree $2$, and hence the dimension of
$\Ker \bigl((ab^{-1}-\id_V)^{2n}\bigr)$ is a multiple of $2$.

Next, $a$ and $b$ induce automorphisms $a'$ and $b'$ of the quotient space
$\Ker \bigl((ab^{-1}-\id_V)^{2n+2}\bigr)/\Ker \bigl((ab^{-1}-\id_V)^{2n}\bigr)$, and the kernel of $a'(b')^{-1}-\id_V$ is the subspace
$\Ker \bigl((ab^{-1}-\id_V)^{2n+1}\bigr)/\Ker \bigl((ab^{-1}-\id_V)^{2n}\bigr)$.
Hence, by Lemma \ref{qstablep=qlemma}, this subspace is stable under $a'$. Again, since $p(a')=0$ we deduce that the dimension of
$\Ker \bigl((ab^{-1}-\id_V)^{2n+1}\bigr)/\Ker \bigl((ab^{-1}-\id_V)^{2n}\bigr)$ is even, and hence the one of
$\Ker \bigl((ab^{-1}-\id_V)^{2n+1}\bigr)$ is also even.
\end{proof}

Now, we prove a converse statement:

\begin{lemma}\label{qCSp=q}
Let $p$ be an irreducible monic polynomial with degree $2$ over $\F$, and $n$ be a positive integer.
Then, the matrix $C\bigl((t-1)^n\bigr) \oplus C\bigl((t-1)^n\bigr)$ is a $(p,p)$-quotient.
\end{lemma}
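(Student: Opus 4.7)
The strategy will closely parallel that of Lemma \ref{CSp=q} for the difference case. Namely: enlarge the field of scalars to a splitting field of $p$, invoke the already-established classification of split $(p,p)$-quotients to realize $C((t-1)^n)$ itself as a $(p,p)$-quotient over that larger field, and then reinterpret the resulting factorization over $\F$ via a suitable embedding followed by a basis permutation.

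More precisely, let $\L := \F[C(p)]$, which, as a subalgebra of $\Mat_2(\F)$, is an explicit copy of the splitting field of $p$. Over $\L$ one has $\delta := p(0)p(0)^{-1} = 1$, and $1$ lies in $\Root(p)\Root(p)^{-1}$ and satisfies the self-symmetry $\delta \cdot 1^{-1} = 1$. Consequently, if $p$ is inseparable (so $\Root(p)$ is a singleton and $\Root(p)\Root(p)^{-1} = \{1\}$), then $C((t-1)^n) \in \Mat_n(\L)$ is a q-exceptional $(p,p)$-quotient by Theorem \ref{qtheoPandQsplitSoleRoot}; if $p$ is separable with two distinct roots in $\L$, then $C((t-1)^n) \in \Mat_n(\L)$ is a q-exceptional $(p,p)$-quotient by Theorem \ref{qtheoPandQsplitSingleroots}, the third subcase of its condition (iv) being the one that applies to $z = 1$ since $z = \delta z^{-1}$.

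We then fix $A', B' \in \GL_n(\L)$ annihilated by $p$ and satisfying $A'(B')^{-1} = C((t-1)^n)$. Through the embedding $\L \subset \Mat_2(\F)$, each $\L$-entry expands into a $2 \times 2$ block with entries in $\F$, yielding matrices $A, B \in \GL_{2n}(\F)$ that are still annihilated by $p$ (the embedding being an $\F$-algebra morphism) and whose quotient $AB^{-1}$ is the $2n \times 2n$ block matrix obtained by replacing each $\L$-entry of $C((t-1)^n)$ by its image in $\Mat_2(\F)$; as those entries all lie in $\{0, 1_\L\}$, each block is $0_2$ or $I_2$.

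A final reordering of the basis, grouping first the odd-indexed vectors and then the even-indexed ones, converts this block pattern into a $2 \times 2$ block-diagonal matrix whose two diagonal blocks both equal $C((t-1)^n) \in \Mat_n(\F)$. Hence $C((t-1)^n) \oplus C((t-1)^n)$ is similar in $\Mat_{2n}(\F)$ to $AB^{-1}$, and is therefore a $(p,p)$-quotient. No serious obstacle arises: all the analytic work is done by the split-case theorems, and the rest is block-matrix bookkeeping identical to that used in the difference analogue.
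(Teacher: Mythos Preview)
Your approach is essentially the same as the paper's, and it is correct in spirit, but there is one small slip you should fix. You assert that the entries of $C\bigl((t-1)^n\bigr)$ all lie in $\{0,1_\L\}$; this is false, since the last column of the companion matrix carries the signed binomial coefficients of $(t-1)^n$. The paper avoids this by working instead with the lower Jordan block (ones on the diagonal and subdiagonal), whose entries really are all $0$ or $1$. Your argument is easily repaired, however: all entries of $C\bigl((t-1)^n\bigr)$ lie in $\F\subset\L$, so under the embedding $\L\hookrightarrow\Mat_2(\F)$ each block is a scalar multiple $cI_2$ of the identity, and the odd/even reordering still splits the resulting $2n\times 2n$ matrix into two diagonal copies of $C\bigl((t-1)^n\bigr)\in\Mat_n(\F)$. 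With that correction, your proof is the paper's proof.
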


Note that if $n$ is even this result is a special case of the Duplication Lemma.

\begin{proof}
Since $C(p)$ is annihilated by $p$, which is irreducible with degree $2$, the $\F$-algebra $\L:=\F\bigl[C(p)\bigr]$
is a splitting field of $p$.
Hence, by Theorem \ref{qtheoPandQsplitSoleRoot} if $p$ is inseparable, and by Theorem \ref{qtheoPandQsplitSingleroots} otherwise,
we know that the matrix
$$
\begin{bmatrix}
1_\L & 0_\L & \cdots & \cdots & (0) \\
1_\L & 1_\L & \ddots & & \vdots \\
0_\L & \ddots & \ddots & \ddots & \vdots \\
\vdots & & \ddots & 1_\L & 0_\L \\
(0) & \cdots & 0_\L & 1_\L & 1_\L
\end{bmatrix}$$
of $\Mat_n(\L)$ is a $(p,p)$-quotient.
Hence, the matrix
$$M_n:=\begin{bmatrix}
I_2 & 0_2 & \cdots & \cdots & (0) \\
I_2 & I_2 & \ddots & & \vdots \\
0_2 & \ddots & \ddots & \ddots & \vdots \\
\vdots & & \ddots & I_2 & 0_2 \\
(0) & \cdots & 0_2 & I_2 & I_2
\end{bmatrix}$$
of $\Mat_{2n}(\F)$ is a $(p,p)$-quotient. By permuting the basis vectors, one sees that $M_n$ is similar to $C\bigl((t-1)^n\bigr) \oplus C\bigl((t-1)^n\bigr)$, and hence
this last matrix is a $(p,p)$-quotient.
\end{proof}

\subsubsection{Conclusion}

We are now ready to conclude the study of the case when $p$ and $q$ are irreducible with the same splitting field.
There are five different subcases to consider:
\begin{itemize}
\item $q$ is not homothetic to $p$, and $p$ is separable;
\item $q$ is not homothetic to $p$, and $p$ is inseparable;
\item $q$ is homothetic to $p$ and $\tr p \neq 0$;
\item $q$ is homothetic to $p$, and $\tr p=0$ and $\car(\F) \neq 2$;
\item $q$ is homothetic to $p$, and $\tr p=0$ and $\car(\F)=2$.
\end{itemize}

\begin{theo}\label{qtheoSamesplitNotHomotheticSeparable}
Let $p$ and $q$ be irreducible monic polynomials with degree $2$ over $\F$, and assume that they have the same splitting field $\L$. Assume further that $q$ is not homothetic to $p$ (over $\F$) and that $p$ is separable, and set $\delta:=p(0)q(0)^{-1}$.
Denote by $x_1,x_2$ (respectively, by $y_1,y_2$) the roots of $p$ (respectively, of $q$) in $\L$.
Let $u$ be an endomorphism of a finite-dimensional vector space $V$ over $\F$. Then, the following conditions are equivalent:
\begin{enumerate}[(i)]
\item $u$ is a q-exceptional $(p,q)$-quotient.
\item The minimal polynomial of $u$ is the product of a power of $t^2-\Tr_{\L/\F}(x_1y_1^{-1})\,t+\delta$
with a power of $t^2-\Tr_{\L/\F}(x_1y_2^{-1})\,t+\delta$.
Moreover, the invariant factors of $u$ read $r_1,r_2,\dots,r_{2k-1},r_{2k},\dots$ where, for every positive integer $k$,
there are integers $a$ and $b$ in $\{0,1\}$ such that
$$r_{2k-1}=r_{2k} \times \bigl(t^2-\Tr_{\L/\F}(x_1y_1^{-1})\,t+\delta\bigr)^a \bigl(t^2-\Tr_{\L/\F}(x_1y_2^{-1})\,t+\delta\bigr)^b.$$
\item In some basis of $V$, the endomorphism $u$ is represented by a block-diagonal matrix in which every diagonal block
equals
$$C\bigl((t^2-\Tr_{\L/\F}(xy^{-1})\,t+\delta)^{k+\epsilon}\bigr)\oplus C\bigl((t^2-\Tr_{\L/\F}(\delta x^{-1}y)\,t+\delta)^k\bigr)$$
for some root $x$ of $p$, some root $y$ of $q$, and some pair $(k,\epsilon)\in \N \times \{0,1\}$.
\end{enumerate}
\end{theo}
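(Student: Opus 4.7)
The plan is to imitate step by step the argument used for Theorem~\ref{theoSamesplitNottranslationSeparable}, replacing the polynomial $F_{p,q}$ by $G_{p,q}$, the auxiliary linear form $v=u^{2}-\delta u$ by $v:=q(0)\,u+p(0)\,u^{-1}$, and the expression $ab^\star+ba^\star-(p(0)+q(0))\id_V$ by its multiplicative avatar $ab^\star+ba^\star$ provided by Remark~\ref{qconjugateremark}. Set
$$r_1:=t^2-\Tr_{\L/\F}(x_1y_1^{-1})\,t+\delta\quad\text{and}\quad r_2:=t^2-\Tr_{\L/\F}(x_1y_2^{-1})\,t+\delta.$$
First I would record that, under the present assumptions, $r_1\neq r_2$ and both are irreducible over $\F$: the separability of $p$ and the fact that $q$ is not homothetic to $p$ rule out any eigenvalue $x_iy_j^{-1}$ belonging to $\F$, so the roots of each $r_j$ form a $\sigma$-conjugate pair, and the two pairs are distinct. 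Hence $G_{p,q}=r_1r_2$ with $r_1,r_2$ coprime, and an endomorphism is q-exceptional with respect to $(p,q)$ iff its minimal polynomial is a product of powers of $r_1$ and $r_2$.

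The equivalence (ii)~$\Leftrightarrow$~(iii) is routine: the coprimality of $r_1$ and $r_2$ lets us split each invariant factor as $C(r_1^{\alpha})\oplus C(r_2^{\beta})$, and the constraint on consecutive pairs of invariant factors translates directly into the block pattern of (iii). The implication (iii)~$\Rightarrow$~(i) is obtained by combining Lemma~\ref{qsamesplitconstructivelemma} (which gives that $C(r_j^{k+\epsilon})\oplus C(r_j^{k})$ is a $(p,q)$-quotient for $\epsilon\in\{0,1\}$) with the Duplication Lemma~\ref{qduplicationlemma} (which handles any block of the form $C(r_j^{k})\oplus C(r_j^{k})$). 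Taking direct sums preserves being a $(p,q)$-quotient.

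The only substantial step is (i)~$\Rightarrow$~(ii). Let $a,b$ be automorphisms of $V$ with $u=ab^{-1}$, $p(a)=q(b)=0$, and set $v:=q(0)\,u+p(0)\,u^{-1}$. By Remark~\ref{qconjugateremark}, $v=ab^\star+ba^\star$, so the Basic Commutation Lemma shows that $a$ and $b$ commute with $v$. The roots of $\Theta_{p,q}$ are $z_1':=\Tr_{\L/\F}(x_1y_2)$ and $z_2':=\Tr_{\L/\F}(x_1y_1)$, which are distinct precisely because $r_1\neq r_2$. Hence $V=V_1\oplus V_2$ where $V_j$ is the characteristic subspace of $v$ associated with $z_j'$; each $V_j$ is stable under $a$ and $b$, and the restriction $u_j$ of $u$ to $V_j$ is a $(p,q)$-quotient whose minimal polynomial is a power of $r_j$ (this is read off the factorization $G_{p,q}(u)=q(0)^{-2}u^2\Theta_{p,q}(v)$, which gives $\Ker r_j(u)^{k}=\Ker(v-z_j'\id_V)^{k}$ on $V_j$, since $u$ is invertible).

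Now for $j\in\{1,2\}$ and $k\in\N^*$, let $n_k^{(j)}$ be the number of elementary invariants of $u$ of the form $r_j^{\ell}$ with $\ell\geq k$. A cyclic $\F[u]$-module of type $\F[t]/(r_j^{\ell})$ has dimension $2\ell$ over $\F$, and on it $v-z_j'\id$ acts as multiplication by a unit multiple of $r_j(u)$, so its kernel up to step $k$ has dimension $2\min(k,\ell)$; summing over the blocks gives the identity
$$n_k(v,z_j')=2\,n_k^{(j)}.$$
Applying Lemma~\ref{CNsamesplitbasicLemma} to the pair $(a,b)$ (and, after swapping $y_1\leftrightarrow y_2$, to obtain the $j=2$ case) yields: whenever $n_k^{(j)}=n_{k+1}^{(j)}$, the integer $n_{k+1}^{(j)}$ is even. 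Exactly as in the end of the proof of Theorem~\ref{theoSamesplitNottranslationSeparable}, this parity constraint on the rectangular piles of primary invariants forces, in any pair $r_{2k-1}=r_1^{N_1}r_2^{N_2}$ and $r_{2k}=r_1^{N'_1}r_2^{N'_2}$ of consecutive invariant factors, the inequalities $N_j\in\{N'_j,N'_j+1\}$ for both $j$, which is condition (ii).

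The main obstacle will be the block-level identification $n_k(v,z_j')=2\,n_k^{(j)}$: it requires verifying that on a cyclic $\F[u]$-module $\F[t]/(r_j^{\ell})$, the element $v-z_j'\id$ coincides up to a unit of $\F[u]/(r_j^\ell)$ with $r_j(u)$. Once that identification is in hand, Lemma~\ref{CNsamesplitbasicLemma} supplies the desired divisibility-by-$4$ statement, and everything else proceeds formally as in the additive analogue.
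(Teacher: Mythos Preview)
Your proposal is correct and follows essentially the same route as the paper's proof: both reduce (iii)~$\Rightarrow$~(i) to Lemma~\ref{qsamesplitconstructivelemma} plus the Duplication Lemma, and for (i)~$\Rightarrow$~(ii) both split $V$ along the characteristic subspaces of $v=q(0)u+p(0)u^{-1}=ab^\star+ba^\star$ and invoke the parity result from Lemma~\ref{CNsamesplitbasicLemma} to force $N_j\in\{N_j',N_j'+1\}$. The identification you flag as the ``main obstacle'' is in fact immediate from the factorization $r_j(u)=q(0)^{-1}u\,(v-z_j'\id_V)$ recorded just before Remark~\ref{qconjugateremark}, which gives $\Ker r_j(u)^k=\Ker(v-z_j'\id_V)^k$ directly without needing to analyze individual cyclic modules.
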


\begin{proof}
With the study from the start of Section \ref{q-exceptionalsectionII}, we know that $r_1:=t^2-\Tr_{\L/\F}(x_1y_1^{-1})\,t+\delta$ and
$r_2:=t^2-\Tr_{\L/\F}(x_1y_2^{-1})\,t+\delta$
are the distinct monic irreducible factors of $G_{p,q}$, and hence $u$ is q-exceptional with respect to $(p,q)$
if and only if the irreducible monic factors of its minimal polynomial are among $r_1$ and $r_2$.

Next, it is obvious that condition (ii) implies condition (iii).
Moreover, by Lemma \ref{qsamesplitconstructivelemma} and the Duplication Lemma, we know that condition (iii) implies condition (i).

In order to conclude, we assume that condition (i) holds and we prove that condition (ii) holds.
For $j \in \{1,2\}$ and $k \in \N^*$, denote by $n_k^{(j)}$ the number of diagonal blocks
equal to $C\bigl((t^2-\Tr_{\L/\F}(x_1y_j^{-1})\,t+\delta)^l\bigr)$, for some $l \geq k$, in the primary canonical form of $u$.
Set $v:=q(0)u+p(0)u^{-1}$ and choose endomorphisms $a$ and $b$ of $V$ such that $u=ab^{-1}$ and $p(a)=q(b)=0$.
Since $\Tr_{\L/\F}(x_1y_1) \neq \Tr_{\L/\F}(x_1y_2)$, we have
$V=V_1 \oplus V_2$ where $V_j$ denotes the characteristic subspace of $v$ for the eigenvalue $\Tr_{\L/\F}(x_1y_{3-j})$.
As $a$ and $b$ commute with $v$, we find that they stabilize $V_1$ and $V_2$.
Fixing $j \in \{1,2\}$ and denoting by $u_j$ (respectively, by $v_j$) the endomorphism of $V_j$ induced by $u$ (respectively, by $v$), we get that
the minimal polynomial of $u_j$ is a power of $r_j$ and that $u_j$ is a $(p,q)$-quotient.
Moreover, for all $k \in \N^*$ and all $j \in \{1,2\}$, we have
$$2 n_k^{(j)}=n_k\bigl(v_j,\Tr_{\L/\F}(x_1y_{3-j})\bigr).$$
Hence, by Lemma \ref{CNsamesplit}, we find that if $n_{k+1}^{(j)}$ is odd then $n_{k}^{(j)}>n_{k+1}^{(j)}$.

Finally, let us consider the invariant factors $s_1,\dots,s_k,\dots$ of $u$.
Since $u$ is q-exceptional with respect to $(p,q)$ we know that each $s_k$ is the product of a power of $r_1$ with a power of $r_2$.
Let $k \in \N^*$ and write $s_{2k-1}=r_1^{N_1} r_2^{N_2}$ and $s_{2k}=r_1^{N'_1} r_2^{N'_2}$,
so that $N_1 \geq N'_1$ and $N_2 \geq N'_2$.
If $N_1 > N'_1+1$ then $n_{N'_1+1}^{(1)}=n_{N'_1+2}^{(1)}=2k-1$ and we have a contradiction with the above result.
Hence, $N_1 \in \{N'_1+1,N'_1\}$. Likewise, we obtain $N_2 \in \{N'_2+1,N'_2\}$.
This yields condition (ii), which completes the proof.
\end{proof}

\begin{theo}\label{qtheoSamesplitNotHomotheticInseparable}
Let $p$ and $q$ be irreducible monic polynomials with degree $2$ over $\F$, and assume that they have the same splitting field $\L$. Assume further that $q$ is not homothetic to $p$ (over $\F$) and that $p$ is inseparable.
Write $p=t^2+\alpha$ and $q=t^2+\beta$ for some $(\alpha,\beta)\in \F^2$.
Let $u$ be an endomorphism of a finite-dimensional vector space over $\F$. Then, the following conditions are equivalent:
\begin{enumerate}[(i)]
\item $u$ is a q-exceptional $(p,q)$-quotient.
\item The minimal polynomial of $u$ is a power of $t^2+\alpha\beta^{-1}$. Moreover,
the invariant factors of $u$ read $r_1,r_2,\dots,r_{2k-1},r_{2k},\dots$ where, for every positive integer $k$,
we have $r_{2k-1}=r_{2k}$ or $r_{2k-1}=(t^2+\alpha\beta^{-1})r_{2k}$.
\item In some basis of $V$, the endomorphism $u$ is represented by a block-diagonal matrix in which every diagonal block
equals
$$C\bigl((t^2+\alpha\beta^{-1})^{n+\epsilon}\bigr) \oplus C\bigl((t^2+\alpha\beta^{-1})^n\bigr)$$
for some $n \in \N$ and some $\epsilon \in \{0,1\}$.
\end{enumerate}
\end{theo}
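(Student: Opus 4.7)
The proof follows closely the strategy of Theorem~\ref{qtheoSamesplitNotHomotheticSeparable}, with simplifications coming from the inseparability of $p$ and $q$. First, since $p$ is inseparable we have $\car(\F)=2$ and $\tr p=\tr q=0$. Setting $\delta:=\alpha\beta^{-1}=p(0)q(0)^{-1}$, a direct computation gives $G_{p,q}(t)=(t^2+\delta)^2$ and $\Theta_{p,q}(t)=t^2$. The polynomial $r:=t^2+\delta$ is irreducible over $\F$: otherwise $\delta=d^2$ for some $d\in\F$, forcing $q=H_d(p)$ and contradicting the hypothesis. Hence $u$ is q-exceptional with respect to $(p,q)$ exactly when its minimal polynomial is a power of $r$; the equivalence (ii)$\Leftrightarrow$(iii) then follows by a formal sorting argument on exponents of invariant factors built from powers of a single irreducible.

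For (iii)$\Rightarrow$(i), I would invoke two construction lemmas. The Duplication Lemma (Lemma~\ref{qduplicationlemma}) applied to $s:=t^n$ produces the balanced block $C(r^n)\oplus C(r^n)$, via the identity $R_\delta(t^n)=t^n(t+\delta t^{-1})^n=(t^2+\delta)^n=r^n$. For the unbalanced block $C(r^{n+1})\oplus C(r^n)$, I would use Lemma~\ref{qsamesplitconstructivelemma} with $x\in\Root(p)$ and $y\in\Root(q)$: the hypothesis $xy^{-1}\notin\F$ is equivalent to $\delta$ being a non-square in $\F$, and in characteristic~$2$ the scalar $\Tr_{\L/\F}(xy^{-1})=2xy^{-1}$ vanishes, so the target polynomial $t^2-\Tr_{\L/\F}(xy^{-1})t+\delta$ coincides with $r$.

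The core of the proof is (i)$\Rightarrow$(ii). Writing $u=ab^{-1}$ with $p(a)=q(b)=0$ and setting $v:=q(0)\,u+p(0)\,u^{-1}$, Remark~\ref{qconjugateremark} gives $v=ab^\star+ba^\star$. Since $\Theta_{p,q}(v)=v^2$ and $u$ is q-exceptional, $v$ is nilpotent, so its sole eigenvalue is $0$. The identity $vu=q(0)\,r(u)$ (together with the invertibility of $u$) yields $\Ker v^k=\Ker r(u)^k$ for every $k\geq 0$, so $n_k(v,0)=2 M_k$, where $M_k$ denotes the number of primary blocks $C(r^{e_i})$ of $u$ satisfying $e_i\geq k$. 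Invoking Lemma~\ref{CNsamesplitbasicLemma}, whose distinguished scalar $z=x_1y_2+x_2y_1=2x_1y_1$ vanishes in characteristic~$2$, I obtain: whenever $n_k(v,0)=n_{k+1}(v,0)$, this common value is a multiple of $4$; equivalently, $M_{k+1}$ odd forces $M_k>M_{k+1}$. Running the same pairing scheme as in the final paragraph of the proof of Theorem~\ref{qtheoSamesplitNotHomotheticSeparable}, one shows that consecutive invariant factors $r_{2k-1}=r^{e_{2k-1}}$ and $r_{2k}=r^{e_{2k}}$ must satisfy $e_{2k-1}-e_{2k}\in\{0,1\}$: any gap $\geq 2$ would force $M_{e_{2k}+1}=M_{e_{2k}+2}=2k-1$, violating the strict-drop rule.

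The principal obstacle is the careful reuse of Lemma~\ref{CNsamesplitbasicLemma}: although introduced in the $(p,q)$-differences section, the lemma is actually a statement about the endomorphism $ab^\star+ba^\star$ per se, and via Remark~\ref{qconjugateremark} transfers unchanged to the quotient setting. One must then check that the lemma's scalar $z$ degenerates to $0$ in characteristic~$2$, which is exactly what makes its conclusion compatible with the nilpotency of $v$.
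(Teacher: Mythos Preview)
Your proposal is correct and follows essentially the same approach as the paper: the paper's own proof simply refers back to Theorem~\ref{qtheoSamesplitNotHomotheticSeparable}, noting that here $G_{p,q}=(t^2+\alpha\beta^{-1})^2$, $\Theta_{p,q}=t^2$, and $v$ has $0$ as its sole eigenvalue. Your explicit identification $vu=q(0)\,r(u)$ and the direct appeal to Lemma~\ref{CNsamesplitbasicLemma} via Remark~\ref{qconjugateremark} (with $z=x_1y_2+x_2y_1=0$) are exactly the ingredients the paper is implicitly invoking through its reference to Corollary~\ref{CNsamesplit}, just unpacked more transparently.
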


\begin{proof}
Here, $G_{p,q}=(t^2+\alpha\beta^{-1})^2$, $\Theta_{p,q}=t^2$ and $t^2+\alpha\beta^{-1}$ is irreducible over $\F$ because $q$ is not
homothetic to $p$.
From there, the proof is essentially similar to the one of Theorem \ref{qtheoSamesplitNotHomotheticSeparable}, the only difference being that, in the proof
that (i) implies (ii), the endomorphism $v:=q(0)\,u+p(0)\,u^{-1}$ has $0$ as its sole eigenvalue.
\end{proof}

\begin{theo}
Let $p$ and $q$ be irreducible monic polynomials with degree $2$ over $\F$.
Assume that $q=H_d(p)$ for some $d \in \F \setminus \{0\}$, and that $\tr p \neq 0$.
Set $r:=t^2-\frac{\tr p \tr q-2d q(0)}{q(0)}\,t+\frac{p(0)}{q(0)}$.
Let $u$ be an endomorphism of a finite-dimensional vector space $V$ over $\F$. Then, the following conditions are equivalent:
\begin{enumerate}[(i)]
\item $u$ is a q-exceptional $(p,q)$-quotient.
\item The minimal polynomial of $u$ is the product of a power of $t-d$ with a power of $r$.
Moreover, the invariant factors of $u$ read $s_1,s_2,\dots,s_{2k-1},s_{2k},\dots$ where, for every positive integer $k$,
we have $s_{2k-1}=s_{2k}$ or $s_{2k-1}=r\,s_{2k}$.
\item In some basis of $V$, the endomorphism $u$ is represented by a block-diagonal matrix in which every diagonal block
equals either $C(r^{n+\epsilon}) \oplus C(r^n)$ for some $n \in \N$ and some $\epsilon \in \{0,1\}$, or
$C\bigl((t-d)^n\bigr) \oplus C\bigl((t-d)^n\bigr)$ for some $n \in \N^*$.
\end{enumerate}
\end{theo}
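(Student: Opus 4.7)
The plan is to mimic the proof strategy of the analogous theorem for differences (the case $q=p(t+d)$, $\car(\F)\neq 2$), replacing the translation reduction by a homothety. Since $b$ is annihilated by $q=H_d(p)$ if and only if $db$ is annihilated by $p$, the automorphism $u=ab^{-1}$ is a $(p,H_d(p))$-quotient if and only if $d^{-1}u=a(db)^{-1}$ is a $(p,p)$-quotient, and the three conditions transform accordingly. From now on, I assume $p=q$ and $d=1$, so that $r(t)=t^2-\frac{(\tr p)^2-2p(0)}{p(0)}\,t+1$ has roots $x_1/x_2, x_2/x_1$ in $\L\setminus\F$ (irreducibility of $r$ uses $\tr p \neq 0$, which also forces $p$ to be separable and the roots $x_1,x_2$ to be distinct with $x_1\neq -x_2$), and $G_{p,p}=(t-1)^2 r$.

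The equivalence (ii) $\Leftrightarrow$ (iii) is then immediate from the coprimality of $t-1$ and $r$: each invariant factor splits as $C(s_k)\simeq C((t-1)^{a_k})\oplus C(r^{b_k})$, and the condition on consecutive pairs in (ii) (equivalent to $a_{2k-1}=a_{2k}$ and $b_{2k-1}-b_{2k}\in\{0,1\}$) regroups into the blocks of (iii). For (iii) $\Rightarrow$ (i), I would invoke the Duplication Lemma (Lemma \ref{qduplicationlemma}) to handle $C(r^n)\oplus C(r^n)$ (noting $r=R_1(t-c)$ where $c=\frac{(\tr p)^2-2p(0)}{p(0)}$), Lemma \ref{qsamesplitconstructivelemma} to handle $C(r^{n+1})\oplus C(r^n)$, and Lemma \ref{qCSp=q} to handle $C((t-1)^n)\oplus C((t-1)^n)$.

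For the main implication (i) $\Rightarrow$ (ii), I write $u=ab^{-1}$ with $p(a)=p(b)=0$ and set $v:=p(0)(u+u^{-1})=ab^\star+ba^\star$. By the Commutation Lemma, $v$ commutes with $a$ and $b$. Since $u$ is q-exceptional, $v$ is annihilated by a power of $\Theta_{p,p}(t)=(t-2p(0))(t-((\tr p)^2-2p(0)))$, whose two roots are distinct thanks to $\tr p\neq 0$. Decomposing $V=V_1\oplus V_2$ into the characteristic subspaces of $v$ corresponding to $2p(0)$ and $z_2:=(\tr p)^2-2p(0)$ respectively, both pieces are stable under $a$ and $b$. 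The identities $v-2p(0)\,\id_V = p(0)(u-\id_V)^2 u^{-1}$ and $v-z_2\,\id_V = p(0)\,u^{-1}\,r(u)$ show that $u|_{V_1}$ has minimal polynomial a power of $t-1$ and $u|_{V_2}$ has minimal polynomial a power of $r$. On $V_1$, Proposition \ref{qstablep=qeven} ensures that the number of Jordan cells of $u|_{V_1}-\id_{V_1}$ of each size is even, so the exponents $a_k^{(1)}$ in the invariants of $u|_{V_1}$ come in equal consecutive pairs. On $V_2$, the second identity yields $\Ker(v|_{V_2}-z_2\,\id)^k=\Ker r(u|_{V_2})^k$, so the multiplicity $n_k(v|_{V_2},z_2)$ equals twice the number $m_k$ of blocks $C(r^\ell)$ of $u|_{V_2}$ with $\ell\geq k$; Corollary \ref{CNsamesplit} then forces $m_{k+1}$ to be even whenever $m_k=m_{k+1}$, which (contrapositively) amounts to $b_{2k-1}^{(2)}-b_{2k}^{(2)}\in\{0,1\}$ for all $k$. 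The Chinese Remainder structure of the invariants of $u$ then delivers condition (ii).

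The main obstacle will be carefully tracking the homothety reduction and confirming that the eigenvalue decomposition of $v$ produces exactly the two required pieces $V_1$ and $V_2$ with the intended identification of their minimal polynomials; a secondary nuisance is verifying that $r$ is indeed irreducible under the assumption $\tr p \neq 0$, since the roots $x_1/x_2, x_2/x_1$ of $r$ could a priori coincide with elements of $\F$ (they do not, precisely because $\tr p \neq 0$ excludes $x_1=-x_2$ while $p$ being irreducible and separable excludes $x_1=x_2$).
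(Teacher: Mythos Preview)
Your proposal is correct and follows essentially the same route as the paper: reduce to $p=q$, $d=1$ via the homothety trick; obtain (iii) $\Rightarrow$ (i) from the Duplication Lemma together with Lemmas \ref{qsamesplitconstructivelemma} and \ref{qCSp=q}; and for (i) $\Rightarrow$ (ii), split along the two eigenvalues of $\Theta_{p,p}$, handle the $(t-1)$-part via Proposition \ref{qstablep=qeven}, and the $r$-part via the parity constraint coming from $w=ab^\star+ba^\star$. One small citation point: Corollary \ref{CNsamesplit} is phrased in the difference setting (with $v=u^2-\delta u$), so strictly speaking you should invoke Lemma \ref{CNsamesplitbasicLemma} directly, which is stated for $w=ab^\star+ba^\star$ and therefore applies verbatim to your $v=p(0)(u+u^{-1})$; the paper does the same (via the proof of Theorem \ref{qtheoSamesplitNotHomotheticSeparable}).
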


\begin{proof}
As we have seen in the beginning of Section \ref{q-exceptionalsectionIII},
$G_{p,q}(t)=(t-d)^2 r$ and $r$ is irreducible.
Moreover, $r=t^2-\Tr_{\L/\F}(dx_1x_2^{-1})\,t+\delta$, where $\L$ denotes the splitting field of
$p$, and $p(t)=(t-x_1)(t-x_2)$ and $\delta:=p(0)q(0)^{-1}$.

Moreover, as we can safely replace $u$ with $d^{-1}u$, we lose no generality in assuming that $p=q$, in which case $d=1$.
From there, the implication (ii) $\Rightarrow$ (iii) is obvious, and (iii) $\Rightarrow$ (i)
is readily obtained by applying the Duplication Lemma together with Lemmas \ref{qsamesplitconstructivelemma} and \ref{qCSp=q}.

Assume finally that condition (i) holds. For $k \in \N^*$, denote in the primary canonical form of $u$, by $n_k$
the number of blocks of type $C\bigl((t-1)^k\bigr)$, and by $m_k$
the number of blocks of type $C(r^l)$ for some $l \geq k$.

By Proposition \ref{qstablep=qeven}, we find that $n_k$ is even for all $k \in \N^*$.

Noting that $\Theta_{p,q}=\bigl(t-\Tr_{\L/\F}(x_1^2)\bigr)\bigl(t-\Tr_{\L/\F}(x_1x_2)\bigr)$, we
use the same line of reasoning as in the proof of Theorem \ref{qtheoSamesplitNotHomotheticSeparable} to gather that
$m_{k}>m_{k+1}$ whenever $m_{k+1}$ is odd. The derivation of (ii) is then achieved as in the proof of
Theorem \ref{qtheoSamesplitNotHomotheticSeparable}.
\end{proof}

\begin{theo}\label{qtheoSamesplitOppositeRoots}
Let $p$ and $q$ be irreducible monic polynomials with degree $2$ over $\F$.
Assume further that $q=H_d(p)$ for some $d \in \F \setminus \{0\}$, and that $\tr p=0$ and $\car(\F) \neq 2$.
Let $u$ be an endomorphism of a finite-dimensional vector space $V$ over $\F$. Then, the following conditions are equivalent:
\begin{enumerate}[(i)]
\item $u$ is a q-exceptional $(p,q)$-quotient.
\item The minimal polynomial of $u$ is the product of a power of $t-d$ with a power of $t+d$.
Moreover, the invariant factors of $u$ read $s_1,s_2,\dots,s_{2k-1},s_{2k},\dots$ where, for every positive integer $k$,
we have $s_{2k-1}=s_{2k}$.
\item In some basis of $V$, the endomorphism $u$ is represented by a block-diagonal matrix in which every diagonal block
equals $C\bigl((t-d)^n\bigr) \oplus C\bigl((t-d)^n\bigr)$ or
$C\bigl((t+d)^n\bigr) \oplus C\bigl((t+d)^n\bigr)$ for some $n \in \N^*$.
\end{enumerate}
\end{theo}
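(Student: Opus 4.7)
The plan is to reduce the statement to the already-treated case $p = q$ via the homothety substitution, and then to assemble the conclusion from Lemma \ref{qCSp=q} and Proposition \ref{qstablep=qeven}. Since $\tr p = 0$ and $q = H_d(p)$, one has $p(t) = t^2 + \alpha$ and $q(t) = t^2 + \alpha d^{-2}$ for some nonzero $\alpha \in \F$, so $\delta = d^2$ and $\Root(p)\Root(q)^{-1} = \{d, -d\}$ (these two scalars being distinct because $\car(\F) \neq 2$). If $u = ab^{-1}$ with $p(a) = q(b) = 0$, then $b' := db$ satisfies $p(b') = d^2\, q(b) = 0$ and $u = d\, a(b')^{-1}$, so $d^{-1}u$ is a $(p,p)$-quotient; the converse direction is identical. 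Moreover, $u$ is q-exceptional with respect to $(p,q)$ if and only if $d^{-1}u$ is q-exceptional with respect to $(p,p)$, and the invariant factor $(t \mp d)^n$ of $u$ corresponds to the invariant factor $(t \mp 1)^n$ of $d^{-1}u$. Hence no generality is lost in assuming $p = q$ and $d = 1$.

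The equivalence of (ii) and (iii) is then immediate: since $t - 1$ and $t + 1$ are coprime in $\car(\F) \neq 2$, writing each invariant factor as $s_i = (t-1)^{\alpha_i}(t+1)^{\beta_i}$ yields $C(s_i) \simeq C((t-1)^{\alpha_i}) \oplus C((t+1)^{\beta_i})$, and the condition $s_{2k-1} = s_{2k}$ is equivalent to both exponent sequences $(\alpha_i)$ and $(\beta_i)$ pairing up, which amounts to decomposing $V$ into duplicated pairs $C((t \pm 1)^k)^{\oplus 2}$.

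For (iii) $\Rightarrow$ (i), Lemma \ref{qCSp=q} directly gives that $C((t-1)^n)^{\oplus 2}$ is a $(p,p)$-quotient for each $n \in \N^*$. For the other type of block, I would use that $\tr p = 0$ allows $a \mapsto -a$ to preserve $p(a) = 0$: if $M = ab^{-1}$ is a $(p,p)$-quotient, then so is $-M = (-a)b^{-1}$, and $-C((t-1)^n)^{\oplus 2} \simeq C((t+1)^n)^{\oplus 2}$.

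The main work lies in (i) $\Rightarrow$ (ii). Given $u = ab^{-1}$ with $p(a) = p(b) = 0$ and all eigenvalues in $\{1, -1\}$, I would first split $V = V_1 \oplus V_{-1}$ into the characteristic subspaces of $u$. By the Commutation Lemma (Lemma \ref{qcommutelemma}), both $a$ and $b$ commute with $v := u + u^{-1}$, whose eigenvalues on $V_1$ and $V_{-1}$ are respectively $2$ and $-2$, distinct because $\car(\F) \neq 2$; hence $a$ and $b$ stabilize the splitting. Proposition \ref{qstablep=qeven} applied to $u_{|V_1}$ shows that the Jordan cells of $u_{|V_1}$ of any size $k \geq 1$ for eigenvalue $1$ come in pairs. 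For $V_{-1}$ one invokes the $\tr p = 0$ trick once more: $-u_{|V_{-1}} = (-a_{|V_{-1}}) b_{|V_{-1}}^{-1}$ is also a $(p,p)$-quotient and has only eigenvalue $1$, so Proposition \ref{qstablep=qeven} produces the analogous pairing for Jordan cells of $u_{|V_{-1}}$ for eigenvalue $-1$. Combining both statements via the coprimality of $t - 1$ and $t + 1$, the invariant factors of $u$ satisfy $s_{2k-1} = s_{2k}$, which is condition (ii). The only real obstacle is that Proposition \ref{qstablep=qeven} directly yields the pairing only for eigenvalue $1$, so one must apply the $\tr p = 0$ trick a second time, which is precisely why the result fails (in this clean form) outside characteristic $\neq 2$.
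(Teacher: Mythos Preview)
Your proof is correct and follows essentially the same route as the paper's. The paper's own proof is extremely terse (``the proof is similar to the ones of the previous three theorems''), and your write-up fills in exactly the details one would expect: the homothety reduction to $p=q$, Lemma~\ref{qCSp=q} for the blocks $C((t-1)^n)^{\oplus 2}$, the splitting $V=V_1\oplus V_{-1}$ via the Commutation Lemma, and Proposition~\ref{qstablep=qeven} for the eigenvalue~$1$. The one ingredient you supply that the paper leaves implicit is the observation that $\tr p=0$ makes $-a$ again a root of $p$, so that $-u$ is also a $(p,p)$-quotient; this is precisely what is needed to transfer Proposition~\ref{qstablep=qeven} from eigenvalue~$1$ to eigenvalue~$-1$, and is the natural way to carry out the paper's sketch. (Incidentally, you do not actually need to restrict to $V_{-1}$ before applying the trick: Proposition~\ref{qstablep=qeven} applied directly to $-u=(-a)b^{-1}$ on all of $V$ already gives the parity for eigenvalue~$-1$, so the splitting is only needed to justify that the two eigenvalue pieces can be analyzed independently, which you have done correctly anyway.)
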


\begin{proof}
Here, we see that $G_{p,q}=(t-d)^2(t+d)^2$ and $\Theta_{p,q}=\bigl(t-2dp(0)\bigr)\bigl(t+2dp(0)\bigr)$.
From there, the proof is similar to the ones of the previous three theorems.
\end{proof}

With a similar proof, we obtain:

\begin{theo}\label{qtheoSamesplitTranslationOppositeRootsCar2}
Let $p$ and $q$ be irreducible monic polynomials with degree $2$ over $\F$.
Assume further that $q=H_d(p)$ for some $d \in \F \setminus \{0\}$, and that $\tr p=0$ and $\car
(\F)=2$.
Let $u$ be an endomorphism of a finite-dimensional vector space $V$ over $\F$. Then, the following conditions are equivalent:
\begin{enumerate}[(i)]
\item $u$ is a q-exceptional $(p,q)$-quotient.
\item The minimal polynomial of $u$ is a power of $t-d$.
Moreover, the invariant factors of $u$ read $s_1,s_2,\dots,s_{2k-1},s_{2k},\dots$ where, for every positive integer $k$,
we have $s_{2k-1}=s_{2k}$.
\item In some basis of $V$, the endomorphism $u$ is represented by a block-diagonal matrix in which every diagonal block
equals $C\bigl((t-d)^n\bigr) \oplus C\bigl((t-d)^n\bigr)$ for some $n \in \N^*$.
\end{enumerate}
\end{theo}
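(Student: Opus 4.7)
The plan is to mimic closely the pattern of Theorems \ref{qtheoSamesplitNotHomotheticInseparable}--\ref{qtheoSamesplitOppositeRoots}, with a preliminary homothetic reduction that turns the situation into the case $p=q$. First I would record the elementary computations: since $\car(\F)=2$ and $\tr p=0$, we have $p=t^2+\alpha$ for some $\alpha\in\F\setminus\F^2$, and since $q=H_d(p)$ with $d\in\F\setminus\{0\}$ we get $q=t^2+d^{-2}\alpha$, so $\tr q=0$ as well. Over the common splitting field $\L=\F(\sqrt\alpha)$, the sole roots are $x_1=x_2=\sqrt\alpha$ and $y_1=y_2=\sqrt\alpha/d$, whence
\[
G_{p,q}(t)=(t-d)^{4} \quad\text{and}\quad \Theta_{p,q}(t)=t^{2}.
\]
Thus $u$ is q-exceptional with respect to $(p,q)$ if and only if its minimal polynomial is a power of $t-d$, which already gives the first half of (ii).

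Next I would perform the homothetic reduction. By the general observation recalled in the introduction, $u$ is a $(p,q)$-quotient if and only if $d^{-1}u$ is a $(q,q)$-quotient. Since the invariant factors of $d^{-1}u$ are obtained from those of $u$ by the substitution $t\mapsto dt$ (up to normalization), the pairing condition $s_{2k-1}=s_{2k}$ for the invariant factors of $u$ is equivalent to the analogous pairing condition for the invariant factors of $d^{-1}u$, and likewise condition (iii) becomes: $d^{-1}u$ is represented by a block-diagonal matrix whose diagonal blocks are of the form $C((t-1)^n)\oplus C((t-1)^n)$. Hence no generality is lost in assuming $p=q$ and $d=1$ for the rest of the proof.

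The equivalence (ii) $\Leftrightarrow$ (iii) is then immediate: since every invariant factor is a power of $t-1$, the pairing $s_{2k-1}=s_{2k}$ translates directly into a block decomposition with blocks of type $C((t-1)^n)\oplus C((t-1)^n)$, and conversely. For the implication (iii) $\Rightarrow$ (i), I would observe that it suffices, by direct summation, to prove that each block $C((t-1)^n)\oplus C((t-1)^n)$ is a $(p,p)$-quotient, and this is precisely the content of Lemma \ref{qCSp=q}.

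Finally, for the implication (i) $\Rightarrow$ (ii), assume $u=ab^{-1}$ with $p(a)=p(b)=0$. The minimal polynomial of $u$ is a power of $t-1$ by the preliminary discussion, so every invariant factor has the form $(t-1)^{n_k}$. The substantive content is the pairing condition, and this is exactly what Proposition \ref{qstablep=qeven} supplies: applied to the pair $(a,b)$, it asserts that for every $k\in\N^{*}$ the endomorphism $u-\id_V=ab^{-1}-\id_V$ has an even number of Jordan cells of size $k$ at the eigenvalue $0$, which is equivalent to saying that the number of invariant factors of $u$ equal to $(t-1)^{k}$ is even. Gathering these parities in decreasing order yields the required equality $s_{2k-1}=s_{2k}$ for all $k\geq 1$. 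There is no serious obstacle here: once the homothetic reduction has collapsed the problem to $p=q$, both Lemma \ref{qCSp=q} and Proposition \ref{qstablep=qeven} do all the work, and the argument is essentially bookkeeping.
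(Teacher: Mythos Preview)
Your proposal is correct and follows essentially the same approach as the paper: the paper's proof is simply ``With a similar proof, we obtain:'', referring back to the preceding homothetic case, which performs exactly the reduction to $p=q$, $d=1$ and then invokes Lemma~\ref{qCSp=q} for (iii)~$\Rightarrow$~(i) and Proposition~\ref{qstablep=qeven} for (i)~$\Rightarrow$~(ii). Your preliminary computation that $G_{p,q}=(t-d)^4$ and $\Theta_{p,q}=t^2$ is correct and confirms that only the single eigenvalue $d$ occurs, so no analogue of the second eigenvalue in Theorem~\ref{qtheoSamesplitOppositeRoots} needs to be handled.
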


Using the above five theorems, it is easy to derive the classification of
q-exceptional $(p,q)$-quotients when $p$ and $q$ are both irreducible and have the same splitting field,
as given in Table \ref{qfigure7}.

\subsection{Exceptional $(p,q)$-quotients (IV): When $p$ and $q$ are irreducible with distinct splitting fields}\label{q-exceptionalsectionIV}

In this final section, we tackle the case when both polynomials $p$ and $q$ are irreducible with distinct
splitting fields in $\overline{\F}$.
The discussion will basically be split into three subcases, whether both $p$ and $q$ are separable,
both are inseparable, or exactly one of them is separable.
In the first two cases, one basic trick will be to extend the field of scalars by using $\Theta_{p,q}$ and
the Jordan-Chevalley decomposition of $v:=q(0)u+p(0)u^{-1}$,
which will allow us to use the results from Section \ref{q-exceptionalsectionIII}.

\subsubsection{Case 1. Both $p$ and $q$ are separable}\label{qbothPandQseparableDistinctsplittingfields}

In that case, the splitting field $\L$ of $pq$ in $\overline{\F}$ is known to be a Galois extension of $\F$
with degree $4$. Moreover, the Galois group of $\L$ over $\F$ contains two elements $\sigma$ and $\tau$
such that $\sigma$ exchanges the two roots of $q$ and fixes the ones of $p$,
and $\tau$ exchanges the two roots of $p$ and fixes the ones of $q$. It follows that $\Gal(\L/\F)$
acts transitively on $\Root(p)\Root(q)^{-1}$, whence $G_{p,q}$ is a power of some irreducible monic polynomial of
$\F[t]$. Once more, we set $\delta:=p(0)q(0)^{-1}$.

Let us split $p=(t-x_1)(t-x_2)$ and $q=(t-y_1)(t-y_2)$ in $\L[t]$.
If $x_1y_1^{-1},x_1y_2^{-1},x_2y_1^{-1},x_2y_2^{-1}$ are pairwise distinct, then $G_{p,q}$
is irreducible over $\F$ and separable.

\begin{lemma}\label{qFpqirrLemma}
The polynomial $\Theta_{p,q}$ is irreducible (and separable) over $\F$.
The polynomial $G_{p,q}$ is irreducible over $\F$ if and only if $(\tr p,\tr q) \neq (0,0)$.
Moreover, if $\tr p=\tr q=0$ then $G_{p,q}=(t^2-\delta)^2$, and $\Theta_{p,q}=t^2-4p(0)q(0)$.
\end{lemma}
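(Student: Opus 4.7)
My plan is to use the Galois structure developed in this subsection. The splitting field $\L$ of $pq$ is Galois over $\F$ with group $(\Z/2)^2$ generated by $\sigma$ (fixing $x_1,x_2$, swapping $y_1,y_2$) and $\tau$ (fixing $y_1,y_2$, swapping $x_1,x_2$). In $\L$, the roots of $\Theta_{p,q}$ are $r := x_1y_1+x_2y_2$ and $r' := x_1y_2+x_2y_1$, while those of $G_{p,q}$ are the four elements $x_iy_j^{-1}$. For $\Theta_{p,q}$, the difference $r-r' = (x_1-x_2)(y_1-y_2)$ is nonzero by separability, so the two roots are distinct and $\Theta_{p,q}$ is separable; moreover $\sigma(r) = r' \neq r$ shows $r \notin \F$, so the monic quadratic $\Theta_{p,q}$ must be the minimal polynomial of $r$ over $\F$ and is therefore irreducible.

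For $G_{p,q}$, the key observation is that $\Gal(\L/\F)$ acts transitively on the formal list $\{x_iy_j^{-1}\}$ of the four roots of $G_{p,q}$, so if these four are pairwise distinct in $\L$, the orbit of $x_1y_1^{-1}$ has full size $4$ and $G_{p,q}$ is its minimal polynomial over $\F$, hence irreducible. Conversely, if $G_{p,q}$ is irreducible it has no repeated roots. Thus the problem reduces to determining when some pair among the four coincides. Separability of $p$ and $q$ rules out every such collision except two symmetric ones: Case A, $x_1y_1^{-1} = x_2y_2^{-1}$ (equivalently $x_1y_2 = x_2y_1$), and Case B, $x_1y_2^{-1} = x_2y_1^{-1}$ (equivalently $x_1y_1 = x_2y_2$). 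A single application of $\sigma$ turns an identity of Case A into one of Case B, so the two are equivalent. Assuming both, adding the identities gives $(x_1-x_2)(y_1+y_2)=0$ and subtracting gives $(x_1+x_2)(y_1-y_2)=0$; separability then forces $\tr q = 0$ and $\tr p = 0$. The converse implication is immediate from $x_2 = -x_1,\ y_2 = -y_1$. Note that in characteristic $2$, separability of $p$ and $q$ forces $\tr p \neq 0$ and $\tr q \neq 0$, so both sides of the claimed equivalence are trivially true and there is nothing to check beyond what has just been established.

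For the ``Moreover'' part, we are in characteristic not $2$ with $\tr p = \tr q = 0$, so $p = t^2+p(0)$ and $q=t^2+q(0)$, yielding $x_1^2 = -p(0)$ and $y_1^2 = -q(0)$. The four roots of $G_{p,q}$ collapse to the two elements $\pm x_1 y_1^{-1}$, each with multiplicity $2$; since $(x_1y_1^{-1})^2 = -p(0)/(-q(0)) = \delta$, this gives $G_{p,q} = (t^2-\delta)^2$. Similarly the roots of $\Theta_{p,q}$ become $\pm 2 x_1 y_1$, yielding $\Theta_{p,q} = t^2 - 4 x_1^2 y_1^2 = t^2 - 4 p(0) q(0)$. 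The main obstacle is the combinatorial bookkeeping of the four roots of $G_{p,q}$ under the Galois action and the verification that the only non-trivial coincidences come in the paired form (A, B); once these are seen to be equivalent and to jointly force the vanishing of both traces, everything else reduces to direct computation.
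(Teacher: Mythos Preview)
Your proof is correct and follows essentially the same approach as the paper: both exploit the transitive action of $\Gal(\L/\F)\simeq(\Z/2)^2$ on the roots of $G_{p,q}$ and $\Theta_{p,q}$, reduce reducibility of $G_{p,q}$ to a coincidence among the $x_iy_j^{-1}$, and show that such a coincidence forces $\tr p=\tr q=0$. The only minor difference is that your ``Conversely, if $G_{p,q}$ is irreducible it has no repeated roots'' step is unnecessary---the paper instead closes the equivalence by the explicit computation $G_{p,q}=(t^2-\delta)^2$ when both traces vanish, which you also carry out in the ``Moreover'' part.
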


\begin{proof}
Note first that the Galois group of $\L$ over $\F$ acts transitively on $\{x_1y_1+x_2y_2,x_1y_2+x_2y_1\}$.
Moreover, $(x_1y_1+x_2y_2)-(x_1y_2+x_2y_1)=(x_1-x_2)(y_1-y_2) \neq 0$. Hence, $\Theta_{p,q}$
is irreducible over $\F$.

Assume that $G_{p,q}$ is reducible over $\F$.
Remember that $G_{p,q}$ has no root in $\F$ and that the Galois group of $\L$ over $\F$ acts transitively on the set of its roots.
Hence, two scalars among $x_1y_1^{-1},x_1y_2^{-1},x_2y_1^{-1},x_2y_2^{-1}$
are equal. Since $y_1 \neq y_2$, $x_1 \neq x_2$, and the $x_i$'s and the $y_j$'s are all nonzero, it follows that $x_1y_1^{-1}=x_2y_2^{-1}$ or $x_1y_2^{-1}=x_2y_1^{-1}$.
Applying the $\sigma$ automorphism, we see that both equalities actually hold, whence $y_1 y_2^{-1}=x_1 x_2^{-1}=y_2 y_1^{-1}$. Hence $y_2=\pm y_1$, and since $y_1 \neq y_2$ we obtain $y_2=-y_1$. Likewise $x_2=-x_1$ and we conclude that
$\tr p=\tr q=0$.

Conversely, assume that $\tr p=\tr q=0$.
Then, $y_2=-y_1$, whence, for all $i \in \{1,2\}$,
$$(t-x_i y_1^{-1})(t-x_i y_2^{-1})=t^2+\frac{x_i^2}{q(0)}=t^2-p(0)q(0)^{-1}=t^2-\delta$$
and hence
$$G_{p,q}=(t^2-\delta)^2.$$
Finally,
$$\Theta_{p,q}=(t-2x_1y_1)(t-2x_1y_2)=t^2-4p(0)q(0).$$
\end{proof}

Note that if $\tr p=\tr q=0$ then $\car(\F) \neq 2$ because we have assumed that $p$ and $q$ must be separable.

Now, we distinguish between two cases, whether $(\tr p,\tr q)$ equals $(0,0)$ or not.

\paragraph{Case 1.1. $(\tr p,\tr q) \neq (0,0)$}

\begin{theo}\label{qtheoFpq4roots}
Let $p$ and $q$ be monic polynomials with degree $2$ over $\F$,
and assume that they are both irreducible with distinct splitting fields and that $(\tr p,\tr q) \neq (0,0)$.
Let $u$ be an endomorphism of a finite-dimensional vector space $V$ over $\F$. Then, the following conditions are equivalent:
\begin{enumerate}[(i)]
\item The endomorphism $u$ is a q-exceptional $(p,q)$-quotient.
\item The minimal polynomial of $u$ is a power of $G_{p,q}$ and if we denote by $r_1,\dots,r_k,\dots$
the invariant factors of $u$, then $r_{2k-1}=r_{2k}$ or $r_{2k-1}=r_{2k}\, G_{p,q}$, for all $k \in \N^*$.
\item In some basis of $V$, the endomorphism $u$ is represented by a block-diagonal matrix in which
each diagonal block equals $C\bigl(G_{p,q}^n\bigr) \oplus C\bigl(G_{p,q}^n\bigr)$
or $C\bigl(G_{p,q}^n\bigr) \oplus C\bigl(G_{p,q}^{n-1}\bigr)$ for some positive integer $n$.
\end{enumerate}
\end{theo}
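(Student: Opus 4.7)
The plan is to follow the template of Theorem \ref{theoFpq4roots}, substituting $G_{p,q}$ and $\Theta_{p,q}$ for $F_{p,q}$ and $\Lambda_{p,q}$, and invoking Theorem \ref{qtheoSamesplitNotHomotheticSeparable} and Lemma \ref{qsamesplitconstructivelemma} in place of their d-exceptional analogues. First I would handle the equivalence (ii) $\Leftrightarrow$ (iii) by the same combinatorial reshuffling argument as in the proof of Theorem \ref{theoFpq4roots}: (ii) $\Rightarrow$ (iii) is immediate since all invariant factors are powers of $G_{p,q}$; for (iii) $\Rightarrow$ (ii), assume (iii) holds but (ii) fails, take the least $k$ such that $G_{p,q}^2 r_{2k}$ divides $r_{2k-1}$, write $r_{2k-1} = G_{p,q}^{\ell+1}$, and derive a contradiction by observing that, in any decomposition of the form (iii), the number of invariant factors with exponent strictly larger than $\ell$ must be even, whereas it must equal $2k-1$.

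Next, let $\K \subset \L$ be the splitting field of $\Theta_{p,q}$ over $\F$, where $\L$ is the common splitting field of $p$ and $q$ in $\overline{\F}$. Lemma \ref{qFpqirrLemma} together with $(\tr p,\tr q) \neq (0,0)$ implies that both $G_{p,q}$ and $\Theta_{p,q}$ are irreducible and separable over $\F$; hence $[\K:\F]=2$ and $\K$ is the fixed field of the Galois involution of $\L/\F$ that simultaneously swaps the two roots of $p$ and the two roots of $q$. In particular $p$ and $q$ are still irreducible over $\K$ with common splitting field $\L$; moreover $q$ is not homothetic to $p$ over $\K$, because any $d \in \K$ with $q=H_d(p)$ would satisfy $\tr q = d\,\tr p$ and $q(0)=d^2 p(0)$, forcing $d \in \F$ under $(\tr p,\tr q) \neq (0,0)$ and contradicting the distinctness of the splitting fields of $p$ and $q$ over $\F$. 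Splitting $p=(t-x_1)(t-x_2)$ and $q=(t-y_1)(t-y_2)$ in $\L[t]$, the polynomial $G_{p,q}$ factors over $\K$ as $G_{p,q}=rs$ with
\[
r := (t - x_1 y_1^{-1})(t - x_2 y_2^{-1}) \in \K[t], \qquad s := (t - x_1 y_2^{-1})(t - x_2 y_1^{-1}) \in \K[t],
\]
both irreducible over $\K$. The key preliminary lemma I would prove is that for every $n \in \N^*$, the matrix $C(r^n) \in \Mat_{2n}(\K)$, reinterpreted as an element of $\Mat_{4n}(\F)$ via an $\F$-basis of $\K$, is similar to $C(G_{p,q}^n)$, and likewise with $s$ in place of $r$. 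This follows from two successive applications of Proposition \ref{blockcyclicprop}: since $r$ is separable over $\K$, $C(r^n)$ is similar in $\Mat_{2n}(\K)$ to a block-Jordan Toeplitz matrix with $C(r)$ on the diagonal and $I_2$'s on the subdiagonal; the reinterpretation over $\F$ turns this into a block-Jordan Toeplitz matrix whose diagonal block $P \in \Mat_4(\F)$ is annihilated by the irreducible separable polynomial $G_{p,q}$, so a second use of Proposition \ref{blockcyclicprop} yields similarity with $C(G_{p,q}^n)$.

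With this preliminary in hand, (iii) $\Rightarrow$ (i) proceeds as follows. The block $C(G_{p,q}^n) \oplus C(G_{p,q}^n)$ is produced by the Duplication Lemma \ref{qduplicationlemma}, using that $G_{p,q} = R_\delta(\rho)$ for $\rho := (t - c_1)(t - c_2) \in \F[t]$ with $c_1 := x_1 y_1^{-1} + x_2 y_2^{-1}$ and $c_2 := x_1 y_2^{-1} + x_2 y_1^{-1}$, both in $\F$ by Galois invariance, so that $G_{p,q}^n = R_\delta(\rho^n)$ by multiplicativity of $R_\delta$. For the mixed block $C(G_{p,q}^{n+1}) \oplus C(G_{p,q}^n)$, apply Lemma \ref{qsamesplitconstructivelemma} over $\K$ to the polynomial $r$ (its hypotheses hold since the roots of $r$ lie in $\L \setminus \K$): this yields $C(r^{n+1}) \oplus C(r^n)$ as a $(p,q)$-quotient in $\Mat_{4n+2}(\K)$, hence in $\Mat_{8n+4}(\F)$, and the preliminary similarity converts it into $C(G_{p,q}^{n+1}) \oplus C(G_{p,q}^n)$ over $\F$. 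Finally, for (i) $\Rightarrow$ (iii), write $u = ab^{-1}$ with $p(a)=q(b)=0$, set $v := q(0)u + p(0)u^{-1}$, and note by the Commutation Lemma that $a$ and $b$ commute with $v$. The identity $G_{p,q}(u) = q(0)^{-2} u^2 \Theta_{p,q}(v)$, combined with q-exceptionality of $u$, forces $\Theta_{p,q}(v)^m = 0$ for some $m$; since $\Theta_{p,q}$ is separable, the Jordan--Chevalley decomposition provides $v = S + N$ with $S$ semisimple, annihilated by $\Theta_{p,q}$, and a polynomial in $v$. Then $a$ and $b$ commute with $S$ and can be regarded as endomorphisms of $V$ viewed as an $\F[S]$-module, with $\F[S] \simeq \K$, so that Theorem \ref{qtheoSamesplitNotHomotheticSeparable} applied over $\K$ produces a block decomposition of $u$ whose blocks take the form $C(r^{k+\epsilon}) \oplus C(r^k)$ or $C(s^{k+\epsilon}) \oplus C(s^k)$ with $(k,\epsilon) \in \N \times \{0,1\}$, and the preliminary similarity then yields the block form of (iii) over $\F$. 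The main obstacle is the clean setup of the preliminary block-cyclic similarity, ensuring that the twofold application of Proposition \ref{blockcyclicprop} properly compounds separability of $r$ over $\K$ with that of $G_{p,q}$ over $\F$; everything else follows the template of Theorem \ref{theoFpq4roots} faithfully.
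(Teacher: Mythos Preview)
Your approach is essentially the same as the paper's, and the overall structure is correct. There is, however, one genuine slip in your justification of the Duplication Lemma step. You claim that $c_1 = x_1 y_1^{-1} + x_2 y_2^{-1}$ and $c_2 = x_1 y_2^{-1} + x_2 y_1^{-1}$ are ``both in $\F$ by Galois invariance''. They are not: the automorphism $\sigma$ of $\L/\F$ that swaps $y_1,y_2$ and fixes $x_1,x_2$ sends $c_1$ to $c_2$, and $c_1 \neq c_2$ since $(x_1-x_2)(y_1^{-1}-y_2^{-1}) \neq 0$. What is true, and what you actually need, is that the polynomial $\rho = (t-c_1)(t-c_2)$ lies in $\F[t]$; this holds because $\rho = H_{q(0)}(\Theta_{p,q})$, which is the identity the paper invokes (writing $G_{p,q} = R_\delta\bigl(H_{q(0)}(\Theta_{p,q})\bigr)$). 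A secondary minor point: in your non-homothetic argument, $q = H_d(p)$ gives $\tr q = d^{-1}\tr p$, not $d\,\tr p$; the conclusion $d \in \F$ still follows, so this does not affect correctness.

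One harmless difference from the paper: for the mixed block $C(G_{p,q}^{n+1}) \oplus C(G_{p,q}^n)$ you invoke Lemma~\ref{qsamesplitconstructivelemma} directly to obtain $C(r^{n+1}) \oplus C(r^n)$ over $\K$, whereas the paper cites the full Theorem~\ref{qtheoSamesplitNotHomotheticSeparable} and reads off a block of the form $C(r^n) \oplus C(s^{n-1})$. Both convert to the same thing over $\F$ via your preliminary similarity, so either route works.
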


\begin{proof}
It is easily seen that condition (ii) implies condition (iii).
Conversely, assume that (iii) holds but that (ii) does not.
First of all, it is obvious from condition (iii) that all the invariant factors of $u$ are powers of $G_{p,q}$.
Next, there is a least positive integer $k$ such that $G_{p,q}^2 r_{2k}$ divides $r_{2k-1}$.
Write then $r_{2k-1}=G_{p,q}^{\ell+1}$ for some integer $\ell \geq 0$. Then,
in any decomposition given by condition (iii), there is no diagonal block of the form
$C\bigl(G_{p,q}^m\bigr) \oplus C\bigl(G_{p,q}^{n}\bigr)$ in which one of $m$ and $n$ equals $\ell$.
Hence, each one of those diagonal blocks equals either $C\bigl(G_{p,q}^n\bigr) \oplus C\bigl(G_{p,q}^n\bigr)$
for some integer $n \neq \ell$, or $C\bigl(G_{p,q}^n\bigr) \oplus C\bigl(G_{p,q}^{n-1}\bigr)$
for some positive integer $n$ distinct from $\ell$ and $\ell+1$; in the latter case either both $n$ and $n-1$ are greater than $\ell$, or both
are less than $\ell$. It follows that there is an even number of invariant factors of $u$ that equal $G_{p,q}^n$ for some $n>\ell$, contradicting
the fact that there are $2k-1$ such invariants factors. Hence, condition (iii) implies condition (ii).

It only remains to prove that conditions (i) and (iii) are equivalent.
Some general work is required before we tackle this equivalence.

As we have seen in Lemma \ref{qFpqirrLemma}, our assumptions imply that both
$G_{p,q}$ and $\Theta_{p,q}$ are irreducible over $\F$.

Denote by $\K$ the splitting field of $\Theta_{p,q}$ in $\L$. Without loss of generality, we can consider that
$\K$ is the subalgebra of $\Mat_2(\F)$ generated by the companion matrix of $\Theta_{p,q}$.
Obviously $\K$ is the Galois subfield of $\L$ associated with the subgroup of $\Gal(\L/\F)$
generated by the Galois automorphism that exchanges the two roots of $p$ and that exchanges the two roots of $q$.
Hence, $p$ and $q$ remain irreducible over $\K$.

Let us split $p(t)=(t-x_1)(t-x_2)$ and $q(t)=(t-y_1)(t-y_2)$ over $\L$.
We claim that, over $\K$, the polynomials $p$ and $q$ satisfy the assumptions of Theorem \ref{qtheoSamesplitNotHomotheticSeparable}.
Indeed, it only remains to check that $p$ and $q$ are not homothetic as polynomials over $\K$;
yet, if the contrary held then we would have $x_1 x_2^{-1}=y_1 y_2^{-1}$ and hence $x_1 y_1^{-1} = x_2 y_2^{-1}$, contradicting the known fact that
$G_{p,q}$ has four distinct roots in $\L$.

Now, set
$$r:=\bigl(t-x_1y_1^{-1}\bigr)\bigl(t-x_2y_2^{-1}\bigr)\in \K[t] \quad \text{and} \quad s:=\bigl(t-x_1y_2^{-1}\bigr)\bigl(t-x_2y_1^{-1}\bigr)\in \K[t].$$
Let $n \in \N^*$.
Throughout the proof, $C(r^n)$ will be interpreted, depending on the context (which will always be obvious) either as a matrix of $\Mat_{2n}(\K)$ or as one of $\Mat_{4n}(\F)$.
Since $r$ is separable, Proposition \ref{blockcyclicprop} shows that
$C(r^n)$ is similar to
$$\begin{bmatrix}
C(r) & 0_2 & \cdots & \cdots & (0) \\
I_2 & C(r) & \ddots & & \vdots \\
0_2 & \ddots & \ddots & \ddots & \vdots \\
\vdots & & \ddots & C(r) & 0_2 \\
(0) & \cdots & 0_2 & I_2 & C(r)
\end{bmatrix} \in \Mat_{2n}(\K).$$
This matrix can be interpreted as the matrix
$$M:=\begin{bmatrix}
P & 0_4 & \cdots & \cdots & (0) \\
I_4 & P & \ddots & & \vdots \\
0_4 & \ddots & \ddots & \ddots & \vdots \\
\vdots & & \ddots & P & 0_4 \\
(0) & \cdots & 0_4 & I_4 & P
\end{bmatrix} \in \Mat_{4n}(\F)$$
for some $P \in \Mat_4(\F)$ that is annihilated by $G_{p,q}$.
Since $G_{p,q}$ is irreducible and separable, it follows once more from Proposition \ref{blockcyclicprop}
that $M$ is similar to $C(G_{p,q}^n)$ in $\Mat_{4n}(\F)$.
Hence, $C(r^n)$ is similar to $C(G_{p,q}^n)$ in $\Mat_{4n}(\F)$.
Note that this remains (trivially) true if $n=0$, and that this remains true if $r$ is replaced with $s$.

We are now ready to prove both implications (iii) $\Rightarrow$ (i) and (i) $\Rightarrow$ (ii).

In order to prove (iii) $\Rightarrow$ (i), it suffices to show that, for all $n \in \N^*$, both matrices
$C(G_{p,q}^n) \oplus C(G_{p,q}^n)$ and  $C(G_{p,q}^n) \oplus C(G_{p,q}^{n-1})$ are $(p,q)$-quotients.
For the first one it suffices to use the Duplication Lemma, since $G_{p,q}(t)=R_\delta\bigl(H_{q(0)}(\Theta_{p,q})\bigr)$.
Next, let $n \in \N^*$. Since $p$ and $q$ satisfy the assumptions of Theorem \ref{qtheoSamesplitNotHomotheticSeparable} over $\K$, we
get that the matrix $C(r^n) \oplus C(s^{n-1})$ is a $(p,q)$-quotient in $\Mat_{4n-2}(\K)$.
Hence, it is also a $(p,q)$-quotient in $\Mat_{8n-4}(\F)$.
Yet, we have just seen that this matrix is similar to $C(G_{p,q}^n) \oplus C(G_{p,q}^{n-1})$ in $\Mat_{8n-4}(\F)$.
It follows that the latter is a $(p,q)$-quotient. Hence, (iii) $\Rightarrow$ (i) is proved.

Conversely, assume that $u$ is a q-exceptional $(p,q)$-quotient.
We choose automorphisms $a$ and $b$ of the $\F$-vector space $V$ such that $u=ab^{-1}$ and $p(a)=q(b)=0$.
The endomorphism $v:=q(0)u+p(0)u^{-1}$ is annihilated by some power of the separable polynomial $\Theta_{p,q}$.
Hence, by the Jordan-Chevalley decomposition, we have a splitting $v=S+N$ in which $S$ is a semi-simple endomorphism of $V$
that is annihilated by $\Theta_{p,q}$ and that belongs to $\F[v]$, and $N$ is a nilpotent endomorphism of $V$.
By the Commutation Lemma, $a$ and $b$ turn out to be endomorphisms of the $\F[S]$-vector space $V$, and so does $u$.
We denote by $V^S$ the $\F[S]$-vector space $V$ to differentiate it from the $\F$-vector space $V$.
Now, $\F[S]$ is isomorphic to the splitting field of $\Theta_{p,q}$ over $\F$, and hence $\F[S] \simeq \K$.
Applying Theorem \ref{qtheoSamesplitNotHomotheticSeparable} to $V^S$,
we get that the endomorphism $u$ of $V^S$ is represented by a block-diagonal matrix in which each diagonal block has
one of the forms $C(r^k) \oplus C(s^{k-1})$, $C(r^k) \oplus C(s^k)$, or $C(s^k) \oplus C(r^{k-1})$ for some positive integer $k$.
It follows from our initial study that condition (iii) is satisfied by $u$, which completes the proof.
\end{proof}

\paragraph{Case 1.2. $\tr p=\tr q=0$}

Here, $\car(\F) \neq 2$ since $p$ and $q$ are separable.
Moreover, $G_{p,q}=r^2$ for $r:=t^2-p(0)q(0)^{-1}$, and $r$ is irreducible over $\F$.
Finally, $\Theta_{p,q}=t^2-4p(0)q(0)$ is irreducible over $\F$. Note that
$r$ and $\Theta_{p,q}$ have the same splitting field in $\L$ since
$\Theta_{p,q}=H_{1/(2q(0))}(r)$.

\begin{theo}\label{qtheoFpqdoubleroots}
Let $p$ and $q$ be monic polynomials with degree $2$ over $\F$,
and assume that they are both irreducible with distinct splitting fields, that they are separable and that
$\tr p=\tr q=0$. Set $r:=t^2-p(0)q(0)^{-1}$.
Let $u$ be an endomorphism of a finite-dimensional vector space $V$ over $\F$. Then, the following conditions are equivalent:
\begin{enumerate}[(i)]
\item The endomorphism $u$ is a q-exceptional $(p,q)$-quotient.
\item The minimal polynomial of $u$ is a power of $r$, and if we denote by $r_1,\dots,r_k,\dots$
the invariant factors of $u$, then $r_{2k-1}=r_{2k}$ for all $k \in \N^*$.
\item In some basis of $V$, the endomorphism $u$ is represented by a block-diagonal matrix in which
each diagonal block equals $C(r^n) \oplus C(r^n)$ for some $n \in \N^*$.
\end{enumerate}
\end{theo}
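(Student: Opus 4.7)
The equivalence between conditions (ii) and (iii) is straightforward, since $r$ is irreducible over $\F$ and the minimal polynomial of a direct sum of companion matrices of powers of $r$ is the largest of those powers. Hence the remainder of the plan concentrates on showing (i) $\Leftrightarrow$ (iii), using the field extension trick that is already at work in the proof of Theorem~\ref{qtheoFpq4roots} and of its sum-counterpart Theorem~\ref{theoFpqdoubleroots}.

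I would first carry out a preliminary analysis of the situation over the splitting field $\K$ of $\Theta_{p,q}$, which I identify with the subalgebra of $\Mat_2(\F)$ generated by $C(\Theta_{p,q})$. Write $p(t)=(t-x_1)(t+x_1)$ and $q(t)=(t-y_1)(t+y_1)$ in $\L[t]$. Then $\K$ is the subfield of $\L$ fixed by the automorphism that simultaneously exchanges the roots of $p$ and the roots of $q$, and a short computation shows that $\K=\F(x_1y_1)$, so that $d:=x_1y_1^{-1}=x_1y_1\cdot y_1^{-2}\in\K$. Over $\K$, both $p$ and $q$ remain irreducible with common splitting field $\L$, and $q=H_d(p)$ because $d^2=x_1^2 y_1^{-2}=p(0)q(0)^{-1}$; moreover $\tr p=0$ and $\car(\F)\neq 2$, so Theorem~\ref{qtheoSamesplitOppositeRoots} applies over $\K$. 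Finally, observing that $r=t^2-d^2=(t-d)(t+d)$ is separable with roots in $\K$, Proposition~\ref{blockcyclicprop} shows that, for every $n\in\N^*$, if $J_n(d)$ denotes the lower-triangular Jordan block of size $n$ with diagonal $d$ (an element of $\Mat_n(\K)$), then the matrix $J_n(d)\oplus J_n(d)\in\Mat_{2n}(\K)$, viewed as a matrix of $\Mat_{4n}(\F)$, is similar to $C(r^n)\oplus C(r^n)$; the same holds with $-d$ in place of $d$.

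For the implication (iii) $\Rightarrow$ (i), I fix $n\in\N^*$ and apply Theorem~\ref{qtheoSamesplitOppositeRoots} over $\K$: in $\Mat_{2n}(\K)$, the matrix $J_n(d)\oplus J_n(d)$ is a $(p,q)$-quotient. Hence it is also a $(p,q)$-quotient when viewed in $\Mat_{4n}(\F)$, and the preceding paragraph then yields that $C(r^n)\oplus C(r^n)$ is a $(p,q)$-quotient. A direct sum argument completes this direction.

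For the converse (i) $\Rightarrow$ (iii), assume $u=ab^{-1}$ with $p(a)=q(b)=0$, and set $v:=q(0)u+p(0)u^{-1}$. Since $u$ is q-exceptional with respect to $(p,q)$, $v$ is annihilated by some power of $\Theta_{p,q}$, and $\Theta_{p,q}$ is irreducible \emph{and separable} over $\F$ (the separability uses $\car(\F)\neq 2$, which is forced by the separability of $p$ combined with $\tr p=0$). The Jordan--Chevalley decomposition then gives $v=S+N$ with $S$ semisimple, $N$ nilpotent, $S\in\F[v]$ and $\Theta_{p,q}(S)=0$. By the Commutation Lemma (Lemma~\ref{qcommutelemma}), $a$ and $b$ commute with $v$ and hence with $S$, so they can be viewed as endomorphisms of the $\F[S]$-vector space $V^S$, where $\F[S]\simeq\K$. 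The endomorphism $u$ is then a q-exceptional $(p,q)$-quotient in $\End(V^S)$, so Theorem~\ref{qtheoSamesplitOppositeRoots} (applied over $\K$) produces a basis of $V^S$ in which $u$ is represented by a block-diagonal matrix whose diagonal blocks are of the form $J_k(d)\oplus J_k(d)$ or $J_k(-d)\oplus J_k(-d)$ for various $k\in\N^*$. Reading this decomposition over $\F$ through the preliminary similarity and remembering that $r$ has $d$ and $-d$ as its roots, every such block yields, in a suitable $\F$-basis, a block $C(r^k)\oplus C(r^k)$; this is exactly condition (iii).

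The main obstacle I expect is the bookkeeping at the interface between $\F$ and $\K$: namely, verifying cleanly that $q=H_d(p)$ over $\K$ so that Theorem~\ref{qtheoSamesplitOppositeRoots} is applicable, and justifying via Proposition~\ref{blockcyclicprop} that the $\K$-Jordan blocks $J_n(\pm d)\oplus J_n(\pm d)$ reduce, as $\F$-matrices, to $C(r^n)\oplus C(r^n)$ (rather than, say, $C(r^{2n})$). Once these two points are nailed down, the rest of the argument follows the pattern already established in Theorems~\ref{theoFpqdoubleroots} and \ref{qtheoFpq4roots}.
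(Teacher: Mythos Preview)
Your proposal is correct and follows essentially the same route as the paper's own proof: both pass to the splitting field $\K$ of $\Theta_{p,q}$ (which coincides with that of $r$), observe that over $\K$ the polynomials $p$ and $q$ become homothetic with $\tr p=0$ and $\car(\F)\neq 2$, invoke Theorem~\ref{qtheoSamesplitOppositeRoots} over $\K$, and use Proposition~\ref{blockcyclicprop} together with the Jordan--Chevalley decomposition and the Commutation Lemma to transfer the result back down to $\F$. The only cosmetic difference is that the paper writes out the Jordan blocks $A_n,B_n$ explicitly where you use the shorthand $J_n(\pm d)$.
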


\begin{proof}
Conditions (ii) and (iii) are obviously equivalent.

Next, we prove that conditions (i) and (iii) are equivalent.
As in the proof of Theorem \ref{qtheoFpq4roots}, some preliminary work is required.

Remember that $\car(\F)\neq 2$.
The splitting field $\K$ of $\Theta_{p,q}$ over $\L$ can be identified with
a subalgebra of $\Mat_2(\F)$. We have seen that $\K$ is also the splitting field of $r$ over $\F$.
Let us split $p(t)=(t-x)(t+x)$ and $q(t)=(t-y)(t+y)$ in $\L[t]$.
Note that $xy^{-1} \in \K$.
In particular, $p$ and $q$ are homothetic as polynomials of $\K[t]$.

Next, fix $n \in \N^*$. Set
$$A_n:=\begin{bmatrix}
xy^{-1} & 0 & \cdots & \cdots & (0) \\
1 & xy^{-1} & \ddots & & \vdots \\
0 & \ddots & \ddots & \ddots & \vdots \\
\vdots & & \ddots & xy^{-1} & 0 \\
(0) & \cdots & 0 & 1 & xy^{-1}
\end{bmatrix}\in \Mat_n(\K)$$ and
$$B_n:=\begin{bmatrix}
-xy^{-1} & 0 & \cdots & \cdots & (0) \\
1 & -xy^{-1} & \ddots & & \vdots \\
0 & \ddots & \ddots & \ddots & \vdots \\
\vdots & & \ddots & -xy^{-1} & 0 \\
(0) & \cdots & 0 & 1 & -xy^{-1}
\end{bmatrix}\in \Mat_n(\K),$$
and set further
$$M_n:=A_n \oplus A_n \quad \text{and} \quad N_n:=B_n \oplus B_n,$$
which we see as matrices with entries in $\K$.
Here, $xy^{-1}$ is a root of $r$. Seeing $\K$ as a subalgebra of $\Mat_2(\F)$, we have
$$A_n=\begin{bmatrix}
xy^{-1} & 0_2 & \cdots & \cdots & (0) \\
I_2 & xy^{-1} & \ddots & & \vdots \\
0_2 & \ddots & \ddots & \ddots & \vdots \\
\vdots & & \ddots & xy^{-1} & 0_2 \\
(0) & \cdots & 0_2 & I_2 & xy^{-1}
\end{bmatrix}.$$
Since $xy^{-1}$ is annihilated by $r$, which is separable with degree $2$,
Proposition \ref{blockcyclicprop} shows that
$A_n$ is similar to $C(r^n)$ in $\Mat_{2n}(\F)$. Likewise, $B_n$ is similar to $C(r^n)$ in $\Mat_{2n}(\F)$,
and we conclude that both $M_n$ and $N_n$ are similar to $C(r^n) \oplus C(r^n)$ in $\Mat_{4n}(\F)$.

We are now ready to conclude. For all $n \in \N^*$, we know from Theorem \ref{qtheoSamesplitOppositeRoots}
that $M_n$ is a $(p,q)$-quotient in $\Mat_{2n}(\K)$, and hence it is also a $(p,q)$-quotient in $\Mat_{4n}(\F)$.
Hence, condition (iii) implies condition (i).
Conversely, assume that $u$ is a q-exceptional $(p,q)$-quotient.
Let $a,b$ be automorphisms of $V$ such that $u=ab^{-1}$ and $p(a)=q(b)=0$.
Setting $v:=q(0)u+p(0)u^{-1}$, we see that $v$ is annihilated by some power of $\Theta_{p,q}$. Since $\Theta_{p,q}$ is separable, we can use the Jordan-Chevalley decomposition $v=S+N$ in which $S$ is semi-simple, $N$ is nilpotent and $S$ is a polynomial in $v$.
Note that $\Theta_{p,q}(S)=0$.
By the Commutation Lemma, both $a$ and $b$ commute with $S$, and hence $a$ and $b$ are endomorphisms of the $\F[S]$-vector space
$V$, which we denote by $V^S$. Hence, $u$ is a q-exceptional $(p,q)$-quotient in the algebra of all endomorphisms of $V^S$.
Yet, $\F[S] \simeq \K$, and hence, by Theorem \ref{qtheoSamesplitOppositeRoots}, in some basis of $V^S$ the endomorphism
$u$ is represented by a block-diagonal matrix in which every diagonal block equals either $C\bigl((t-xy^{-1})^k\bigr) \oplus
C\bigl((t-xy^{-1})^k\bigr)$ or $C\bigl((t+xy^{-1})^k\bigr) \oplus C\bigl((t+xy^{-1})^k\bigr)$ for some $k \in \N^*$.
Hence, there is a basis of the $\F$-vector space $V$ in which $u$ is represented by a block-diagonal
matrix in which every diagonal block equals $M_k$ or $N_k$ for some $k \in \N^*$. Our preliminary work on block matrices then yields that condition (iii) holds.
\end{proof}

\paragraph{Case 2. Both $p$ and $q$ are inseparable}

\begin{theo}\label{qtheoPandQinseparable}
Assume that $\car(\F)=2$.
Let $\alpha$ and $\beta$ be elements of $\F$, set $p(t):=t^2-\alpha$ and $q(t):=t^2-\beta$
and assume that both $p$ and $q$ are irreducible over $\F$ and that they have distinct splitting fields in $\overline{\F}$.
Set $\delta:=p(0)q(0)^{-1}=\alpha \beta^{-1}$.
Let $u$ be an endomorphism of a finite-dimensional vector space $V$. Then, the following conditions are equivalent:
\begin{enumerate}[(i)]
\item $u$ is a q-exceptional $(p,q)$-quotient.
\item Every invariant factor of $u$ is a power of $t^2-\delta$, and, if we denote by $r_1,\dots,r_k,\dots$
those invariant factors, we have $r_{2k-1}=r_{2k}$ for all $k \in \N^*$.
\item In some basis of $V$, the endomorphism $u$ is represented by a block-diagonal matrix in which every diagonal block
equals $C\bigl((t^2-\delta)^n\bigr) \oplus C\bigl((t^2-\delta)^n\bigr)$ for some $n \in \N^*$.
\end{enumerate}
\end{theo}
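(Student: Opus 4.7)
The plan is to mirror the strategy used for Theorem \ref{theoPandQinseparable}, adapting it to the quotient setting by exploiting the fact that, in characteristic $2$, the key quantity $w = ab+ba$ can be recovered both from the difference $a-b$ and from the quotient $ab^{-1}$. First I would record that $G_{p,q} = (t^2-\delta)^2$, and that $r := t^2 -\delta$ is irreducible over $\F$: indeed any root of $r$ in $\overline{\F}$ is a quotient of a root of $p$ by a root of $q$, and lies in $\F$ only if $\sqrt{\alpha}$ and $\sqrt{\beta}$ are $\F$-linearly dependent, contradicting the distinctness of the splitting fields of $p$ and $q$. The equivalence (ii) $\Leftrightarrow$ (iii) is then immediate from the primary canonical form and the fact that $r$ is the only monic irreducible divisor of $G_{p,q}$.

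For (iii) $\Rightarrow$ (i), the Duplication Lemma (Lemma \ref{qduplicationlemma}) applies: since $R_\delta(t^n) = t^n(t+\delta t^{-1})^n = (t^2+\delta)^n = r^n$ in characteristic $2$, the matrix $C(r^n) \oplus C(r^n) = C(R_\delta(t^n)) \oplus C(R_\delta(t^n))$ is a $(p,q)$-quotient for every $n \in \N^*$.

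For the harder implication (i) $\Rightarrow$ (ii), fix automorphisms $a,b$ of $V$ with $p(a)=q(b)=0$ and $u=ab^{-1}$. Because $\car(\F)=2$ and $\tr p = \tr q = 0$, the $p$-conjugate of $a$ equals $a$ and the $q$-conjugate of $b$ equals $b$, so that $b^{-1} = q(0)^{-1} b$ and $a^{-1} = p(0)^{-1} a$. A direct computation then gives
\[
v := q(0)\,u + p(0)\,u^{-1} = ab + ba.
\]
Hence Lemma \ref{LemmaPandQinseparable} applies to the pair $(a,b)$ and yields that $n_k(v,0)$ is a multiple of $4$ for every $k \in \N^*$. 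Next I would relate $\Ker v^k$ to the invariant factor data of $u$: since $v$ is a polynomial in $u$ and $u$ is invertible, the identity
\[
q(0)\,u + p(0)\,u^{-1} = q(0)\,u^{-1}(u^2 + \delta I) = q(0)\,u^{-1} r(u)
\]
gives $\Ker v^k = \Ker r(u)^k$ for every $k$. Knowing that every invariant factor of $u$ is a power of $r$, a standard count on each cyclic block $C(r^{a_i})$ shows that $\dim \Ker r(u)^k = \sum_i 2\min(k,a_i)$, so that
\[
n_k(v,0) = 2\,\#\{i : a_i \geq k\} = 2\,N_k,
\]
where $N_k$ denotes the number of invariant factors of $u$ equal to some $r^m$ with $m \geq k$. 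Thus $N_k$ is even for every $k$, whence the number $N_k - N_{k+1}$ of invariant factors equal to $r^k$ is even, which is exactly condition (ii).

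The only step that requires any real thought is the identity $v = ab + ba$, which is where the characteristic $2$ assumption and the inseparability of both $p$ and $q$ enter crucially; everything else is bookkeeping or a direct invocation of previously established results.
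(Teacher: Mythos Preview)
Your proof is correct and follows essentially the same route as the paper: both establish $v=q(0)u+p(0)u^{-1}=ab+ba$ from the inseparability in characteristic $2$, invoke Lemma~\ref{LemmaPandQinseparable} to get that $n_k(v,0)$ is a multiple of $4$, identify $\Ker v^k=\Ker r(u)^k$ via the factorization $v=q(0)\,u^{-1}r(u)$, and conclude that each $N_k$ is even. The sufficiency step via the Duplication Lemma and $R_\delta(t^n)=(t^2+\delta)^n=r^n$ is likewise identical.
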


\begin{proof}
The equivalence between conditions (ii) and (iii) is obvious.

Before we prove that conditions (i) and (ii) are equivalent, some preliminary work is required.
First of all, here we have $G_{p,q}=(t^2-\delta)^2$. Set $r:=t^2-\delta=t^2-\alpha \beta^{-1}$.
Since $p$ and $q$ have distinct splitting fields, their respective roots $\sqrt{\alpha}$ and $\sqrt{\beta}$ in $\L$
are linearly independent over $\F$, and hence $r$ is irreducible over $\F$.
It follows that $u$ is q-exceptional with respect to $(p,q)$ if and only if it is annihilated by some power of $r$.

From there, as $t^2-\delta=R_\delta(t)$ (because $\F$ has characteristic $2$),
the Duplication Lemma yields that condition (iii) implies condition (i).
In order to conclude, we prove that condition (i) implies condition (ii).
Assume that condition (i) holds. First of all, we know that each invariant factor of $u$
is a power of $r$. Let $a$ and $b$ be endomorphisms of $V$ such that $p(a)=0=q(b)$ and $u=ab^{-1}$.
Note that $a^2=p(0)\,\id_V$ and $b^2=q(0)\,\id_V$, whence
$ab+ba=q(0)\,ab^{-1}+p(0)\,ba^{-1}=q(0)\,u+p(0)\,u^{-1}$. We deduce that $v:=ab+ba$ commutes with $u$ and that
$q(0)(u^2-\delta\,\id)=u\,v$. It follows that, for all $n \in \N^*$, we have
$r(u)^n=q(0)^{-n}\,u^n\,v^n$, whence $\Ker r(u)^n=\Ker v^n$.
Let $n \in \N^*$. Denote by $N_n$ the number of invariant factors of $u$ that equal $r^k$ for some $k \geq n$.
Then, $2N_n=\dim \Ker r(u)^n-\dim \Ker r(u)^{n-1}=\dim \Ker v^n-\dim \Ker v^{n-1}$.
By Lemma \ref{LemmaPandQinseparable}, we deduce that $N_n$ is even.
Finally, for all $n \in \N^*$, the number of invariant factors of $u$ that equal $r^n$
is $N_n-N_{n+1}$, and hence it is even. It follows that condition (ii) holds.
\end{proof}

Combining Theorems \ref{qtheoFpqdoubleroots} and \ref{qtheoPandQinseparable}, we
deduce the classification of indecomposable q-exceptional $(p,q)$-quotients
in the case when $p$ and $q$ have distinct splitting fields and $\tr p=\tr q=0$,
as given in Table \ref{qfigure9}.

\subsubsection{Case 3. $p$ is separable and $q$ is not}

This is both the last remaining case and the most difficult one. Fortunately, most of the necessary preliminary work for this case has been done in 
our study of $(p,q)$-differences. 

Here, $\car(\F)=2$. The splitting field $\L$ of $pq$ is not a Galois extension of $\F$.
Yet, it is not a radicial extension either because $p$ is irreducible and separable.
Hence, we have a decomposition $\F - \K - \L$ where $\K$ is a radicial quadratic extension of $\F$
and $\L$ is a separable extension of $\K$. Explicitly, $\K$ is the set of all $x \in \L$ such that $x^2 \in \F$.

Since $q$ is inseparable, $\K$ is its splitting field in $\L$.

Let us split $p(t)=(t-x_1)(t-x_2)$ and $q(t)=(t-y)^2$ in $\L[t]$.
Let $u$ be an endomorphism of a finite-dimensional vector space $V$, and
set $v:=q(0) u+p(0) u^{-1}$.
Here,
$$G_{p,q}(t)=\bigl(t^2-(\tr p) y^{-1} t+\delta\bigr)^2 \quad \text{and} \quad \Theta_{p,q}(t)=
\bigl(t-(x_1+x_2)y\bigr)^2=t^2-(\tr p)^2 y^2.$$
Note that $G_{p,q}$ is irreducible: indeed, it is split over $\L$, the Galois group of $\L$ over $\F$ acts transitively on the set $\{x_1y^{-1},x_2y^{-1}\}$
of its roots in $\L$, and hence the only possible monic irreducible proper divisor of $G_{p,q}$ would be $(t-x_1y^{-1})(t-x_2y^{-1})=
t^2-(\tr p) y^{-1} t+\delta$; Yet, since $p$ is separable and irreducible we have $\tr p \neq 0$, whereas $y \not\in \F$, whence
$(t-x_1y^{-1})(t-x_2y^{-1})$ does not belong to $\F[t]$.

Hence, the minimal polynomial of $u$ is a power of $G_{p,q}$ if and only if $u$ is q-exceptional with respect to $(p,q)$.

With that result in mind, we are now ready to classify the q-exceptional $(p,q)$-quotients.

\begin{theo}\label{qtheolastcase}
Let $p$ and $q$ be irreducible monic polynomials with degree $2$ over $\F$.
Assume that $p$ is separable and that $q$ is inseparable.
Let $u$ be an endomorphism of a finite-dimensional vector space $V$ over $\F$.
Then, the following conditions are equivalent:
\begin{enumerate}[(i)]
\item $u$ is a q-exceptional $(p,q)$-quotient.
\item The minimal polynomial of $u$ is a power of $G_{p,q}$ and
the invariant factors of $u$ read $r_1,\dots,r_k,\dots$, where, for each positive integer $k$,
either $r_{2k}=r_{2k-1}$ or $r_{2k-1}=r_{2k}\,G_{p,q}$.
\item In some basis of $V$, the endomorphism $u$ is represented by a block-diagonal matrix
in which every diagonal block equals $C(G_{p,q}^{n+\epsilon}) \oplus C(G_{p,q}^n)$ for some non-negative integer
$n$ and some $\epsilon \in \{0,1\}$.
\end{enumerate}
\end{theo}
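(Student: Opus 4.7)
The plan is to mirror the proof of Theorem \ref{theolastcase}, since the setup here is the direct analog for quotients of the one for differences. Let us write $q(t)=(t-y)^2$ in $\L[t]$ and recall from the discussion preceding the theorem that $G_{p,q}$ is irreducible over $\F$ and satisfies $G_{p,q}(t)=\bigl(t^2-(\tr p)y^{-1}t+\delta\bigr)^2$ (in particular, $G_{p,q}$ has ``double roots'' in the sense of Proposition \ref{blockcyclicprop}), and that $\Theta_{p,q}(t)=t^2-(\tr p)^2 y^2$. The equivalence between (ii) and (iii) is purely combinatorial and follows by exactly the same bookkeeping as in Theorem \ref{qtheoFpq4roots}, noting that every invariant factor of $u$ is forced to be a power of the irreducible polynomial $G_{p,q}$.

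For the implication (iii) $\Rightarrow$ (i), it suffices to exhibit $C(G_{p,q}^{n})\oplus C(G_{p,q}^n)$ and $C(G_{p,q}^{n+1})\oplus C(G_{p,q}^n)$ as $(p,q)$-quotients for every positive integer $n$. The first is a direct application of the Duplication Lemma (Lemma \ref{qduplicationlemma}) since $G_{p,q}=R_\delta(s)$ for some monic polynomial $s$. For the second, I follow the strategy used in Theorem \ref{theolastcase}: work over the radicial quadratic extension $R:=\F[C(q)]$, over which $q$ splits with a double root while $p$ remains irreducible and separable. Theorem \ref{qtheoQsplitswithdouble} then gives that $C\bigl(H_y(p)^k\bigr)\in \Mat_{2k}(R)$ is a $(p,q)$-quotient; since $H_y(p)$ is separable over $R$, Proposition \ref{blockcyclicprop} shows this matrix is similar to a block-cyclic matrix with diagonal blocks $C\bigl(H_y(p)\bigr)$ and subdiagonal blocks $I_2$. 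Interpreting each $2\times 2$ entry over $R$ as a $4\times 4$ matrix over $\F$, we obtain a block-cyclic matrix in $\Mat_{4k}(\F)$ whose diagonal blocks $P\in \Mat_4(\F)$ are annihilated by $G_{p,q}$; applying Proposition \ref{blockcyclicprop} once more, this time using that $G_{p,q}$ is irreducible with double roots, identifies it with $C(G_{p,q}^{s+\epsilon})\oplus C(G_{p,q}^s)$ where $s$ and $\epsilon$ are the quotient and remainder of $k$ modulo $2$; varying $k$ yields both needed shapes.

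For the implication (i) $\Rightarrow$ (ii), the key is to reduce to Proposition \ref{lastprop}. Let $a,b$ be automorphisms of $V$ with $u=ab^{-1}$, $p(a)=q(b)=0$. Since $p$ is separable and irreducible, $\tr(p)\neq 0$, so I set $a':=(\tr p)^{-1}a$, which is annihilated by the polynomial $p_1:=t^2-t+p(0)(\tr p)^{-2}$ of trace $1$. Let $w:=a'b^\star+b(a')^\star$ (with $b^\star=b$ and $(a')^\star=\id-a'$ in characteristic $2$). Thanks to Remark \ref{qconjugateremark} applied to the pair $(a',b)$, $w$ equals $q(0)\,a'b^{-1}+p_1(0)\,b(a')^{-1}=(\tr p)^{-1}v$ where $v:=q(0)u+p(0)u^{-1}$. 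A direct substitution then gives
$$G_{p,q}(u)=q(0)^{-2}u^2\,\Theta_{p,q}(v)=q(0)^{-2}u^2(\tr p)^2\bigl(w^2-y^2\,\id_V\bigr)=q(0)^{-2}(\tr p)^2\,u^2\,q(w),$$
so that $\Ker G_{p,q}(u)^k=\Ker q(w)^k$ for every $k$ (using that $u$ is invertible and commutes with $w$).

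Once this identification is in place, Proposition \ref{lastprop} applied to $(a',b)$ and $(p_1,q)$ asserts that whenever the two consecutive jumps $n_{k+1}(w):=\dim \Ker q(w)^{k+1}-\dim \Ker q(w)^k$ and $n_{k+2}(w)$ are equal, the common value is a multiple of $8$; dividing by $\deg G_{p,q}=4$, this says that whenever the number $N_k$ of invariant factors of $u$ of the form $G_{p,q}^\ell$ with $\ell\ge k$ satisfies $N_{k+1}=N_{k+2}$, this common value is even. From this one concludes condition (ii) by the same two-line argument as at the end of the proof of Theorem \ref{theoSamesplitNottranslationSeparable}: assuming $m_{2k-1}\ge m_{2k}+2$ for some $k$, setting $\ell:=m_{2k}$ yields $N_{\ell+1}=N_{\ell+2}=2k-1$, an odd number, a contradiction. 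The main obstacle is really the bookkeeping needed to get $G_{p,q}(u)$ and $q(w)$ to coincide up to an invertible factor; beyond this reduction, Proposition \ref{lastprop} does all the work.
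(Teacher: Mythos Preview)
Your proof is correct and follows essentially the same route as the paper's: the same reduction over $R=\F[C(q)]$ combined with Proposition \ref{blockcyclicprop} for (iii) $\Rightarrow$ (i), and the same rescaling $a'=(\tr p)^{-1}a$ leading to $\Ker G_{p,q}(u)^k=\Ker q(w)^k$ and an appeal to Proposition \ref{lastprop} for (i) $\Rightarrow$ (ii). The only cosmetic difference is that you invoke Remark \ref{qconjugateremark} on the pair $(a',b)$ to obtain $w=(\tr p)^{-1}v$, whereas the paper computes directly that $ab^\star+ba^\star=(\tr p)\bigl(a'b^\star+b(a')^\star\bigr)$; both are one-line verifications.
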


\begin{proof}
Let us write $q=(t-y)^2=t^2-y^2$.

The equivalence between conditions (ii) and (iii) is obtained in exactly the same way as in the proof of Theorem \ref{qtheoFpq4roots}.

Next, we prove that condition (iii) implies condition (ii).
Obviously, it suffices to fix a positive integer $n$ and to prove that both matrices
$C(G_{p,q}^{n}) \oplus C(G_{p,q}^{n-1})$ and $C(G_{p,q}^{n}) \oplus C(G_{p,q}^n)$ are $(p,q)$-quotients
(for the second one, this can be directly obtained as a consequence of the Duplication Lemma, but we will give a general proof that encompasses both cases).
First, fix a positive integer $k$. Without loss of generality, we can assume that $\K=\F[C(q)]$.
Over $\K$, the polynomial $q$ splits with a double root, whereas $p$ remains irreducible and separable.
Hence, by Theorem \ref{qtheoQsplitswithdouble} the matrix $C\bigl(H_y(p)^k\bigr)$ of $\Mat_{2k}(\K)$ is a $(p,q)$-quotient.
Since $H_y(p)$ is separable over $\K$, Proposition \ref{blockcyclicprop} yields that the
matrix
$$M_k=\begin{bmatrix}
C\bigl(H_y(p)\bigr) & 0_2 & \cdots & \cdots & (0) \\
I_2 & C\bigl(H_y(p)\bigr) & \ddots & & \vdots \\
0_2 & \ddots & \ddots & \ddots & \vdots \\
\vdots & & \ddots & C\bigl(H_y(p)\bigr) & 0_2 \\
(0) & \cdots & 0_2 & I_2 & C\bigl(H_y(p)\bigr)
\end{bmatrix}\in \Mat_{2k}(\K)$$
is similar to $C(H_y(p)^k)$, and is therefore a $(p,q)$-quotient.

Viewing $C\bigl(H_y(p)\bigr)$ as a matrix $P$ of $\Mat_4(\F)$, we deduce that $G_{p,q}$ annihilates $P$ and that the matrix
$$\begin{bmatrix}
P & 0_4 & \cdots & \cdots & (0) \\
I_4 & P & \ddots & & \vdots \\
0_4 & \ddots & \ddots & \ddots & \vdots \\
\vdots & & \ddots & P & 0_4 \\
(0) & \cdots & 0_4 & I_4 & P
\end{bmatrix}\in \Mat_{4k}(\F)$$
is a $(p,q)$-quotient.
Here $G_{p,q}$ is irreducible with double roots. Therefore, Proposition \ref{blockcyclicprop}
yields that the above matrix of $\Mat_{4k}(\F)$ is similar to $C(G_{p,q}^{s+\epsilon}) \oplus C(G_{p,q}^s)$,
where $s$ and $\epsilon$ respectively denote the quotient and the remainder of $k$ modulo $2$.
Varying $k$ then yields the claimed result, and we conclude that condition (iii) implies condition (i).

It remains to prove that condition (i) implies condition (ii).
Assume therefore that $u$ is a q-exceptional $(p,q)$-quotient.
Since $G_{p,q}$ is monic and irreducible, the minimal polynomial of $u$ must be a power of it.
From there, we set $v:=q(0) u +p(0) u^{-1}$. Let $a,b$ be automorphisms of $V$ such that $u=ab^{-1}$ and $p(a)=q(b)=0$.
Note that $\tr p \neq 0$ since $\F$ has characteristic $2$ and $p$ is inseparable.
Set $a':=(\tr p)^{-1}\,a$ and note that $a'$ is annihilated by $p_1:=t^2+t+\frac{p(0)}{(\tr p)^2}\cdot$
Denoting by $a^\star$ the $p$-conjugate of $a$, by $b^\star$ the $q$-conjugate of $b$ and by
$(a')^\star$ the $p_1$-conjugate of $a'$, one sees that $a b^\star+b a^\star=(\tr p) (a' b^\star+b (a')^\star)$,
and hence by setting $w:=a' b^\star+b (a')^\star$, we have
$$v=a b^\star+b a^\star=(\tr p)\, w.$$
It follows that
$$q(w)=(\tr p)^{-2}\,(v^2-(\tr p)^2 y^2\id)=(\tr p)^{-2}\Theta_{p,q}(v)$$
and
$$G_{p,q}(u)=q(0)^{-2} (\tr p)^2 u^2 q(w).$$
Since $u$ commutes with $v$ and hence with $w$, we obtain, for all $k \in \N$,
$$G_{p,q}(u)^k=q(0)^{-2k} (\tr p)^{2k} u^{2k} q(w)^k$$
whence
$$\Ker G_{p,q}(u)^k=\Ker q(w)^k.$$
Then, Proposition \ref{lastprop} yields that, for all $k \in \N$, if $\dim \bigl(\Ker G_{p,q}(u)^{k+1}/\Ker G_{p,q}(u)^k\bigr)=\dim \bigl(\Ker G_{p,q}(u)^{k+2}/\Ker G_{p,q}(u)^{k+1}\bigr)$,
then $\dim \bigl(\Ker G_{p,q}(u)^{k+1}/\Ker G_{p,q}(u)^k\bigr)$ is a multiple of $8$.

From there, we obtain condition (ii):
Suppose indeed that there exists an integer $s \geq 1$ such that $r_{2s-1}=G_{p,q}^k$ and $r_{2s}=G_{p,q}^l$ for some integers
$k,l$ with $k>l+1\geq 1$. Then, the number $N$ of invariant factors of $u$ that are multiples of $G_{p,q}^{l+2}$ equals $2s-1$, which is odd,
and it is also the number of invariant factors of $u$ that are multiples of $G_{p,q}^{l+1}$. As $G_{p,q}$ has degree $4$
one finds
$$4N=\dim \Ker\bigl(G_{p,q}^{l+2}(u)\bigr)-\dim \Ker\bigl(G_{p,q}^{l+1}(u)\bigr)
=\dim \Ker\bigl(G_{p,q}^{l+1}(u)\bigr)-\dim \Ker\bigl(G_{p,q}^{l}(u)\bigr).$$
Yet $4N=4(2s-1)$ is not a multiple of $8$, which contradicts the above result. Hence, condition (ii) holds, which completes our proof.
\end{proof}

Combining Theorem \ref{qtheoFpq4roots} with Theorem \ref{qtheolastcase}, we obtain the classification of indecomposable q-exceptional $(p,q)$-quotients when
$p$ and $q$ are irreducible with distinct splitting fields and $(\tr p,\tr q) \neq (0,0)$, as given in Table \ref{qfigure8}.

This completes our classification of $(p,q)$-quotients.

\appendix

\section{Appendix: A result on block-cyclic matrices}

Here, we state and prove some results that are used to study the d-exceptional $(p,q)$-differences and
q-exceptional $(p,q)$-quotients.

First of all, we need to handle multiple roots of polynomials over fields whose characteristic might be positive.
This requires that we adopt a non-traditional variation of the differentials of a polynomial.

\begin{Def}
Given a polynomial $r \in \F[t]$, we choose another indeterminate $x$ and we obtain
a sequence $(G^n(r))_{n \in \N}$ of polynomials of $\F[t]$ (which terminates at $0$) such that
$$r(t+x)=\sum_{n=0}^{+\infty} G^n(r)\, x^n.$$
\end{Def}

It is easily seen that
$$\forall n \in \N, \; D^n(r)=n!\,G^n(r)$$
where $D^n(r)$ denotes the differential of order $n$ of $r$.
In particular $G^0(r)=r$ and $G^1(r)=r'$.
The following result is then essentially obvious:

\begin{prop}
Let $r \in \F[t]$, $\lambda$ be a scalar in $\overline{\F}$, and $n$ be a positive integer.
The following conditions are then equivalent:
\begin{enumerate}[(i)]
\item $\lambda$ is a root of $r$ with multiplicity at least $n$.
\item For all $k \in \lcro 0,n-1\rcro$, one has $G^k(r)[\lambda]=0$.
\end{enumerate}
\end{prop}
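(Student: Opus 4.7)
The plan is to evaluate the defining identity $r(t+x) = \sum_{n \geq 0} G^n(r)(t)\, x^n$ at $t = \lambda$ and reduce the statement about multiplicity to an elementary divisibility fact in the polynomial ring $\overline{\F}[x]$. This works uniformly in every characteristic precisely because the polynomials $G^n(r)$ are defined through the Taylor-type expansion rather than through iterated differentiation.

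First I would note that the defining identity lives in $\F[t][x]$, hence remains valid after extending scalars to $\overline{\F}$ and specializing $t := \lambda$. This yields the identity
\[
r(\lambda+x) \;=\; \sum_{k=0}^{+\infty} G^k(r)[\lambda]\, x^k \quad \text{in } \overline{\F}[x].
\]
Next, I would recall the elementary characterization of multiplicity: $\lambda$ is a root of $r$ with multiplicity at least $n$ if and only if $(t-\lambda)^n$ divides $r(t)$ in $\overline{\F}[t]$. Applying the $\overline{\F}$-algebra automorphism $t \mapsto t+\lambda$ of $\overline{\F}[t]$, this is equivalent to $t^n$ dividing $r(t+\lambda)$, or equivalently (after renaming the indeterminate) to $x^n$ dividing $r(\lambda+x)$ in $\overline{\F}[x]$.

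Finally, a polynomial $P \in \overline{\F}[x]$ is divisible by $x^n$ if and only if its coefficients on $x^0, x^1, \dots, x^{n-1}$ all vanish. Combining this with the expansion above, divisibility of $r(\lambda+x)$ by $x^n$ is equivalent to the vanishing of $G^k(r)[\lambda]$ for every $k \in \lcro 0, n-1\rcro$. Stringing these three equivalences together yields the desired equivalence of (i) and (ii).

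There is no genuine obstacle here: the only subtle point is to resist the reflex of invoking the usual derivatives $D^k(r)$, which encode multiplicities correctly only in characteristic zero (or when $n! \neq 0$ in $\F$); the Hasse-style normalization $G^k(r) = \frac{1}{k!}D^k(r)$ is exactly what is needed to make the Taylor argument valid in arbitrary characteristic, and the proof is otherwise just the standard translation-then-divisibility argument.
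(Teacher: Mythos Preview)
Your proof is correct and is exactly the natural argument the paper has in mind; in fact the paper does not write out a proof at all, simply declaring the result ``essentially obvious'' after introducing the Hasse-type polynomials $G^n(r)$ via the expansion of $r(t+x)$. Your write-up supplies precisely the standard translation-and-divisibility argument that justifies this claim.
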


From there, we can prove the following key result:

\begin{prop}\label{blockcyclicprop}
Let $d$ and $n$ be positive integers, and $P$ be an irreducible monic polynomial with degree $d$
in $\F[t]$. Let $N \in \Mat_d(\F)$ be annihilated by $P$.
 Consider the following matrix
$$M:=\begin{bmatrix}
N & 0_d & \cdots & \cdots & (0) \\
I_d & N
 & \ddots & & \vdots \\
0 & \ddots & \ddots & \ddots & \vdots \\
\vdots & & \ddots & N & 0_d \\
(0) & \cdots & 0 & I_d & N
\end{bmatrix}$$
of $\Mat_{nd}(\F)$.
Denote by $m$ the multiplicity of the roots of $P$, and consider the Euclidian division
$n=qm+r$.
Then, $M$ is similar to the direct sum of
$m-r$ copies of $C(P^q)$ and of
$r$ copies of $C(P^{q+1})$.
\end{prop}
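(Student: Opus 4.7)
The plan is to realise $M$ as the commuting sum of a block-scalar operator and a nilpotent block-shift, expand $P^j(M)$ for every $j\ge 1$ using the divided derivatives $G^k$, and extract the rational canonical form of $M$ from the dimensions $\dim \Ker P^j(M)$.

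First I would write $M = A + B$, where $A$ is block-diagonal with $N$ in each diagonal block and $B$ is the block-subdiagonal matrix whose only nonzero blocks are $I_d$ in positions $(i+1,i)$. A direct check shows that $AB = BA$ and that $B^n = 0$. Applying the binomial theorem to the commuting pair $(A,B)$ and using the defining identity $f(t+x) = \sum_k G^k(f)(t)\,x^k$, one obtains
\[
f(M) \;=\; \sum_{k=0}^{n-1} G^k(f)(N)\,B^k \qquad \text{for every } f \in \F[t],
\]
since $G^k(f)(A)$ is block-diagonal with $G^k(f)(N)$ on each block. In block form, the $(i+k,i)$-block of $f(M)$ is $G^k(f)(N)$ and all other blocks vanish.

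Next I would specialise to $f = P^j$ and analyse vanishing. Let $\lambda$ be any root of $P$ in $\overline{\F}$ and write $P(t) = (t-\lambda)^m R(t)$ with $R(\lambda) \ne 0$. Then $P^j(\lambda + u) = u^{jm}\,R(\lambda+u)^j$ shows that $G^k(P^j)(\lambda) = 0$ for $k < jm$ while $G^{jm}(P^j)(\lambda) = R(\lambda)^j \ne 0$. Since $P$ is irreducible and $G^k(P^j) \in \F[t]$, this gives $P \mid G^k(P^j)$ for $k < jm$ and $\gcd\bigl(P,\,G^{jm}(P^j)\bigr) = 1$. Because the minimal polynomial of $N$ equals $P$ (the only possibility, as $P$ is irreducible and annihilates $N$), we conclude $G^k(P^j)(N) = 0$ for $k < jm$ while $G^{jm}(P^j)(N)$ is invertible in $\Mat_d(\F)$. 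Assuming $jm \le n$, the matrix $P^j(M)$ is therefore strictly block-lower-triangular, with its first nonzero subdiagonal equal to the $jm$-th, filled everywhere with the same invertible block $G^{jm}(P^j)(N)$. A direct reading of the resulting block system shows that the image of $P^j(M)$ is exactly the subspace of vectors whose first $jm$ blocks vanish, so $\rk P^j(M) = (n-jm)\,d$; and if $jm \ge n$ then $P^j(M) = 0$. In both cases,
\[
\dim \Ker P^j(M) \;=\; d \cdot \min(jm,\, n).
\]

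Finally I would extract the rational canonical form. The characteristic polynomial of $M$ is $\det(tI_d - N)^n = P^n$, so every nontrivial invariant factor of $M$ is a power of $P$; write them $P^{a_1}, \ldots, P^{a_s}$ with $a_1 \ge \cdots \ge a_s \ge 1$. The standard identity $\dim \Ker P^j(M) = d \sum_i \min(a_i,j)$ combined with the above yields $\sum_i \min(a_i,j) = \min(jm,n)$ for every $j \ge 1$. Setting $N_l := \#\{i : a_i \ge l\}$, this rewrites as $\sum_{l=1}^{j} N_l = \min(jm,n)$, from which one reads off $N_1 = \cdots = N_q = m$, $N_{q+1} = r$ and $N_{q+2} = 0$. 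Hence the invariant factors of $M$ are exactly $r$ copies of $P^{q+1}$ together with $m - r$ copies of $P^q$ (with the convention that $C(P^0) = C(1)$ is empty if $q = 0$), and the primary canonical form theorem produces the claimed similarity. The main technical hurdle is the multiplicity analysis of $G^k(P^j)$ at the roots of $P$ in positive characteristic, where the classical $k!$-based Taylor formula fails for inseparable $P$; the divided-derivative formalism $G^k$ handles this uniformly over all fields, after which the rest of the argument is bookkeeping.
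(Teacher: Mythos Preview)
Your proof is correct and follows essentially the same route as the paper: both write $M=A+B$ with $A$ block-diagonal in $N$ and $B$ the nilpotent block-shift, expand $f(M)=\sum_k G^k(f)(N)\,B^k$ via the divided-derivative identity, and identify the first nonvanishing block-subdiagonal of $P^j(M)$ as the $jm$-th (with an invertible block there) to obtain $\rk P^j(M)=d\max(n-jm,0)$. The only cosmetic difference is in the final bookkeeping: the paper reads off the multiplicities $n_k$ from the second differences $\rk P^{k+1}(M)+\rk P^{k-1}(M)-2\rk P^k(M)$, whereas you pass through $\dim\Ker P^j(M)=d\sum_i\min(a_i,j)$ and the partial sums $\sum_{l\le j}N_l$; these are equivalent.
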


\begin{proof}
Since $P$ is irreducible and annihilates $N$, which is a $d$-by-$d$ matrix, we see that the characteristic polynomial of
$N$ equals $P$ and that $\F[N]$ is a field.
The characteristic polynomial of $M$ equals $P^n$, and hence
every invariant factor of $M$ is a power of $P$.
Given an integer $k \in \N^*$, we denote by $n_k$ the number of invariant factors of
$M$ that equal $P^k$. Classically, we have
$$dn_k=\rk \bigl(P(M)^{k+1}\bigr)+\rk\bigl(P(M)^{k-1}\bigr)-2 \rk \bigl(P(M)^k\bigr).$$

Setting
$$A:=N \oplus \cdots \oplus N \qquad \text{(with $n$ copies of $N$)}$$
and
$$B:=\begin{bmatrix}
0_d & 0_d & \cdots & \cdots & (0) \\
I_d & 0_d & \ddots & & \vdots \\
0 & \ddots & \ddots & \ddots & \vdots \\
\vdots & & \ddots & 0_d & 0_d \\
(0) & \cdots & 0_d & I_d & 0_d
\end{bmatrix} \in \Mat_{nd}(\F),$$
we see that $A$ commutes with $B$, while $M=A+B$. Fixing $R \in \F[t]$, we get
$$R(M)=\sum_{k=0}^{+\infty} G^k(R)[A]\,B^k
=\begin{bmatrix}
R(N) & 0_d & \cdots & \cdots & (0) \\
G^1(R)[N] & R(N) & \ddots & & \vdots \\
\vdots & \ddots & \ddots & \ddots & \vdots \\
\vdots & & \ddots &  & 0_d \\
G^{n-1}(R)[N] & \cdots & \cdots & G^1(R)[N] & R(N)
\end{bmatrix}.$$
Remembering that $\F[N]$ is a subfield of the ring $\Mat_d(\F)$, it follows that:
\begin{itemize}
\item If $G^i(R)[N]=0$ for all $i \in \lcro 0,n-1\rcro$, then $R(M)=0$.
\item If there is a least integer $\ell \in \lcro 0,n\rcro$ for which
$G^\ell(R)[N]\neq 0$, then $\rk R(M)=(n-\ell)d$.
\end{itemize}
The first situation occurs if and only if any root of $P$ is a root of $R$
with multiplicity at least $n$. Moreover, if the second one occurs then $\ell$
is actually the multiplicity of any root of $P$ as a root of $R$.

In particular, the first situation occurs if $R=P^{q+1}$, and the second one occurs whenever $R$ divides $P^q$.
It follows that the minimal polynomial of $M$ divides $P^{q+1}$. Moreover, for any non-negative integer $k$,
$$\rk P^k(M)=\begin{cases}
(n-km)d & \text{if $k \leq q$} \\
0 & \text{otherwise.}
\end{cases}$$
It follows that
$$n_{q+1}=r, \quad n_q=m-r, \quad \text{and} \quad \forall k \in \N \setminus \{q,q+1\}, \; n_k=0,$$
which yields the claimed result.
\end{proof}

\newpage

\end{document}